\begin{document}
\newcommand{\pa}{\partial}
\newcommand{\CI}{C^\infty}
\newcommand{\dCI}{\dot C^\infty}
\newcommand{\supp}{\operatorname{supp}}
\renewcommand{\Box}{\square}
\newcommand{\ep}{\epsilon}
\newcommand{\Ell}{\operatorname{Ell}}
\newcommand{\WF}{\operatorname{WF}}
\newcommand{\WFb}{\operatorname{WF}_{\bl}}
\newcommand{\diag}{\mathrm{diag}}
\newcommand{\sign}{\operatorname{sign}}
\newcommand{\sH}{\mathsf{H}}
\newcommand{\codim}{\operatorname{codim}}
\newcommand{\Id}{\operatorname{Id}}
\newcommand{\cl}{{\mathrm{cl}}}
\newcommand{\piece}{{\mathrm{piece}}}
\newcommand{\bl}{{\mathrm b}}
\newcommand{\Psib}{\Psi_\bl}
\newcommand{\Psibc}{\Psi_{\mathrm{bc}}}
\newcommand{\Psibcc}{\Psi_{\mathrm{bcc}}}
\newcommand{\RR}{\mathbb{R}}
\newcommand{\NN}{\mathbb{N}}
\newcommand{\codimY}{k}
\newcommand{\dimX}{n}
\newcommand{\cS}{\mathcal S}
\newcommand{\cF}{\mathcal F}
\newcommand{\cL}{\mathcal L}
\newcommand{\cH}{\mathcal H}
\newcommand{\cG}{\mathcal G}
\newcommand{\cU}{\mathcal U}
\newcommand{\loc}{{\mathrm{loc}}}
\newcommand{\comp}{{\mathrm{comp}}}
\newcommand{\Tb}{{}^{\bl}T}
\newcommand{\Vf}{\mathcal V}
\newcommand{\Vb}{{\mathcal V}_{\bl}}
\newcommand{\etat}{\tilde\eta}

\setcounter{secnumdepth}{3}
\newtheorem{lemma}{Lemma}[section]
\newtheorem{prop}[lemma]{Proposition}
\newtheorem{thm}[lemma]{Theorem}
\newtheorem{cor}[lemma]{Corollary}
\newtheorem{result}[lemma]{Result}
\newtheorem*{thm*}{Theorem}
\newtheorem*{prop*}{Proposition}
\newtheorem*{cor*}{Corollary}
\newtheorem*{conj*}{Conjecture}
\numberwithin{equation}{section}
\theoremstyle{remark}
\newtheorem{rem}[lemma]{Remark}
\newtheorem*{rem*}{Remark}
\theoremstyle{definition}
\newtheorem{Def}[lemma]{Definition}
\newtheorem*{Def*}{Definition}

\title{Diffraction from conormal singularities}
\author[Maarten de Hoop, Gunther Uhlmann and Andras Vasy]{Maarten de
  Hoop,
Gunther Uhlmann and Andr\'as Vasy}
\date{April 3, 2012}
\address{Department of Mathematics, Purdue University, 
West Lafayette, IN 47907}
\email{mdehoop@math.purdue.edu}
\address{Department of Mathematics, University of Washington, 
Seattle, WA 98195-4350, and Department of Mathematics,
University of California, Irvine, 340 Rowland
Hall, Irvine CA 92697}
\email{gunther@math.washington.edu}
\address{Department of Mathematics, Stanford University, Stanford, CA
94305-2125, U.S.A.}
\email{andras@math.stanford.edu}
\thanks{The authors were partially supported by the National Science Foundation under
grant CMG-1025259 and are grateful for the
stimulating environment at the MSRI in Berkeley
where this work was initiated in Autumn 2010, and where the second author
was supported by a Senior Clay Award and a Chancellor's Professorship.}
\subjclass{35A21, 35L05}

\begin{abstract}
In this paper we show that for metrics with conormal singularities that correspond to
class $C^{1,\alpha}$, $\alpha>0$, the reflected wave is more regular than
the incident wave in a Sobolev sense. This is helpful
in the analysis of the multiple scattering series since higher order
terms can be effectively `peeled off'.
\end{abstract}

\maketitle

\section{Introduction}
In this paper we show that for metrics with conormal singularities that correspond to
class $C^{1,\alpha}$, $\alpha>0$, the reflected wave is more regular than
the incident wave in a Sobolev sense for a range of background Sobolev
spaces. That is, informally, for suitable $s\in\RR$ and $\ep_0>0$, depending on
the order of the conormal singularity (thus on $\alpha$),
if a solution of the wave equation is microlocally
in the Sobolev space $H^{s-\ep_0}_{\loc}$ prior to hitting the
conormal singularity of the metric in a normal fashion, then the reflected wave front
is in $H^s_{\loc}$, while the transmitted front is just in the a
priori space $H^{s-\ep_0}_{\loc}$. (This assumes that along the backward
continuation of the reflected ray, one has $H^s_{\loc}$ regularity,
i.e.\ there is no incident $H^s_{\loc}$ singularity for which
transmission means propagation
along our reflected ray.)
Such a result is helpful
in the analysis of the multiple scattering series, i.e.\ for waves
iteratively reflecting from conormal singularities, since higher order
terms, i.e.\ those involving more reflections,
can be effectively `peeled off' since they have higher regularity.

Here the main interest is in
$\alpha<1$, for in the $C^{1,1}$ setting one has at least a partial
understanding of wave propagation {\em without} a geometric structure
to the singularities of the metric,
such as conormality (though of course one does need {\em some}
geometric structure to obtain a theorem analogous to ours), as then
the Hamilton vector field is Lipschitz, and automatically has unique
integral curves; see Smith's paper \cite{Smith:Parametrix} where a
parametrix was constructed, and also the work of Geba and Tataru
\cite{Geba-Tataru:Phase}. We also recall that, in
a different direction, for even lower
regularity coefficients, Tataru has shown Strichartz estimates
\cite{Tataru:Strichartz}; these are not microlocal in the sense of
distinguishing reflected vs.\ transmitted waves as above.

In order to state the theorem precisely we need more notation.
First suppose $X$ is a $\dim X=n$-dimensional $\CI$ manifold, and $Y$
is a smooth embedded submanifold of 
codimension
$$
\codim Y=\codimY.
$$
With H\"ormander's normalization \cite{FIO1}, the class of Lagrangian
distributions associated to the conormal bundle $N^*Y$ of $Y$ (also
called distributions conormal to $Y$), denoted by
$I^\sigma(N^*Y)$, arises from symbols in $S^{\sigma+(\dim
 X-2\codimY)/4}$ when parameterized via a partial inverse Fourier
transform in the normal variables. That is, if one has local coordinates
$(x,y)$, such that $Y$ is given by $x=0$, then $u\in I^\sigma(N^*Y)$
can be written, modulo $\CI(\RR^n)$, as
$$
(2\pi)^{-\codimY}\int e^{ix\cdot \xi} a(y,\xi)\,d\xi,\qquad a\in S^{\sigma+(\dimX-2\codimY)/4}.
$$
For us it is sometimes convenient to have the orders relative to
delta distributions associated to $Y$, which arise as the partial
inverse Fourier transforms of symbols of order $0$, as in
\cite{Greenleaf-Uhlmann:Recovering}, thus we let
$$
I^{[-s_0]}(Y)=I^{-s_0-(\dim X-2\codimY)/4}(N^*Y),
$$
so elements of $I^{[-s_0]}(Y)$ are $s_0$ orders more regular than such
a delta distribution.
For any $\CI$ vector bundle over $X$ one can then talk about conormal
sections (e.g.\ via local trivialization of the bundle); in
particular, one can talk conormal metrics.

Thus, if $X$ is a $\CI$ manifold, $Y$ an embedded submanifold, and $g$
a symmetric 2-cotensor which is in $I^{[-s_0]}(Y)$ with
$s_0>\codimY=\codim Y$ (here we drop the bundle from the notation of
conormal spaces), then $g$ is continuous. We say that $g$ is
Lorentzian if for each $p\in X$, $g$
defines a symmetric bilinear form on $T_p X$ of signature $(1,\dimX-1)$,
$\dimX=\dim X$. (One would say $g$ is Riemannian if the signature is
$(n,0)$. Another possible normalization of Lorentzian signature is
$(n-1,1)$.) We say that $Y$ is time-like if the pull-back of $g$ to
$Y$ (which is a $\CI$ 2-cotensor) is Lorentzian, or equivalently if
the dual metric $G$ restricted to $N^*Y$ is negative definite. 

A typical example, with $Y$ time-like, is if $X=X_0\times\RR_t$, where $X_0$ is the
`spatial' manifold, $Y=Y_0\times\RR$, $g=dt^2-g_0$, $g_0$ is (the
pull-back of) a
Riemannian metric on $X_0$ which is conormal to $Y_0$, in the class
$I^{[-s_0]}(Y_0)$,
where $s_0>\codim_X Y=\codim_{X_0}Y_0$. In this case, one may choose
local coordinates $(x,y')$ on $X_0$ such that $Y_0$ is given by $x=0$;
then with $y=(y',t)$, $(x,y)$ are local coordinates on $X$ in which
$Y$ is given by $x=0$. Thus, the time variable $t$ is one of the $y$
variables in this setting.

Before proceeding, recall that there is a propagation of singularities
result in the manifolds with corners setting
\cite{Vasy:Propagation-Wave}, which requires only minimal changes to adapt
to the present setting.
This states that for solutions of the wave equation lying
in $H^{1,r}_{\bl}(X)$ for some $r\in\RR$, $\WF^{1,m}_{\bl}$ propagates
along generalized broken bicharacteristics. Thus, for a ray normally
incident at $Y$, if all of the incoming rays that are incident at the same
point in $Y$ and that have the same tangential
momentum carry $H^{m+1}$ regularity, then the outgoing rays from this
point in $Y$ with this tangential momentum will carry
the same regularity. In other words, in principle (and indeed, when
one has boundaries, or transmission problems with jump singularities
of the metric, this is typically the case) $H^{m+1}$ singularities can jump from
a ray to another ray incident at the same point with the same
tangential momentum (let us call these {\em related rays}), i.e.\ one has a whole cone (as the magnitude of
the normal momentum is conserved for the rays) of reflected rays
carrying the $H^{m+1}$ singularity. Here we recall that for $r\geq 0$, $H^{1,r}_{\bl}(X)$
is the subspace of $H^1(X)$ consisting of elements possessing $r$ b
(i.e.\ tangential to $Y$) derivatives in $H^1(X)$; for $r<0$ these are
distributions obtained from $H^1(X)$ by taking finite linear
combinations of up to $-r$ derivatives of elements of $H^1(X)$. In
particular, one can have arbitrarily large singularities;
one can always represent these by taking tangential derivatives, in
particular time derivatives. Via standard functional analytic duality
arguments,
these estimates (which also hold for the inhomogeneous equation) also give {\em
  solvability}, provided there is a global time function $t$.
Phrased in terms of these spaces, and for convenience for the
inhomogeneous equation with vanishing
initial data, for $f\in
H^{-1,r+1}_{\bl}(X)$ supported in $t>t_0$ there exists a unique $u\in H^{1,r}_{\bl}(X)$ solving the
equation $\Box_g u=f$ such that $\supp u\subset\{t>t_0\}$.

The object of this paper is to improve on this propagation result by showing
that, when $s_0>\codimY+1$ (thus $I^{[-s_0]}(Y)\subset C^{1+\alpha}$ for
$\alpha<s_0-\codimY-1$) in fact this jump to the related rays does not happen in an
appropriate range of Sobolev spaces. As above,
let $(x,y)$ denote local
coordinates on $X$, $Y$ given by $x=0$, and let $(\xi,\eta)$ 
denote dual variables.
Let $\Sigma\subset T^*X$ denote the
characteristic set of the wave operator $\Box=\Box_g$; this is the
zero-set of the dual metric $G$ in $T^*X$.

\begin{thm}\label{thm:microloc-background}
Suppose $\codim Y=\codimY=1$, $\codimY+1+2\ep_0< s_0$ and
$0<\ep_0\leq s<s_0-\ep_0-1-\codimY/2$.
Suppose that $u\in L^2_{\loc}$, $\Box u=0$,
$$
q_0=(0,y_0,\xi_0,\eta_0)\in\Sigma,\ \xi_0\neq 0,
$$
and the backward bicharacteristics from {\em related} points
$(0,y_0,\xi,\eta_0)\in\Sigma$ are disjoint from $\WF^{s-\ep_0}(u)$,
and
the backward bicharacteristic from the point
$q_0$ is disjoint from $\WF^s(u)$. Then
the forward bicharacteristic from
$(0,y_0,\xi_0,\eta_0)$ is disjoint from $\WF^s(u)$.
\end{thm}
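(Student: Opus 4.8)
The plan is to reduce the statement to a microlocal estimate for the transmission problem and solve it by positive commutator methods adapted to the b-calculus, using the conormal regularity of the metric to control the error terms. First I would work near $q_0$, choose coordinates so that $Y=\{x=0\}$ with $x$ the single normal variable (here $k=1$), and split $X$ into the two sides $x>0$ and $x<0$. The key structural point is that the wave operator $\Box_g$ has coefficients in $I^{[-s_0]}(Y)\subset C^{1,\alpha}$, so after peeling off a smooth (or at least $C^{1,\alpha}$) principal part, the singular part of $\Box_g$ lies in $x$-multiplied or conormal remainders that gain roughly $s_0-k$ derivatives relative to a delta. Concretely, one writes $\Box_g = \Box_0 + R$, where $\Box_0$ is a wave operator with smooth coefficients (say, the restriction/average of $g$ across $Y$) and $R$ is a second-order operator whose coefficients are conormal to $Y$ with one order of vanishing, hence $R$ maps $H^{s'}_{\bl}$ into $H^{s'-1+(s_0-k)}_{\loc}$ on each side, i.e.\ it is "better by $s_0-k-1$" than a generic second-order operator. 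This is what forces the numerology $s_0>k+1+2\ep_0$ and $s<s_0-\ep_0-1-k/2$.

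Next I would set up a positive commutator argument at the normally incident ray. Let $A=A^*\in\Psi^{2s-1}_{\bl}$ be a b-pseudodifferential operator microlocalized near the forward bicharacteristic from $(0,y_0,\xi_0,\eta_0)$, with symbol chosen so that the commutator $i[\Box_0,A]$ is, modulo lower order, $(-)$ a positive multiple of $|\xi|$ times a square $B^*B$ along the reflected ray, plus a term supported along the backward continuations of the related rays and along the backward continuation of $q_0$ itself (this is the standard Hörmander-type propagation commutant, but now built with the full "jump" structure across $Y$ that the b-propagation theorem of \cite{Vasy:Propagation-Wave} encodes). Pairing $\langle i[\Box_0,A]u,u\rangle$ against $\langle A\Box_0 u,u\rangle - \langle \Box_0 u, Au\rangle$ and using $\Box_g u=0$, the left side produces the desired control of $\WF^s(u)$ on the forward reflected ray, while the right side contributes (i) the a priori terms along the related backward rays, which are in $\WF^{s-\ep_0}$ and hence acceptable because $A$ has order only $2s-1$ and we only claim $H^s$ there with a loss, (ii) the term along the backward continuation of $q_0$, which is in $H^s$ by hypothesis, and (iii) the genuinely dangerous term $\langle A R u, u\rangle$ coming from replacing $\Box_g$ by $\Box_0$.

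The main obstacle, and the heart of the argument, will be controlling $\langle ARu,u\rangle$. Here one uses that $u\in H^{1,r}_{\bl}$ for some $r$ (possibly very negative, by the a priori regularity and the manifolds-with-corners propagation result, applied first at a coarse level), and that $R$ gains $s_0-k-1$ derivatives of tangential-type regularity. A bootstrap is then needed: starting from the a priori space one improves the regularity of $u$ near the ray in steps of size $\le s_0-k-1$ (which is why $2\ep_0<s_0-k-1$ is imposed so that a single step of size $\ep_0$, or a bounded number of such steps, suffices), at each stage commuting $R$ through $A$, using that $R$'s conormal coefficients multiply into $\Psi_{\bl}$ cleanly (conormal distributions are closed under such products and the orders add), and estimating $\|Ru\|$ in $H^{2s-1-2s_0+\text{(stuff)}}$ by the inductively known regularity of $u$. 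One must track carefully that the threshold $s<s_0-\ep_0-1-k/2$ keeps $Ru$ in a space dual to the one in which $Au$ lives, so the pairing is finite; the $-k/2$ is the usual Sobolev-embedding / trace cost of the codimension. Finally, since the forward bicharacteristic is a connected non-trapped curve (it eventually leaves any neighbourhood of $Y$), one propagates the $H^s$ conclusion along it by the standard smooth-coefficient propagation of singularities away from $Y$, completing the proof.
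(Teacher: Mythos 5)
Your proposal diverges from the paper in ways that matter, and it contains a step that would fail as stated.

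First, the architecture. The paper does \emph{not} prove Theorem~\ref{thm:microloc-background} by a single commutator argument. It proves the purely local version, Theorem~\ref{thm:global-background}, in which $H^{s-\ep_0}_{\loc}$ background regularity is assumed everywhere, and then reduces Theorem~\ref{thm:microloc-background} to it in Section~\ref{sec:reflection}. That reduction is a separate argument: one uses a time cutoff $\chi$ and the forward/backward solution operators to write $u=(\Box_-^{-1}-\Box_+^{-1})[\Box,\chi]u$, splits the source microlocally into a piece supported where $u$ is known to be $H^{s-\ep_0}$ and a piece whose GBB flow-out misses the forward ray, and invokes the b-calculus propagation result (Theorem~\ref{thm:prop-sing}) together with Corollary~\ref{cor:forward-solution-reg}. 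Without this step your direct commutator argument cannot close: the a~priori control along the related rays is only microlocal, while a positive commutator pairing needs global (local-in-space) $H^{s-\ep_0}$ control to make the error pairings finite.

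Second, and more seriously, the decomposition $\Box_g=\Box_0+R$ with $R$ "gaining $s_0-k-1$ derivatives" is not correct. If $g_{ij}-\tilde g_{ij}$ is $C^{1,\alpha}$ and vanishes to order $1+\alpha$ at $Y$, multiplication by it is still only bounded on a limited range of Sobolev spaces; it does \emph{not} map $H^{s'}$ into $H^{s'+s_0-k-1}$. So $R$ is, for the purposes of $\langle ARu,u\rangle$, a genuine second-order operator with rough coefficients, and your pairing is not controllable by the a priori $H^{s-\ep_0}$ information. The paper's actual mechanism is different: it writes the full commutator $[P,A]$ (with $P$ the rough reduced wave operator) and observes that in the Schwartz-kernel expression the dangerous term is $D_{i,L}D_{j,R}(g_{ij,L}-g_{ij,R})K_A$, whose principal symbol on $N^*\diag$ vanishes because $g_{ij,L}-g_{ij,R}$ vanishes on the diagonal. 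This one-order gain on the diagonal Lagrangian, combined with the paired-Lagrangian boundedness results (Propositions~\ref{prop:diag-main-bded}, \ref{prop:one-sided-bded}, Corollary~\ref{cor:one-sided-bded}), is what makes $\langle Fu,u\rangle$ finite under exactly the hypotheses $\codimY+1+2\ep_0<s_0$ and $s<s_0-\ep_0-1-\codimY/2$. Your $\Box_0+R$ split loses this cancellation: there is no smooth $\Box_0$ for which the error $R$ becomes a pseudodifferential operator of reduced order at $Y$, and the commutator $[\Box,A]$ genuinely lives in the paired Lagrangian class $I^{p,l}$ and must be treated as such.

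Third, your choice of $A\in\Psi^{2s-1}_{\bl}$ is the wrong operator class for this refinement. A b-pseudodifferential commutant gives exactly the GBB propagation theorem (Theorem~\ref{thm:prop-sing}), which permits the singularity to jump to all related rays; it cannot distinguish the transmitted from the reflected ray. The refined estimate uses an ordinary $\Psi^{2s-1}(X)$ commutant, exploiting that $\sH_p$ has unique integral curves at normally-incident points (Section~\ref{sec:bichar}), together with the paired Lagrangian analysis of $[P,A]$ and the Melrose--Sj\"ostrand-style piecing argument in Section~\ref{sec:prop-sing}. These are the missing ingredients.
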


\begin{rem}
The theorem is expected to be valid for all values of $\codimY$, and
the limitation on $\codimY$ in the statement is so that it fits
conveniently into the existing (b-microlocal) framework for proving
the basic propagation of singularities (law of reflection)
without too many technical changes. This is discussed in
Section~\ref{sec:reflection}, and is to some extent `orthogonal' to
the actual main ideas of the paper; it is only used to microlocalize
the `background regularity', $H^{s-\ep_0}$. If one does not want to microlocalize
the background regularity, i.e.\ assumes $u$ is in $H^{s-\ep_0}$
at least locally, we prove the result for all codimensions, see Theorem~\ref{thm:global-background}.
\end{rem}

Thus, the limiting Sobolev regularity $s$ that one can obtain, if $s_0$ is
slightly greater than $1+\codimY$, i.e.\ $2$ in the case of a
hypersurface, which is the minimum allowed by the first constraint, is
just above $\codimY/2$. On the other hand, if $s_0>1+\codimY$ then for any
$0\leq s<s_0-1-\codimY/2$, one can choose $\ep_0>0$ sufficiently small
so that all the inequalities are satisfied, so the theorem always
provides interesting information on wave propagation for a range of
values of $s$, providing at least some improvement over the basic
propagation of singularities result (which would not allow better regularity than that
on backward rays from $(0,y_0,\xi,\eta_0)\in\Sigma$, i.e.\ $H^{s-\ep_0}$).

\begin{cor}
Under assumptions as in the theorem, the terms of the multiple
scattering series have higher regularity, in the sense of Sobolev wave
front sets, with each iteration, until the limiting regularity,
$H^{s_0-1-\codimY/2}$, is reached.
\end{cor}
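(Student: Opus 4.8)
The plan is to realise the multiple scattering series as the Neumann series of the forward solution operator and then iterate the reflected‐wave estimate of Theorem~\ref{thm:microloc-background}, resp.\ of Theorem~\ref{thm:global-background} when $\codimY>1$. Fix a smooth background metric $g_0$ agreeing with $g$ outside a small neighbourhood of $Y$ and write $\Box=\Box_g=\Box_{g_0}+P$, where $P$ is a second order operator whose principal coefficients are $g^{jk}-g_0^{jk}\in I^{[-s_0]}(Y)$ and whose lower order coefficients lie in $I^{[-s_0+1]}(Y)$; thus $P$ has singular support in $Y$ and, after a harmless smooth modification, $\supp P$ lies near $Y$. Let $E_0$ be the forward fundamental solution of $\Box_{g_0}$ supplied, via the global time function $t$, by the solvability statement recalled above. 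For $f$ supported in $t>t_0$ the problem $\Box_g u=f$, $\supp u\subset\{t>t_0\}$, is equivalent to $u=E_0 f-E_0 P u$; localising in $t$ so that $E_0 P$ is a contraction on the relevant space $H^{1,r}_{\bl}$, this is solved by the multiple scattering series $u=\sum_{j\ge 0}u_j$ with $u_0=E_0 f$ and $u_{j+1}=-E_0 P u_j$, where $u_j$ carries the wave that has interacted $j$ times with the conormal singularity. Each $u_{j+1}$ solves $\Box_{g_0}u_{j+1}=-Pu_j$, so it is, away from $Y$, a free background wave launched by the conormal source $-Pu_j$ supported at $Y$; equivalently the tails $\sum_{i\ge j}u_i$ solve $\Box_g(\cdot)=-Pu_{j-1}$ and reflect off $Y$ in exactly the sense governed by the theorem, up to this conormal source.

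The core is an induction on $j$. Let $r_j$ denote the microlocal Sobolev order of $u_j$ along the bicharacteristic obtained from the incident bicharacteristic of $u_0$ by $j$ successive normal reflections at $Y$, and suppose $0\le r_j<s_0-2\ep_0-1-\codimY/2$. I claim that then $r_{j+1}\ge r_j+\ep_0$, i.e.\ $u_{j+1}$ is microlocally in $H^{r_j+\ep_0}$ along the $(j+1)$-times reflected continuation. Indeed, at the reflection point $q\in\Sigma\cap N^*Y$ relevant to this continuation one applies Theorem~\ref{thm:microloc-background} (resp.\ \ref{thm:global-background}) with $s=r_j+\ep_0$: the incident related bicharacteristics carry the wave after at most $j$ reflections, hence carry $H^{r_j}=H^{s-\ep_0}$ (the $j$-times reflected one by the inductive hypothesis, the others by the standard propagation of singularities, being later and at least as regular terms), while the backward bicharacteristic from $q$ meets only terms that, by the earlier stages of the induction or by ordinary propagation, are already $H^{s}$ there. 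The only extra point is that the conormal source $-Pu_j$ (resp.\ $-Pu_{j-1}$ for the tail) does not obstruct the conclusion: away from $N^*Y$ it is $H^{r_j+\ep_0}$ microlocally, and being the image under a second order operator with $I^{[-s_0]}(Y)$ coefficients of the conormal distribution $u_j$, it enters the reflection estimate as an admissible inhomogeneity of the type for which the estimates recalled above continue to hold, its loss of regularity being confined to the transmitted directions. Hence the theorem gives $H^{r_j+\ep_0}$ along the $(j+1)$-times reflected ray. Iterating from the incident order $r_0\ge 0$ of $u_0$, one obtains that $u_N$ is microlocally $H^{r_0+N\ep_0}$ along the $N$-times reflected ray as long as $r_0+N\ep_0<s_0-2\ep_0-1-\codimY/2$.

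For the limiting regularity, note that the theorem only constrains $\ep_0$ by $0<\ep_0$ and $\codimY+1+2\ep_0<s_0$. Thus given any target $r_\ast<s_0-1-\codimY/2$ one first picks $\ep_0>0$ small enough that $r_\ast<s_0-2\ep_0-1-\codimY/2$ and then $N$ large enough that $r_0+N\ep_0\ge r_\ast$; the induction then shows that every $u_j$ with $j\ge N$ is microlocally $H^{r_\ast}$ along the corresponding multiply reflected ray, and that along a fixed reflected ray the microlocal order of $u_j$ is nondecreasing in $j$ and strictly increasing until it stalls just below the ceiling. Letting $r_\ast\uparrow s_0-1-\codimY/2$ yields the corollary.

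The main obstacle is the bookkeeping in the inductive step rather than any new analytic ingredient beyond the theorem: one must set up the inhomogeneous version of Theorem~\ref{thm:microloc-background} with the specific conormal sources $Pu_j$ and verify both that they preserve the $\ep_0$ gain and that they do not spoil the background‐regularity hypotheses — in particular that the cone of related reflected rays never injects a singularity below the current target, which rests on the fact that the earlier terms of the series are already at least that regular there. Keeping track of the conormal structure of the $u_j$ — each conormal, after its first reflection, to the union of the Lagrangians swept out by the reflected rays — is what makes this source admissibility transparent.
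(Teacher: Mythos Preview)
The paper gives no proof of this corollary; it is stated as an immediate consequence of iterating Theorem~\ref{thm:microloc-background}: follow a fixed multiply-reflected ray in the full solution $u$ and apply the theorem at each successive reflection, gaining $\ep_0$ in Sobolev order each time until the ceiling $s<s_0-\ep_0-1-\codimY/2$ blocks further progress, then let $\ep_0\downarrow 0$.

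Your Neumann-series formalization is a legitimate way to give the phrase ``terms of the multiple scattering series'' a precise meaning, but it takes a genuinely different route and creates difficulties the direct iteration avoids. Two concrete issues:

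\begin{itemize}
\item The individual terms $u_{j+1}$ satisfy $\Box_{g_0}u_{j+1}=-Pu_j$, an equation for the \emph{smooth} background operator, so Theorem~\ref{thm:microloc-background} does not apply to them. You pass to the tails $v_j=\sum_{i\ge j}u_i$, which do solve a $\Box_g$-equation, but now the conclusion you draw concerns $v_j$, not $u_{j+1}$ alone; you never explain how the microlocal regularity of the tail isolates that of the single term $u_{j+1}$, and the difference $u_{j+1}=v_{j+1}-v_{j+2}$ does not help since you would need the later tail to be \emph{more} regular, which is what you are trying to prove.
\item The tail equation has inhomogeneity $-Pu_{j-1}$, whose wave front set you do not actually analyze. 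The phrase ``admissible inhomogeneity of the type for which the estimates \ldots\ continue to hold'' is the entire content of the step, and you flag it yourself as the main obstacle without resolving it. In fact $Pu_{j-1}$ has a conormal singularity at $Y$ (already when $u_{j-1}$ is smooth there) and, at points of $\Sigma\cap T^*_YX$ where $u_{j-1}$ is singular, a product-of-conormals structure whose Sobolev order along reflected directions requires exactly the paired-Lagrangian machinery of Section~\ref{sec:paired-Lag}; this would essentially amount to reproving the theorem for the source term.
\end{itemize}

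A minor slip: you write ``the reflection point $q\in\Sigma\cap N^*Y$'', but $\Sigma\cap N^*Y=\emptyset$ since $Y$ is timelike ($G$ is negative definite on $N^*Y$); the reflection points lie in $\Sigma\cap T^*_YX$ with nonzero normal momentum.

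The cleaner argument the paper has in mind works directly with $u$ and avoids $Pu_j$ altogether: at the $(j{+}1)$th reflection, the related incoming rays carry at worst the $j$-times-reflected singularity, which is $H^{r_j}$ by induction, while the backward continuation of the $(j{+}1)$th reflected direction carries no singularity at all in the model setup, so the theorem with $s=r_j+\ep_0$ applies.
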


In view of the propagation of singularities along {\em generalized
  broken bicharacteristics}, i.e.\ that singularities can spread at
most to related rays, Theorem~\ref{thm:microloc-background} is in fact
equivalent to the weaker version where one assumes
$H_{\loc}^{s-\ep_0}$ regularity {\em not just on related
rays}. Thus, as we show in Section~\ref{sec:reflection}, it
suffices to prove the following theorem, which is what we prove in Section~\ref{sec:prop-sing}:

\begin{thm}\label{thm:global-background}
Suppose that $\ep_0>0$,
$\codimY+1+2\ep_0< s_0$ and $-\codimY/2<s<s_0-\ep_0-1-\codimY/2$. Then
for $u\in H_{\loc}^{s-\ep_0}$, $\Box u\in H_{\loc}^{s-1}$, $\WF^s(u)$ is a union of maximally extended
bicharacteristics in $\Sigma$.
\end{thm}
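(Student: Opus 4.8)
The plan is to run the standard positive-commutator (energy estimate) argument for propagation of singularities, but to track carefully how the conormal singularity of the metric enters the commutator. Since $\Box_g = \Box$ has principal symbol $p = G$ (the dual metric) which is only conormal of class $I^{[-s_0]}(Y)$, the first-order behavior of $p$ — in particular the Hamilton vector field $H_p$ — is continuous but its derivatives across $Y$ have limited regularity: roughly, $H_p$ lies in $C^{s_0-\codimY-1+\alpha}$ for small $\alpha$, i.e.\ it has $s_0 - 1 - \codimY/2$ "effective" normal derivatives in an $L^\infty$-type sense after accounting for the conormal gain. I would work microlocally near a point $q_0=(0,y_0,\xi_0,\eta_0)\in\Sigma$ with $\xi_0\neq 0$, i.e.\ at a point with nonzero conormal momentum, so that the singularity of the metric is genuinely "seen" transversally. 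Away from such points (where $\xi=0$) the metric is effectively smooth along the bicharacteristic and the classical argument applies verbatim, so the content is entirely at $\xi\neq 0$.

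The key steps, in order: (1) Construct a commutant $A\in\Psi^{2s-1}$ (in the ordinary, not b-, calculus, since here we assume global $H^{s-\ep_0}_{\loc}$ regularity and need not microlocalize the background) with symbol $a$ supported in a small conic neighborhood of a bicharacteristic segment through $q_0$, of the usual Hörmander form, monotone along $H_p$, so that $H_p a = -b^2 + e$ with $b$ elliptic on the segment and $e$ supported where regularity is assumed a priori. (2) Pair $\langle \Box u, Au\rangle - \langle u, A\Box u\rangle = \langle i[\Box,A]u,u\rangle$ and expand the commutator. The principal part gives the good term $-\|Bu\|^2$ plus the a priori-controlled $\|E^{1/2}u\|^2$. (3) The crucial new term is the contribution of the subprincipal/lower-order part of $\Box$ together with the non-smoothness of $p$: writing $\Box = \sum G^{ij}D_iD_j + \text{(lower order)}$, the commutator $[\Box,A]$ contains a term in which derivatives fall on the conormal coefficients $G^{ij}$. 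This term is not in $\Psi^{2s-1}$ in the classical sense; instead it is a sum of operators of the form $(D G^{ij})\,\tilde A$ with $\tilde A\in\Psi^{2s-1}$ and $DG^{ij}\in I^{[-s_0+1]}(Y)\subset H^{s_0-1-\codimY/2-0}_{\loc}$. (4) Estimate this error term by the fractional-Leibniz / paraproduct bound: a product of an $H^{r}_{\loc}$ conormal function ($r = s_0-1-\codimY/2-\ep$) with $\tilde A u$ where $u\in H^{s-\ep_0}_{\loc}$, paired against $Bu\in H^{?}$, is controlled provided $s$ and $s-\ep_0$ together stay below $r$; chasing the indices yields exactly the constraint $s < s_0 - \ep_0 - 1 - \codimY/2$ and $s > -\codimY/2$. (5) Absorb the resulting term into $\|Bu\|^2$ up to a constant times the a priori norm, giving $\|Bu\|^2 \lesssim \|E^{1/2}u\|^2 + \|u\|^2_{H^{s-\ep_0}}+\|\Box u\|^2_{H^{s-1}}$, and conclude $q_0\notin\WF^s(u)$ by the standard regularization (inserting a family $A_\delta$ with uniformly bounded symbols and passing $\delta\to 0$). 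A separate, short argument handles $\xi_0=0$: there the conormal direction is not in the span of the momentum and one may quote the classical propagation of singularities since $G$ is effectively smooth in the relevant directions along the bicharacteristic.

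The main obstacle — and the heart of the paper — is step (3)–(4): making precise the sense in which "$\tfrac12$ of the conormal gain pays for $\tfrac{\codimY}{2}$ of a derivative," i.e.\ that a conormal coefficient of class $I^{[-s_0]}(Y)$ behaves like an $H^{s_0-\codimY/2}_{\loc}$ multiplier for the purpose of commutator estimates, and that multiplying a pseudodifferential operator of order $2s-1$ by such a factor still produces an operator for which the commutator pairing $\langle (DG)\tilde Au, u\rangle$ can be bounded by $\epsilon\|Bu\|^2 + C_\epsilon(\text{a priori})$. This requires (a) a boundedness statement for $u\mapsto (DG)\tilde A u$ between the relevant Sobolev spaces — a fractional-Leibniz estimate for conormal symbols, which one can derive from the explicit oscillatory-integral representation $u\equiv(2\pi)^{-\codimY}\int e^{ix\cdot\xi}a(y,\xi)\,d\xi$ given in the introduction, plus standard Littlewood–Paley — and (b) care that the "lost half derivative" in $s$ versus $s-\ep_0$ is genuinely available, which is why the hypothesis carries both a background space $H^{s-\ep_0}$ strictly weaker than the target $H^s$ and the two-sided bound on $s$. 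The rest of the argument is the classical Hörmander positive-commutator scheme with only cosmetic modifications to accommodate the finite regularity of $H_p$ (one must check that the flow of $H_p$ is still $C^1$ and that the symbol $a$ can be constructed along it — this uses $s_0 > \codimY+1$, which makes $H_p$ Lipschitz or better).
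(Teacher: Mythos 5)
Your overall architecture (positive commutator, isolate the error coming from commuting $A$ through the rough coefficients, bound it by the $H^{s-\ep_0}$ background regularity, absorb and regularize) matches the paper's strategy, but two of your steps contain genuine gaps. First, the claim in your last paragraph that $s_0>\codimY+1$ makes $\sH_p$ ``Lipschitz or better,'' so that the rest is the classical H\"ormander scheme with cosmetic changes, is false: $s_0>\codimY+1$ only gives $g\in C^{1,\alpha}$ with $\alpha<s_0-\codimY-1$ possibly arbitrarily small, hence $\sH_p\in C^{0,\alpha}$ only. Integral curves of $\sH_p$ need not be unique, one cannot flow out a symbol along the bicharacteristic, and the estimate $|\sH_p\sigma_j|\leq C(\omega^{1/2}+|\etat|)$ used to control $\sH_p\omega$ on $\supp a$ degrades to $|\sH_p\sigma_j|\leq C(\omega^{1/2}+|\etat|)^\alpha$. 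This forces the Melrose--Sj\"ostrand style construction: one localizes only along the single direction $\sH_p(\bar q)$ in parabolas of width $\omega^{1/2}\sim\delta^{1+\alpha}$ (taking $\ep\sim\delta^\alpha$), obtaining a one-step statement (the ball of radius $C_0\delta^{1+\alpha}$ around $q_0\pm\delta\sH_p(q_0)$ meets $\WF^s(u)$ if $q_0$ does), and then a separate limiting/compactness argument is needed to assemble these steps into an actual $C^1$ bicharacteristic contained in $\WF^s(u)$. Without this last step the theorem's conclusion --- that $\WF^s(u)$ is a union of maximally extended bicharacteristics --- does not follow from the local estimates.

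Second, your step (3)--(4) replaces the commutator error by $(DG^{ij})\tilde A$ with $\tilde A\in\Psi^{2s-1}$ and then invokes a fractional-Leibniz bound treating $DG^{ij}$ as an $H^{s_0-1-\codimY/2-0}_{\loc}$ multiplier. The expansion $[g_{ij},A]=(Dg_{ij})\tilde A+\cdots$ is an asymptotic expansion in the pseudodifferential calculus that is not available for non-smooth $g_{ij}$; the correct statement is about the Schwartz kernel $(g_{ij,L}-g_{ij,R})K_A$, which is a sum of paired Lagrangian distributions associated to $(N^*(\diag\cap(Y\times Y)),N^*\diag)$ and to the one-sided pairs involving $N^*(Y\times X)$ and $N^*(X\times Y)$. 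The one-order gain on $N^*\diag$ does come from the vanishing of $g_{ij,L}-g_{ij,R}$ on the diagonal, as you suggest, but the residual pieces live on Lagrangians of the form $\Lambda^\sharp\times o$ whose Sobolev boundedness is not a multiplier estimate at all, and the sharp range $-\codimY/2<s<s_0-\ep_0-1-\codimY/2$ is extracted precisely from the mapping properties of those pieces. A bound using only the Sobolev class of $DG^{ij}$ (for $\codim Y=1$ and $s_0$ near $2$ this is barely $H^{1/2+}_{\loc}$, a very weak multiplier in $n$ variables) would not reach the stated range; what saves the day is the conormal structure --- full tangential smoothness --- and exploiting it systematically is exactly the paired-Lagrangian calculus (composition, square roots of conormal symbols for constructing $B$, and the boundedness propositions) that your sketch replaces by an unproved index chase.
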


Note that if  $s_0>1+\codimY$, then first taking $-\codimY/2<
s<s_0-1-\codimY/2$, and then $\ep_0>0$ sufficiently small, all
the inequalities in the Theorem are satisfied.

This theorem is proved by a positive commutator, or microlocal energy,
estimate. They key issue is that as the wave operator does not have
$\CI$ coefficients, the commutator of a pseudodifferential
microlocalzier with it is {\em not} a pseudodifferential operator;
instead it is a sum of paired Lagrangian distributions associated to
various Lagrangian submanifolds of $T^*(X\times X)$. Thus, the main
technical task is to analyze these Lagrangian pairs, including their
Sobolev boundness properties.

The plan of the paper is the following. In Section~\ref{sec:structure}
we recall the structure of positive commutator estimates, in
particular the robust version due to Melrose and Sj\"ostrand
\cite{Melrose-Sjostrand:I, Melrose-Sjostrand:II}, used in their proof
of propagation of singularities at glancing rays on manifolds with a
smooth boundary. In Section~\ref{sec:bichar} we describe the structure
of the bicharacteristics, in particular their uniqueness
properties. In Section~\ref{sec:reflection} we recall the already
mentioned b-Sobolev spaces and the `standard' propagation of
singularities theorem based on these, also discussing how these can be
used to reduce Theorem~\ref{thm:microloc-background} to Theorem~\ref{thm:global-background}. Section~\ref{sec:paired-Lag} is the technical heart of the
paper in which we analyze paired Lagrangian distributions relevant to
the positive commutator estimates in our
setting. Section~\ref{sec:elliptic} gives microlocal elliptic
regularity in this setting, and is used as a warm-up towards the
positive commutator estimate. Section~\ref{sec:prop-est} gives the proof of the key
analytic estimate towards the proof of the propagation of
singularities, which is completed in Section~\ref{sec:prop-sing} in the form of Theorem~\ref{thm:global-background}.

\section{The structure of positive commutator estimates}\label{sec:structure}
In order to motivate our proof, we recall
the structure of the standard positive commutator estimate, in the
formulation of H\"ormander \cite{Hormander:Existence}, Melrose and
Sj\"ostrand \cite{Melrose-Sjostrand:I, Melrose-Sjostrand:II}, giving
propagation of singularities for the wave operator $\Box$ on a $\CI$
Lorentzian manifold $(X,g)$ (and indeed more generally for pseudodifferential operators
of real principal type).

We state at the outset that since all results are local, {\em one may
always arrange that the Schwartz kernels of various operators we
consider have proper support, or even compact support, and we do not
comment on support issues from this point on}. Similarly, {\em all Sobolev
spaces in which distributions are assumed to lie are local}, and we do
not always show this in the notation explicitly.

One arranges that for an appropriate operator $A\in\Psi^{2s-1}(X)$
that
$$
{i}[\Box,A]=B^*B+E+F,\ B\in\Psi^s(X),\ E\in\Psi^{2s}(X),\ F\in\Psi^{2s-2\ep_0}(X),
$$
with $\ep_0>0$ (typically $\ep_0=1/2$), where the solution is a priori
known to lie in $H^s$ on $\WF'(E)$ (this is where we propagate the
estimate from), and lie in $H^{s-\ep_0}$ on $\WF'(F)$ (which is
typically equal to $\WF'(A)$). Then one gets for $u$ with $\Box u=0$
(or even $\Box u=f$),
\begin{equation}\label{eq:basic-pairing}
\langle {i} Au,\Box u\rangle-\langle {i} \Box u,A^*u\rangle=
\langle {i}[\Box,A]u,u\rangle=\|Bu\|^2+\langle Eu,u\rangle+\langle 
Fu,u\rangle,
\end{equation} 
provided that $u$ is sufficiently nice for the pairings and the
adjoint (integration by parts) to make sense; then one can estimate
$Bu$ in $L^2$, and thus $u$ on the elliptic set of $B$ in $H^s$ in
terms of
$u$ on $\WF'(E)$ in $H^s$, $u$ on $\WF'(F)$ in $H^{s-\ep_0}$ and $u$
itself in any Sobolev space $H^{-N}$ globally (the latter is to deal
with smoothing errors). A standard regularization
argument gives that $u\in H^s$ actually on the elliptic set $\Ell(B)$
of $B$ even without
stronger a priori assumptions.

The desired commutator then is arranged by choosing some symbol $a$ in $S^{2s-1}$,
such that, with $p$ denoting the dual metric function, which is the
principal symbol of $\Box$,
\begin{equation}\label{eq:pos-comm-symbol}
H_p a=-b^2+e,\ \text{modulo}\ S^{2s-2\ep_0},
\end{equation}
and letting $a,b,e$ be the principal symbols of $A$, $B$ and $E$
respectively.
We recall how to do this in a robust manner, following the
presentation of \cite[Section~7]{Vasy:Geometric-optics}, though with
the more convenient notation of constants of
\cite{Vasy:Propagation-Wave} and \cite{Vasy:Maxwell}. Fix $\rho$ to be
a positive elliptic symbol of order $1$ locally in the region where we are
considering, e.g.\ $\rho=\langle\xi\rangle$ in canonical coordinates
$(x,\xi)$ based on local coordinates $x$ on the base space $X$. Let
$$
\sH_p=\rho^{-m+1}H_p,
$$
so $\sH_p$ is homogeneous of degree zero. Homogeneous
degree zero functions can be regarded as functions on $S^*X$, and
correspondingly $\sH_p$ can be considered a vector field on $S^*X$.
One can actually arrange
local coordinates $(q_1,q_2,\ldots,q_{2n-1})$ on $S^*X$ such that
$\sH_p=\frac{\pa}{\pa q_1}$ -- this is not necessary, but is a useful guide.
First let $\etat\in\CI(S^*X)$ be a function with
\begin{equation}\label{eq:Ham-vf-flow-relation}
\etat(\bar q)=0,\ \sH_p\etat(\bar q)>0.
\end{equation}
Thus, $\etat$ measures propagation along bicharacteristics; e.g.\
$\etat=q_1$ works, but so do many other choices.
We will use a function $\omega$ to localize near putative bicharacteristics.
This statement is deliberately vague; at first
we only assume that $\omega\in\CI(S^*X)$ is the sum of the squares of
$\CI$ functions $\sigma_j$, $j=1,\ldots,2n-2$,
with non-zero differentials at $\bar q$ such that $d\etat$ and
$d\sigma_j$, $j=1,\ldots,2n-2$, span $T_{\bar q}S^*X$, and such that
\begin{equation}\label{eq:Ham-vf-localizer-relation}
\sH_p\sigma_j(\bar q)=0.
\end{equation}
Such a function $\omega$ is non-negative
and it vanishes
quadratically at $\bar q$, i.e.\ $\omega(\bar q)=0$ and
$d\omega(\bar q)=0$. Moreover, $\omega^{1/2}+|\etat|$ is equivalent to
the distance from $\bar q$ with respect to
any distance function given by a Riemannian metric on $S^*X$.
An example is $\omega=q_2^2+\ldots+q_{2n-1}^2$
with the notation from before, but again there are many other possible
choices; with this choice $\sH_p\omega=0$.
We now consider a family symbols, parameterized by constants 
$\delta\in(0,1)$, $\ep\in(0,1]$, $\beta\in(0,1]$, of the form
\begin{equation}\label{eq:a-form-def}
a=\chi_0\left(\digamma^{-1}\Big(2\beta-\frac{\phi}{\delta}\Big)\right)\chi_1\left(\frac{\etat+\delta}{\ep\delta}+1\right),
\end{equation}
where
\begin{equation*}
\phi=\etat+\frac{1}{\ep^2\delta}\omega,
\end{equation*}
$\chi_0(t)=0$ if $t\leq 0$, $\chi_0(t)=e^{-1/t}$ if $t>0$,
$\chi_1\in\CI(\RR)$, $\chi_1\geq 0$, $\sqrt{\chi_1}\in\CI(\RR)$, $\supp\chi_1\subset[0,+\infty)$,
$\supp\chi_1'\subset[0,1]$, and $\digamma>0$ will be taken large. Here
$\digamma$ is used to deal with technical issues such as weights and
regularization, so at first reading
one may consider it fixed. We also need
weights such as $\rho^{2s-1}$ where $s\in\RR$ is as above; in a product type
Lorentzian setting these can be arranged Hamilton commute with $p$ by
taking $\rho=|\tau|$ and thus can
be ignored, otherwise taking $\digamma$ large will deal with them in
any case. Thus, the actual principal symbol of $A$ is
\begin{equation}\label{eq:commutant-symbol}
\sigma_{2s-1}(A)=\rho^{2s-1}\chi_0\left(\digamma^{-1}\Big(2\beta-\frac{\phi}{\delta}\Big)\right)\chi_1\left(\frac{\etat+\delta}{\ep\delta}+1\right),
\end{equation}

We analyze the properties of $a$ step by step. First, note
that $\phi(\bar q)=0$, $\sH_p\phi(\bar q)=\sH_p\etat(\bar q)>0$,
and $\chi_1(\frac{\etat+\delta}{\ep\delta}+1)$ is identically $1$
near $\bar q$, so $\sH_p a(\bar q)<0$. Thus, $\sH_p a$ has
the correct sign, and is in particular non-zero, at $\bar q$.

Next,
\begin{equation*}
q\in\supp a\Rightarrow\phi(q)\leq2\beta\delta
\ \text{and}\ \etat(q)\geq-\delta-\ep\delta.
\end{equation*}
Since $\ep\leq 1$, we deduce that in fact $\etat=\etat(q)\geq -2\delta$.
But $\omega\geq 0$, so $\phi=\phi(q)\leq 2\beta\delta$ implies that
$\etat=\phi-\ep^{-2}\delta^{-1}\omega\leq \phi\leq 2\beta\delta\leq 2\delta$. Hence, $\omega=\omega(q)
=\ep^2\delta(\phi-\etat)\leq 4\ep^2\delta^2$. Since $\omega$ vanishes quadratically
at $\bar q$, it is useful to rewrite the estimate as $\omega^{1/2}\leq
2\ep\delta$. Combining these, we have seen that on $\supp a$,
\begin{equation}\label{eq:supp-a-est}
-\delta-\ep\delta\leq \etat\leq 2\beta\delta
\ \text{and}\ \omega^{1/2}\leq 2\ep\delta.
\end{equation}
Moreover, on $\supp a\cap\supp\chi_1'$, 
\begin{equation*}
-\delta-\ep\delta\leq \etat\leq -\delta
\ \text{and}\ \omega^{1/2}\leq 2\ep\delta.
\end{equation*}
Note that given any neighborhood $U$
of $\bar q$, we can thus make $a$ supported in $U$ by
choosing $\delta$ sufficiently small (and keeping $\ep,\beta\leq 1$).
Note that $\supp a$ is a parabola shaped region, which is very explicit
in case $\etat=q_1$ and $\omega=q_2^2+\ldots+q_{2n-1}^2$.
Note that as $\ep\to 0$, but $\delta$ fixed,
the parabola becomes very sharply localized at $\omega=0$; taking
$\beta$ small makes $a$ localized very close to the segment $\etat\in
[-\delta,0]$.

So we have shown that $a$ is supported near $\bar q$. We define
\begin{equation}\label{eq:e-form}
e=\chi_0\left(\digamma^{-1}\Big(2\beta-\frac{\phi}{\delta}\Big)\right)\sH_p\left(\chi_1\Big(\frac{\etat+\delta}{\ep\delta}+1\Big)\right),
\end{equation}
so the crucial question in our quest for \eqref{eq:pos-comm-symbol}
is whether $\sH_p\phi\geq 0$ on $\supp a$. Note that
choosing $\delta_0\in(0,1)$ sufficiently small,
one has for $\delta\in (0,\delta_0]$, $\ep\in(0,1]$, $\beta\in(0,1]$,
$\sH_p\etat\geq c_0>0$ where $|\etat|\leq 2\delta_0$,
$\omega^{1/2}\leq 2\delta_0$. So $\sH_p\phi\geq \frac{c_0}{2}>0$ on
$\supp a$ if $\delta<\delta_0$, $\ep,\beta\leq 1$,
provided that $|\sH_p\omega|\leq \frac{c_0}{2}\ep^2\delta$ there, which is
automatically the case if one arranges
\begin{equation}\label{eq:precise-localizer}
\sH_p q_j=0\ \text{for}\ j\geq 2,\ \text{and}\ \sigma_j=q_{j+1},
\end{equation}
i.e.\ any $\ep>0$ works. Note that if
$H_p\phi\geq \frac{c_0}{2}$
on $\supp a$ then one can let
\begin{equation}\label{eq:b-form}
b=\digamma^{-1/2}\delta^{-1/2}\sqrt{\sH_p\phi}\sqrt{\chi_0'\left(\digamma^{-1}\Big(2\beta-\frac{\phi}{\delta}\Big)\right)}\sqrt{\chi_1\left(\frac{\etat+\delta}{\ep\delta}+1\right)};
\end{equation}
thus \eqref{eq:pos-comm-symbol} holds with $s=1/2$ and $\ep_0=1/2$.

However, we do not need such a strong relationship to $\sH_p$, which
cannot be arranged (with smooth $\sigma_j$) if one makes $p$ have
conormal singularities at a submanifold. Suppose instead
that we merely get $\omega$ `right' at $\bar q$, in the sense that
\begin{equation}\label{eq:rougher-localizer}
\omega=\sum \sigma_j^2,\ \sH_p\sigma_j(\bar q)=0.
\end{equation}
Then,
$\sH_p\sigma_j$ being a $\CI$, thus locally Lipschitz, function,
\begin{equation}\label{eq:H_p-sigma-est}
|\sH_p\sigma_j|\leq C_0(\omega^{1/2}+|\etat|),
\end{equation}
so
$|\sH_p\omega|\leq C \omega^{1/2}(\omega^{1/2}+|\etat|)$. Using
\eqref{eq:supp-a-est}, we deduce that $|\sH_p\omega|\leq \frac{c_0}{2}\ep^2\delta$
provided that $\frac{c_0}{2}\ep^2\delta\geq C''(\ep\delta)\delta$, i.e.\ that
$\ep\geq C'\delta$ for some constant $C'$ independent of $\ep$,
$\delta$ (and of $\beta$).
Now the size of the parabola at $\etat=-\delta$ is roughly
$\omega^{1/2}\sim\delta^2$, i.e.\ we have localized along a single
direction, namely the direction of $\sH_p$ at $\bar q$.

By a relatively
simple argument, also due to Melrose and Sj\"ostrand
\cite{Melrose-Sjostrand:I, Melrose-Sjostrand:II} in the case of smooth
boundaries,
one can piece together such estimates (i.e.\ where
the direction is correct `to first order') and deduce the
propagation of singularities. We explain this in more detail in the
last section of the paper.

This argument would go through if one manages to arrange this with $F$
having just the property that $F:H^{s-\ep_0}\to H^{-s+\ep_0}$, i.e.\
the ps.d.o.\ behavior of $F$ does not matter as long as one has $H^{s-\ep_0}$
background regularity -- indeed, one only needs the $H^{s-\ep_0}$
regularity on the wave front set of $F$.

We finally indicate how one deals with regularizers and weights.
Let $\rho$ is a positive elliptic symbol of order $1$ as above.
It is convenient to write
$$
\check a=\rho^{s-1/2}\sqrt a\in S^{s-1/2}
$$
with $a$ as in \eqref{eq:a-form-def},
and
let $\check A\in\Psi^{s-1/2}$ have principal symbol $\check a$,
$\WF'(\check A)$ contained in the conic support of $\check a$, and be
formally self-adjoint (e.g.\ take $\check A_0$ to be a quantization of
$\check a$ in local coordinates, and then take the self-adjoint part,
$\check A=(\check A+\check A^*)/2$), and
let $A=\check A^2$. We
also let $\Lambda_r$, $r\in[0,1]$, be such that the family is uniformly
bounded in $\Psi^0(X)$, $\Lambda_r\in\Psi^{-1}$ for $r>0$, and
$\Lambda_r\to\Id$ in $\Psi^\ep$ for $\ep>0$, and $\Lambda_r$ formally
self-adjoint.
For instance, one can
take $\Lambda_r$ to be a (symmetrized) quantization of $\phi_r=(1+r\rho)^{-1}$.
Let
$$
A_r=\Lambda_r A\Lambda_r,\ a_r=\phi_r^2\rho^{2s-1} a.
$$
Then the principal symbol of
${i}[\Box,A_r]$, as a family with values in $\Psi^{2s}$, is
$$
\phi_r^2\rho^{2s}\sH_p a+a\phi_r^2\rho^{2s}((2s-1)-r\phi_r\rho) (\rho^{-1}\sH_p\rho).
$$
Now, $|r\phi_r\rho|\leq 1$ while $\rho^{-1}\sH_p\rho$ is bounded,
being a symbol of order $0$, so the second term is bounded in absolute
value by $Ca\phi_r^2\rho^{2s}$. Now, given $M>0$, for sufficiently large
$\digamma$, not only is $\sH_p a$ of the form $-b^2+e$, but
$$
\phi_r^2\rho^{2s}\big(\sH_p a+((2s-1)-r\phi_r\rho)
(\rho^{-1}\sH_p\rho) a\big)=-b_r^2-M^2
\rho a_r+e_r,
$$
with $e_r=\phi_r^2 \rho^{2s}e$, $e$ as before.
This is due to $\chi_0(t)=t^2\chi_0'(t)$ for
$t\in\RR$, so
\begin{equation}\begin{aligned}\label{eq:absorb-a-into-b}
&\digamma^{-1}\delta^{-1}(\sH_p\phi)\chi_0'\left(\digamma^{-1}\Big(2\beta-\frac{\phi}{\delta}\Big)\right)\\
&\qquad\qquad-\Big(\big((2s-1)-r\phi_r\rho\big) (\rho^{-1}\sH_p\rho)+M^2\Big)\chi_0\left(\digamma^{-1}\Big(2\beta-\frac{\phi}{\delta}\Big)\right)\\
&=\digamma^{-1}\delta^{-1}\left((\sH_p\phi)-\Big(\big((2s-1)-r\phi_r\rho\big)
  (\rho^{-1}\sH_p\rho)+M^2\Big)\digamma^{-1}\delta\Big(2\beta-\frac{\phi}{\delta}\Big)^2\right)\\
&\qquad\qquad\qquad\qquad\qquad\times\chi_0'\left(\digamma^{-1}\Big(2\beta-\frac{\phi}{\delta}\Big)\right),
\end{aligned}\end{equation}
and $|2\beta-\frac{\phi}{\delta}|\leq 4$ on $\supp a$, so for sufficiently
large $\digamma$ (independent of $\delta,\ep,\beta\in(0,1]$ as long as
$\ep\geq C'\delta$, $C'$ as above),
the factor in the large parentheses on the right hand side is positive,
with a positive lower bound,
and thus its square root $c_r$ satisfies that $c_r\in S^0$ uniformly,
$c_r\in S^{-1}$ for $r>0$, and $c_r$ is elliptic where $\chi_0$ and
$\chi_1$ are both positive. Now with $E_r=\Lambda_r E\Lambda_r$, $E$
as before  with wave front set in the conic support of $a$,
and taking $B_r$ a family, uniformly
bounded in $\Psi^{s}$, with (uniform, or
family) wave front set in the conic support of $a$ and with principal symbol
\begin{equation}\label{eq:reg-b-def}
b_r=\phi_r \rho^s c_r \sqrt{\chi_0'\left(\digamma^{-1}\Big(2\beta-\frac{\phi}{\delta}\Big)\right)}\sqrt{\chi_1\left(\frac{\etat+\delta}{\ep\delta}+1\right)},
\end{equation}
we have
$$
{i}[\Box,A_r]=-B_r^*B_r-M^2(\check A_r)^*Q^*Q\check A_r+E_r+F_r,\ \check
A_r=\check A\Lambda_r,
$$
with $Q\in\Psi^{1/2}$ with symbol $\rho$ (thus elliptic),
with $F_r$ uniformly bounded in $\Psi^{2s-1}$, and with uniform
wave front set in the conic support of $a$. Now for $r>0$ applying this
expression to $u$ and pairing with $u$, as in
\eqref{eq:basic-pairing}, makes sense provided $\WF^{s-1/2}(u)$ is
disjoint from the conic support of $a$, and we obtain
\begin{equation}\label{eq:basic-pairing-mod}
\|B_r u\|^2+M^2\|Q\check A_r u\|^2\leq 2|\langle  A_ru,\Box u\rangle|
+|\langle E_ru,u\rangle|+|\langle F_ru,u\rangle|.
\end{equation}
Further, with $G$ a parametrix for $Q$ with $GQ=\Id+R$, $R\in\Psi^{-\infty}$,
\begin{equation}\begin{aligned}\label{eq:break-up-inhomog}
2|\langle A_ru,\Box u\rangle|&\leq 2|\langle Q\check A_r u,G\check A_r\Box
u\rangle|+2|\langle R\check A_r u,\check A_r\Box
u\rangle|\\
&\leq \|Q\check A_r u\|^2+\|G\check A_r\Box
u\|^2 +2|\langle R\check A_r u,\check A_r\Box u\rangle|,
\end{aligned}\end{equation}
and the first term on the left hand side now can be absorbed into
$M^2\|Q\check A_r u\|^2$ (if we chose $M\geq 1$).
Letting $r\to 0$
we get a uniform bound for $\|B_r u\|$, and thus by the weak
compactness of the unit ball in $L^2$ plus that $B_r u\to B_0 u$ in
distributions, we conclude that $B_0 u\in L^2$, completing the proof
that the elliptic set of $B_0$, i.e.\ where $\chi_0$ and $\chi_1$ are
positive, is disjoint from $\WF^s(u)$.

One completes the proof of the
propagation estimate by an inductive argument in $s$, raising the
order $s$ by
$1/2$ in each step. During this process one needs to shrink the
support of $a$ so that, denoting the replacement of $a$ given in the next step of the
iteration by $a'$, at every point of $\supp a'$ either $b$ is elliptic (the $b$
corresponding to the original $a$), or one has a priori
regularity there (which is the case on $\supp e$). This can be done by
reducing $\beta$ which shrinks the support as
desired. We refer to \cite[Section~24.5]{Hor}, in particular to last paragraph of the proof of
Proposition~24.5.1, for further details.

\section{Bicharacteristics}\label{sec:bichar}

Since $g$ is not $\CI$, we need to discuss the behavior of bicharacteristics, i.e.\ integral
curves of $\sH_p$, in some detail. When $g\in I^{[-s_0]}(Y)$ and $\codim
Y+1+\alpha<s_0<\codim Y+2$ (with $0<\alpha<1$),
which is the main case of interest for us,
then $g$ is $C^{1,\alpha}$, and thus
$\sH_p$ is a $C^{0,\alpha}$. Thus, the standard ODE theory ensures the
existence of bicharacteristics, but does not ensure their uniqueness
(as H\"older-$\alpha$, $\alpha<1$, is insufficient for this; Lipschitz would
suffice). Nevertheless, for normally incident rays at a codimension
one hypersurface $Y$ one has local
uniqueness. In this setting, locally, $\sH_p$ is transversal to $T^*_Y
X$, and using local coordinates $(x,y)$ such that $Y=\{x=0\}$ and dual coordinates
$(\xi,\eta)$, $\sH_p$
is continuous in $x$ and $\CI$ in $(y,\xi,\eta)$, so the following lemma gives this conclusion:

\begin{lemma}
If $I\subset\RR_{x_n}$ is an open interval containing $0$,
$O\subset\RR^{n-1}_{x'}$ open containing $0$, $V=\sum_{j=1}^n V_j(x)\pa_j$ is a continuous real vector field on
$O\times I$ with
$V_j\in C(I;C^{0,1}(O))$ and with $V_n(0)\neq 0$ then there exists $\Omega\subset
O\times I$ open containing $0$ and $\delta>0$ such that the given $x^{(0)}\in\Omega$,
there is a unique $C^1$ integral curve $x:(-\delta,\delta)\to O\times I$
with $x(0)=x^{(0)}$.
\end{lemma}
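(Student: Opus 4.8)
The plan is to exploit the fact that, although $V$ is only Lipschitz in the $x'$-variables and merely continuous in $x_n$ --- so that the Picard--Lindel\"of theorem does not apply to the system $\dot x=V(x)$ directly --- the hypothesis $V_n(0)\neq 0$ lets one use $x_n$, rather than the curve parameter $t$, as the independent variable, and in that form the equation does satisfy a Cauchy--Lipschitz condition. Concretely, I would first reduce to the case $V_n(0)>0$ (replacing $V$ by $-V$ if necessary) and fix a product neighborhood $\Omega_0=O_0\times I_0$ of $0$ with $\overline{\Omega_0}$ compact in $O\times I$, on which $V_n\geq c>0$, all the $V_j$ are bounded, and $x'\mapsto V_j(x',x_n)$ is Lipschitz with a constant $L$ independent of $x_n\in I_0$; the last point uses that $x_n\mapsto V_j(\cdot,x_n)$ is continuous into $C^{0,1}(O)$, hence bounded on the compact interval $I_0$. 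On $\Omega_0$ each $V_j$ is then jointly continuous, and the map $W(x',x_n)=\big(V_1,\dots,V_{n-1}\big)(x',x_n)/V_n(x',x_n)$ is jointly continuous and Lipschitz in $x'$, uniformly in $x_n\in I_0$.

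Next I would pass to the reduced equation. Along any $C^1$ integral curve of $V$ lying in $\Omega_0$ one has $\dot x_n=V_n(x)\geq c>0$, so $x_n$ is a strictly increasing $C^1$ function of $t$, invertible with $C^1$ inverse; writing $x'$ as a function of $x_n$ yields
$$
\frac{dx'}{dx_n}=W\big(x'(x_n),x_n\big).
$$
Since the right-hand side is continuous and locally Lipschitz in the unknown, the Picard--Lindel\"of theorem gives, for every initial point $(x_0',x_{n,0})$ in a suitable smaller neighborhood of $0$, a unique $C^1$ solution $\gamma$ with $\gamma(x_{n,0})=x_0'$, defined on some $x_n$-interval around $x_{n,0}$ with graph in $\Omega_0$; standard continuous dependence makes this $x_n$-interval and the bound on $\gamma$ uniform over such initial data. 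Given $\gamma$, I would recover the $t$-parametrization by solving the scalar equation $\dot x_n=V_n\big(\gamma(x_n),x_n\big)$, $x_n(0)=x_{n,0}$: here the right-hand side is only continuous in $x_n$, but it is everywhere $\geq c>0$, so the solution exists and is unique, being the inverse of the strictly increasing $C^1$ function $x_n\mapsto\int_{x_{n,0}}^{x_n} V_n(\gamma(s),s)^{-1}\,ds$. Setting $x(t)=\big(\gamma(x_n(t)),x_n(t)\big)$ produces a $C^1$ integral curve of $V$ through $(x_0',x_{n,0})$, as one checks by differentiating.

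For uniqueness, suppose $\tilde x$ is another $C^1$ integral curve of $V$ on $(-\delta,\delta)$ through the same point with values in $O\times I$; shrinking $\delta$ (using that $V$ is bounded on $\overline{\Omega_0}$) forces $\tilde x$ to remain in $\Omega_0$, hence $\tilde x_n$ is strictly increasing with $C^1$ inverse. Then $\tilde\gamma:=\tilde x'\circ\tilde x_n^{-1}$ solves the reduced equation with the same initial data, so $\tilde\gamma=\gamma$ by Picard--Lindel\"of uniqueness, and then $\tilde x_n$ solves the same scalar equation as $x_n$ with the same initial value, so $\tilde x_n=x_n$ by the separation-of-variables argument; hence $\tilde x=x$. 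Choosing $\Omega$ a small neighborhood of $0$ contained in the set of admissible initial points and $\delta>0$ small enough to make all the above constructions valid simultaneously for $x^{(0)}\in\Omega$ finishes the argument.

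I expect the only real subtlety to be the bookkeeping of the two distinct uniqueness mechanisms --- Cauchy--Lipschitz for the $x'$-components and one-dimensional separation of variables for the $x_n$-component --- together with verifying that a single $\Omega$ and $\delta$ work for all nearby initial points; the regularity statements about $W$ are routine consequences of $V_j\in C(I;C^{0,1}(O))$ and $V_n(0)\neq 0$.
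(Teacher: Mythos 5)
Your proof is correct and follows essentially the same strategy as the paper's: use $V_n(0)\neq 0$ to switch the independent variable from $t$ to $x_n$, at which point the hypothesis $V_j\in C(I;C^{0,1}(O))$ supplies exactly the Lipschitz-in-unknown, continuous-in-parameter regularity needed for Picard--Lindel\"of, and then transfer existence and uniqueness back to the original system by inverting $t\mapsto x_n(t)$. The only cosmetic difference is packaging: the paper adjoins the time variable $T(s)=x_n^{-1}(s)$ as an $n$-th unknown $z_n$ satisfying $dz_n/ds=1/V_n$, noting that the right-hand side of the full $n$-dimensional system is independent of $z_n$ and hence Lipschitz, whereas you exploit the triangular structure explicitly by first solving the $(n-1)$-dimensional system for $x'(x_n)$ and then recovering $t(x_n)=\int V_n(\gamma(s),s)^{-1}\,ds$ by direct integration; these are equivalent arguments.
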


\begin{proof}
Since the other sign works similarly, we may assume that $V_n(0)>0$,
and also at the cost of shrinking $I$ and $O$ then $V_n>c>0$ on
$O\times I$.

Being an integral curve means that $\frac{dx_j}{dt}(t)=V_j(x(t))$. We
consider an other system of ODE, namely writing $Z(s)=(z'(s),s)$, with
$(-\delta',\delta')\subset I$, $s_0\in (-\delta',\delta')$,
$z'\in C^1((-\delta',\delta');O)$, $z_n\in
C^1((-\delta',\delta');\RR)$, $z=(z',z_n)$.
\begin{equation}\label{eq:rewritten-ODE}
\frac{dz}{ds}(s)=F(z'(s),s),\ z(s_0)=z^{(0)}\in O'\times I',
\end{equation}
with
\begin{equation}\begin{aligned}\label{eq:rewritten-ODE-coeffs}
&F_j(y)=\frac{V_j(y)}{V_n(y)},\ j=1,\ldots,n-1,\\
&F_n(y)=\frac{1}{V_n(y)},\\
\end{aligned}\end{equation}
so $F\in C((-\delta',\delta')_s;C^{0,1}(O))$. The key point here is that
$F(z'(s),s)$ on the right hand side of \eqref{eq:rewritten-ODE} is
independent of $z_n(s)$, i.e.\ \eqref{eq:rewritten-ODE} is of the type
$\frac{dz}{ds}(s)=\Phi(z(s),s)$, with $\Phi$ continuous in the last
variable and Lipschitz in the first. Thus, the standard ODE existence
and uniqueness theorem applies, giving the local existence and
uniqueness of solutions to \eqref{eq:rewritten-ODE}, provided
$O'\times I'$ is a sufficiently small neighborhood of $0$.

Now if $x=x(t)$ is a $C^1$ integral curve of $V$, and we let
$T$ be the inverse function of $x_n=x_n(t)$ near $0$, which exists and
is $C^1$ by
the inverse function theorem as $\frac{dx_n}{dt}(t)=V_n(x(t))\geq
c>0$, with $T'(s)=\frac{1}{V_n(x(T(s)))}$,
then $z=(z',z_n)$ with $z'=x'\circ T$, $z_n=T$,
satisfies \eqref{eq:rewritten-ODE} with $s_0=x_n(0)=(x^{(0)})_n$,
$z^{(0)}=(x'(0),0)=((x^{(0)})',0)$.
Indeed, $z$ is $C^1$ as $x$ and $T$ are such, and
\begin{equation*}\begin{aligned}
&\frac{dz_j}{ds}=(\frac{dx_j}{dt}\circ T)T'=\frac{V_j\circ x\circ
  T}{V_n\circ x\circ T},\ j=1,\ldots,n-1,\\
&\frac{dz_n}{ds}=\frac{1}{V_n\circ x\circ T},
\end{aligned}\end{equation*}
which, as $x_n\circ T(s)=s$, is a rewriting of
\eqref{eq:rewritten-ODE}. One can also proceed backwards, starting
with a solution of \eqref{eq:rewritten-ODE}, by letting $x_n$ be the
inverse function of $z_n$, and then letting $x_j=z_j\circ x_n$ for
$j=1,\ldots,n-1$.

Thus, if one has two solutions $x(t)$ and $\tilde x(t)$ of
$\frac{dx_j}{dt}(t)=V_j(x(t))$ with $x(0)=x^{(0)}$, then defining $T$,
resp.\ $\tilde T$, as the inverse functions of $x_n$, resp.\ $\tilde
x_n$, we have solutions $z$, resp.\ $\tilde z$ of
\eqref{eq:rewritten-ODE} with initial conditions $((x^{(0)})',0)$ and
time $(x^{(0)})_n$. Thus, by the uniqueness part of the ODE
theorem, $z=\tilde z$. The $n$th components give then $T=\tilde T$,
hence $x_n=\tilde x_n$,
and thus the other components yield $x_j=\tilde x_j$, completing the proof.
\end{proof}

As mentioned, an immediate consequence is, if one lets $\cG$ be the
glancing set, i.e.\ where $\sH_p$ is tangent to $T^*_YX$:

\begin{cor}
Suppose $Y$ has codimension $1$. Then
the integral curves of $\sH_p$ in $\Sigma\setminus \cG$ through a given point are unique.
\end{cor}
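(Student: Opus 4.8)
The plan is to reduce to the preceding lemma by a standard continuation argument in the curve parameter; the only real work is verifying the hypotheses of the lemma for $\sH_p$ near a point of normal incidence.

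Fix $q\in\Sigma\setminus\cG$ and let $\gamma_1,\gamma_2$ be $C^1$ integral curves of $\sH_p$, defined on intervals about $0$ and taking values in $\Sigma\setminus\cG$, with $\gamma_1(0)=\gamma_2(0)=q$. Let $J$ be the common parameter interval and $S=\{t\in J:\gamma_1(t)=\gamma_2(t)\}$. Then $0\in S$, and $S$ is closed in $J$ by continuity; I claim $S$ is also open in $J$, which by connectedness of $J$ forces $\gamma_1=\gamma_2$ on $J$, i.e.\ gives the asserted uniqueness. Openness of $S$ amounts to \emph{local} uniqueness of integral curves at each point $q'=\gamma_1(t_0)=\gamma_2(t_0)\in\Sigma\setminus\cG$: for $t$ near $t_0$ both curves stay in a small neighborhood of $q'$, and after the shift $t\mapsto t-t_0$ they become integral curves through $q'$ at parameter $0$, so local uniqueness forces them to coincide near $t_0$.

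It thus remains to prove local uniqueness at an arbitrary $q'\in\Sigma\setminus\cG$. If $q'\notin T^*_YX$, choose a neighborhood of $q'$ disjoint from the closed set $T^*_YX$; there $g$ is $\CI$, hence $\sH_p$ is $\CI$, in particular locally Lipschitz, and the classical ODE existence--uniqueness theorem applies. If $q'\in T^*_YX$, take local coordinates $(x,y)$ on $X$ with $Y=\{x=0\}$, centered at the base point of $q'$, and the induced coordinates $(x,y,\xi,\eta)$ on $T^*X$ centered at $q'$, shrinking the patch so that it is relatively compact; since $\codim Y=1$, the variable $x$ is scalar. I want to apply the lemma to $\sH_p$ in these coordinates, with $x$ playing the role of the distinguished variable $x_n$ and $(y,\xi,\eta)$ the role of $x'$, so I must check that (a) the $x$-component of $\sH_p$ is nonzero at $q'$, and (b) the coefficients of $\sH_p$ are continuous in $x$ with values in $C^{0,1}$ functions of $(y,\xi,\eta)$. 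Point (a) holds because the vanishing of the $x$-component of $\sH_p$ at $q'$ is precisely the statement that $\sH_p(q')$ is tangent to $T^*_YX=\{x=0\}$, i.e.\ that $q'\in\cG$, which is excluded. For (b), write $\sH_p=\rho^{-m+1}H_p$; the factor $\rho^{-m+1}$ is $\CI$, and the coefficients of $H_p$ are polynomials in $(\xi,\eta)$ whose coefficients are built from the dual metric entries $G^{ij}(x,y)$ together with $\pa_xG^{ij}$ and $\pa_yG^{ij}$. Since $g\in I^{[-s_0]}(Y)$ with $s_0>\codim Y+1$ we have $g\in C^{1,\alpha}$ for $\alpha<s_0-\codim Y-1$, so $G^{ij}$, $\pa_xG^{ij}$, $\pa_yG^{ij}$ are all $C^{0,\alpha}$; moreover, because differentiation tangential to $Y$ (i.e.\ in $y$) preserves the conormal class $I^{[-s_0]}(Y)$, the same holds for all of their $y$-derivatives. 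Hence $G^{ij}$ and $\pa_xG^{ij}$ are continuous in $x$ with values in $\CI$, a fortiori $C^{0,1}_{\loc}$, functions of $y$, and the coefficients of $\sH_p$ are then continuous in $x$ with values in $C^{0,1}_{\loc}$ functions of $(y,\xi,\eta)$. The lemma now provides a neighborhood $\Omega\ni q'$ and $\delta>0$ such that the integral curve through any point of $\Omega$ is unique on $(-\delta,\delta)$; applied at $q'$ this is the local uniqueness we needed.

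The main --- essentially the only --- obstacle is point (b): one must observe that the conormality of $g$, together with $s_0>\codim Y+1$, supplies not merely $g\in C^{1,\alpha}$ but the stronger ``mixed'' regularity that differentiating $g$ in directions tangential to $Y$ costs nothing, so that $\sH_p$ genuinely has the structure ``continuous in the single normal variable, $\CI$ (hence Lipschitz) in the remaining variables'' demanded by the lemma. Granting this, the transversality check (a) is just a reformulation of $q'\notin\cG$, and the passage from local to global uniqueness is the routine continuation argument above.
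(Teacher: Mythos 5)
Your proof is correct and follows essentially the same route as the paper's: a standard continuation argument (your open-and-closed set $S$ plays the role of the paper's infimum of separation times) reduces global to local uniqueness, and local uniqueness at a non-glancing point is exactly what the preceding Lemma provides. The verification of the Lemma's hypotheses — transversality from $q'\notin\cG$, and the ``continuous in $x$, Lipschitz in the rest'' structure from conormality of $g$ together with $s_0>\codim Y+1$ — is stated tersely in the paper just before the Lemma; you have simply spelled out that argument.
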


\begin{proof}
Suppose there are two solutions $x(t)$ and $\tilde x(t)$ with the same
initial condition $x^{(0)}$ at time $0$. Assuming that $x(t)\neq\tilde
x(t)$ for some $t>0$,
let $t_0$ be the infimum of positive times such that $x(t)\neq\tilde
x(t)$, so any neighborhood $I$ of $t_0$ contains $t\in I$ such that
$x(t)\neq\tilde x(t)$ but, as $x$ and $\tilde x$ are continuous
$x(t_0)=\tilde x(t_0)$. (The last assertion is clear if $t_0=0$; if
$t_0>0$ it follows as $x(t)=\tilde x(t)$ for $t\in [0,t_0)$ by
definition of $t_0$.) Then the local uniqueness result stated above
yields a contradiction. Since negative times are dealt with similarly, this
completes the proof.
\end{proof}

\section{Law of reflection: standard propagation of singularities}\label{sec:reflection}

We now recall from \cite{Vasy:Propagation-Wave} the basic law of
reflection. In \cite{Vasy:Propagation-Wave} this is shown in the
setting of manifolds with corners with Dirichlet or Neumann boundary
conditions. However, the same arguments go through in our setting,
where we consider the quadratic form domain
$H^1_{\loc}(X)$. Generalized broken bicharacteristics (GBB) are defined in
this setting to allow reflected rays as follows.

For simplicity consider $Y$ of codimension $1$ (this is all that is
needed for Theorem~\ref{thm:microloc-background}, and
Theorem~\ref{thm:global-background} does not need this at all). Since the results are
local, we may assume that $Y$ separates $X$ into two manifolds $X_\pm$
with boundary $Y$. Each of $X_\pm$ comes equipped with the so-called
b-cotangent bundle, $\Tb^* X_\pm$. This is the dual bundle of the
b-tangent bundle, whose smooth sections are $\CI$ vector fields on
$X_\pm$ tangent to $Y$, denoted by $\Vb(X_\pm)$.
Over $\CI(X_\pm)$, these are locally spanned by
$x\pa_x$ and $\pa_{y_j}$, $j=1,\ldots,n-1$, and correspondingly, a
local basis for smooth sections of $\Tb^*X_\pm$ is $\frac{dx}{x}$ and
$dy_j$, $j=1,\ldots,n-1$. One may thus write smooth sections of $\Tb^*X_\pm$
as
\begin{equation}\label{eq:b-coords}
\sigma(x,y)\,\frac{dx}{x}+\sum_j \eta_j(x,y)\,dy_j;
\end{equation}
so $(x,y,\sigma,\eta)$ are local coordinates on $\Tb^*X_\pm$.
As $\Vb(X_\pm)\subset
\Vf(X_\pm)$, there is a dual map $\pi_\pm:T^*X_\pm\to \Tb^*X_\pm$; the kernel
at $p\in Y$
is given by $N^*_pY$, and the range can be naturally
identified with $T^*_pY=T^*_p X_\pm/N^*_pY$. Concretely, if one uses
canonical dual coordinates $(x,y,\xi,\eta)$ on $T^*X$, writing
one-forms as
$$
\xi(x,y)\,dx+\sum_j\eta_j(x,y)\,dy_j,
$$
then
$$
\pi_\pm(x,y,\xi,\eta)=(x,y,x\xi,\eta),
$$
corresponding to the identification $\xi\,dx=(x\xi)\,\frac{dx}{x}$.
The same constructions can be performed directly on $X$, working with
$\CI$ vector fields tangent to $Y$, which we denote by $\Vb(X;Y)$. The
so obtained cotangent bundle $\Tb^* X$, which is a $\CI$ vector
bundle,
when restricted to $X_\pm$,
gives $\Tb^*X_\pm$, and again comes with a natural map
$\pi:T^*X\to\Tb^*X$.

In particular, one can now consider the characteristic set
$\Sigma\subset T^*X$ of $\Box$, and its image
$\dot\Sigma\subset\Tb^*X$ under $\pi$; this is called the compressed
characteristic set. A GBB $\tilde\gamma$ is defined to be a continuous map from an interval to
$\dot\Sigma$ satisfying a Hamilton vector field condition, namely that
for all $f\in\CI(\Tb^*X)$ real valued,
$$
\limsup_{s\to s_0} \frac{f(\tilde\gamma(s))-f(\tilde\gamma(s_0))}{s-s_0}\leq \sup
\{(\sH_p\pi^*f)(q):\ q\in\Sigma,\ \pi(q)=\tilde\gamma(s_0)\}.
$$
Thus, $C^1$ integral curves of $\sH_p$ in $\Sigma\subset T^*X$
are certainly generalized broken
bicharacteristics (i.e.\ their image under $\pi$ is),
but more generally, any two integral curve segments
of $\sH_p$, say $\gamma_+$ defined on $[0,s_0)$ and $\gamma_-$ on
$(-s'_0,0]$, can be combined
into a single GBB provided $\pi(\gamma_+(0))=\pi(\gamma_-(0))$.

For a Lorentzian metric $g$, $T^*Y$ can be regarded as a subset of $T^*X$,
identified as the orthocomplement of the spacelike $N^*Y$. In fact,
one may arrange that the dual metric $G$ is
$$
G=A(x,y)\pa_x^2+\sum_j 2C_j(x,y)\pa_x\pa_{y_j}+\sum_{ij}B_{ij}(x,y)\pa_{y_i}\pa_{y_j},
$$
with
$$
C_j(0,y)=0,\ A(0,y)<0,\ B(0,y)\ \text{Lorentzian on}\ T^*_yY,
$$
see \cite[Section~2]{Vasy:Maxwell}. We write
$$
B(0,y)\eta\cdot\eta=\sum_{ij}B_{ij}(0,y)\eta_i\eta_j
$$
for the dual metric function of $B$. Then $T^*Y$ is identified with
points with $x=0$ and $\xi=0$.
We recall from \cite{Vasy:Propagation-Wave} and \cite{Vasy:Maxwell}
that $\dot\Sigma=\cH\cup\cG$ is the union of the hyperbolic
and the glancing sets at $\Tb^*_YX$ with
$$
\cH\cap\Tb^*_YX=\pi(\Sigma\setminus T^*Y),\ \cG\cap\Tb^*_YX=\pi(\Sigma\cap T^*Y).
$$
Concretely, in coordinates on a chart $\cU$, using the b-coordinates $(x,y,\sigma,\eta)$,
\begin{equation*}\begin{aligned}
&\cH\cap\Tb^*_{\cU\cap Y}X=\{(0,y_0,0,\eta_0)\in\Tb^*_{\cU\cap Y}X:\ B(0,y_0)\eta_0\cdot\eta_0>0\},\\
&\cG\cap\Tb^*_{\cU\cap Y}X=\{(0,y_0,0,\eta_0)\in\Tb^*_{\cU\cap Y}X:\ B(0,y_0)\eta_0\cdot\eta_0=0\}
\end{aligned}\end{equation*}
If
$q_0=(0,y_0,\xi_0,\eta_0)\in\Sigma$ is not a glancing point, then locally all GBB $\tilde\gamma$
with $\tilde\gamma(0)=q_0$ are of
the form discussed above, i.e.\ the concatenation of two integral
curves of $\sH_p$. Indeed, such GBB stay outside $\Tb^*_Y X$ for a punctured time
interval, i.e.\ there is $\ep>0$ such that $\tilde\gamma(s)\notin\Tb^*_Y X$
for $s\in(-\ep,\ep)\setminus\{0\}$, so $\gamma_+=\tilde\gamma|_{(0,\ep)}$,
$\gamma_-=\tilde\gamma|_{(-\ep,0)}$ are integral curves of $\sH_p$;
see \cite[Lemma~2.1]{Vasy:Maxwell}. In view
of the kernel of the map $T^*X\to\Tb^*X$ at $Y$, this means exactly
that GBBs allow the standard law of reflection, i.e.\ the incident and
reflected rays differ by a covector in $N^*Y$.

In order to state the propagation of singularities theorem, we need a
notion of wave front set in $\Tb^*X\setminus o$. This is a simple
extension of $\WFb^{1,m}(u)$ introduced in
\cite{Vasy:Propagation-Wave} for manifolds with corners to a manifold
with a codimension one hypersurface $Y$ replacing the boundary, as
above.
This wave front set in turn is based on the so-called b-pseudodifferential operators.
In the setting of manifolds
with boundaries, or indeed, corners, such as $X_\pm$, these are just the totally
characteristic, or b-, pseudodifferential operators introduced by
Melrose \cite{Melrose:Transformation}, see also \cite{Melrose:Atiyah}, and discussed
by Melrose and Piazza \cite[Section~2]{Melrose-Piazza:Analytic}. We
also refer to \cite{Vasy:Propagation-Wave} for a concise description
of the background. In our setting, to work on $X$ with these
operators, we recall that the Schwartz kernels of $\Psib(X_+),\Psibc(X_+)$ are
tempered distributions on $X_+\times X_+$ which are conormal on the
blow-up $[X_+\times X_+;\pa X_+\times \pa X_+]$ to the front face and
the lifted diagonal, in the sense of being either the partial Fourier
transforms of symbols in the case of $\Psibc(X_+)$, or those of
classical (one-step polyhomogeneous) symbols in the case of
$\Psib(X_+)$, which extend smoothly across the front face (to which
the diagonal is transversal, and thus this makes sense), and
vanishing to infinite order on the side faces,
i.e.\ the lifts of $X_+\times\pa X_+$ and $\pa X_+\times X_+$.
Concretely, fixing $\phi\in\CI_c(\RR)$, identically $1$ near $0$,
supported in $(-1/2,1/2)$ and a coordinate chart $(x,y)$, a large subset of elements of
$\Psibc^m(X_+)$ and
$\Psib^m(X_+)$ (and indeed, all modulo smoothing
operators, i.e.\ elements of
$\Psib^{-\infty}(X_+)=\Psibc^{-\infty}(X_+)$) have the form
\begin{equation*}\begin{aligned}
&(A_+ v)(x,y)\\
&=(2\pi)^{-n}\int  e^{i\big(\sigma\frac{x-x'}{x'}+\sum_j
 \eta_j (y_j-y'_j)\big)}\,\phi\Big(\frac{x-x'}{x'}\Big)\,a_+(x,y,\sigma,\eta)\,v(x',y')\,\frac{dx'\,dy'}{x'},
\end{aligned}\end{equation*}
where
$$
a_+\in S^m([0,\infty)_x\times \RR^{n-1}_y;\RR^n_{\sigma,\eta}),\
\text{resp.}
\ a_+\in S^m_{\cl}([0,\infty)_x\times \RR^{n-1}_y;\RR^n_{\sigma,\eta})
$$
if $A_+\in\Psibc^m(X_+)$,
resp.\ $A_+\in\Psib^m(X_+)$. (Here the symbol notation denotes
symbolic behavior in the variables after the semicolon.) Note that $\phi$ is identically $1$ near
the diagonal lifted to $[X_+^2;(\pa X_+)^2]$, i.e.\ it does not affect
the diagonal singularity at all; its role is to localize away from the
side faces. Here the image of $a_+$ in $S^m/S^{m-1}$, or if $a_+$ is classical, the
homogeneous degree $m$ summand in its asymptotic expansion, is the
principal symbol $\sigma_{\bl,m}(A_+)$ of $A_+$; this is naturally a
function (or equivalence class of functions) on $\Tb^* X_+\setminus o$
(with $o$ the zero section) regarding
$(\sigma,\eta)$ as fiber coordinates on this bundle as in \eqref{eq:b-coords}.

We then
{\em define} $\Psib(X,Y)$ to consist of operators $A$ acting on
$\CI_{\piece}(X)$, continuous piecewise $\CI$ functions, i.e.\
continuous functions $v$ on $X$ with $v|_{X_\pm}$ being $\CI$, via
Schwartz kernels on $X^2$ supported in $(X_+)^2\times(X_-)^2$,
conormal on $[X^2;Y^2]$ such that the normal operators are the
same. Such an operator can be identified with a pair of operators
$(A_+,A_-)$ given by the restriction to $\dCI(X_+)$, $\dCI(X_-)$,
which are then in $\Psib(X_+)$, resp.\ $\Psib(X_-)$. Thus, modulo
$\Psib^{-\infty}(X,Y)$, with $\phi$ as above, these operators are of the form
\begin{equation*}\begin{aligned}
&(A v)(x,y)\\
&=(2\pi)^{-n}\int  e^{i\big(\sigma\frac{x-x'}{x'}+\sum_j
 \eta_j (y_j-y'_j)\big)}\,\phi\Big(\frac{x-x'}{x'}\Big)\,a(x,y,\sigma,\eta)\,v(x',y')\,\frac{dx'\,dy'}{x'},
\end{aligned}\end{equation*}
where
$$
a\in S^m(\RR_x\times \RR^{n-1}_y;\RR^n_{\sigma,\eta}),\
\text{resp.}
\ a\in S^m_{\cl}(\RR_x\times \RR^{n-1}_y;\RR^n_{\sigma,\eta})
$$
if $A\in\Psibc(X,Y)$,
resp.\ $A\in\Psib(X,Y)$. Note that the support condition on $\phi$
implies that $\frac{1}{2}\leq \frac{x}{x'}\leq \frac{3}{2}$ on
$\supp\phi$, so in particular $x$ and $x'$ have the same sign, which
means that $A$ preserves the class of distributions supported in
$X_+$, as well as those in $X_-$.

The key property of $\Psibc^0(X,Y)$ is given in the following lemma:

\begin{lemma} (cf.\ \cite[Lemma~3.2]{Vasy:Propagation-Wave})
Any $A\in\Psibc^0(X,Y)$ of compactly support
is bounded on $H^1(X)$,
with norm bounded by a seminorm in $\Psibc^0(X,Y)$. By duality, the
analogous statement holds on $H^{-1}(X)$ as well.
\end{lemma}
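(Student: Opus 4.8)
The plan is to reduce the $H^1(X)$ boundedness of a compactly supported $A\in\Psibc^0(X,Y)$ to the boundedness of the associated pair $(A_+,A_-)$ on the weighted spaces $H^1(X_\pm)$, and then to prove the latter by a standard square-function/Schur-type argument adapted to the b-calculus. First I would recall that $H^1(X)$ for the quadratic-form domain is the space of $v\in L^2(X)$ with $Vv\in L^2(X)$ for $V$ ranging over $\CI$ vector fields on $X$ (equivalently, $dv\in L^2$ as a section of $T^*X$), and that since $A$ preserves supports in $X_\pm$ (because $\tfrac12\le x/x'\le\tfrac32$ on $\supp\phi$), it suffices to bound $A_\pm$ on $H^1(X_\pm)$ with norm controlled by a seminorm of $a$. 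Working on $X_+$, say, I would use that $\Vb(X_+)$ is spanned over $\CI(X_+)$ by $x\pa_x$ and the $\pa_{y_j}$, while a general $\CI$ vector field is $\CI$-combination of $\pa_x$ and $\pa_{y_j}$; thus $H^1(X_+)$ is cut out by requiring $x\pa_x v, \pa_{y_j}v\in L^2$ \emph{and} $v/x$ is \emph{not} required — rather, near $Y=\{x=0\}$ one has $\pa_x v\in L^2$, which is the genuinely non-b-differential constraint. So the reduction is: show $A_+$ maps $L^2\to L^2$ (trivial, $A_+\in\Psibc^0$), that $x\pa_x A_+$ and $\pa_{y_j}A_+$ are again in $\Psibc^0(X_+)$ applied after a bounded operator on $L^2$, and that $\pa_x A_+ v\in L^2$ with the correct bound whenever $v\in H^1(X_+)$.

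The key commutation step is the heart of the matter: I would compute $[\pa_x, A_+]$ and $[x\pa_x, A_+]$ and $[\pa_{y_j},A_+]$ at the level of Schwartz kernels (or symbols), using that the b-Hamilton derivative of a symbol in $S^0$ stays in $S^0$. For the $y$-derivatives and for $x\pa_x$ this is immediate: $x\pa_x$ and $\pa_{y_j}$ are b-vector fields, so $[x\pa_x,A_+]$ and $[\pa_{y_j},A_+]$ lie in $\Psibc^0(X_+)$, hence are $L^2$-bounded, and $x\pa_x A_+ v = A_+(x\pa_x v)+[x\pa_x,A_+]v$, $\pa_{y_j}A_+v=A_+\pa_{y_j}v+[\pa_{y_j},A_+]v$ are in $L^2$ with the right bounds. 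The remaining case is $\pa_x$, which is not a b-vector field; here I would write $\pa_x = \tfrac1x(x\pa_x)$ and instead directly analyze $\pa_x A_+$. Differentiating the oscillatory-integral representation of $A_+$ in $x$, the $\pa_x$ hitting the phase $\sigma\tfrac{x-x'}{x'}$ produces a factor $\tfrac{\sigma}{x'}$, i.e.\ $\tfrac{\sigma}{x}$ times $\tfrac{x}{x'}$, and $\tfrac{x}{x'}$ is a smooth bounded function of $\tfrac{x-x'}{x'}$ on $\supp\phi$; the factor $\sigma$ raises the symbol order to $1$, but it is then multiplied by $\tfrac1x$ and — crucially — paired against $v'=\pa_x v$ coming from the identity $\pa_x A_+ v = A_+ \pa_{x}v + (\pa_x A_+ - A_+\pa_x)v$, rather, one integrates by parts in $x'$ in the kernel to trade $\sigma$ for a $\pa_{x'}$ falling on $v(x',y')$, at the cost of an extra $\tfrac{1}{x'}$ from the measure and the $\phi(\tfrac{x-x'}{x'})$-cutoff. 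The point, exactly as in \cite[Lemma~3.2]{Vasy:Propagation-Wave}, is that after this integration by parts one obtains $\pa_x A_+ = \tilde A_+ \pa_x + R_+$ where $\tilde A_+\in\Psibc^0(X_+)$ and $R_+\in\Psibc^0(X_+)$ as well (the $\tfrac1x$ singularities cancel because $x$ and $x'$ are comparable on the support of the cutoff), so $\pa_x A_+ v = \tilde A_+(\pa_x v) + R_+ v\in L^2$ with norm bounded by a seminorm of $a$ times $\|v\|_{H^1}$.

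Assembling: $\|A_+ v\|_{H^1(X_+)}^2 \lesssim \|A_+v\|_{L^2}^2 + \|x\pa_x A_+v\|_{L^2}^2 + \sum_j\|\pa_{y_j}A_+v\|_{L^2}^2 + \|\pa_x A_+ v\|_{L^2}^2$, and each term is bounded by (a $\Psibc^0$-seminorm of $a$)$^2\cdot\|v\|_{H^1(X_+)}^2$ by the above. Summing the $\pm$ contributions and using the support-preservation gives the claim on $H^1(X)$. The $H^{-1}(X)$ statement follows by duality since $\Psibc^0(X,Y)$ is closed under formal adjoints with respect to the b-density $\tfrac{dx\,dy}{|x|}$ (equivalently, one conjugates by $|x|^{1/2}$ to pass to Lebesgue density; this conjugation preserves $\Psibc^0$), and $H^{-1}(X)$ is the dual of $H^1(X)$. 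The step I expect to be the main obstacle is the careful treatment of $\pa_x A_+$: one must verify that the integration by parts in $x'$ genuinely cancels the apparent $\tfrac1x$ (order $+1$ in the b-sense) loss, using in an essential way that $\phi$ localizes to $\tfrac12\le x/x'\le\tfrac32$ and that $\phi'$ is supported away from $0$, so that the terms where $\pa_{x'}$ hits the cutoff $\phi(\tfrac{x-x'}{x'})$ are harmless smoothing-type contributions; this is precisely the place where the b-structure (the weight $\tfrac{dx'}{x'}$ in the kernel) does the work, and the rest is bookkeeping of symbol seminorms.
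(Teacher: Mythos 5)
Your reduction to the half-spaces and your commutator analysis of $\pa_x A_+$ reconstruct, plausibly, the content of the cited result \cite[Lemma~3.2]{Vasy:Propagation-Wave}, which the paper simply invokes as a black box for the one-sided bounds $\|A_\pm v\|_{H^1(X_\pm)}\leq C\|v\|_{H^1(X_\pm)}$. That part is acceptable. However, there is a genuine gap at your assembly step: ``summing the $\pm$ contributions'' does not give membership of $Au$ in $H^1(X)$. A function on $X$ whose restrictions to $X_+$ and $X_-$ each lie in $H^1(X_\pm)$ need not lie in $H^1(X)$: if the two one-sided traces on $Y$ disagree, the distributional normal derivative across $Y$ contains a multiple of $\delta_Y$ and is not in $L^2$ (the Heaviside function is the basic example). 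So bounding $A_\pm$ on $H^1(X_\pm)$ is necessary but not sufficient, and your argument as written would equally ``prove'' the false statement for a pair $(A_+,A_-)$ with mismatched normal operators, e.g.\ $A_+=\Id$, $A_-=0$, which sends $u$ to $u\cdot 1_{X_+}\notin H^1(X)$ when $u|_Y\neq 0$.

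The missing ingredient is exactly the point the paper's proof is built around: for $u\in\CI_\comp(X)$, the boundary value of $A_\pm u|_{X_\pm}$ on $Y$ is the indicial operator $\hat N(A_\pm)(0)$ applied to $u|_Y$, and these two boundary values coincide because the definition of $\Psibc(X,Y)$ requires the normal operators of $A_+$ and $A_-$ to agree. Hence $Au$ is a \emph{continuous} piecewise $\CI$ function, its first distributional derivatives on $X$ are the locally bounded functions obtained by differentiating each restriction separately (no delta term at $Y$), and only then do the one-sided $H^1(X_\pm)$ bounds combine to give the $H^1(X)$ bound by density. You should add this matching-of-traces step explicitly; without it the proof does not go through.
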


\begin{proof}
If $u\in\CI_\comp(X)$ (which is a dense subspace of $H^1(X)$), then
the compactly supported $Au$ restricts to a $\CI$ function on both $X_+$
and $X_-$, namely $A_\pm u|_{X_\pm}$, whose restriction to the
boundary is the indicial operator $\hat N(A_\pm)(0)$ applied to
$u|_Y$, and thus these two $\CI$ functions coincide at $Y$. As
first derivatives of such a continuous piecewise $\CI$ function are
given by the (no longer necessarily continuous, but still locally
bounded) $\CI$ functions given by differentiating the restrictions to
each half-space separately, and as $\|A_\pm u|_{X_\pm}\|_{H^1}\leq
C\|u|_{X_\pm}\|_{H^1}$ by \cite[Lemma~3.2]{Vasy:Propagation-Wave},
with $C$ bounded by a continuous seminorm on $\Psibc^0(X_\pm)$, the
claim follows.
\end{proof}

We in fact need to generalize the coefficients of $\Psibc(X,Y)$ to
allow conormal singularities if $g_{ij}$ are not simply piecewise
smooth, i.e.\ have $\CI$ restrictions to $X_\pm$.
The key point is that one can allow more general
conormal behavior at the front faces, i.e.\ allow $a$ to satisfy
symbolic bounds in $x$:
$$
\left|\big((xD_x)^\ell D_y^\alpha D_{(\sigma,\eta)}^\beta
  a\big)(x,y,\sigma,\eta)\right|
\leq C_{\ell\alpha\beta} \langle (\sigma,\eta)\rangle^{m-|\beta|};
$$
denote by $\Psibcc(X,Y)$ the resulting space.
With such coefficients, in general, $A\in\Psibcc^0(X,Y)$ no longer
preserves $H^1$, though if one requires $A=A_0+A_1$ with
$A_0\in\Psibc^0(X,Y)+x\Psibcc^0(X,Y)$, the $H^1$ bounds remain
valid. However, $L^2$ bounds are valid in general, and $\Psibcc(X,Y)$
is closed under composition with
\begin{equation*}\begin{aligned}
&A\in\Psibcc^m(X,Y),\ B\in\Psibcc^{m'}(X,Y)\\
&\qquad\Longrightarrow
AB\in\Psibcc^{m+m'}(X,Y),
\ [A,B]\in\Psibcc^{m+m'-1}(X,Y),
\end{aligned}\end{equation*}
with principal symbols given by
$$
\sigma_{\bl,m+m'}(AB)=\sigma_{\bl,m}(A)\sigma_{\bl,m'}(B),
\ \sigma_{\bl,m+m'}([A,B])=\frac{1}{{i}}\{\sigma_{\bl,m}(A),\sigma_{\bl,m'}(B)\},
$$
with $\{.,.\}$ being the Hamilton bracket lifted to $\Tb^*X$.
Note that if $f\in I^{[-s]}(Y)$ then the operator of multiplication by
$f$ is in $\Psibc^0(X,Y)$ provided $s>1$.

The propagation of singularities theorem is then the following:

\begin{thm}\label{thm:prop-sing}
Suppose $r,m\in\RR$, $u\in H^{1,r}_{\bl,\loc}(X)$ and
$\Box u\in H^{-1,m+1}_{\bl,\loc}(X)$. Then $\WFb^{1,m}(u)$ is a union of maximally extended GBB.
\end{thm}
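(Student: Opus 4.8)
The plan is to prove this by a b-microlocal positive commutator estimate, following \cite{Vasy:Propagation-Wave} essentially line by line, the only structural change being that the boundary $\pa X$ (equipped there with Dirichlet or Neumann data) is replaced by the interface $Y$, across which the sole constraint on $u$ is continuity, inherited from the quadratic form domain $H^1_\loc(X)$. First I would reduce the statement to flow-invariance of the complement of $\WFb^{1,m}(u)$: it suffices to show that whenever $\tilde\gamma$ is a GBB with $\tilde\gamma(s_0)\notin\WFb^{1,m}(u)$, then $\tilde\gamma(s)\notin\WFb^{1,m}(u)$ for $s$ in a one-sided neighbourhood of $s_0$, since a connectedness argument on the interval of definition then gives that $\WFb^{1,m}(u)$ is a union of maximally extended GBB. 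Because $g$ is conormal to $Y$, hence $\CI$ on $X\setminus Y$, at points of $\dot\Sigma$ outside $\Tb^*_Y X$ this is just H\"ormander's theorem for operators of real principal type, so the work is at $\dot\Sigma\cap\Tb^*_Y X=\cH\cup\cG$.

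At a point $\bar q\in\cH\cup\cG$ I would run the Melrose--Sj\"ostrand construction of Section~\ref{sec:structure}, now with the commutant $A\in\Psib^{2m-1}(X,Y)$ (the weight $\rho^{2m-1}$ absorbed) whose b-principal symbol is the parabolic cutoff $a$ of \eqref{eq:a-form-def} localized near the putative GBB through $\bar q$, arranged so that $\sH_p a=-b^2+e$ modulo a term supported where a priori regularity is available. Writing $i[\Box,A]=-B^*B+E+F$ up to errors, with $B\in\Psib^m(X,Y)$, with $\WFb'(E)$ lying in the region where $\tilde\gamma(s_0)\notin\WFb^{1,m}(u)$ supplies a priori $H^{1,m}_{\bl}$ control, and with $F$ of lower b-order, I would pair against $u$ as in \eqref{eq:basic-pairing}, inserting the regularizers $\Lambda_r$ exactly as there: the error and $F$ terms are then controlled by the a priori hypothesis $u\in H^{1,r}_{\bl,\loc}$, and the $\langle Au,\Box u\rangle$ term by $\Box u\in H^{-1,m+1}_{\bl,\loc}$. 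The $H^1$-boundedness of $\Psibc^0(X,Y)$, and of the $\Psibcc^0(X,Y)$ operators of the form $A_0+A_1$ above, together with the $L^2$-boundedness of $\Psibcc^0(X,Y)$, are precisely what make these pairings estimable in the appropriate b-Sobolev spaces. This yields $Bu\in L^2_{\loc}$ in the relevant order, hence $\Ell(B)\cap\WFb^{1,m}(u)=\emptyset$; since $\Ell(B)$ contains a portion of $\tilde\gamma$ slightly past $\bar q$, and $\beta$ can be shrunk to slide the support along the curve while keeping either $B$ elliptic or a priori control in place, the Melrose--Sj\"ostrand marching iteration (raising $m$ by $\tfrac12$ at each stage) propagates the regularity along $\tilde\gamma$, completing Step~1.

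The hard part will be the commutator $i[\Box,A]$ itself: since $\Box$ has only conormal coefficients and is not a genuine b-differential operator, it must be extracted from the quadratic form, i.e.\ from $\langle G\,d(Au),du\rangle-\langle G\,du,d(A^*u)\rangle$ integrated by parts over $X_+$ and $X_-$ separately, and one must check that the interface contributions along $Y$ either cancel---using continuity of $u$ across $Y$ and the equality of normal operators built into the definition of $\Psib(X,Y)$---or carry the sign forced by the hyperbolic/glancing dichotomy. This is exactly the computation carried out in \cite{Vasy:Propagation-Wave} for corners with boundary conditions, and the transmission case here is if anything simpler, but it is the step needing genuine care. A secondary nuisance, also inherited from that reference, is that $\sH_p\sigma_j$ cannot be made to vanish identically, so $\sH_p a=-b^2+e$ holds only ``to first order at $\bar q$'', which forces the constraint $\ep\geq C'\delta$ of Section~\ref{sec:structure} and makes the marching argument the most bookkeeping-heavy piece---though it transfers essentially unchanged from the smooth-boundary setting.
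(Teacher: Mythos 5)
Your proposal is correct and follows essentially the same route as the paper, which likewise reduces to the b-microlocal positive commutator argument of \cite{Vasy:Propagation-Wave} and \cite{Vasy:Maxwell} run at the quadratic-form level, with $A$ a b-pseudodifferential operator with \emph{smooth} coefficients and the conormal coefficients of $\Box$ handled through the observation that $[A,g_{ij}]\in\Psibcc(X,Y)$ can be used directly without being commuted further through the derivatives $D_i$. The paper's exposition of this step is even terser than yours --- it simply cites the relevant propositions of \cite{Vasy:Maxwell} --- so you have in fact spelled out more of the structure (the flow-invariance reduction, the hyperbolic/glancing split, the $\ep\geq C'\delta$ constraint) than the paper itself records.
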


This theorem is proved by using b-ps.d.o's, $A\in\Psibc(X)$ (so no
conormal coefficients allowed), as microlocalizers, gaining
regularity relative to $H^1_{\loc}(X)$. One works with the quadratic
form as was done in \cite{Vasy:Propagation-Wave} for the Neumann
boundary condition and in \cite{Vasy:Maxwell} for differential
forms. This requires commuting $A$ past $D_i$, which works exactly as
in these papers, as well as commuting $A$ through $g_{ij}\in
I^{[-s_0]}$. However, the commutator $[A,g_{ij}]\in \Psibcc(X,Y)$ need
not be further commuted through the derivatives $D_i$ in view of the
arguments of \cite[Proposition~3.10]{Vasy:Maxwell} and its uses in
Propositions~5.1 and Propositions~6.1 there, thus
the proof of Theorem~\ref{thm:prop-sing} can be completed as there.

\begin{rem}
Note that in particular Theorem~\ref{thm:prop-sing} holds for
transmission problems; indeed, these do not even require the
introduction of $\Psibcc(X,Y)$, i.e.\ are in this sense technically a
bit easier than our, more regular, problem!
\end{rem}

Thus, if $q_0=(0,y_0,\xi_0,\eta_0)\in\WFb^{1,m}(u)$, then there is a
GBB $\tilde\gamma$ with $\tilde\gamma(0)=q_0$ which is in $\WFb^{1,m}(u)$. If
$q_0$ is not glancing, this states that for small $\ep>0$, one
of the backward integral curve segments of $\sH_p$, defined over
$(-\ep,0]$, is in $\WFb^{1,m}(u)$. Since $\WFb^{1,m}(u)$ is just
$\WF^{m+1}(u)$ outside $Y$, we thus have that if
$q_0\in\WFb^{1,m}(u)$, then there is a backward integral curve segment
from $q_0$ which is in $\WF^{m+1}(u)$ over $(-\ep,0)$.

As a corollary we can now prove that
Theorem~\ref{thm:global-background} implies
Theorem~\ref{thm:microloc-background}:

\begin{proof}[Proof of Theorem~\ref{thm:microloc-background} given Theorem~\ref{thm:global-background}.]
By assumption, for some $\delta>0$, $u$ is in $H^{s-\ep_0}$
along the backward
bicharacteristics from $q_0$, i.e.\ $\WF^{s-\ep_0}(u)\cap
\tilde\gamma|_{(-\delta,0)}=\emptyset$ for all $\tilde\gamma$ with
$\tilde\gamma(0)=q_0$; note that for $\delta>0$ sufficiently small, these
are disjoint from $\Tb^*_Y X$. The wave front set being closed, there
is a neighborhood $U$ of these bicharacteristic segments disjoint from
$\WF^{s-\ep_0}(u)$.
Let $t$ be a global time function, which thus has
a derivative with a definite sign along $\sH_p$ depending on the
component of the characteristic set. Since the other case of similar,
we assume that $t$ is increasing along $\sH_p$ in the component of
$q_0$. Now let $t_0=t(q_0)$, and let
$$
T_2=\sup\{t(\tilde\gamma(-3\delta/4)):\ \tilde\gamma\ \text{a GBB},\
\tilde\gamma(0)=q_0\}<t_0,
$$
and let $T_1\in (T_2,t_0)$. Let
$$
K=\{\tilde\gamma(s):\ t(\tilde\gamma(s))\in[T_2,T_1],\ \tilde\gamma\ \text{a GBB},\
\tilde\gamma(0)=q_0\},
$$
which is thus compact, and if $\tilde\gamma(s)\in K$ then $s\in(-\delta,0)$,
so $\tilde\gamma(s)\notin \WF^{s-\ep_0}(u)$, so
$K\cap\WF^{s-\ep_0}(u)=\emptyset$ and $U$ is a neighborhood of $K$.
Let $\chi_0\in\CI(\RR)$ be such that $\chi_0\equiv 1$ near
$(-\infty,T_2]$, and $\chi_0\equiv 0$ near $[T_1,\infty)$, and let
$\chi=\chi_0\circ t$. Let
$\Box_+^{-1}$ denote the forward solution operator for $\Box$, i.e.\
given $f$ supported in $t>t_1$, $v=\Box_+^{-1}f$ is the unique
solution of $\Box v=f$ with $t>t_1$ on $\supp v$. Then
$$
u=\chi u-\Box_+^{-1}[\Box,\chi]u,
$$
since both sides solve $\Box w=0$ and the difference is supported in
$t\geq T_2$. Similarly, with $\Box_-^{-1}$ the backward solution operator,
$$
u=(1-\chi)u-\Box_-^{-1}[\Box,1-\chi]u=(1-\chi)u+\Box_-^{-1}[\Box,\chi]u,
$$
so
$$
u=(\Box_-^{-1}-\Box_+^{-1})[\Box,\chi]u.
$$
Moreover, for any $f$, $v=(\Box_-^{-1}-\Box_+^{-1})f$ solves $\Box
v=f$, and as $\WFb^{1,m}(\Box_+^{-1})(f)$ is contained in points from
which some backward GBB enters $\WFb^{-1,m-1}(f)$, and analogously
$\WFb^{1,m}(\Box_-^{-1})(f)$ is contained in points from
which some forward GBB enters $\WFb^{-1,m-1}(f)$, $\WFb^{1,m}(v)$ is
contained in GBB through $\WFb^{-1,m-1}(f)$.

So now let $Q\in\Psi^0(X)$ be such that $\WF'(Q)\subset U$ and
$\WF'(\Id-Q)\cap K=\emptyset$, and let
$$
u_0=(\Box_-^{-1}-\Box_+^{-1})Q[\Box,\chi]u,\ u_1=(\Box_-^{-1}-\Box_+^{-1})(\Id-Q)[\Box,\chi]u.
$$
We treat $u_0$ and $u_1$ separately.

We start with $u_1$.
We note that
backward bicharacteristics from $q_0$ cannot enter $\WF'(\Id-Q)\cap
T^*_{\supp d\chi}X$,
for if $\tilde\gamma$ is such a backward bicharacteristic from $q_0$ and
$\tilde\gamma(s)\in T^*_{\supp d\chi}X$, then $t(\tilde\gamma(s))\in [T_2,T_1]$,
so $\tilde\gamma(s)\in K$, which is
disjoint from $\WF'(\Id-Q)$. Correspondingly
$$
q_0\notin\WFb^{1,\infty}(u_1),
$$
and $\WFb^{1,\infty}(u_1)$ is disjoint from forward bicharacteristic
segments from $q_0$, in particular, for sufficiently small $s>0$,
for which $\tilde\gamma(s)\notin \Tb^*_YX$, $\tilde\gamma(s)\notin\WF(u_1)$.

Now we turn to $u_0$. As
$\WF^{s-\ep_0-1}([\Box,\chi]u)\subset\WF^{s-\ep_0}(u)\cap
T^*_{\supp d\chi}X$ is disjoint from $U$, we deduce that $Q[\Box,\chi]u\in
H^{s-\ep_0-1}$, and thus
$$
u_0=(\Box_-^{-1}-\Box_+^{-1})Q[\Box,\chi]u\in
H_{\bl,\loc}^{1,s-\ep_0-1}(X).
$$
In particular, $u_0\in L^2_{\loc}$ as $s-\ep_0\geq 0$. By
Corollary~\ref{cor:forward-solution-reg}, $u_0\in H^{s-\ep_0}_{\loc}$.
Moreover, with $\gamma_0$ denoting the integral curve of $\sH_p$
through $(0,y_0,\xi_0,\eta_0)$, $\gamma_0|_{(-\delta,0)}$ is disjoint from
$\WF^s(u_0)$ since the analogous statement is true for $u$. Thus,
Theorem~\ref{thm:global-background} is applicable to $u_0$, giving
that all of $\gamma_0$ is disjoint from $\WF^s(u_0)$. Combining with
the result on $u_1$, Theorem~\ref{thm:microloc-background} is proved.
\end{proof}

\section{Paired Lagrangian distributions}\label{sec:paired-Lag}

The class of distributions that plays the starring role below is that
of paired Lagrangian distributions associated to two cleanly
intersecting Lagrangians with the intersection having codimension $k$;
these were introduced by Guillemin and Uhlmann
\cite{Guillemin-Uhlmann:Oscillatory} following the codimension $1$
work of Melrose and Uhlmann \cite{Melrose-Uhlmann:Intersection}.
In the model case where these Lagrangians are
$\tilde\Lambda_0=T^*_0\RR^n$ and $\tilde\Lambda_1=N^*\{x''=0\}$ in
$T^*\RR^n$ where the coordinates on $\RR^n$ are
$x=(x',x'')\in\RR^{k}\times\RR^{n-k}$,
these (compactly supported) elements of $I^{p,l}(\tilde\Lambda_0,\tilde\Lambda_1)$
are defined in \cite{Guillemin-Uhlmann:Oscillatory}, modulo $\CI_c(\RR^n)$, by oscillatory
integrals of the form
\begin{equation}\label{eq:Gui-Uhl-paired}
\int e^{i[(x'-s)\zeta'+x''\zeta''+s\sigma]}a(x,s,\zeta,\sigma)\,ds\,d\zeta\,d\sigma,
\end{equation}
$a$ being a {\em product type} symbol $a\in
S^{M,M'}(\RR^{n+k}_{x,s},\RR^n_\zeta,\RR^k_\sigma)$ with
$M=p-n/4+k/2$, $M'=l-k/2$ and with compact support in $x,s$, and in general
via reduction to this model Lagrangian pair via a Fourier integral
operator. Here $a\in
S^{M,M'}(\RR^{n+k}_{x,s},\RR^n_\zeta,\RR^k_\sigma)$ means that
$$
|(D^\alpha_{x,s} D^\beta_\zeta D^\gamma_\sigma
a)(x,s,\zeta,\sigma)|\leq C_{\alpha\beta\gamma} \langle\zeta\rangle^{M-|\beta|}\langle\sigma\rangle^{M'-|\gamma|}.
$$
Such a distribution is, microlocally away from
$\tilde\Lambda_0\cap\tilde\Lambda_1$,
in $I^p(\tilde\Lambda_1\setminus\tilde\Lambda_0)$ and in
$I^{p+l}(\tilde\Lambda_0\setminus\tilde\Lambda_1)$. It is important to
realize that these distributions are {\em not} a simple extension of
these two
classes of Lagrangian distributions, and in particular it is {\em not} the case that
$I^{p+l}(\tilde\Lambda_0)\subset
I^{p,l}(\tilde\Lambda_0,\tilde\Lambda_1)$ for all $p,l$, though this
inclusion of course holds away from
$\tilde\Lambda_0\cap\tilde\Lambda_1$. In fact, what is true is
$$
I^{p}(\tilde\Lambda_0)\subset I^{p-k/2,k/2}(\tilde\Lambda_0,\tilde\Lambda_1);
$$
we show this below in Lemma~\ref{lemma:Lambda_0-Lag}. On the other
hand, $I^p(\tilde \Lambda_1)\subset
I^{p,l}(\tilde\Lambda_0,\tilde\Lambda_1)$, so there is a fundamental
asymmetry between the two Lagragians.

Indeed, this model can be simplified as follows.  A distribution $u$
is in $I^{p,l}(\tilde\Lambda_0,\tilde\Lambda_1)$, modulo
$\CI_c(\RR^n)$, if it can be written as
$$
\int e^{i[x'\zeta'+x''\zeta'']}b(x,\zeta)\,d\zeta,
$$
i.e.\ is essentially the inverse Fourier transform of $b$, with $b$
satisfying the following estimates with $M=p-n/4+k/2$, $M'=l-k/2$ as before:
First, in the region $|\zeta'|\leq C'|\zeta''|$, $|\zeta''|\geq
1$, the conditions on
$b$ amount to
$$
|(Qb)(x,\zeta)|\leq C\langle\zeta''\rangle^{M}\langle\zeta'\rangle^{M'}
$$
whenever $Q$ is a finite product of differential operators of the form
$$
D_{\zeta'_m},\ \zeta'_j D_{\zeta'_m},\ \zeta''_j D_{\zeta''_m},
$$
i.e.\ standard product-type regularity, when localized to this
region. (Note that by localizing to the region where $\zeta''_q$, for
instance, dominates the other $\zeta''_j$, one may simply replace
$\zeta''_j$ by $\zeta''_q$, as may be convenient on occasion.)
On the other hand, in the region where $|\zeta''|\leq
C''|\zeta'|$, $|\zeta'|\geq 1$, which maps to $\tilde\Lambda_0$ away from the intersection of
$\tilde\Lambda_0$ and $\tilde\Lambda_1$ and is not of too much interest, one has
standard symbolic regularity, i.e.
$$
|(Qb)(x,\zeta)|\leq C\langle\zeta'\rangle^{M+M'}
$$
whenever $Q$ is a finite product of differential operators of the form
$$
\zeta'_j D_{\zeta'_m},\ \zeta'_j D_{\zeta''_m}.
$$
Alternatively, altogether, without any localization, one has bounds
\begin{equation}\label{eq:iterated-est-for-b}
|(Qb)(x,\zeta)|\leq C\langle\zeta\rangle^{M}\langle\zeta'\rangle^{M'}
\end{equation}
whenever $Q$ is a finite product of differential operators of the form
\begin{equation}\label{eq:diff-factors-for-b-est}
D_{\zeta'_m},\ \zeta'_j D_{\zeta'_m},\ D_{\zeta''_m},\ \zeta''_j
D_{\zeta''_m},\ \zeta'_j D_{\zeta''_m}.
\end{equation}
One direction of this equivalence
claim is easily shown by starting from \eqref{eq:Gui-Uhl-paired}
by taking
$$
b(x,\zeta)=\int
e^{is(\sigma-\zeta')}a(x,s,\zeta,\sigma)\,ds\,d\sigma=\int(\cF' a)(x,\zeta'-\sigma,\zeta,\sigma)\,d\sigma,
$$
where $\cF'$ is Fourier transform in the second slot (so $\cF' a$ is
Schwartz in this variable!) and directly checking the stability
estimates. For the converse, if $b$ is supported in
$|\zeta'|<C'|\zeta''|$, as one may assume, one can take
$$
a(x,s,\zeta,\sigma)=(2\pi)^{-k} b(x,\sigma,\zeta'')\chi(\langle\zeta'\rangle/\langle\zeta''\rangle)\chi_0(s),
$$
where $\chi\in\CI_c(\RR)$ is identically $1$ on $[0,2C']$, while
$\chi_0\in\CI_c(\RR^k)$ is such that if $b$ is supported in $|x|<R$
then $\chi_0(s)$ is identically $1$ on $|s|<2R$. Here the localizer
$\chi$ makes $a$ into a symbol of the desired product type in
$(\zeta,\sigma)$, while $\chi_0$ localizes the support in $s$. With
this definition of $a$,
$$
\int(\cF' a)(x,\zeta'-\sigma,\zeta,\sigma)\,d\sigma=\chi(\langle\zeta'\rangle/\langle\zeta''\rangle)\int
b(x,\sigma,\zeta'')
(2\pi)^{-k}\hat\chi_0(\zeta'-\sigma)\,d\sigma;
$$
by the support conditions on $\chi$ and $b$ and as $\hat\chi_0$ is
Schwartz, dropping the factor $\chi$ only causes a Schwartz
error to obtain $\tilde b(x,\zeta)=\int
b(x,\sigma,\zeta'')
(2\pi)^{-k}\hat\chi_0(\zeta'-\sigma)\,d\sigma$. Now,
$$
\int e^{ix'\cdot\zeta'}\tilde b(x,\zeta')\,d\zeta'
=\chi_0(x)\int e^{ix'\cdot\zeta'} b(x,\zeta')\,d\zeta'=\int e^{ix'\cdot\zeta'} b(x,\zeta')\,d\zeta',
$$ 
so the distributions defined by $a$ and $b$ differ by an element of
$\CI(\RR^n)$ as claimed.

We remark that, although we do not use this point of view here, the
regularity statement
\eqref{eq:iterated-est-for-b}-\eqref{eq:diff-factors-for-b-est} for
$b$ amount to the statement that
$b$ is a conormal
function on the blow up of $\RR^n\times\overline{\RR^n}$, with the
second factor radially compactified, at
$\RR^n\times\pa\overline{\RR^{n-k}_{\zeta''}}$, i.e.\ at infinity in
$\zeta$ where $\zeta'=0$, with order $M$ on the front face, and order
$M+M'$ on the lift of $\RR^n\times\pa\overline{\RR^n}$, where
$M=p-n/4+k/2$, $M'=l-k/2$ as before.

Indeed, a further argument shows that first, modulo $\CI(\RR^n)$, the
$x''$ dependence of $b$ can be eliminated via expanding $b$ in Taylor
series
around $x''=0$ and noting that $(x'')^\alpha$ becomes
$(-1)^{|\alpha|}D_{\zeta''}^\alpha$ after an integration by parts, so
in view of the symbolic estimates in $\zeta''$ corresponds to reduced
$p$, with an asymptotic summation argument completing the
argument. Next, modulo $I^p(\tilde\Lambda_1)$, the $x'$ dependence of $b$
can be eliminated by a similar argument, expanding in Taylor series in
$x'$, which via integration by parts gives
$(-1)^{\alpha}D_{\zeta'}^\alpha$, thus reducing $l$, which via an
asymptotic summation argument completes the claim. Hence, it may be
assumed that, modulo a term in $I^p(\tilde\Lambda_1)$, a paired Lagrangian
distribution is the inverse Fourier transform of a conormal
function on the blow up of $\overline{\RR^n}$ at
$\pa\overline{\RR^{n-k}_{\zeta''}}$, i.e.\ at infinity in
$\zeta$ where $\zeta'=0$, with order $M$ on the front face, and order
$M+M'$ on the lift of $\pa\overline{\RR^n}$, where
$M=p-n/4+k/2$, $M'=l-k/2$ as before.

One immediate consequence is:

\begin{lemma}\label{lemma:filter}
If $p_1\leq p_2$ and $p_1+l_1\leq p_2+l_2$ then
$I^{p_1,l_1}(\Lambda_0,\Lambda_1)\subset I^{p_2,l_2}(\Lambda_0,\Lambda_1)$.
\end{lemma}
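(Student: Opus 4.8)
The plan is to reduce to the model Lagrangian pair $\tilde\Lambda_0=T^*_0\RR^n$, $\tilde\Lambda_1=N^*\{x''=0\}$ and then to check a completely elementary inequality between symbol weights. First I would recall that membership in $I^{p,l}(\Lambda_0,\Lambda_1)$ is defined, microlocally, by reduction via an elliptic Fourier integral operator to this model pair, and that such operators carry the filtered family $\{I^{p,l}\}$ into itself (shifting $p$ by their order and leaving $l$ fixed); hence it suffices to prove the inclusion $I^{p_1,l_1}(\tilde\Lambda_0,\tilde\Lambda_1)\subset I^{p_2,l_2}(\tilde\Lambda_0,\tilde\Lambda_1)$ for the model pair.

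For the model pair I would use the characterization obtained just above: $u\in I^{p,l}(\tilde\Lambda_0,\tilde\Lambda_1)$ iff, modulo $\CI_c(\RR^n)$, $u=\int e^{ix\cdot\zeta}b(x,\zeta)\,d\zeta$ with $b$ satisfying the iterated estimates \eqref{eq:iterated-est-for-b} over the family of operators \eqref{eq:diff-factors-for-b-est}, where $M=p-n/4+k/2$ and $M'=l-k/2$. The key observation is that the set of operators $Q$ appearing in \eqref{eq:diff-factors-for-b-est} depends on neither $p$ nor $l$, and that $\CI_c(\RR^n)$ lies in every class $I^{p,l}$. So given $u\in I^{p_1,l_1}$ with an associated symbol $b$ obeying \eqref{eq:iterated-est-for-b} with $M_1=p_1-n/4+k/2$, $M_1'=l_1-k/2$, it is enough to verify that the same $b$ obeys \eqref{eq:iterated-est-for-b} with $M_2=p_2-n/4+k/2$, $M_2'=l_2-k/2$ in place of $M_1,M_1'$; this then places $u$ in $I^{p_2,l_2}$. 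That reduces everything to the single pointwise bound $\langle\zeta\rangle^{M_1-M_2}\langle\zeta'\rangle^{M_1'-M_2'}\le C$.

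Writing $\mu=M_1-M_2=p_1-p_2$ and $\nu=M_1'-M_2'=l_1-l_2$, the hypotheses say precisely that $\mu\le 0$ and $\mu+\nu=(p_1+l_1)-(p_2+l_2)\le 0$. If $\nu\le 0$, then $\langle\zeta\rangle^{\mu}\langle\zeta'\rangle^{\nu}\le 1$ since $\langle\zeta\rangle,\langle\zeta'\rangle\ge 1$. If $\nu>0$, then using $\langle\zeta'\rangle\le\langle\zeta\rangle$ we get $\langle\zeta\rangle^{\mu}\langle\zeta'\rangle^{\nu}\le\langle\zeta\rangle^{\mu+\nu}\le 1$. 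In either case the bound holds with $C=1$, completing the argument. (Equivalently, in the blow-up language: $b$ is conormal with order $M$ at the front face and $M+M'$ at the lift of $\partial\overline{\RR^n}$, and one is merely invoking monotonicity of conormal orders, $M_1\le M_2$ and $M_1+M_1'\le M_2+M_2'$.) There is no genuine obstacle here; the only point to watch is that the weight $\langle\zeta'\rangle$ carries the exponent $M'$, which may be of either sign, forcing the two cases above, and the second hypothesis $p_1+l_1\le p_2+l_2$ is exactly what is needed in the case $l_1>l_2$, where the $\langle\zeta'\rangle$ factor would otherwise work against us.
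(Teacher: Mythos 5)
Your proof is correct and follows essentially the same route as the paper's: reduce to the model pair, note that the class of differential operators $Q$ in the iterated estimates is independent of $(p,l)$, and verify the elementary weight inequality $\langle\zeta\rangle^{\mu}\langle\zeta'\rangle^{\nu}\le 1$ under $\mu\le 0$, $\mu+\nu\le 0$. The paper does this by a single chain of inequalities rather than splitting into the cases $\nu\le 0$ and $\nu>0$, but the content is identical.
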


\begin{proof}
It suffices to consider the model pair, $(\tilde\Lambda_0,\tilde\Lambda_1)$.
Since the class of differential operators under which one has
stability in the two cases is the same, one just has to remark that
for $p_1'\leq p_2'$, $p_1'+l_1'\leq p_2'+l_2'$,
$$
\langle\zeta\rangle^{p_1'}\langle\zeta'\rangle^{l_1'}\leq
\langle\zeta\rangle^{p_1'}\langle\zeta'\rangle^{l_2'}\langle\zeta'\rangle^{p_2'-p_1'}
\leq
\langle\zeta\rangle^{p_1'}\langle\zeta'\rangle^{l_2'}\langle\zeta\rangle^{p_2'-p_1'}
= \langle\zeta\rangle^{p_2'}\langle\zeta'\rangle^{l_2'}.
$$
\end{proof}

Another immediate consequence is:

\begin{lemma}\label{lemma:Lambda_0-Lag}
$$
I^{p}(\Lambda_0)\subset I^{p-k/2,k/2}(\Lambda_0,\Lambda_1).
$$
\end{lemma}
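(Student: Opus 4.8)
The plan is to reduce to the model pair $(\tilde\Lambda_0,\tilde\Lambda_1)$ exactly as in the proof of Lemma~\ref{lemma:filter}, and then to write a given $u\in I^p(\tilde\Lambda_0)$ in the form $\int e^{ix\cdot\zeta}b(x,\zeta)\,d\zeta$ and check that $b$ satisfies the iterated estimates \eqref{eq:iterated-est-for-b}--\eqref{eq:diff-factors-for-b-est} for the orders corresponding to $I^{p-k/2,k/2}$. Since $\tilde\Lambda_0=T^*_0\RR^n=N^*\{0\}$ is conormal to a point, there are no base ($y$) variables, and H\"ormander's normalization (with $\dim X-2\codim\{0\}=n-2n=-n$) says that, modulo $\CI_c(\RR^n)$, one has
$$
u=\int e^{ix\cdot\zeta}c(\zeta)\,d\zeta,\qquad c\in S^{p-n/4}(\RR^n).
$$

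Next I would simply take $b(x,\zeta):=c(\zeta)$, which does not depend on $x$. For the target indices $p'=p-k/2$, $l'=k/2$ the quantities entering \eqref{eq:iterated-est-for-b} are $M=p'-n/4+k/2=p-n/4$ and $M'=l'-k/2=0$, so what has to be verified is precisely
$$
|(Qb)(x,\zeta)|\leq C\langle\zeta\rangle^{p-n/4}
$$
for every finite product $Q$ of the operators $D_{\zeta'_m}$, $\zeta'_j D_{\zeta'_m}$, $D_{\zeta''_m}$, $\zeta''_j D_{\zeta''_m}$, $\zeta'_j D_{\zeta''_m}$ in \eqref{eq:diff-factors-for-b-est}. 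This is immediate from $c\in S^{p-n/4}$: each bare derivative $D_{\zeta'_m}$ or $D_{\zeta''_m}$ maps $S^m(\RR^n)$ into $S^{m-1}(\RR^n)$, while each of $\zeta'_j D_{\zeta'_m}$, $\zeta''_j D_{\zeta''_m}$, $\zeta'_j D_{\zeta''_m}$ maps $S^m(\RR^n)$ into itself (as $\zeta_i\in S^1$, $D_{\zeta_j}\colon S^m\to S^{m-1}$, and symbol orders add under products). Hence any such $Q$ carries $c\in S^{p-n/4}$ into $S^{p-n/4}$, giving $|(Qb)(x,\zeta)|=|(Qc)(\zeta)|\leq C\langle\zeta\rangle^{p-n/4}=C\langle\zeta\rangle^M\langle\zeta'\rangle^{M'}$. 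Thus $u\in I^{p-k/2,k/2}(\tilde\Lambda_0,\tilde\Lambda_1)$, and the general inclusion follows by the Fourier integral operator reduction.

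There is essentially no hard analytic step here; the only thing to get right is the index bookkeeping, i.e.\ that $I^p(\tilde\Lambda_0)$ corresponds to ordinary symbols of order $p-n/4$ in all of $\zeta$ (no base variables, since $\tilde\Lambda_0$ is conormal to a point), and that the target orders $(p-k/2,k/2)$ translate to $M'=0$ -- so that no $\langle\zeta'\rangle$-growth is available, but none is needed because an honest symbol in all variables trivially satisfies every product-type estimate in \eqref{eq:diff-factors-for-b-est}. As a consistency check, $(p-k/2)+(k/2)=p$, matching the fact that away from $\tilde\Lambda_0\cap\tilde\Lambda_1$ one has $I^{p-k/2,k/2}(\tilde\Lambda_0,\tilde\Lambda_1)\subset I^p(\tilde\Lambda_0\setminus\tilde\Lambda_1)$.
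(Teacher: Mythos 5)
Your proof is correct and, modulo packaging, identical to the paper's: both reduce to the model pair, identify an element of $I^p(\tilde\Lambda_0)$ as an inverse Fourier transform of $c\in S^{p-n/4}(\RR^n)$, and observe that an ordinary symbol of that order automatically satisfies the product-type bounds with $M=p-n/4$, $M'=0$ (the paper states this via the blow-up characterization, you verify the equivalent iterated estimates \eqref{eq:iterated-est-for-b}--\eqref{eq:diff-factors-for-b-est} directly). The index bookkeeping is right.
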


\begin{proof}
Again, it suffices to consider the model pair,
$(\tilde\Lambda_0,\tilde\Lambda_1)$. An element of $I^p(\tilde\Lambda_0)$
can be written, modulo $\CI(\RR^n)$, as the inverse Fourier transform
of a symbol in $S^{p-\frac{n}{4}}(\RR^n)$. But
$S^{p-\frac{n}{4}}(\RR^n)$ is conormal on $\overline{\RR^n}$, of order
$p-\frac{n}{4}$, hence on its blow up at
$\pa\overline{\RR^{n-k}_{\zeta''}}$, with order $M=M+M'=p-n/4$ {\em
  both} on the front face, and on the lift of $\pa\overline{\RR^n}$. In
terms of $I^{\tilde p,\tilde l}(\tilde\Lambda_0,\tilde\Lambda_1)$ this
corresponds to orders $\tilde p=p-k/2$, $\tilde l=k/2$, proving the lemma.
\end{proof}

Note from the proof that one {\em cannot} lower $\tilde p=p-k/2$ even by
increasing $\tilde l=k/2$. In fact, on the one hand, for an element of
$S^{\tilde p',\tilde l'}$ the growth rate at the front face is
determined by $\tilde p'$ alone, and on the other hand for $u\in
I^p(\tilde\Lambda_0)$, the growth rate at this place is determined by
$p$ in general (i.e.\ there is no extra decay at the front face
compared to other directions).

One can now easily describe the principal symbol on $\Lambda_1$ in
general (without homogeneity discussions as in
\cite{Guillemin-Uhlmann:Oscillatory}). For this purpose it is useful
to work with half-densities to avoid having to tensor with bundles
that vary with the particular problem we want to study (such as
half-density bundles from the base space $X$, or a factor of the base
space on product spaces $X=X_L\times X_R$). Since the half-density bundles are trivial,
from now on, without further comments, we trivialize them on the base manifold, as well as its
factors, so as to {\em regard distributions (e.g.\ elements of
$I^{p,l}(\Lambda_0,\Lambda_1)$) as distributional half-densities, and
distributions with values in densities on the right factor $X_R$
(which are the Schwartz kernels of operators acting on functions) also
as distributional half-densities}.

\begin{lemma}\label{lemma:basic-symbol}
Suppose $u\in I^{p,l}(\Lambda_0,\Lambda_1)$ given by an inverse Fourier
transform $\cF^{-1} b$, $b$ conormal on the blow up of
$[\overline{\RR^n_\zeta};
\pa\overline{\RR^{n-k}_{\zeta''}}]$ supported in
$\langle\zeta'\rangle\leq C\langle\zeta''\rangle$, with order $M$ on the front face, and order
$M+M'$ on the lift of $\pa\overline{\RR^n}$, where
$M=p-n/4+k/2$, $M'=l-k/2$ as before.
Let $a=(\cF')^{-1} b$, where $\cF'$ is partial Fourier transform in
the primed variables. Then
\begin{equation}\label{eq:Lambda_1-symb-as-IFT}
a\in
S^{p-n/4+k/2}(\RR^{n-k}_{\zeta''};I^{M'+\frac{k}{4}}(\RR^k_{x'};N^*\{0\})),
\end{equation}
and the equivalence class of $a$ modulo
$S^{p-n/4+k/2-1}(\RR^k_{x'}\setminus 0;\RR^{n-k}_{\zeta''})$ satisfies
\begin{equation}\label{eq:Lambda_1-princ-symbol}
[(2\pi)^{\frac{(n-2k)}{4}}a|_{x'\neq 0}\,|dx'|^{1/2} |d\zeta''|^{1/2}]=\sigma_{\Lambda_1\setminus\Lambda_0,p}(u),
\end{equation}
with the right hand side being the standard principal symbol of a (microlocal)
element of $I^p(\Lambda_1)$. The equivalence class of
$$
(2\pi)^{\frac{(n-2k)}{4}}a\,|dx'|^{1/2}\,|d\zeta''|^{1/2}
\ \text{modulo}
\
S^{p-1-n/4+k/2}(\RR^{n-k}_{\zeta''};I^{M'+1+\frac{k}{4}}(\RR^k_{x'};N^*\{0\}))
$$
is the principal symbol of $u$ on $\Lambda_1$, which is well-defined.

Furthermore,
$$
a\in
S^{p-1-n/4+k/2}(\RR^{n-k}_{\zeta''};I^{M'+1+\frac{k}{4}}(\RR^k_{x'};N^*\{0\}))
\Longrightarrow u\in I^{p-1,l+1}(\Lambda_0,\Lambda_1),
$$
while if
$$
\tilde a\in
S^{p-n/4+k/2}(\RR^{n-k}_{\zeta''};I^{M'+\frac{k}{4}}(\RR^k_{x'};N^*\{0\}))
$$
then there is $u\in I^{p,l}(\Lambda_0,\Lambda_1)$ such that the
principal symbol of $u$ on $\Lambda_1$ is $\tilde a$.
\end{lemma}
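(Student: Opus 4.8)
The plan is to reduce everything to the model pair $(\tilde\Lambda_0,\tilde\Lambda_1)$: all the classes and symbol spaces appearing in the statement, as well as the principal symbol on $\Lambda_1\setminus\Lambda_0$, are invariant (in the half-density framework fixed above) under the elliptic Fourier integral operators bringing a cleanly intersecting pair to the model, and away from $\Lambda_0\cap\Lambda_1$ all the assertions are the classical ones for $I^p(\Lambda_1)$; so the content is near $\Lambda_0\cap\Lambda_1$, where we may take $u=\cF^{-1}b$ modulo $\CI_c(\RR^n)$ with $b$ as in the hypothesis, supported in $\langle\zeta'\rangle\le C\langle\zeta''\rangle$. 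Throughout I use that $I^\sigma(N^*\{0\})$ in $\RR^k$ is the inverse Fourier transform of $S^{\sigma-k/4}(\RR^k)$, and write $M=p-n/4+k/2$, $M'=l-k/2$ as in the statement.

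For \eqref{eq:Lambda_1-symb-as-IFT} the point is to rephrase the hypothesis on $b$ --- conormality on $[\overline{\RR^n_\zeta};\pa\overline{\RR^{n-k}_{\zeta''}}]$ with order $M$ on the front face and $M+M'$ on the lift of $\pa\overline{\RR^n}$, equivalently the iterated bounds \eqref{eq:iterated-est-for-b}--\eqref{eq:diff-factors-for-b-est} --- as follows: on $\supp b$, for each fixed $\zeta''$ with $\langle\zeta''\rangle\ge1$ the function $b(\cdot,\zeta'')$ is a genuine symbol of order $M'$ in $\zeta'\in\RR^k$, and the family is a genuine symbol of order $M$ in $\zeta''$ with values in $S^{M'}(\RR^k_{\zeta'})$. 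The only subtle point is \emph{genuine} symbolic behaviour rather than the coarser $\langle\zeta\rangle^M\langle\zeta'\rangle^{M'}$-bound, and this is exactly where the blow-up is used: passing to projective coordinates near the front face, $\rho=\langle\zeta''\rangle^{-1}$, $s=\zeta'/\langle\zeta''\rangle$, $\omega=\zeta''/|\zeta''|$, one has $\pa_{\zeta'_m}=\rho\,\pa_{s_m}$ and $\pa_{\zeta''_m}=\rho\cdot(\text{smooth b-vector field})$, so each such derivative genuinely improves the relevant order by one, yielding the symbolic estimates; the corner part of $\supp b$ (where $\zeta'$ is comparable to $\zeta''$ and one is near $\Lambda_0$) and the region $\langle\zeta''\rangle\le1$ only produce lower-order terms. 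Applying $(\cF')^{-1}$ in $\zeta'$, and using that it turns $S^{M'}(\RR^k_{\zeta'})$ into $I^{M'+k/4}(\RR^k_{x'};N^*\{0\})$ while commuting with the $\zeta''$-symbolic structure, one gets $a=(\cF')^{-1}b\in S^{p-n/4+k/2}(\RR^{n-k}_{\zeta''};I^{M'+k/4}(\RR^k_{x'};N^*\{0\}))$.

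For \eqref{eq:Lambda_1-princ-symbol}, do the $\zeta'$-integral first: $u(x)=(2\pi)^{-(n-k)}\int e^{ix''\cdot\zeta''}a(x',\zeta'')\,d\zeta''$. On $x'\ne0$ the factor of $a$ conormal to $\{x'=0\}$ is $\CI$, so $a(x',\cdot)\in S^{M}(\RR^{n-k}_{\zeta''})$ smoothly in $x'\ne0$; since $M=p-(n-2k)/4$, this is precisely H\"ormander's normalized representation (\cite{FIO1}; cf.\ \cite{Greenleaf-Uhlmann:Recovering}) of an element of $I^p(\Lambda_1)$ microlocally on $\Lambda_1\setminus\Lambda_0$, with symbol $a$. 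Reading off the principal symbol of such a conormal distribution and matching that normalization with ours is the one genuine bookkeeping step --- it is what produces the universal factor $(2\pi)^{(n-2k)/4}$ --- and gives \eqref{eq:Lambda_1-princ-symbol}.

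It remains to treat the last three assertions. If two admissible $b$'s give the same $u$ modulo $\CI_c$, their difference has Schwartz inverse Fourier transform, hence is itself Schwartz, so the corresponding $a$'s differ by an element of $S^{p-1-n/4+k/2}(\RR^{n-k}_{\zeta''};I^{M'+1+k/4}(\RR^k_{x'};N^*\{0\}))$; combined with the filtration step this makes the displayed equivalence class intrinsic to $u$, i.e.\ a well-defined principal symbol on $\Lambda_1$. The filtration step is order arithmetic: if $a$ lies in that lower-order space then $b=\cF'a$ is a symbol of order $M'+1$ in $\zeta'$ whose $\zeta''$-coefficients are symbols of order $M-1$, which is exactly the $b$-description of an element of $I^{\tilde p,\tilde l}$ with $\tilde p-n/4+k/2=M-1$, $\tilde l-k/2=M'+1$, i.e.\ $\tilde p=p-1$, $\tilde l=l+1$. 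For the converse, given $\tilde a\in S^{p-n/4+k/2}(\RR^{n-k}_{\zeta''};I^{M'+k/4}(\RR^k_{x'};N^*\{0\}))$, cut it off to $\langle\zeta'\rangle\le C\langle\zeta''\rangle$ as in the reduction carried out before Lemma~\ref{lemma:filter} (this changes it only by a negligible term), set $b=\cF'\tilde a$ and $u=\cF^{-1}b$; then $u\in I^{p,l}(\Lambda_0,\Lambda_1)$, and by the previous steps its principal symbol on $\Lambda_1$ is the class of $\tilde a$. The main obstacle is thus twofold: extracting genuine symbolic decay from the blow-up conormality in the second step, and pinning down the exact power of $2\pi$ in the third; everything else is soft Fourier analysis and order arithmetic.
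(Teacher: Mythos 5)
Your plan is the paper's: localize to the model, read off that $b$ is an iterated symbol in $S^M(\RR^{n-k}_{\zeta''};S^{M'}(\RR^k_{\zeta'}))$ supported in the cone, take $(\cF')^{-1}$ in $\zeta'$ to get \eqref{eq:Lambda_1-symb-as-IFT}, match oscillatory representations for \eqref{eq:Lambda_1-princ-symbol}, then do order arithmetic for the filtration and a cutoff construction for the converse. The one step that is not right is the blow-up/projective-coordinates justification you interpose for the iterated-symbol claim. The coordinates $\rho=\langle\zeta''\rangle^{-1}$, $s=\zeta'/\langle\zeta''\rangle$, $\omega$ are coordinates on $\overline{\RR^n_\zeta}$ \emph{before} the blow-up, in which the center is $\{\rho=0,s=0\}$; projective coordinates on $[\overline{\RR^n};\pa\overline{\RR^{n-k}_{\zeta''}}]$ near the front face would be, e.g., $\rho$ and $s/\rho=\zeta'$. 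More to the point, the inference ``$\pa_{\zeta'_m}=\rho\,\pa_{s_m}$, so each derivative improves the order by one'' names the wrong gain: the iterated bound $|\pa_{\zeta'}^\alpha\pa_{\zeta''}^\beta b|\lesssim\langle\zeta''\rangle^{M-|\beta|}\langle\zeta'\rangle^{M'-|\alpha|}$ requires $\pa_{\zeta'_m}$ to gain $\langle\zeta'\rangle^{-1}$, not $\rho=\langle\zeta''\rangle^{-1}$, and these are very different on the front face interior where $\langle\zeta'\rangle$ stays bounded while $\langle\zeta''\rangle\to\infty$. The factor of $\rho$ is illusory because $\pa_{s_m}$ is not a b-vector field on the blown-up space (it fails to be tangent to the lift of $\pa\overline{\RR^n}$), so it does not preserve the conormal order of $b$. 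Concretely, for $b=\chi(\zeta'/\langle\zeta''\rangle)\langle\zeta''\rangle^M\langle\zeta'\rangle^{M'}$ with $\chi$ a cutoff, $\pa_{\zeta'_m}b\sim\langle\zeta''\rangle^M\langle\zeta'\rangle^{M'-1}$ and is not $O(\langle\zeta''\rangle^{M-1}\langle\zeta'\rangle^{M'})$. In fact no such geometric argument is needed: on the support, the defining estimates \eqref{eq:iterated-est-for-b}--\eqref{eq:diff-factors-for-b-est} already \emph{are} the statement $b\in S^M(\RR^{n-k}_{\zeta''};S^{M'}(\RR^k_{\zeta'}))$, since the genuine decay under $D_{\zeta'_m}$, $D_{\zeta''_m}$ follows from stability under $\zeta'_jD_{\zeta'_m}$, $\zeta''_jD_{\zeta''_m}$ by writing $D_{\zeta'_m}=|\zeta'|^{-2}\sum_j\zeta'_j\,(\zeta'_jD_{\zeta'_m})$ on $|\zeta'|\ge 1$ and analogously in $\zeta''$, using $\langle\zeta\rangle\sim\langle\zeta''\rangle$ on the cone. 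This is exactly how the paper treats the step.

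Two smaller points on the converse construction: the cutoff must be applied to $\cF'\tilde a$ in the $(\zeta',\zeta'')$ variables rather than to $\tilde a$ itself (which is a function of $(x',\zeta'')$ and has no $\zeta'$ to cut off in), and the normalization $b=(2\pi)^{-(n-2k)/4}(\cF'\tilde a)\chi$ is needed; without the factor $(2\pi)^{-(n-2k)/4}$, by \eqref{eq:Lambda_1-princ-symbol} the resulting $u$ has principal symbol $(2\pi)^{(n-2k)/4}\tilde a$ rather than $\tilde a$.
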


\begin{proof}
Note that elements of $S^{M,M'}$ with the stated support condition are
exactly the functions on $\RR^n$ with a bound
$|b|\leq C\langle\zeta''\rangle^M\langle\zeta'\rangle^{M'}$ which is
stable upon iteratively applying finite products of
$\zeta'_jD_{\zeta'_m}, D_{\zeta'_m},
\zeta''_jD_{\zeta''_m},D_{\zeta''_m}$ to $b$, so it consists exactly
of elements of $S^M(\RR^{n-k}_{\zeta''};S^{M'}(\RR^k))$ with the
stated support.
Since the partial inverse Fourier transform in the primed variables
maps $S^{M'}(\RR^k_{x'})$ contiuously to
$I^{M'+\frac{k}{4}}(\RR^k_{x'};N^*\{0\})$,
\eqref{eq:Lambda_1-symb-as-IFT} follows immediately. As the standard
parameterization of a conormal distribution in $I^p(\Lambda_1)$ is
$$
(2\pi)^{-(n+2(n-k))/4}\int e^{ix''\cdot\zeta''}\tilde
a(x',x'',\zeta'')\,d\zeta'',
$$
with $\tilde a\in S^{p+(n-2(n-k))/4}(\RR^n_x;\RR^{n-k}_{\zeta''})$
with principal symbol given by the equivalence class of the restriction of $\tilde a$ to
$x'=0$, while
$$
u=(2\pi)^{-n+k}\int e^{ix''\cdot\zeta''}(\cF')^{-1}b (x',\zeta'')\,d\zeta'',
$$
with $(\cF')^{-1}b (x',\zeta'')$ in
$S^M(\RR^{n-k}_{\zeta''};\CI(\RR^k\setminus 0))$,
\eqref{eq:Lambda_1-princ-symbol} follows.

Since conversely we have that the partial Fourier transform in the
primed variables maps
$S^{M}(\RR^{n-k}_{\zeta''};I^{M'+\frac{k}{4}}(\RR^k_{x'};N^*\{0\}))$
to $S^M(\RR^{n-k}_{\zeta''};S^{M'}(\RR^k))$, if $u=\cF^{-1}b$, and $b\in S^{M,M'}$ satisfies
$$
(\cF')^{-1}b\in
S^{M-1}(\RR^{n-k}_{\zeta''};I^{M'+1+\frac{k}{4}}(\RR^k_{x'};N^*\{0\})),
$$
then $b\in S^{M-1,M'+1}$ and thus $u\in I^{p-1,l+1}$. Further, if
$$
\tilde a\in
S^{p-n/4+k/2}(\RR^{n-k}_{\zeta''};I^{M'+\frac{k}{4}}(\RR^k_{x'};N^*\{0\}))
$$
then defining $b$ to be  $(2\pi)^{-\frac{(n-2k)}{4}}(\cF'\tilde
a)\chi$, where $\chi$ is a symbol on $\RR^n$, with support in
$\langle\zeta'\rangle<2\langle\zeta''\rangle$, identically $1$ on
$\langle\zeta'\rangle<\langle\zeta''\rangle$, then
$b-(2\pi)^{-\frac{(n-2k)}{4}}(\cF'\tilde a)\in S^{M-N,M'+N}$ for every
$N\geq 0$, and thus
$$
(2\pi)^{\frac{(n-2k)}{4}} (\cF')^{-1}b-\tilde a\in
S^{M-1}(\RR^{n-k}_{\zeta''};I^{M'+1+\frac{k}{4}}(\RR^k_{x'};N^*\{0\}))
$$
as claimed.
\end{proof}

This description of paired Lagrangians is rather convenient for describing what happens when
$\Lambda_0$ and $\Lambda_1$ are interchanged.

\begin{prop}\label{prop:reverse-order}
For $l<-k/2$ and $N\in\NN$ such that $l+N<-k/2$ one has
$$
I^{p,l}(\Lambda_0,\Lambda_1)\subset I^p(\Lambda_1)+I^{p-N-\frac{k}{2},N+\frac{k}{2}}(\Lambda_1,\Lambda_0).
$$
On the other hand, for $l>-k/2$,
$$
I^{p,l}(\Lambda_0,\Lambda_1)\subset I^{p+l,\frac{k}{2}}(\Lambda_1,\Lambda_0).
$$
In both cases the inclusion maps are continuous, i.e.\ in the first
case, when restricted to distributions with support in a fixed
compact set, for any $M$ there is $M'$ and $C>0$
such that for $u\in I^{p,l}(\Lambda_0,\Lambda_1)$ there are
$I^p(\Lambda_1)$ and  $u_2\in
I^{p-N-\frac{k}{2},N+\frac{k}{2}}(\Lambda_1,\Lambda_0)$ with
\begin{equation}\label{eq:reverse-order-cont}
\|u_1\|_{I^p(\Lambda_1);M}+\|u_2\|_{I^{p-N-\frac{k}{2},N+\frac{k}{2}}(\Lambda_1,\Lambda_0);M}\leq C\|u\|_{I^{p,l}(\Lambda_0,\Lambda_1);M'},
\end{equation}
where $\|.\|_{I^p(\Lambda_1);M}$, etc., denotes the $M$th seminorm giving the
topology on $I^p(\Lambda_1)$, etc.
\end{prop}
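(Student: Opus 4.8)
The plan is to work in the model pair $(\tilde\Lambda_0,\tilde\Lambda_1)$, to which a general cleanly intersecting pair is reduced by a Fourier integral operator as usual, and to use the explicit symbol descriptions from the discussion preceding and including Lemma~\ref{lemma:basic-symbol}. Modulo $I^p(\Lambda_1)$ we may write $u=\cF^{-1}b$ with $b$ conormal on the blow-up $[\overline{\RR^n_\zeta};\pa\overline{\RR^{n-k}_{\zeta''}}]$, of order $M=p-n/4+k/2$ on the front face and $M+M'=p-n/4+l$ on the lift of $\pa\overline{\RR^n}$, where $M'=l-k/2$; equivalently, setting $a=(\cF')^{-1}b$ (partial inverse Fourier transform in the primed variables) as in Lemma~\ref{lemma:basic-symbol}, $u=\int e^{ix''\cdot\zeta''}a(x',\zeta'')\,d\zeta''$ with $a\in S^{p-n/4+k/2}(\RR^{n-k}_{\zeta''};I^{l-k/4}(\RR^k_{x'};N^*\{0\}))$. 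The central observation is that interchanging $\Lambda_0$ and $\Lambda_1$ corresponds, in the model, to the partial Fourier transform $\cF'$ in the primed variables, the Fourier integral operator implementing the (non-homogeneous) canonical transformation $(x',\zeta')\mapsto(\zeta',-x')$ that interchanges $\tilde\Lambda_0=T^*_0\RR^n$ and $\tilde\Lambda_1=N^*\{x''=0\}$. Since this transform involves only the $k$ primed variables, it moves the paired-Lagrangian orders by amounts of the form $\pm k/2$, and the first task is to set up the resulting dictionary carefully: namely to record exactly how $\cF'$ translates the condition ``$v\in I^{\tilde p,\tilde l}(\Lambda_1,\Lambda_0)$'' into a condition on the $a$-type data, comparing the conormal order of $a$ in the position variable $x'$ with the order demanded by the reversed pair. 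The asymmetry between the two Lagrangians in the notation $I^{p,l}$ is exactly an artifact of how this dictionary is normalized.

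Granting the dictionary, the case $l>-k/2$ is a direct verification: the conormal-distribution-valued symbol $a$ of $u$ already satisfies the estimates required of a reversed-pair symbol with $\tilde p=p+l$, $\tilde l=k/2$ (this is the analogue, with the roles of the Lagrangians interchanged, of the easy inclusion $I^p(\Lambda_1)\subset I^{p,l}(\Lambda_0,\Lambda_1)$); the only point to check is stability under the mixed operators such as $x'_jD_{\zeta''_m}$, which follows from the conormal-valued symbol estimates of Lemma~\ref{lemma:basic-symbol} together with the fact that differentiation and multiplication by $x'$ move one within the conormal scale. For $l<-k/2$ one must first split off an honest $I^p(\Lambda_1)$ summand. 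Here the hypotheses force $l+N<-k/2$, hence the conormal order $l-k/4+|\alpha|$ of $\pa_{x'}^\alpha a$ is $<-3k/4$ for every $|\alpha|\le N$, so that for each $\zeta''$ the symbol $a(\cdot,\zeta'')$ is a $C^N$ function of $x'$ whose $C^N$-seminorms are symbols of order $M$ in $\zeta''$. Expanding in the Taylor polynomial of degree $N$ in $x'$ about $x'=0$,
$$
u=\sum_{|\alpha|\le N}\frac{(x')^\alpha}{\alpha!}\int e^{ix''\cdot\zeta''}(\pa_{x'}^\alpha a)(0,\zeta'')\,d\zeta''+\int e^{ix''\cdot\zeta''}R_N(x',\zeta'')\,d\zeta'',
$$
each summand in the finite sum is $(x')^\alpha$ times the inverse Fourier transform in $\zeta''$ of a symbol of order $M=p-n/4+k/2$ with no $x'$-dependence, hence $(x')^\alpha$ times an element of $I^p(N^*\{x''=0\})=I^p(\Lambda_1)$, and so the finite sum lies in $I^p(\Lambda_1)$; the Taylor remainder $R_N$ vanishes to order $N$ at $x'=0$ with the accompanying gain in the $\zeta''$-order, so that $\int e^{ix''\cdot\zeta''}R_N\,d\zeta''$ falls into $I^{p-N-k/2,N+k/2}(\Lambda_1,\Lambda_0)$ through the dictionary.

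The continuity assertion \eqref{eq:reverse-order-cont} then comes along for free: each of the ingredients — the Fourier integral reduction, the passage between $b$, $a$ and the reversed-pair data via $\cF'$, the finite Taylor truncation, and the identification of the finite sum with an element of $I^p(\Lambda_1)$ — is a map that is continuous on the relevant countably-normed symbol spaces of distributions supported in a fixed compact set, and one simply composes these continuity estimates.

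The hard part is the dictionary itself: tracking how the partial Fourier transform $\cF'$ — a Fourier transform in only $k$ of the $n$ variables — shifts the bi-indexed orders, in particular seeing precisely where the loss of $N+k/2$ between the indices in $I^{p-N-k/2,N+k/2}(\Lambda_1,\Lambda_0)$ comes from, and why when $l<-k/2$ an $I^p(\Lambda_1)$ correction (but no more) is unavoidable, this being the analogue of the boundary terms that appear on integrating by parts against a symbol that does not decay. The other place where care is needed is checking that the Taylor remainder $R_N$, whose top-order terms involve only $C^0$ rather than smooth $x'$-derivatives of $a$, genuinely obeys the full iterated product-type estimates defining $I^{p-N-k/2,N+k/2}(\Lambda_1,\Lambda_0)$, and not merely pointwise bounds.
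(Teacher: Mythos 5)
Your plan is the paper's proof in disguise: the paper conjugates by an explicit elliptic FIO quantizing the homogeneous symplectomorphism $(x',\xi')\mapsto(-\xi'/\xi''_q,\;\xi''_q x')$ that swaps $\tilde\Lambda_0$ and $\tilde\Lambda_1$, reduces everything to the purely symbolic statement \eqref{eq:pFT-of-symbols} about the twisted partial Fourier transform $\tilde\cF'$, and then Taylor-expands $\tilde\cF' b$ in $\xi'$ about $\xi'=0$; your Taylor expansion of $a=(\cF')^{-1}b$ in $x'$ about $x'=0$ is the same computation read on the position side, and your identification of the degree-$\le N$ polynomial with an element of $I^p(\Lambda_1)$ matches the paper's $S^{p'-k/2}(\RR^n)$ summand. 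However, as written the proposal has a genuine gap: the two steps you explicitly defer --- the ``dictionary'' and the verification that the remainder satisfies the full iterated product-type estimates of $I^{p-N-k/2,N+k/2}(\Lambda_1,\Lambda_0)$ --- are the entire content of the proposition. In particular, the interchange of the two Lagrangians is \emph{not} implemented by the plain partial Fourier transform $\cF'$: that map is not homogeneous on $T^*\RR^n$, and the correct identification requires the rescaling by $\xi''_q$ (hence the appearance of $\tilde\cF'$, with $(\tilde\cF'b)(\xi',\xi'')=|\xi''_q|^{-k/2}(\cF'b)((\xi''_q)^{-1}\xi',\xi'')$). This rescaling is exactly where the shifts by $\pm k/2$ and the loss of $N$ in the orders come from, and without it the claimed target spaces cannot be verified.

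Concretely, what is missing and how the paper supplies it: first, the $N=0$ case of \eqref{eq:pFT-of-symbols} is proved by the direct bound $|(\tilde\cF'b)(\xi',\xi'')|\lesssim|\xi''_q|^{p'-k/2}\bigl(1+\int_{1\le|\zeta'|\le C|\xi''_q|}|\zeta'|^{l'}\,d\zeta'\bigr)$, which is where the dichotomy $l'>-k$ versus $l'<-k$ (i.e.\ $l>-k/2$ versus $l<-k/2$) enters and where the unavoidable $I^p(\Lambda_1)$ correction in the second case is isolated; second, the higher-order symbol estimates for the Taylor remainder are obtained from the identity
\begin{equation*}
\xi'^\alpha D_{\xi'}^\alpha(\cF' b)\bigl((\xi''_q)^{-1}\xi',\xi''\bigr)
=(-1)^{|\alpha|}\,\xi'^\alpha\,|\xi''_q|^{-|\alpha|}\,\bigl(\cF'((M')^\alpha b)\bigr)\bigl((\xi''_q)^{-1}\xi',\xi''\bigr),
\end{equation*}
which converts the $|\alpha|=N$ remainder terms into the $N=0$ case applied to $(M')^\alpha b\in S^{p',l'+|\alpha|}$ (this is precisely where the hypothesis $l+N<-k/2$, i.e.\ $l'+N<-k$, is used). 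Your worry that the top-order remainder terms only carry $C^0$ information in $x'$ is resolved exactly by this device; without it, the remainder estimate does not close. Your continuity assertion is fine once these estimates are in place, since every step is a composition of maps bounded between the countably-normed spaces.
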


Note that when $l>-k/2$, $I^p(\Lambda_1)\subset
I^{p,l}(\Lambda_0,\Lambda_1)$
is included in $I^{p+l,\frac{k}{2}}(\Lambda_1,\Lambda_0)$ by
Lemma~\ref{lemma:Lambda_0-Lag} and Lemma~\ref{lemma:filter}, while the
same conclusion does not hold when $l<-k/2$ necessitating the addition of
$I^p(\Lambda_1)$ explicitly to the right hand side.

\begin{proof}
As usual, it suffices to consider the model Lagrangians.
It is straightforward to write down an explicit homogeneous
symplectomorphism, and quantize it as a Fourier integral operator,
microlocally near $\tilde\Lambda_0\cap\tilde\Lambda_1$. Explicitly,
where $C|\xi''_q|>\langle\xi\rangle$, as one may always arrange
microlocally near a point in the intersection by suitably picking the index $q$, letting
$e_q$ be the corresponding coordinate unit vector,
one can take the symplectomorphism
$$
(x',x'',\xi',\xi'')\mapsto (-\frac{\xi'}{\xi''_q},x''+\frac{x'\cdot
  \xi'}{\xi''_q}\,e_q,\xi''_k x',\xi''),
$$
and quantize it as
\begin{equation*}\begin{aligned}
Fu(y)&=\int e^{i(y''-x''+(x'\cdot y')e_q)\cdot\xi''}|\xi''_q|^{k/2}\,u(x)\,dx\,d\xi''\\
&=\int e^{iy''\cdot\xi''}|\xi''_q|^{k/2}(\cF u)(-\xi''_q
y', \xi'')\,d\xi'',
\end{aligned}\end{equation*}
where the symbol $|\xi''_q|^{k/2}$ is chosen to make $F$ elliptic of
order $0$. Thus, for $u\in I^{p,l}(\tilde\Lambda_0,\tilde\Lambda_1)$,
assuming as we may that $u$ is the inverse Fourier transform of an
element $b$ of $S^{p',l'}$ with $p'=p-n/4+k/2$, $l'=l-k/2$, and with
support in $|\xi|\leq C|\xi''_q|$, $|\xi''_q|\geq 1$,
\begin{equation}\begin{aligned}
Fu(y)&=\int e^{iy''\cdot\xi''}|\xi''_q|^{k/2}b(-\xi''_q
y', \xi'')\,d\xi''
=\int e^{i(y'\cdot\xi'+y''\cdot\xi'')}|\xi''_q|^{k/2}(\cF' \tilde b)(\xi',\xi'')\,d\xi,\\
&\tilde b(\zeta',\zeta'')=b(\zeta''_q\zeta',\zeta''),
\end{aligned}\end{equation}
where
$$
\cF' \tilde b(\xi',\xi'')=(2\pi)^{-k}\int e^{i\xi'\cdot\zeta'} \tilde b(\zeta',\xi'')\,d\zeta'
$$
is the partial inverse Fourier transform of $\tilde b$.
Thus, $Fu$ is (up to a constant factor) the inverse Fourier transform
of
$$
a(\xi',\xi'')=|\xi''_q|^{k/2}(\cF' \tilde
b)(\xi',\xi'')=|\xi''_q|^{-k/2}(\cF'
b)((\xi''_q)^{-1}\xi',\xi'')=(\tilde\cF' b)(\xi',\xi''),
$$
with $\tilde\cF'$ defined by the last equation,
and in order to prove the proposition, we only need to show that (with
$p'=p-n/4+k/2$, $l'=l-k/2$)
\begin{equation}\begin{aligned}\label{eq:pFT-of-symbols}
&l'>-k\Rightarrow \tilde\cF' S^{p',l'}\subset S^{p'+l'+k/2,0}\\
&l'<-k\Rightarrow \tilde\cF' S^{p',l'}\subset S^{p'-k/2}(\RR^n)+ S^{p'-N-k/2,N},
\end{aligned}\end{equation}
with continuous inclusions.
We first prove the first implication as well as the second in the
special case $N=0$, when the first term on the right hand side can be
absorbed in the second. Since it is straightforward to check that the
differential operators under which we require iterative regularity
transform properly, the main issue is to obtain $\sup$ bounds.
But
\begin{equation}\begin{aligned}
&|(\tilde\cF' b)(\xi',\xi'')|\leq |\xi''_q|^{-k/2}\int
|b(\zeta',\xi'')|\,d\zeta'\lesssim |\xi''_q|^{-k/2}\int
\langle \zeta'\rangle^{l'}|\xi''_q|^{p'}\,d\zeta'\\
&\leq |\xi''_q|^{p'-k/2}\Big(\int_{|\zeta'|\leq 1}\,d\zeta'+\int_{1\leq
  |\zeta'|\leq C|\xi''_q|} |\zeta'|^{l'}\,d\zeta'\Big)\lesssim
|\xi''_q|^{p'-k/2}(1+|\xi''_q|^{l'+k}),
\end{aligned}\end{equation}
so the conclusion immediately follows. (We remark that if $l'=-k$, a
logarithmic term in $|\xi''_q|$ would appear on the right hand side,
so in terms of spaces with polynomial weights, we would have to lose
$\ep>0$ to end up in $S^{p'-k/2+\ep,0}$, which is the result one
obtains if one simply replaces $l'$ by $l'+\ep$ and applies the
statement in that case, hence not stating the case $l=-k/2$ separately.)
Now, for general $N\geq 1$, we expand $(\tilde\cF' b)(\xi',\xi'')$ in Taylor
series around $\xi'=0$ to order $N-1$,
\begin{equation}\begin{aligned}\label{eq:symbol-Taylor-exp}
(\tilde\cF' b)(\xi',\xi'')=&\sum_{|\alpha|\leq N-1}\frac{1}{\alpha !}
(\xi')^{\alpha}\pa_{\xi'}^\alpha(\tilde\cF'
b)(0,\xi'')\\
&\qquad+\sum_{|\alpha|=N}\frac{N}{\alpha!}\int_0^1 (1-t)^{N-1}(\xi')^{\alpha}\pa_{\xi'}^\alpha(\tilde\cF'
b)(t\xi',\xi'')\,dt,
\end{aligned}\end{equation}
and check that the two terms are respectively in $S^{p'-k/2}(\RR^n)$
and $S^{p'-N-k/2,N}$ when microlocalized to $|\xi'|\leq \tilde
C|\xi''_q|$. Here the key point is that
$$
\xi'^\alpha D_{\xi'}^\alpha(\cF'
b)((\xi''_q)^{-1}\xi',\xi'')=(-1)^{|\alpha|}\xi'^\alpha
|\xi''_q|^{-|\alpha|}(\cF' ((M')^\alpha b)) ((\xi''_q)^{-1}\xi',\xi''),
$$
where $M'_j$ is multiplication by the $j$th primed coordinate
function, with $(M')^\alpha$ then defined by the standard multiindex
notation, so $(M')^\alpha b\in S^{p',l'+|\alpha|}$, and thus, in view
of \eqref{eq:pFT-of-symbols} with the already proved case, $N=0$, the
$\alpha$th term in \eqref{eq:symbol-Taylor-exp} is in
$S^{p'+l'+k/2,|\alpha|}$ if $l'+|\alpha|>-k$, and in
$S^{p'-|\alpha|-k/2,|\alpha|}$ if $l'+|\alpha|<-k$, with the
additional information (in view of the evaluation at $\xi'=0$) that if $|\alpha|<N$ then in fact
the $\alpha$th term is in $S^{p'+l'+|\alpha|+k/2}(\RR^n)$ if
$l'+|\alpha|>-k$, and in
$S^{p'-k/2}(\RR^n)$ if $l'+|\alpha|<-k$. This proves
\eqref{eq:pFT-of-symbols} when $N\geq 0$ is an integer with
$l'+N<-k$, and thus the proposition.
\end{proof}

\begin{cor}\label{cor:reverse-contain}
For $l<-k/2$ and for $\ep>0$,
$$
I^{p,l}(\Lambda_0,\Lambda_1)\subset I^p(\Lambda_1)+I^{p+l+\ep,-l-\ep}(\Lambda_1,\Lambda_0),
$$
with continuous inclusions.
\end{cor}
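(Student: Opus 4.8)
The plan is to read this off from Proposition~\ref{prop:reverse-order} by feeding its conclusion through the filtration Lemma~\ref{lemma:filter}; apart from one delicate point about very small $\ep$, it is entirely order-bookkeeping. As always it suffices to treat the model pair $(\tilde\Lambda_0,\tilde\Lambda_1)$. Given $u\in I^{p,l}(\tilde\Lambda_0,\tilde\Lambda_1)$ with $l<-k/2$, I would take $N$ to be the largest non-negative integer with $l+N<-k/2$ (such an $N$ exists, since $l<-k/2$ makes $N=0$ admissible), and apply the first case of Proposition~\ref{prop:reverse-order}: this writes $u=u_1+u_2$ with $u_1\in I^p(\tilde\Lambda_1)$ and $u_2\in I^{\,p-N-\frac{k}{2},\;N+\frac{k}{2}}(\tilde\Lambda_1,\tilde\Lambda_0)$, continuously in the sense of \eqref{eq:reverse-order-cont}. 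Since $u_1$ already lies in the first summand of the asserted right-hand side, everything reduces to placing $u_2$ in $I^{p+l+\ep,-l-\ep}(\tilde\Lambda_1,\tilde\Lambda_0)$.

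For this I would invoke Lemma~\ref{lemma:filter} for the pair $(\tilde\Lambda_1,\tilde\Lambda_0)$. The two orders sum to the same value in source and target, namely $(p-N-\frac{k}{2})+(N+\frac{k}{2})=p=(p+l+\ep)+(-l-\ep)$, so the only hypothesis of the lemma left to verify is that the leading index does not decrease: $p-N-\frac{k}{2}\le p+l+\ep$. Writing $\delta=-l-\frac{k}{2}-N$, which lies in $(0,1]$ by the maximal choice of $N$, this is just $\delta\le\ep$. So the inclusion holds --- with its asserted continuity, whose constants come from Lemma~\ref{lemma:filter} and \eqref{eq:reverse-order-cont} --- whenever $\ep\ge\delta$, and hence for every larger $\ep$ as well, since $I^{p+l+\ep_1,-l-\ep_1}(\tilde\Lambda_1,\tilde\Lambda_0)\subset I^{p+l+\ep_2,-l-\ep_2}(\tilde\Lambda_1,\tilde\Lambda_0)$ when $\ep_1\le\ep_2$, again by Lemma~\ref{lemma:filter}.

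The point I expect to be the real obstacle is covering the remaining range $0<\ep<\delta$. There $N$ is already the largest integer that Proposition~\ref{prop:reverse-order} permits, so its black-box statement no longer suffices and one must return to the symbol estimate \eqref{eq:pFT-of-symbols} that underlies it and establish a version of it with a non-integer order. Concretely, with $b\in S^{p',l'}$, $p'=p-n/4+k/2$, $l'=l-k/2$ (so $l'+N<-k\le l'+N+1$), one needs to estimate the Taylor remainder of $\tilde\cF' b$ at $\xi'=0$ not by rounding the order down to the integer $N$ but by carrying through its sharp, generally fractional, value --- a more careful, interpolation- or H\"older/Littlewood--Paley-type analysis of the crude bound used there --- which should give $\tilde\cF' S^{p',l'}\subset S^{p'-k/2}(\RR^n)+S^{\,p'+l'+\frac{k}{2}+\ep,\;-l'-k-\ep}$ for every $\ep>0$; undoing the Fourier integral operator $F$, this is precisely $u\in I^p(\tilde\Lambda_1)+I^{p+l+\ep,-l-\ep}(\tilde\Lambda_1,\tilde\Lambda_0)$. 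The care needed in that argument is to check that the $\ep$-loss appears only on the $\tilde\Lambda_0$-order --- i.e.\ on the front-face weight after applying $F$ --- and not on the $\tilde\Lambda_1$-order, so that the $I^p(\tilde\Lambda_1)$ summand need not be enlarged.
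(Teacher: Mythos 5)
Your first step is correct and matches the easy half of the paper's argument: when $l=-k/2-\ep-N$ with $N\in\NN$ (equivalently, in your notation, when $\ep\ge\delta$), the corollary is immediate from Proposition~\ref{prop:reverse-order} together with Lemma~\ref{lemma:filter}, and the monotonicity in $\ep$ you invoke is also right. But the remaining range $0<\ep<\delta$, which you correctly flag as ``the real obstacle,'' is the actual content of the corollary, and your proposal does not prove it --- it only asserts that a ``more careful, interpolation- or H\"older/Littlewood--Paley-type analysis'' of \eqref{eq:pFT-of-symbols} ``should give'' the fractional-order inclusion $\tilde\cF' S^{p',l'}\subset S^{p'-k/2}(\RR^n)+S^{p'+l'+k/2+\ep,-l'-k-\ep}$. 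That step would fail as a direct continuation of the Taylor-expansion argument in Proposition~\ref{prop:reverse-order}: the expansion of $\tilde\cF'b$ at $\xi'=0$ only produces integer powers $(\xi')^\alpha$, so the remainder estimate saturates at the integer order $N$, and extracting the sharp fractional weight $\langle\xi'\rangle^{-l'-k-\ep}$ requires a genuinely new ingredient (e.g.\ H\"older regularity of the $N$th derivatives at $\xi'=0$ uniformly in the product-type symbol seminorms), which you do not supply.

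The paper closes this gap by complex interpolation rather than by refining the symbol estimate. It completes $I^{p,-k/2-\ep-N}(\Lambda_0,\Lambda_1)$ and the target spaces with respect to their $M$th seminorms to Banach spaces $I_M^{*,*}$, observes that because all seminorms are weighted $L^\infty$ bounds the interpolation is that of families of multiplication operators --- so the interpolation spaces are exactly $I_{M}^{p,-k/2-\ep-N\theta}(\Lambda_0,\Lambda_1)$ on the source side and $I_M^{p-N\theta-k/2,N\theta+k/2}(\Lambda_1,\Lambda_0)$ on the target side --- interpolates the continuous inclusions of Proposition~\ref{prop:reverse-order} between $N=0$ and a general integer $N$, and then recovers the Fr\'echet statement by intersecting over $M$ (using a diagonal-sequence argument to identify $\cap_M I_M$ with $I$). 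If you want to salvage your direct approach, you would need to prove your claimed fractional version of \eqref{eq:pFT-of-symbols} in full, including stability under all the vector fields in \eqref{eq:diff-factors-for-b-est}; as written, the proposal is a correct reduction plus an unproven key estimate.
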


Note that the second summand has order $p$ on $\Lambda_1$, i.e.\ it
did not increase when reversing the order of the two Lagrangians,
while it has order $p+l+\ep$ on $\Lambda_0$, so it only increased by
$\ep$ as compared to the left hand side. This is an affordable loss
when $\Lambda_0$ is thought of as carrying a `small singularity',
while any loss on $\Lambda_1$ is unaffordable.

Also note that in view
of Lemma~\ref{lemma:filter}, the corollary indeed becomes stronger if
one decreases $\ep$.

\begin{proof}
If $l=-k/2-\ep-N$ for some $N\in\NN$ and $\ep>0$, then this is just
Proposition~\ref{prop:reverse-order}. Below we assume that seminorms
are actually norms, as we may (by including the weighted $\sup$ norm without
derivatives on the symbol in all of them), and that they get stronger
with increasing index $M$. Let
$I_{M'}^{p-k/2-\ep}(\Lambda_0,\Lambda_1)$ denote the completion of
$I^{p-k/2-\ep}(\Lambda_0,\Lambda_1)$ with respect to the $M'$th norm,
so the result is a Banach space;
thus, for $M''\geq M'$, the identity map on $I^{p-k/2-\ep}(\Lambda_0,\Lambda_1)$ extends
to a continuous map
$$
I_{M''}^{p-k/2-\ep}(\Lambda_0,\Lambda_1)\to I_{M'}^{p-k/2-\ep}(\Lambda_0,\Lambda_1),
$$
and the completeness of $I^{p-k/2-\ep}(\Lambda_0,\Lambda_1)$ as a
Fr\'echet space means that
$$
I^{p-k/2-\ep}(\Lambda_0,\Lambda_1)=\cap_{M'}I_{M'}^{p-k/2-\ep}(\Lambda_0,\Lambda_1).
$$
Indeed, if $u\in \cap_{M}I_{M}^{p-k/2-\ep}(\Lambda_0,\Lambda_1)$
then for each $M$ there is a Cauchy sequence in
$I^{p-k/2-\ep}(\Lambda_0,\Lambda_1)$ converging to $u$; we may assume
that this Cauchy sequence is of the form $\{u_{M,j}\}_{j=1}^\infty$
with $\|u_{M,j}-u\|_M\leq 2^{-j}$. Then the diagonal sequence
$u_j=u_{j,j}$ is Cauchy with respect to all norms $M$, and it
converges to $u$ in all of these, so by the completeness of
$I^{p-k/2-\ep}(\Lambda_0,\Lambda_1)$, $u\in I^{p-k/2-\ep}(\Lambda_0,\Lambda_1)$.
We use similar notation for completions of other spaces with respect
to various norms below.

The complex interpolation spaces for
$I_{M'}^{p,-k/2-\ep}(\Lambda_0,\Lambda_1)$ and
$I_{M'}^{p,-k/2-\ep-N}(\Lambda_0,\Lambda_1)$ are
$I_{M'}^{p,-k/2-\ep-N\theta}(\Lambda_0,\Lambda_1)$, $\theta\in[0,1]$,
since in the interpolation only the weight corresponding to
$\Lambda_0$ is changed, and the seminorms are weighted $L^\infty$
bounds, i.e.\ the interpolation is actually for a family of multiplication operators.
Similarly,
the complex interpolation spaces between
$I_M^{p-k/2,k/2}(\Lambda_1,\Lambda_0)$, and $I_M^{p-N-k/2,N+k/2}(\Lambda_1,\Lambda_0)$,
are $I_M^{p-N\theta-k/2,N\theta+k/2}(\Lambda_1,\Lambda_0)$,
$\theta\in[0,1]$;
now both weights are interpolated, but this still is interpolation
for a family of multiplication operators.
In view of the continuity of the
inclusion map
$$
I^{p,-k/2-\ep-N}(\Lambda_0,\Lambda_1)\hookrightarrow
I^p(\Lambda_1)+I^{p-N-k/2,N+k/2}(\Lambda_1,\Lambda_0)
$$
for $N\in\NN$, for all $M$ there is $M'$ such that
the inclusion map extends to a map from the $M'$th
completion of the left hand side to the $M$th completion of the right
hand side. Thus, complex interpolation is applicable, and yields that
$$
I_{M'}^{p,-k/2-\ep-N\theta}(\Lambda_0,\Lambda_1)\hookrightarrow
I_M^p(\Lambda_1)+I_M^{p-N\theta-k/2,N\theta+k/2}(\Lambda_1,\Lambda_0),\ \theta\in[0,1].
$$
In particular, as
$$
I^{p,-k/2-\ep-N\theta}(\Lambda_0,\Lambda_1)\subset
I_{M'}^{p,-k/2-\ep-N\theta}(\Lambda_0,\Lambda_1),
$$
the inclusion map extends to
$$
I^{p,-k/2-\ep-N\theta}(\Lambda_0,\Lambda_1)\hookrightarrow
I_M^p(\Lambda_1)+I_M^{p-N\theta-k/2,N\theta+k/2}(\Lambda_1,\Lambda_0),\ \theta\in[0,1],
$$
for all $M$, with the spaces on the right becoming stronger with
$M$. Since the intersections of these spaces is
$I^p(\Lambda_1)+I^{p-N\theta-k/2,N\theta+k/2}(\Lambda_1,\Lambda_0)$,
we deduce that
$$
I^{p,-k/2-\ep-N\theta}(\Lambda_0,\Lambda_1)\hookrightarrow
I^p(\Lambda_1)+I^{p-N\theta-k/2,N\theta+k/2}(\Lambda_1,\Lambda_0),\ \theta\in[0,1].
$$
As $N\in\NN$ is arbitrary,
$$
I^{p,-k/2-\ep-m}(\Lambda_0,\Lambda_1)\subset I^p(\Lambda_1)+I^{p-m-k/2,m+k/2}(\Lambda_1,\Lambda_0).
$$
when $m\geq 0$ real, which is just a rewriting of the statement of the corollary.
\end{proof}

We also recall the composition rule of Antoniano and Uhlmann
\cite{Antoniano-Uhlmann:Functional} for flow-outs, with $\Lambda_1$
the flow-out of $\Lambda_0=N^*\diag$, as referred to in
\cite[Proposition~1.39]{Greenleaf-Uhlmann:Estimates}, namely (with $k$
the codimension of the intersection)
\begin{equation}\label{eq:Ant-Uhl-compose}
I^{p,l}(\Lambda_0,\Lambda_1)\circ I^{p',l'}(\Lambda_0,\Lambda_1)\subset I^{p+p'+k/2,l+l'-k/2}(\Lambda_0,\Lambda_1).
\end{equation}
We recall the set-up of flow-outs here, phrased in the general
codimension case as in 
Greenleaf and Uhlmann \cite{Greenleaf-Uhlmann:Estimates}. Thus,
one of the Lagrangians is the conormal bundle $N^*\diag$ of the
diagonal, and the other is a flow-out $\Lambda=\Lambda_\Gamma$
corresponding to a conic, codimension $k$, involutive (i.e.\ coisotropic)
$\Gamma\subset T^*\RR^n$. Such a $\Gamma$ is defined by the vanishing
of $k$ functions $p_i$ which Poisson commute on $\Gamma$; $\Lambda_\Gamma$
is then the set of points $((x,\xi),(y,-\eta))\in T^*\RR^{2n}$ such
that $(y,\eta)=\exp(\sum t_j \sH_{p_j})(x,\xi)$ for some
$t\in\RR^k$. We give a concrete example: if $\Gamma=T^*_Y X$ with $Y$ defined
by $x'=0$, $x'\in\RR^k$, then one can take $x'_1,\ldots,x'_k$ as the
Poisson commuting functions, and then $\Lambda_\Gamma$ consists of
points $((x,\xi),(y,\eta))$ such that $x=y\in Y$ (i.e.\ $x'=0=y'$,
$x''=y''$) and $\xi+\eta\in N^*_x Y$ (i.e.\ $\xi''=-\eta''$), i.e.
$$
\Lambda_\Gamma=N^*\{x'=0=y',\ x''=y''\}.
$$
Another
example, considered in \cite{Greenleaf-Uhlmann:Estimates},
is with $\tilde\Gamma$ given by $\xi'=0$, so
\begin{equation}\label{eq:Gr-Uhl-model-2}
\tilde\Lambda
=\Lambda_{\tilde\Gamma}=\{((x,\xi),(y,\eta)):\ \xi'=0=\eta',\
\xi''=-\eta'',\ x''=y''\}.
\end{equation}

For purposes of
considering elements of $I^{p,l}(\Lambda_0,\Lambda_1)$ as operators on
functions or distributions on $\RR^n$, it is important whether the
Lagrangians intersect $T^*\RR^n\times o_{\RR^n}$ or $o_{\RR^n}\times
T^*\RR^n$, with $o_{\RR^n}$ denoting the zero section of $T^*\RR^n$.
Our first example, with $\Gamma=T^*_YX$, $\Lambda_0=N^*\diag$ and
$\Lambda_1=\Lambda_\Gamma$ contains covectors of both types, namely
points like
$$
\{x'=0=y',\ x''=y'',\ \xi''=-\eta''=0,\ \xi'=0,\
\eta'\neq 0\}
$$
and
$$
\{x'=0=y',\ x''=y'',\ \xi''=-\eta''=0,\ \eta'=0,\ \xi'\neq 0\};
$$
these are in the
intersections of $N^*\{x'=0=y',\ x''=y''\}$ with $N^*\{y'=0\}$ resp.\
$N^*\{x'=0\}$. The behavior at these intersections is best considered
in terms of another Lagrangian pair, discussed below after
\eqref{eq:one-sided-mapping}, and for now we assume that the wave
front set of the elements of $I^{p,l}(\Lambda_0,\Lambda_1)$ we
consider is disjoint from $T^*\RR^n\times o_{\RR^n}$ and $o_{\RR^n}\times
T^*\RR^n$.
We write
\begin{equation}\begin{aligned}\label{eq:tilde-I-def}
&\tilde I^{*,*}(\Lambda_0,\Lambda_1)\\
&\qquad=\{K\in
I^{*,*}(\Lambda_0,\Lambda_1):\ \WF(K)
\cap (T^*\RR^n\times o_{\RR^n})=\emptyset,\\
&\qquad\qquad\qquad\qquad\qquad \WF(K)\cap (o_{\RR^n}\times
T^*\RR^n)=\emptyset\}.
\end{aligned}\end{equation}

If one reverses the order of the Lagrangians, i.e.\ $\Lambda_0$ is the
flow-out of $\Lambda_1=N^*\diag$, then for $l,l'<-k/2$, one has
$$
I^{p,l}(\Lambda_0,\Lambda_1)\subset
I^p(\Lambda_1)+I^{p+l+\ep,-l-\ep}(\Lambda_1,\Lambda_0),
$$
with a similar decomposition for
$I^{p',l'}(\Lambda_0,\Lambda_1)$. Now,  by \eqref{eq:Ant-Uhl-compose}
in the last case (and in fact all the other, simpler, statements can be reduced to this
using Lemma~\ref{lemma:Lambda_0-Lag}),
\begin{equation*}\begin{aligned}
&I^p(\Lambda_1)\circ I^{p'}(\Lambda_1)\subset I^{p+p'}(\Lambda_1),\\
&I^p(\Lambda_1)\circ I^{p'+l'+\ep,-l'-\ep}(\Lambda_1,\Lambda_0)\subset
I^{p+p'+l'+\ep,-l'-\ep}(\Lambda_1,\Lambda_0),\\
&I^{p+l+\ep,-l-\ep}(\Lambda_1,\Lambda_0)\circ I^{p'}(\Lambda_1)\subset
I^{p+p'+l+\ep,-l-\ep}(\Lambda_1,\Lambda_0)\\
&I^{p+l+\ep,-l-\ep}(\Lambda_1,\Lambda_0)\circ
I^{p'+l'+\ep,-l'-\ep}(\Lambda_1,\Lambda_0)\\
&\qquad\qquad\qquad\subset I^{p+p'+l+l'+2\ep+k/2,-l-l'-2\ep-k/2}(\Lambda_1,\Lambda_0).
\end{aligned}\end{equation*} 
Thus,
\begin{equation*}\begin{aligned}
&I^{p,l}(\Lambda_0,\Lambda_1)\circ
I^{p',l'}(\Lambda_0,\Lambda_1)\\
&\qquad\qquad\subset I^{p+p'}(\Lambda_1)
+I^{p+p'+l'+\ep,-l'-\ep}(\Lambda_1,\Lambda_0)+I^{p+p'+l+\ep,-l-\ep}(\Lambda_1,\Lambda_0),
\end{aligned}\end{equation*}
which suffices for our purposes. (Note that the order of the two
Lagrangians is reversed on the two sides!)

We now recall a result of Greenleaf and Uhlmann:

\begin{prop}\label{prop:Greenleaf-Uhlmann} (See 
\cite[Theorem~3.3]{Greenleaf-Uhlmann:Estimates})
An operator
$A\in I^{p,l}(\Lambda_1,\Lambda_0)$ (with, say, compactly supported
Schwartz kernel) is continuous $H^{m'}\to H^{m}$ if
$$
p+\frac{k}{2}\leq m'-m\ \text{and}\ p+l\leq m'-m.
$$
\end{prop}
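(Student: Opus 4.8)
The plan is to reduce the Sobolev statement to the case $m=m'=0$, then to a model computation, and finally to prove the model bound by an almost-orthogonal dyadic decomposition. For the first reduction write $\Lambda_1=N^*\diag$ and let $\Lambda_0$ be the flow-out, so that (microlocally, away from the intersection) $A\in I^{p,l}(\Lambda_1,\Lambda_0)$ carries order $p+l$ on $\Lambda_1$ and order $p$ on $\Lambda_0$. A standard elliptic $\Lambda^t\in\Psi^t$ lies in $I^{t-k/2,k/2}(\Lambda_1,\Lambda_0)$ by Lemma~\ref{lemma:Lambda_0-Lag}, applied with $\Lambda_1$ in the role of its first Lagrangian, so by the Antoniano--Uhlmann composition rule \eqref{eq:Ant-Uhl-compose} — which, as noted there, also governs compositions with pseudodifferential factors via Lemma~\ref{lemma:Lambda_0-Lag} — one gets
$$
\Lambda^{m}\circ A\circ\Lambda^{-m'}\in I^{\,p+m-m',\,l}(\Lambda_1,\Lambda_0).
$$
As $A\colon H^{m'}\to H^{m}$ is equivalent to the $L^2$-boundedness of $\Lambda^{m}A\Lambda^{-m'}$, it suffices to prove: a compactly supported $A\in I^{p,l}(\Lambda_1,\Lambda_0)$ with $p+k/2\le0$ and $p+l\le0$ is bounded on $L^2$. (Lemma~\ref{lemma:filter} permits making either one of these inequalities strict, but not both, so the corner $p+k/2=0=p+l$ must survive the argument.)

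For the $L^2$ statement I would microlocalize and, as is standing in this section, assume the wave front set avoids the two zero sections, i.e.\ $A\in\tilde I^{p,l}(\Lambda_1,\Lambda_0)$ as in \eqref{eq:tilde-I-def}. Conjugating $A\mapsto FAF^{-1}$ by an elliptic order-zero Fourier integral operator $F$ on $\RR^n$ — bounded on every $H^s(\RR^n)$, hence on $L^2$, and preserving $N^*\diag$ — one may bring the coisotropic defining $\Lambda_0$ to $\{\xi'=0\}$, so $\Lambda_0=\tilde\Lambda$ as in \eqref{eq:Gr-Uhl-model-2} and $\Lambda_1=N^*\diag$. In this model the Schwartz kernel of $A$, trivialized as a half-density, is, modulo smoothing, $\int e^{i(x-y)\cdot\xi}\,b(x,y,\xi)\,d\xi$ with $b$ of product type in $\xi=(\xi',\xi'')$: symbolic of order $p+l$ where $|\xi'|\gtrsim|\xi''|$, and, where $|\xi'|\lesssim|\xi''|$, satisfying $|D^\alpha_{(x,y)}D^\beta_\xi b|\lesssim\langle\xi''\rangle^{p+k/2-|\beta''|}\langle\xi'\rangle^{l-k/2-|\beta'|}$, the front-face order $p+k/2$ encoding the order $p$ of $A$ on $\Lambda_0$; this is the description underlying Lemma~\ref{lemma:basic-symbol}, transported to the present pair.

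For the model estimate, split $b=\sum_{j\ge0}\sum_{0\le i\le j}b_{ij}$ with $b_{ij}$ supported where $\langle\xi''\rangle\sim2^{j}$ and $\langle\xi'\rangle\sim2^{i}$ (the term $i=0$ absorbing $\langle\xi'\rangle\lesssim1$), with corresponding operators $A_{ij}$. Rescaling $\xi'\mapsto2^{i}\xi'$ and $\xi''\mapsto2^{j}\xi''$ exhibits $A_{ij}$ as an order-zero operator with symbol bounds uniform in $i,j$, so Schur's test (or Calder\'on--Vaillancourt) gives $\|A_{ij}\|_{L^2\to L^2}\lesssim2^{\,j(p+k/2)+i(l-k/2)}$; since for $j\ge1$ the exponent equals $j$ times the convex combination $(1-i/j)(p+k/2)+(i/j)(p+l)$, it is $\le0$ for all $0\le i\le j$ precisely when $p+k/2\le0$ and $p+l\le0$, so the hypotheses give $\sup_{i,j}\|A_{ij}\|<\infty$, endpoint included. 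One then establishes the almost-orthogonality bounds $\|A_{ij}^*A_{i'j'}\|+\|A_{ij}A_{i'j'}^*\|\lesssim_N2^{-N(|i-i'|+|j-j'|)}$ uniformly in all indices, by composing kernels and integrating by parts in whichever frequency block is separated, using the uniform symbol bounds (with the evident modification when one of $i,i'$ labels the low-frequency piece). Cotlar--Stein then gives $\|A\|_{L^2\to L^2}\lesssim\sup_{i,j}\sum_{i',j'}2^{-N(|i-i'|+|j-j'|)/2}<\infty$, and unwinding the two reductions proves the proposition.

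The main obstacle is this last step — not the order bookkeeping (routine, modulo care with the asymmetry of the paired-Lagrangian orders), but the almost-orthogonality with rapid, \emph{uniform} off-diagonal decay, and in particular the transition region $1\ll|\xi'|\lesssim|\xi''|$ where $A$ interpolates between pseudodifferential and flow-out behaviour, together with the constrained shape $0\le i\le j$ of the index set. An alternative to Cotlar--Stein is a $T^*T$ iteration: since $\Lambda_0^*=\Lambda_0$, $\Lambda_1^*=\Lambda_1$, and the pair composes within its class, $A^*A\in I^{2p+k/2,\,2l-k/2}(\Lambda_1,\Lambda_0)$ by \eqref{eq:Ant-Uhl-compose}, and iterating gives $(A^*A)^N\in I^{q_N,m_N}$ with $q_N+k/2=2N(p+k/2)$ and $q_N+m_N=2N(p+l)$; for large $N$ and strict inequalities $(A^*A)^N$ is trace class, which bounds $\|AR_\ep\|$ uniformly for regularizers $R_\ep\to\Id$ and hence bounds $\|A\|$, the endpoint then requiring a separate limiting step — so the dyadic route seems cleaner.
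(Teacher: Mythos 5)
Your proposal is correct in outline, but it takes a genuinely different route from the paper: the paper does not reprove this result at all — it cites Theorem~3.3 of Greenleaf--Uhlmann and then repairs a gap in \emph{their} reduction, namely that when $p+l=0$ and $p<-k/2$ their argument implicitly uses $\Id\in I^{p,l}(\Lambda_1,\Lambda_0)$, which fails; the fix is to first apply Lemma~\ref{lemma:filter} to replace $(p,l)$ by $(-k/2,k/2)$, after which the Greenleaf--Uhlmann argument goes through. You instead give a self-contained proof: the same two reductions (to $m=m'=0$ via conjugation by elliptic $\Psi$DOs using Lemma~\ref{lemma:Lambda_0-Lag} and \eqref{eq:Ant-Uhl-compose}, then to the model \eqref{eq:Gr-Uhl-model-2} by an order-zero FIO), followed by a dyadic decomposition of the product-type symbol and Cotlar--Stein. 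Your order bookkeeping is right throughout, and your convexity observation — that the exponent $j(p+k/2)+i(l-k/2)$ is $j$ times a convex combination of $p+k/2$ and $p+l$ on the index set $0\le i\le j$ — handles the corner $p+k/2=0=p+l$ directly, which is precisely the case where the cited proof needed repair; this is the main thing your approach buys, at the cost of having to establish the model $L^2$ bound from scratch.

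One technical caution on the step you yourself flag as the main obstacle: the uniform almost-orthogonality $\|A_{ij}^*A_{i'j'}\|\lesssim_N 2^{-N(|i-i'|+|j-j'|)}$ does not follow from naive integration by parts in the intermediate variable together with a sup-of-kernel/Schur bound, because the crude kernel estimate carries an uncompensated volume factor $2^{j(n-k)}$ that can dwarf the gain $2^{-N\max(i,i')}$ when only the $\xi'$-blocks are separated. The standard remedy is to expand the compactly supported, smooth $(x,y)$-dependence of each $b_{ij}$ in Fourier modes (rapidly decaying coefficients), reducing each block to modulated Fourier multipliers with genuinely disjoint supports up to controlled frequency shifts; with that (or an equivalent device) the asserted bounds hold uniformly, including on the constrained index set $0\le i\le j$, and Cotlar--Stein closes the argument. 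This is standard multiparameter Calder\'on--Vaillancourt technology, so the gap is one of execution rather than of substance.
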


Note that the first condition is exactly the boundedness condition for
elements of $I^{p}(\Lambda_0)$, while the second one is that of elements
of $I^{p+l}(\Lambda_1)$.

There is actually an error in the proof of
\cite[Theorem~3.3]{Greenleaf-Uhlmann:Estimates}. Recall that the
proposition is reduced to the case of $m=m'=0$ and equality holding in
one of the two inequalities. The $p+l=0$ (and then
$l\geq k/2$, so $p\leq -k/2$) case is the
problematic one in the proof; note that this means that the order on
the flow-out, $\Lambda_0$, which is regarded as the main Lagrangian, is
small compared to that on $\Lambda_1$, the conormal bundle of the
diagonal.
This is a problem since $\Id\in I^0(\Lambda_1)$
is assumed to be to be $I^{p,l}(\Lambda_1,\Lambda_0)$, but as we
remarked after this only holds for $p=-k/2$, $l=k/2$, and {\em not for
  smaller values of $p$}. However, this can be fixed: by Lemma~\ref{lemma:filter}, if
$p+l=0$, $p<-k/2$, then
$$
I^{p,l}(\Lambda_1,\Lambda_0)\subset I^{-k/2,k/2}(\Lambda_1,\Lambda_0),
$$
so one may assume that $p=-k/2$, $l=k/2$, in which case the rest of
the argument goes through.

In view of Corollary~\ref{cor:reverse-contain}, we deduce:

\begin{prop}\label{prop:diag-main-bded}
With $\Lambda_1=N^*\diag$, $\Lambda_0$ its flow out, $\tilde
I^{*,*}(\Lambda_0,\Lambda_1)$ as in \eqref{eq:tilde-I-def},
$K\in \tilde I^{p,l}(\Lambda_0,\Lambda_1)$ is bounded from $H^{m'}$ to $H^m$ if
$$
p\leq m'-m,\  p+l<m'-m-\frac{k}{2}.
$$
\end{prop}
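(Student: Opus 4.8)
The plan is to combine the reversed-order decomposition of Corollary~\ref{cor:reverse-contain} with the mapping property of Proposition~\ref{prop:Greenleaf-Uhlmann} and with the standard boundedness of pseudodifferential operators on Sobolev spaces, first in the range $l<-k/2$ and then in general by a filtration argument. As always it suffices to work with the model Lagrangians, the Schwartz kernels may be taken compactly supported by the standing conventions, and the wave front set conditions built into $\tilde I^{p,l}(\Lambda_0,\Lambda_1)$ are inherited by all the pieces produced below, so I suppress them from now on.

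First I would handle the case $l<-k/2$. Because the hypothesis $p+l<m'-m-k/2$ is strict, one may fix $\ep>0$ small enough that $p+l+\ep+k/2\leq m'-m$. Corollary~\ref{cor:reverse-contain} then gives a decomposition $K=K_1+K_2$ with $K_1\in I^p(\Lambda_1)$ and $K_2\in I^{p+l+\ep,-l-\ep}(\Lambda_1,\Lambda_0)$, continuously. Since $\Lambda_1=N^*\diag$, the operator $K_1$ is pseudodifferential of order $p$, hence bounded $H^{m'}\to H^m$ by the other hypothesis $p\leq m'-m$. For $K_2$ I would apply Proposition~\ref{prop:Greenleaf-Uhlmann} with its $(p,l)$ taken to be $(p+l+\ep,-l-\ep)$: the two required inequalities become $(p+l+\ep)+k/2\leq m'-m$, which holds by the choice of $\ep$, and $(p+l+\ep)+(-l-\ep)=p\leq m'-m$, which is the remaining hypothesis; so $K_2$ is bounded $H^{m'}\to H^m$, and hence so is $K$.

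For general $l$ I would reduce to the previous case via Lemma~\ref{lemma:filter}. If $l\geq -k/2$, pick $p'$ with $p+l+k/2<p'\leq m'-m$ (possible, again, exactly because $p+l<m'-m-k/2$), and set $l'=p+l-p'$; then $l'<-k/2$, while $p\leq p'$ (as $l\geq -k/2$) and $p+l=p'+l'$, so Lemma~\ref{lemma:filter} yields $K\in I^{p',l'}(\Lambda_0,\Lambda_1)$. The pair $(p',l')$ satisfies the hypotheses of the proposition with $l'<-k/2$, so the first case applies to $K$ viewed in $I^{p',l'}(\Lambda_0,\Lambda_1)$.

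I do not anticipate a real obstacle here: the only substantive ingredients, Corollary~\ref{cor:reverse-contain} (which carries the complex-interpolation work descending from Proposition~\ref{prop:reverse-order}) and the corrected Greenleaf--Uhlmann bound of Proposition~\ref{prop:Greenleaf-Uhlmann}, are already in place, and the rest is bookkeeping with orders. The one point to watch is that the strict inequality $p+l<m'-m-k/2$ is used twice in an essential way: once to absorb the $\ep$-loss inherent in Corollary~\ref{cor:reverse-contain}, and once to create the room needed for the filtration step that lowers $l$ below $-k/2$.
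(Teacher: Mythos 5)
Your proof is correct and follows essentially the same route as the paper's: split off the $l<-k/2$ case via Corollary~\ref{cor:reverse-contain}, handle the two summands with the pseudodifferential bound on $I^p(\Lambda_1)$ and the corrected Greenleaf--Uhlmann estimate of Proposition~\ref{prop:Greenleaf-Uhlmann}, and reduce the $l\geq -k/2$ case to this one by Lemma~\ref{lemma:filter}. The only cosmetic difference is in the filtration step, where you parameterize by a free choice of $p'\in(p+l+k/2,\,m'-m]$ with $l'=p+l-p'$ rather than writing $p'=p+l+k/2+\ep$ and $l'=-k/2-\ep$ as the paper does; these are the same maneuver.
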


Note that the assumptions are the criterion (except that equality is also allowed in the criterion)
for elements of $I^p(\Lambda_1)$, resp.\ $I^{p+l}(\Lambda_0)$,
to be bounded in the stated manner.

\begin{proof}
If $l<-k/2$ then Corollary~\ref{cor:reverse-contain} gives that for
all $\ep>0$,
$$
I^{p,l}(\Lambda_0,\Lambda_1)\subset I^p(\Lambda_1)+I^{p+l+\ep,-l-\ep}(\Lambda_1,\Lambda_0).
$$
Now, elements of $I^p(\Lambda_1)$ are bounded from $H^{m'}$ to $H^m$
when $p\leq m'-m$, while those of
$I^{p+l+\ep,-l-\ep}(\Lambda_1,\Lambda_0)$ are bounded from $H^{m'}$ to $H^m$ when
$$
p+l+\ep+\frac{k}{2}\leq m'-m\ \text{and}\ p\leq m'-m,
$$
taking $\ep>0$ sufficiently small (so that $\ep\leq
m'-m-\frac{k}{2}-p-l$, note that the right hand side is positive), the
proposition follows.

If $l\geq -k/2$ then for $\ep>0$, using $l+k/2+\ep>0$, and thus in
view of Lemma~\ref{lemma:filter}:
$$
I^{p,l}(\Lambda_0,\Lambda_1)\subset I^{p+l+k/2+\ep,-k/2-\ep}(\Lambda_0,\Lambda_1),
$$
so by the first part of the proof $I^{p,l}(\Lambda_0,\Lambda_1)$ is
bounded from $H^{m'}$ to $H^m$ when
$$
p+l+k/2+\ep\leq m'-m,\  p+l<m'-m-\frac{k}{2}.
$$
Taking $0<\ep<m'-m-\frac{k}{2}-(p+l)$, the inequalities are satisfied,
and the proposition follows.
\end{proof}

However, while boundedness is important for our purposes, we also need
to show that the classes $I^{p,l}(\Lambda_0,\Lambda_1)$ satisfy
a composition law. For this, as well as other, purposes, we consider another model of cleanly
intersecting Lagrangians, related to the $\Gamma=T^*_YX$ case
considered above.

This other model of a cleanly intersecting Lagrangian pair
is, in $T^*\RR^n\setminus o$, where $\RR^n=\RR^k_{x'}\times\RR^{n-k-d}_{x''}\times\RR^d_{x'''}$,
\begin{equation}\label{eq:model-Lag-2}
\Lambda_0=N^*\{x'=0,\ x''=0\},\ \Lambda_1=N^*\{x''=0\}.
\end{equation}
One may assume (via localization in the double primed dual variables, and
using that one is near the intersection $\Lambda_0\cap\Lambda_1$) that one is working
in the region where $|\xi''_q|>C\langle\xi\rangle$, and then
this pair is reduced to the standard Lagrangian pair
$(\tilde\Lambda_0,\tilde\Lambda_1)$ considered above via the homogeneous
symplectomorphism
$$
(x',x'',x''',\xi',\xi'',\xi''')\mapsto(x',x''+\frac{x'''\cdot\xi'''}{\xi''_q}e_q,-\frac{\xi'''}{\xi''_q},\xi',\xi'',\xi''_q x'''),
$$
which is quantized by the elliptic $0$th order FIO
$$
Fu(y)=\int e^{i[(y'-x')\cdot\xi'+(y''-x'')\cdot\xi''+(x'''\cdot y''')
e_q\cdot \xi'']}|\xi''|^{d/2}u(x)\,dx.
$$
The characterization of $I^{p,l}(\tilde\Lambda_0,\tilde\Lambda_1)$ as
inverse Fourier transforms modulo $I^p(\tilde\Lambda_1)$ of elements of
$S^{p,l}$ gives that they can also be described, modulo
$I^p(\Lambda_1)$, by oscillatory integrals
\begin{equation}\label{eq:osc-int-mod-2}
\int e^{i(y'\cdot\xi'+y''\cdot\xi'')}b(y''',\xi',\xi'') \,d\xi'\,d\xi'',
\end{equation}
where $b\in
S^{p-\frac{n}{4}+\frac{k}{2}+\frac{d}{2},l-\frac{k}{2}}(\RR^d_{x'''};\RR^{n-k-d}_{\xi''};\RR^k_{\xi'})$. Thus,
one has the inverse Fourier transform in the primed and double primed
variables, with the triple primed parameters serving as parameters,
i.e.\ one can add parametric variables to the above parameterization
using $N^*\{x'=0,x''=0\}$ and $N^*\{x''=0\}$ at the cost of shifting
the orders appropriately. The principal symbol of
\eqref{eq:osc-int-mod-2} on $\Lambda_1$ is then, with $\cF'$ the
inverse Fourier transform in the primed variables,
\begin{equation}\label{eq:Lambda_1-princ-symbol-mod}
(2\pi)^{\frac{(3n+2k-2d)}{4}}(\cF')^{-1}b\, |dy'|^{1/2}\,|d\xi''|^{1/2}\,|dy'''|^{1/2}
\end{equation}
in
\begin{equation}\begin{aligned}\label{eq:Lambda_1-symb-as-IFT-mod}
&S^{p-n/4+k/2+d/2}(\RR^d_{y'''};\RR^{n-k-d}_{\xi''};I^{l-\frac{k}{4}}(\RR^k_{y'};N^*\{0\}))\\
&=S^{p-n/4+k/2+d/2}(\RR^{n-k-d}_{\xi''};I^{l-\frac{k+d}{4}}(\RR^{k+d}_{y',y'''};N^*\{y'=0\}))
\end{aligned}\end{equation}
modulo
\begin{equation*}\begin{aligned}
&S^{p-n/4+k/2+d/2-1}(\RR^d_{y'''};\RR^{n-k-d}_{\xi''};I^{l+1-\frac{k}{4}}(\RR^k_{y'};N^*\{0\}))\\
&=S^{p-n/4+k/2+d/2-1}(\RR^{n-k-d}_{\xi''};I^{l+1-\frac{k+d}{4}}(\RR^{k+d}_{y',y'''};N^*\{y'=0\})).
\end{aligned}\end{equation*}

With this parameterization it is straightforward to see, as was shown
by Greenleaf and Uhlmann in
\cite[Lemma~1.1]{Greenleaf-Uhlmann:Recovering}, that if $Y$ and $Z$ are
transversal manifolds of codimension $d_1$, resp.\ $d_2$, in $\RR^n$,
then the product of distributions conormal to
$Y$ and $Z$, respectively, is a sum of paired Lagrangian distributions
associated to the pairs $(N^*(Y\cap Z),N^*Y)$ and $(N^*(Y\cap
Z),N^*Z)$. More precisely,
$$
I^{[\mu]}(Y) I^{[\mu']}(Z)\subset I^{[\mu,\mu']}(Y\cap
Z,Y)+I^{[\mu',\mu]}(Y\cap Z,Z),
$$
where
\begin{equation}\label{eq:pair-conorm-subset}
I^{[\mu,\mu']}(Y\cap
Z,Y)=I^{\mu+\frac{d_1}{2}-\frac{n}{4},\mu'+\frac{d_2}{2}}(N^*(Y\cap Z),N^*Y).
\end{equation}
(Here the left hand side is denoted by $I^{\mu,\mu'}(Y,Y\cap
Z)$, $Y\cap Z=Y_2\subset Y_1=Y$ in \cite{Greenleaf-Uhlmann:Recovering}
just after Equation~(1.4). Then the equality in
\eqref{eq:pair-conorm-subset} is the extreme left hand side of the first
displayed equation after Equation~(1.4) being equal to the extreme
right hand side. The middle expression in this equation is {\em not}
equal to the extreme right hand side.)

Note that here the codimension of the intersection of the two
Lagrangians $N^*Y$ and $N^*(Y\cap Z)$ is $d_2$, and thus using
$$
I^{[\mu]}(Y)=I^{\mu+\frac{d_1}{2}-\frac{n}{4}}(N^*Y),\ 
I^{[\mu']}(Z)=I^{\mu'+\frac{d_2}{2}-\frac{n}{4}}(N^*Z),
$$
one has
\begin{equation}\label{eq:product-conormal}
I^{\mu}(N^*Y) I^{\mu'}(N^*Z)\subset I^{\mu,\mu'+\frac{n}{4}}(N^*(Y\cap Z),N^*Y)+I^{\mu',\mu+\frac{n}{4}}(N^*(Y\cap Z),N^*Z),
\end{equation}
We remark here that one must be careful in ordering the Lagrangians,
as mentioned above; this {\em is} the correct ordering. Thus, the
`main' Lagrangians are the original ones, $N^*Y$ and $N^*Z$;
$N^*(Y\cap Z)$ carries a relative singularity only.

A special case of the model of \eqref{eq:model-Lag-2}
in $\RR^{2n}=\RR^n_x\times\RR^n_y$ (note the change of
dimension!), with $\RR^n=\RR^k_{x'}\times\RR^{n-k}_{x''}$ is, with
$(\xi,\eta)$ the dual variables of $(x,y)$,
\begin{equation}\begin{aligned}\label{eq:near-diag-model}
&\Lambda_1=\{x'=y',\ x''=y'',\ \xi'=-\eta',\ \xi''=-\eta''\}=N^*\diag,\\
&\Lambda_0=\{x'=0=y',\ x''=y'',\ \xi''=-\eta''\}=N^*\{x'=0=y',\ x''=y''\},
\end{aligned}\end{equation}
with codimension $k$ intersection; this corresponds to the flowout
with $\Gamma=T^*_Y X$, $Y=\{x'=0\}$, discussed above.
Then the parameterization of $I^{p,l}(\Lambda_0,\Lambda_1)$, modulo
$I^p(\Lambda_1)$, is
$$
\int e^{i[(x'-y')\cdot\xi'+(x''-y'')\cdot\xi''+x'\cdot\eta']} a(x'',
\xi',\xi'',\eta')\,d\xi\,d\eta',\ a\in S^{p,l-\frac{k}{2}}(\RR^{n-k}_{x''};\RR^{n}_\xi;\RR^k_{\eta'}),
$$
with a conic neighborhood of $\eta'=0$ in $(\RR^n_{\xi}\times
\RR^k_{\eta'})\setminus 0$ corresponding to a neighborhood of the
intersection $\Lambda_0\cap\Lambda_1$ (so $\xi$ is the `large'
variable on the parameter space,
note that it is indeed the variable in the parameterization of the
conormal bundle of the diagonal),
and the $x''$ dependence can be replaced by $y''$ dependence. (To see
this form of parameterization, write $z''=x-y$, $z'=x'$, $z'''=x''$
then $\Lambda_1=N^*\{z''=0\}$, $\Lambda_0=N^*\{z'=0,\
z''=0\}$. Replacing $x'$ and $x''$ by $y'$ and $y''$ in the definition
of $z'$ and $z'''$ gives the other parameterization.)
Here the principal symbol is, with $\cF'$ the Fourier transform
in the last variable, $\eta'$,
\begin{equation}\label{eq:Lambda_1-princ-symbol-op}
(2\pi)^{n+k}(\cF')^{-1}a\, |dy'|^{1/2}\,|d\xi|^{1/2}\,|dy'''|^{1/2}
\end{equation}
in
\begin{equation}\label{eq:Lambda_1-symb-as-IFT-op}
S^{p}(\RR^{n-k}_{x''};\RR^{n}_{\xi};I^{l-\frac{k}{4}}(\RR^k_{x'};N^*\{0\}))
=S^{p}(\RR^{n}_{\xi''};I^{l-\frac{n}{4}}(\RR^{n}_{x};N^*\{x'=0\}))
\end{equation}
modulo
$$
S^{p-1}(\RR^{n-k}_{x''};\RR^{n}_{\xi};I^{l+1-\frac{k}{4}}(\RR^k_{x'};N^*\{0\}))
=S^{p-1}(\RR^{n}_{\xi''};I^{l+1-\frac{n}{4}}(\RR^{n}_{x};N^*\{x'=0\})).
$$

Writing out the composition we have:

\begin{prop}\label{prop:diag-main-compose}
With $\Lambda_1=N^*\diag$, $\Lambda_0$ its flow out, the subset
$\tilde I^{*,*}(\Lambda_0,\Lambda_1)$ of
$I^{*,*}(\Lambda_0,\Lambda_1)$ defined in \eqref{eq:tilde-I-def},
satisfies that if $l+l'<0$ and $L=\max(l,l',l+l'+k/2)$, then
\begin{equation}\begin{aligned}\label{eq:composition-prop}
&\tilde I^{p,l}(\Lambda_0,\Lambda_1)\circ
\tilde I^{p',l'}(\Lambda_0,\Lambda_1)\subset \tilde
I^{p+p',L}(\Lambda_0,\Lambda_1).
\end{aligned}\end{equation}
Furthermore,  with $-(l+l')>\delta>0$, modulo
\begin{equation*}\begin{aligned}
&S^{p+p'-\min(1,\delta)}(\RR^{n-k}_{x''};\RR^n_{\xi};I^{L+\delta-\frac{k}{4}}(\RR^k_{x'};N^*\{0\}))\\
&=S^{p+p'-\min(1,\delta)}(\RR^{n}_{\xi''};I^{L+\delta-\frac{n}{4}}(\RR^{n}_{x};N^*\{x'=0\}))
\end{aligned}\end{equation*}
the principal symbol on $\Lambda_1=N^*\diag$ in
$$
S^{p+p'}(\RR^{n-k}_{x''};\RR^n_{\xi};I^{L-\frac{k}{4}}(\RR^k_{x'};N^*\{0\}))
=S^{p+p'}(\RR^{n}_{\xi''};I^{L-\frac{n}{4}}(\RR^{n}_{x};N^*\{x'=0\}))
$$
of the composition of two operators
is the product of their principal symbols.
\end{prop}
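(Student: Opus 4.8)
The plan is to keep $\Lambda_1=N^*\diag$ as the main Lagrangian throughout --- operator composition adds the diagonal orders, so this is the natural bookkeeping --- and to compute $A\circ B$ directly in the model \eqref{eq:near-diag-model}. First I would reduce to that model: conjugate by the elliptic $0$th order Fourier integral operators used to set up \eqref{eq:osc-int-mod-2}, use the standard Fourier integral operator calculus away from $\Lambda_0\cap\Lambda_1$ (legitimate because of the $\WF$-disjointness from the zero sections built into $\tilde I^{*,*}$), and write each factor, modulo $I^p(\Lambda_1)$ resp.\ $I^{p'}(\Lambda_1)$, as an oscillatory integral of the form \eqref{eq:near-diag-model}. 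The $I^p(\Lambda_1),I^{p'}(\Lambda_1)$ pieces compose by the usual pseudodifferential calculus into $I^{p+p'}(\Lambda_1)\subset\tilde I^{p+p',L}(\Lambda_0,\Lambda_1)$ (using Lemma~\ref{lemma:filter}, the trivial bounds $L\ge l,l'$, and that $I^{p+p'}(\Lambda_1)\subset I^{p+p',l''}(\Lambda_0,\Lambda_1)$ for every $l''$), while the cross terms --- a pseudodifferential operator composed with an oscillatory integral \eqref{eq:near-diag-model} --- are checked directly to lie in $\tilde I^{p+p',l'}$ resp.\ $\tilde I^{p+p',l}$, hence in $\tilde I^{p+p',L}$; the one thing to notice here is that the conic support constraint $\langle\eta'\rangle\lesssim\langle\xi''\rangle$ keeps the product-type symbol bounds intact under this operation. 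So the genuinely new content is the composition of two factors given in the form \eqref{eq:near-diag-model}.

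For that, I would multiply the two Schwartz kernels and carry out the intermediate ($z$-)integration (in compatible left/right form). The $z''$-integration is the usual stationary phase / integration by parts in the nondegenerate directions, producing the identification of the two $\xi''$ variables and the familiar asymptotic expansion $\sum_\alpha\frac1{\alpha!}\partial_{\xi''}^\alpha(\,\cdot\,)D_{z''}^\alpha(\,\cdot\,)$ of pseudodifferential composition; the $z'$-integration is an honest Fourier integral whose phase identifies the primed frequencies and, after renaming the surviving primed variables, returns an oscillatory integral exactly of the form \eqref{eq:near-diag-model} whose new amplitude is a \emph{convolution in the $\eta'$-variable} of the two old amplitudes (modulo the harmless occurrence of $\xi'+\eta'$ in one slot, controlled by the conic support). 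Thus the whole matter reduces to a convolution estimate for product-type symbols on the $\RR^k_{\eta'}$ factor: if $a_A$ has order $l-\frac k2$ and $a_B$ order $l'-\frac k2$ in $\eta'$, then $a_A*_{\eta'}a_B$ has order $\max\bigl(l-\frac k2,\,l'-\frac k2,\,l+l'\bigr)=L-\frac k2$ in $\eta'$, the estimate being available uniformly in the large parameters, and the order $L$ emerging, precisely because $(l-\frac k2)+(l'-\frac k2)<-k$, i.e.\ $l+l'<0$. This convolution estimate is essentially the one already carried out in the proof of Proposition~\ref{prop:reverse-order} (the analysis of $\tilde\cF'S^{p',l'}$): splitting the $\eta'$-integration into the regions where $|\eta'|\lesssim1$, where $|\eta'|$ is comparable to one frequency scale, and where it is comparable to the other, produces the three terms of the maximum defining $L$, and stability under the relevant differential operators transforms exactly as there. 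With the symbol models \eqref{eq:Lambda_1-princ-symbol-op}--\eqref{eq:Lambda_1-symb-as-IFT-op} this yields \eqref{eq:composition-prop}.

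For the principal-symbol assertion, I would retain the $\alpha=0$ term of the $z''$-expansion and the leading part of the $\eta'$-convolution; this is the pointwise product of the two principal symbols, which lies in the stated class. The error comes from the subleading $z''$-terms (each a full order lower in $\xi$) and from the remainder in the convolution estimate applied with a shifted conormal index: since the latter may be applied only after shifting by some $\delta$ with $l+l'+\delta<0$, the error worsens by $\delta$ on $\Lambda_0$ and improves by $\min(1,\delta)$ on $\Lambda_1=N^*\diag$, which is precisely the stated remainder. The main obstacle is this convolution (equivalently, same-submanifold conormal product) estimate --- the sharp order $L$ together with the $\min(1,\delta)$ control of the remainder, uniformly in all the symbolic parameters $x'',\xi'',\xi'$; once it is in hand the rest is bookkeeping of orders and reductions patterned on Proposition~\ref{prop:reverse-order} and Corollary~\ref{cor:reverse-contain}, together with the routine verification that composition creates no wave front set on the zero sections (automatic from the $\tilde I$ hypotheses and the support structure of \eqref{eq:near-diag-model}).
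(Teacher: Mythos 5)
Your proposal is correct and follows essentially the same route as the paper: write both factors in the explicit oscillatory-integral model \eqref{eq:near-diag-model} with left/right-compatible amplitudes, identify the composite amplitude as a convolution in the primed frequency variable, prove the sharp order $L=\max(l,l',l+l'+k/2)$ by splitting the convolution integral into regions (using $l+l'<0$ for convergence), and obtain the principal symbol from the fact that the inverse Fourier transform of a convolution is the product of the inverse Fourier transforms, with the $\min(1,\delta)$ remainder coming from Taylor expansion of the amplitudes. The only cosmetic differences are that the paper keeps the $x''$/$z''$ dependence in the composite amplitude and Taylor-expands only at the symbol stage rather than running a stationary-phase expansion in $z''$ up front, and that its convolution estimate is carried out from scratch in this proof rather than quoted from Proposition~\ref{prop:reverse-order}.
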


\begin{rem}\label{rem:big-off-diag}
As one can always decrease the second order $l$ at the cost of
increasing the first order $p$, see Lemma~\ref{lemma:filter}, this
result also gives that if $l+l'\geq 0$ then for any $\ell>l+l'$,
with $L=\max(l-\ell,l',l-\ell+l'+k/2)$,
\begin{equation}\begin{aligned}\label{eq:composition-prop-mod}
&\tilde I^{p,l}(\Lambda_0,\Lambda_1)\circ
\tilde I^{p',l'}(\Lambda_0,\Lambda_1)\subset \tilde
I^{p+p'+\ell,L}(\Lambda_0,\Lambda_1).
\end{aligned}\end{equation}
However, the increase of the order on $\Lambda_1$ relative to the
$l+l'<0$ case makes this a much
less useful result.
\end{rem}

\begin{rem}\label{rem:symbol-constraint}
The constraint $l+l'<0$ is exactly the constraint under which elements
of
$S^{p}(\RR^{n}_{\xi''};I^{l-\frac{n}{4}}(\RR^{n}_{x};N^*\{x'=0\}))$
and
$S^{p'}(\RR^{n}_{\xi''};I^{l'-\frac{n}{4}}(\RR^{n}_{x};N^*\{x'=0\}))$
can be multiplied in view of the lack of smoothness of these symbols
in $x'$. Namely, the issue is multiplication for elements of
$I^{l-\frac{n}{4}}(\RR^{n}_{x};N^*\{x'=0\})$ and
$I^{l'-\frac{n}{4}}(\RR^{n}_{x};N^*\{x'=0\})$ which come from partial inverse
Fourier transforms in $x'$ of symbols of order $l-k/2$, resp.\
$l'-k/2$. Typical members of these classes are asymptotically
homogeneous of degree $l-k/2$, resp.\ $l'-k/2$, so their
partial inverse
Fourier transforms in $x'$ are, modulo smooth functions, homogeneous
of degree $-k/2-l$, resp.\ $-k/2-l'$. The restriction $l+l'<0$ means
that the total homogeneity is $>-k$, i.e.\ is strictly greater than
that of a delta distribution on $x'=0$. Marginally disallowed products
are thus, in the case $k=1$, a delta distribution and a step function
at a hypersurface; any more smoothness than that of the step function
(in terms of conormal order) means that the functions is continuous
and may be multiplied by the $\delta$ distribution. Thus, in this
sense, this proposition is {\em sharp}.
\end{rem}

\begin{proof}
Let
$$
A\in\tilde I^{p,l}(\Lambda_0,\Lambda_1),
\ B\in \tilde I^{p',l'}(\Lambda_0,\Lambda_1);
$$
we may assume that $A$ and $B$ both have wave front set near the
intersection of the two Lagrangians.
Write $A$ resp.\ $B$ as an oscillatory integral with the amplitude
independent of the right, resp.\ left, base variable, i.e.
\begin{equation*}\begin{aligned}
&(Av)(x)=\int e^{i[(x'-y')\cdot\xi'+(x''-y'')\cdot\xi''+x'\cdot\eta']} a(x'',
\xi',\xi'',\eta')\,d\xi\,d\eta'\,v(y)\,dy,\\
&\qquad a\in S^{p,l-\frac{k}{2}}(\RR^{n-k}_{x''};\RR^{n}_\xi;\RR^k_{\eta'}),
\end{aligned}\end{equation*}
resp.\ 
\begin{equation*}\begin{aligned}
&(Bu)(y)=\int e^{i[(y'-z')\cdot\zeta'+(y''-z'')\cdot\zeta''+z'\cdot\mu']} b(z'',
\zeta',\zeta'',\mu')\,d\zeta\,d\mu'\,u(z)\,dz,\\
&\qquad b\in S^{p',l'-\frac{k}{2}}(\RR^{n-k}_{z''};\RR^{n}_\zeta;\RR^k_{\mu'}),
\end{aligned}\end{equation*}
with
$$
|\xi|\geq 1, |\eta'|\leq \ep|\xi|\ \text{on}\ \supp a,\ \text{and}
\ |\zeta|\geq 1, |\mu'|\leq \ep |\zeta|\ \text{on}\ \supp b,
$$
for $\ep<1/2$. Note that the wave front set of the Schwartz kernel of
$A$ (over $x'=y'=0$, $x''=y''$) is contained in the set of covectors
of the form
$(\xi'+\eta',\xi'',-\xi',-\xi'')$ such that $a$ is not Schwartz in the
direction $(\xi',\xi'',\eta')$, i.e.
$(\xi',\xi'',\eta')$ is not in the microsupport of $a$. Since we do not want covectors of the
kind $o\times T^*\RR^n$ in the wave front set, we need $(\xi'+\eta',\xi'')$
bounded away from $0$ on the microsupport of $a$ when
$(\xi',\xi'')\neq 0$, which is
accomplished by our requirement that $\ep<1/2$.

Thus, with $\cF$ denoting the Fourier transform on $\RR^n$,
\begin{equation*}\begin{aligned}
(Av)(x)=&\int e^{i[x'\cdot\xi'+x''\cdot\xi''+x'\cdot\eta']} a(x'',
\xi',\xi'',\eta')(\cF v)(\xi)\,d\xi\,d\eta',
\end{aligned}\end{equation*}
while $Bu$ is the inverse Fourier transform in $\zeta$ of
$$
\int e^{i[-z'\cdot\zeta'-z''\cdot\zeta''+z'\cdot\mu']}
(2\pi)^n b(z'',
\zeta',\zeta'',\mu')\,d\mu'\,u(z)\,dz.
$$
Therefore,
\begin{equation*}\begin{aligned}
(ABu)(x)=\int
&e^{i[(x'-z')\cdot\xi'+(x''-z'')\cdot\xi''+x'\cdot\eta'+z'\cdot\mu']}\\
& (2\pi)^n a(x'',
\xi',\xi'',\eta') b(z'',
\xi',\xi'',\mu') \,d\xi\,d\eta'\,d\mu' \,u(z)\,dz,
\end{aligned}\end{equation*}
i.e.\ the Schwartz kernel of $AB$ is given by the oscillatory integral
\begin{equation*}\begin{aligned}
\int
&e^{i[(x'-z')\cdot\xi'+(x''-z'')\cdot\xi''+x'\cdot\eta'+z'\cdot\mu']}\\
& (2\pi)^n a(x'',
\xi',\xi'',\eta') b(z'',
\xi',\xi'',\mu') \,d\xi\,d\eta'\,d\mu'.
\end{aligned}\end{equation*}
We rewrite the phase as
$$
(x'-z')\cdot(\xi'-\mu')+(x''-z'')\cdot\xi''+x'\cdot(\eta'+\mu').
$$
Letting $\nu'=\eta'+\mu'$, $\zeta'=\xi'-\mu'$, we deduce that the Schwartz
kernel of $AB$ is
\begin{equation*}\begin{aligned}
\int
&e^{i[(x'-z')\cdot\zeta'+(x''-z'')\cdot\xi''+x'\cdot\nu']} c(x'',z'',\zeta',\xi'',\nu') \,d\zeta'\,d\xi''\,d\nu',\\
& c(x'',z'',\zeta',\xi'',\nu')=(2\pi)^n \int a(x'',
\zeta'+\mu',\xi'',\nu'-\mu') b(z'',
\zeta'+\mu',\xi'',\mu')\,d\mu'.
\end{aligned}\end{equation*}
Thus, to show \eqref{eq:composition-prop}, we merely need to show that
\begin{equation}\label{eq:c-symbol-bd}
c\in S^{p+p',L-k/2}(\RR^{n-k}_{x''}\times \RR^{n-k}_{z''};\RR^{n}_\xi;\RR^k_{\nu'}),
\end{equation}
and then the composition result follows. Note that in view of the
support conditions on $a$ and $b$, on the support of the integrand of $c$,
$|\nu'-\mu'|,|\mu'|\leq\ep|\zeta+\mu'|$ (here $\zeta\in\RR^n$), thus $|\mu'|\leq
\frac{\ep}{1-\ep}|\zeta|$, $|\nu'|\leq 2 \frac{\ep}{1-\ep}|\zeta|$,
and thus the integral is certainly convergent, {\em without
  restrictions on $l,l'$}, with $c$ supported in
$|\nu'|\leq 2|\zeta|$, and moreover $|\zeta+\mu'|$ is bounded from above
and below by positive multiples of $|\zeta|$. For
$l+l'<0$, one gets, for an absolute constant $C>0$, and with
$\|a\|_{S^{p,l-\frac{k}{2}},0}$, etc., denoting $0$th symbol norms
($\sup$ norms),
\begin{equation}\label{eq:twisted-conv-bd}
|c|\leq C\|a\|_{S^{p,l-\frac{k}{2}},0}\|b\|_{S^{p',l'-\frac{k}{2}},0}\langle\zeta\rangle^{p+p'}
\int_{\RR^k} \langle\nu'-\mu'\rangle^{l-k/2}\langle\mu'\rangle^{l'-k/2}\,d\mu';
\end{equation}
here for $\nu'$ in a compact set, one gets uniform bounds for the
integral as the integrand is then bounded by $\tilde
C\langle\mu'\rangle^{l+l'-k}$; $l+l'<0$ is used here strongly. (If one
does not assume $l+l'<0$, one needs to use that $|\mu'|\lesssim
|\zeta|$ on the support of the integrand, so $\RR^k$ can be replaced
by the ball $B_{|\zeta|}(0)$, and one obtains a positive
power of $|\zeta|$ as a result when integrating, which allows one to
obtain a paired Lagrangian symbolic estimate but with the rather
undesirable increase of the order $p+p'$ on $\Lambda_1$. See also
Remark~\ref{rem:big-off-diag}.)
Further, for $l+l'<0$, the integral on the right hand side can be
estimated, uniformly as $|\nu'|\to\infty$, by
\begin{equation}\label{eq:basic-conv-bd}
C'(\langle\nu'\rangle^{l+l'}+\langle\nu'\rangle^{l-k/2}+\langle\nu'\rangle^{l'-k/2})\leq
C''\langle\nu'\rangle^{L-k/2}.
\end{equation}
Indeed, for $|\nu'|\leq 1$, say, we already explained this estimate.
Otherwise we
break up the region of integration into $|\mu'|\leq |\nu'|/2$, resp.\
$|\nu'-\mu'|\leq |\nu'|/2$, resp.\ $|\nu'|/2\leq
|\mu'|,|\nu'-\mu'|\leq 2|\nu'|$, resp.\ $2|\nu'|\leq
|\mu'|$, resp.\ $2|\nu'|\leq|\nu'-\mu'|$. Note that the last two regions are not disjoint,
but the union of the five regions is $\RR^k$.
On the first, resp.\ second of these,
$\langle\nu'-\mu'\rangle$, resp.\ $\langle\mu'\rangle$ is bounded from
above and below by a positive multiple of $\langle\nu'\rangle$, so the
corresponding weight can be pulled outside the integral, so in the
first case one is
reduced to the estimate
$$
\int_{B_{|\nu'|/2}(0)}\langle\mu'\rangle^{l-k/2}\,d\mu'\lesssim
  (1+|\nu'|^{l+k/2}),
$$
resulting in an overall bound $|\nu'|^{l'-k/2}(1+|\nu'|^{l+k/2})$,
yielding that \eqref{eq:basic-conv-bd} is satisfied in this case,
with a similar estimate in the second case. In the third case, both
$\langle\nu'-\mu'\rangle$
and  $\langle\mu'\rangle$ is bounded from
above and below by a positive multiple of $\langle\nu'\rangle$, and
one obtains a bound $\lesssim |\nu'|^{l+l'}$. In the fourth, resp.\ fifth case,
$\langle\nu'-\mu'\rangle$, resp.\ $\langle\mu'\rangle$ is bounded from
above and below by a positive multiple of $\langle\mu'\rangle$,
resp.\ $\langle\nu'-\mu'\rangle$, so in the fourth case one is reduced to the estimate
$$
\int_{|\mu'|\geq 2|\nu'|}\langle\mu'\rangle^{l+'l-k}\,d\mu'\lesssim
  \langle \nu'\rangle^{l+l'},
$$
with a similar bound in the fifth case; these use $l+l'<0$.
This proves \eqref{eq:basic-conv-bd}, and thus gives the $0$th seminorm estimate of the claimed
$S^{p+p',L}(\RR^{n-k}_{x''}\times
\RR^{n-k}_{z''};\RR^{n}_\xi;\RR^k_{\nu'})$ statement, \eqref{eq:c-symbol-bd}, for $c$.

The
derivatives can be handled easily, with this being immediate for
$\zeta$, $x''$ and $z''$ derivatives, while for $\nu'_j\pa_{\nu'_k}$
derivatives one writes
$\nu'_j\pa_{\nu'_k}=(\nu'_j-\mu'_j)\pa_{\nu'_k}+\mu'_j\pa_{\nu'_k}$
under the integral, then the first term is handled by the symbol
bounds for $a$, while for the second one rewrites $\mu'_j\pa_{\nu'_k}a$ as
$-\mu'_j\pa_{\mu'_k}a+\mu'_j\pa_{\zeta'_k} a$, integrates by parts for
the first term to use the symbol estimates of $b$, while the symbol
estimates for $a$ plus the bounds for $\mu'$ in terms of $\zeta+\mu'$
handle the second term. Proceeding inductively, one deduces that
\eqref{eq:c-symbol-bd} holds.

To prove the principal symbol property, take
$N\geq 1$ integer. (Here $N=1$ suffices; taking $N$ larger one can
obtain further terms in the $\Lambda_1$-symbolic expansion of the composition.) We expand $a,b$ in Taylor series in their second
argument, $\zeta'+\mu'$, around $\zeta'$ with the integral remainder
formula involving
$N$th derivatives. In case of $a$, this gives terms
$$
\frac{1}{\alpha!}(\mu')^\alpha(\pa_{\zeta'}^\alpha
a)(x'',
\zeta',\xi'',\nu'-\mu')
$$
with $|\alpha|<N$ in the expansion, and the remainder is a sum of
integrals with $|\alpha|=N$:
$$
\int_0^1 \frac{N}{\alpha!}(1-t)^N(\mu')^\alpha(\pa_{\zeta'}^\alpha
a)(x'',
\zeta'+t\mu',\xi'',\nu'-\mu')\,dt;
$$
similar expressions hold for $b$, with
$(\mu')^\beta\pa_{\zeta'}^\beta$ being the relevant derivatives. The
$(\alpha\beta)$th term (with $|\alpha|\leq N$, $|\beta|\leq N$)
in $c$ inside the integral has bounds
$$
\lesssim\langle\zeta'+\mu'\rangle^{p+p'-|\alpha|-|\beta|}\langle\nu'-\mu'\rangle^{l-k/2}
\langle\mu'\rangle^{l'+|\alpha|+|\beta|-k/2},
$$
and thus if $l+l'+|\alpha|+|\beta|<0$, the contribution to $c$ is in
\begin{equation*}\begin{aligned}
&S^{p+p'-|\alpha|-|\beta|,L_{\alpha\beta}},\ \text{with}\\
&L_{\alpha\beta}=\max(l,l'+|\alpha|+|\beta|,l+l'+|\alpha|+|\beta|+k/2)\leq
L+|\alpha|+|\beta|.
\end{aligned}\end{equation*} 
If $l+l'+|\alpha|+|\beta|\geq 0$, then, letting
$$
M=-\delta+|\alpha|+|\beta|>l+l'+|\alpha|+|\beta|\geq 0,
$$
so $M<|\alpha|+|\beta|$, and using
$$
\langle\mu'\rangle^{l'+|\alpha|+|\beta|-k/2}
\lesssim
\langle\zeta\rangle^M \langle\mu'\rangle^{l'+|\alpha|+|\beta|-k/2-M}
$$
(by the support conditions), we obtain that the contribution of the
$(\alpha\beta)$th term to
$c$ is in
\begin{equation*}\begin{aligned}
&S^{p+p'-|\alpha|-|\beta|+M,\tilde L_{\alpha\beta}},\ \text{with}\\
&\tilde
L_{\alpha\beta}=\max(l,l'+|\alpha|+|\beta|-M,l+l'+|\alpha|+|\beta|+k/2-M)\\
&\qquad\qquad\leq
L+|\alpha|+|\beta|-M.
\end{aligned}\end{equation*}
This gives that modulo
$S^{p+p'-\min(1,\delta),L+\min(1,\delta)}$,
$c$ is given by the convolution
\begin{equation*}
(2\pi)^n \int a(x'',
\zeta',\xi'',\nu'-\mu') b(z'',
\zeta',\xi'',\mu')\,d\mu'.
\end{equation*}
Taylor expanding $b$ in $z''$ around $x''$ and integrating by parts in
$\xi''$ gives that further this can be replaced by
\begin{equation*}
\tilde c(x'',\zeta',\xi'',\nu')=(2\pi)^n \int a(x'',
\zeta',\xi'',\nu'-\mu') b(x'',
\zeta',\xi'',\mu')\,d\mu'
\end{equation*}
modulo
$S^{p+p'-1,L+1}$.
The $\Lambda_1$-principal symbol of the distribution corresponding to
$\tilde c$ is $(2\pi)^{n-k}$ times
the partial inverse Fourier transform in $\nu'$ of $\tilde c$. Since
the inverse Fourier transform of a convolution in $\RR^k$ is
$(2\pi)^k$ times the product of the inverse Fourier transforms of the
factors, we deduce that this principal symbol is
\begin{equation*}\begin{aligned}
&(2\pi)^{n+k}((\cF')^{-1}\tilde c)
\,|d\zeta'|^{1/2}\,|d\xi''|^{1/2}\\
&\qquad=(2\pi)^{n+k} ((\cF')^{-1}a)
(2\pi)^{n+k} ((\cF')^{-1}b) \,|d\zeta'|^{1/2}\,|d\xi''|^{1/2},
\end{aligned}\end{equation*}
i.e.\ it is the product of the principal symbols of $a$ and $b$, as claimed.
\end{proof}

\begin{rem}\label{rem:conormal-product}
Note that the proof we just gave also shows that if
$$
\tilde a\in
S^{p}(\RR^{n}_{\xi};I^{l-\frac{n}{4}}(\RR^{n}_{x};N^*\{x'=0\})),\ 
\tilde b\in
S^{p'}(\RR^{n}_{\xi};I^{l'-\frac{n}{4}}(\RR^{n}_{x};N^*\{x'=0\})),
$$
with $l+l'<0$, then with $L=\max(l,l',l+l'+k/2)$,
$$
\tilde a\tilde b\in S^{p+p'}(\RR^{n}_{\xi};I^{L-\frac{n}{4}}(\RR^{n}_{x};N^*\{x'=0\})).
$$
This does not require a conic support condition on the partial ($x'$-)Fourier transforms
$a$, resp.\ $b$, of $\tilde a$, resp.\ $\tilde b$ like one we did above; one is
estimating a partial convolution $c$ of $a$ and $b$ in the dual variable $\mu'$
of $x'$, and the estimates boil down to \eqref{eq:basic-conv-bd} being
satisfied for the integral on the right hand side of
\eqref{eq:twisted-conv-bd}. Further, this shows that
\begin{equation}\label{eq:tilde-a-tilde-b-0th}
\|\tilde a\tilde
b\|_{S^{p+p'}(I^{L-\frac{n}{4}});0}
\leq C\|\tilde a\|_{S^p(I^{l-\frac{n}{4}});0}\|\tilde b\|_{S^{p'}(I^{l'-\frac{n}{4}});0},
\end{equation}
where we used a short hand notation for the symbol spaces discussed
above to simplify the notation. Now, the higher order product-type
symbol norms for the partial Fourier transform, of the
partial convolution $c$ are
equivalent to a product of $\pa_{\xi_j}$, $\xi_k\pa_{\xi_j}$,
$\pa_{x_j}$, $\mu'_j\pa_{\mu'_k}$, $\pa_{\mu'_k}$ being applied
iteratively to $c$ and the zeroth $S^{p+p',L-\frac{k}{2}}$ norm being
evaluated. As $c$ is the partial Fourier transform of $\tilde a\tilde
b$ in $x'$, this means $\pa_{\xi_j}$, $\xi_k\pa_{\xi_j}$,
$\pa_{x_j}$, $\pa_{x'_j}x'_k$, $x'_k$ being applied
iteratively to $\tilde a\tilde b$, and the zeroth $S^{p+p',L-\frac{k}{2}}$ 
norm of the partial Fourier transform of the result being evaluated. (Here $x'_k$
can be dropped if one assumes compact support for $\tilde a$ or
$\tilde b$; one can also replace $\pa_{x'_j}x'_k$ by
$x'_k\pa_{x'_j}$.)
Using Leibniz' rule, which is valid by the density of
order $-\infty$ symbols in $\mu'$, resp.\ order $-\infty$ conormal
distributions in $x'$, and using
\eqref{eq:tilde-a-tilde-b-0th}, the seminorms of $\tilde a\tilde
b$ in $S^{p+p'}(I^{L-\frac{n}{4}})$ are bounded by
\begin{equation}\label{eq:tilde-a-tilde-b-kth}
\|\tilde a\tilde
b\|_{S^{p+p'}(I^{L-\frac{n}{4}});k}
\leq C_k\|\tilde a\|_{S^p(I^{l-\frac{n}{4}});k}\|\tilde b\|_{S^{p'}(I^{l'-\frac{n}{4}});k}.
\end{equation}
\end{rem}

We record here a statement regarding square roots of conormal
distributions that will be useful later; it allows us to construct
square root of the principal symbols of paired Lagrangian
distributions.

\begin{lemma}\label{lemma:sqrt}
Suppose that
$a\in
S^{p}(\RR^{n}_{\xi};I^{l-\frac{n}{4}}(\RR^{n}_{x};N^*\{x'=0\}))$ with
$l<-k/2$,
and with $a\geq c|\xi|^p$, $c>0$, for $|\xi|\geq R$, on a conic open set
$\Gamma\subset\RR^n_{\xi}$. Let $l'\in (l,-k/2)$. Then
$$
b=\sqrt{a}\in S^{p/2}(\Gamma_{\xi};I^{l'-\frac{n}{4}}(\RR^{n}_{x};N^*\{x'=0\})).
$$
\end{lemma}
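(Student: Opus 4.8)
The plan is to reduce the assertion to taking the square root of ``$1$ plus a small conormal perturbation,'' to expand that square root in a binomial series, and to make the series converge using (a tame refinement of) the product structure of Remark~\ref{rem:conormal-product}.

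First I would carry out the routine reductions. Multiplying $a$ by the elliptic symbol $\langle\xi\rangle^{-p}$ reduces to $p=0$; and since the conclusion is microlocal on $\Gamma$, cutting $a$ off to $\Gamma\cap\{|\xi|\geq R\}$ and adding $(1-\chi(\xi))$ lets me assume $a\in S^0(\RR^n_\xi;I^{l-n/4}(\RR^n_x;N^*\{x'=0\}))$ with $a\geq c_0>0$ everywhere (and, as everywhere in the paper, compactly supported, or at least localized near $\{x'=0\}$). Because $l<-k/2$, elements of $I^{l-n/4}(\RR^n_x;N^*\{x'=0\})$ are continuous functions of $x$ — their partial Fourier transforms in $x'$ are symbols of order $l-k/2<-k$ in $\xi'\in\RR^k$, hence integrable — so the restriction $a^\flat:=a|_{x'=0}$ is a genuine classical symbol in $S^0$, with $a^\flat\geq c_0$; the usual square root of an elliptic symbol then gives $\sqrt{a^\flat},\,1/a^\flat\in S^0$. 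Writing $a=a^\flat(1+r)$ with $r=(a-a^\flat)/a^\flat$, we get $r\in S^0(\RR^n_\xi;I^{l-n/4}(\cdots))$, $r|_{x'=0}=0$, $1+r=a/a^\flat$ bounded above and below by positive constants, and $\sqrt a=\sqrt{a^\flat}\,\sqrt{1+r}$; so it suffices to show $\sqrt{1+r}\in S^0(\RR^n_\xi;I^{l'-n/4}(\cdots))$.

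Next I would split off a low-frequency piece in the conormal variable. With $\psi=\psi(\xi')$ a symbol of order $0$ equal to $1$ on $|\xi'|\geq2\Lambda$ and to $0$ on $|\xi'|\leq\Lambda$ (symbol seminorms bounded uniformly in $\Lambda\geq1$), write $r=r_1+r_2$, $r_1=(1-\psi)r$, $r_2=\psi r$. Then $r_1$ has partial $x'$-Fourier transform supported in $|\xi'|\leq2\Lambda$, hence is $\CI$ in $x'$ with $x'$-symbolic bounds, while $|\xi'|\geq\Lambda$ on $\supp\psi$ forces, for each $k$, $\|r_2\|_{I^{l'-n/4}(\cdots);k}\leq C_k\Lambda^{-(l'-l)}$ with $C_k$ independent of $\Lambda$ — in particular $\|r_2\|_{C^0}\to0$ as $\Lambda\to\infty$. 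For $\Lambda$ large $1+r_1=(1+r)-r_2$ is still bounded above and below by positive constants, $\sqrt{1+r_1}$ is again $\CI$ in $x'$ with $x'$-symbolic bounds (Fa\`a di Bruno), hence lies in $S^0(\RR^n_\xi;I^{l'-n/4}(\cdots))$ (a function smooth near $\{x'=0\}$ is conormal there of every order), and $\sqrt{1+r}=\sqrt{1+r_1}\,\sqrt{1+\rho}$ with $\rho=r_2/(1+r_1)\in S^0(\RR^n_\xi;I^{l'-n/4}(\cdots))$, $\|\rho\|_{\,;0}\to0$ as $\Lambda\to\infty$, and all higher seminorms of $\rho$ finite. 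It then remains to sum $\sqrt{1+\rho}=\sum_{j\geq0}\binom{1/2}{j}\rho^j$ in $S^0(\RR^n_\xi;I^{l'-n/4}(\cdots))$; by Remark~\ref{rem:conormal-product}, since $l'<-k/2$ one has $l'+l'<0$ and $\max(l',l',2l'+k/2)=l'$, so each $\rho^j$ lies in $S^0(\RR^n_\xi;I^{l'-n/4}(\cdots))$.

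\textbf{The hard part} is controlling $\|\rho^j\|_{\,;k}$: the estimate \eqref{eq:tilde-a-tilde-b-kth} as recorded is too weak, because its right-hand side is $\|\tilde a\|_{\,;k}\|\tilde b\|_{\,;k}$, and summing the series that way would require making a full $k$-th seminorm of $\rho$ small — which the frequency splitting cannot do, since the low-frequency truncation $r_1$ has large high $x'$-derivatives. The key technical step I would establish is the tame (Moser-type) bound $\|\tilde a\tilde b\|_{\,;k}\leq C_k\big(\|\tilde a\|_{\,;0}\|\tilde b\|_{\,;k}+\|\tilde a\|_{\,;k}\|\tilde b\|_{\,;0}\big)$, via a Littlewood--Paley (paraproduct) splitting of the $\mu'$-convolution underlying \eqref{eq:basic-conv-bd} into low--high, high--low and high--high parts. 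Granting this, induction gives $\|\rho^j\|_{\,;k}\leq j\,(C_k\|\rho\|_{\,;0})^{j-1}\|\rho\|_{\,;k}$, so, since $|\binom{1/2}{j}|\lesssim j^{-3/2}$, for each fixed $k$ one chooses $\Lambda=\Lambda_k$ large enough that $C_k\|\rho\|_{\,;0}<1$; the series then converges in the $k$-th seminorm, whence $\sqrt{1+\rho}$ has finite $k$-th seminorm, and since $\sqrt{1+r_1}$ has finite conormal seminorms for each fixed $\Lambda$ and the factorization $\sqrt{1+r}=\sqrt{1+r_1}\,\sqrt{1+\rho}$ is independent of $\Lambda$, every seminorm of $\sqrt{1+r}$ is finite. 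Multiplying back by $\langle\xi\rangle^{p/2}$ and restricting to $\Gamma$ completes the argument.
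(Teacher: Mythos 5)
Your overall architecture coincides with the paper's: both proofs split $a$ into a smooth, positively elliptic piece plus a conormal remainder that is \emph{small in the weaker $I^{l'}$-topology} (your frequency cutoff at $|\mu'|\sim\Lambda$ is exactly the paper's truncation $b_R=b\,\phi(\mu'/R)$, and both exploit $l'>l$ to get smallness of the tail), factor out the smooth square root, and sum the binomial series for $\sqrt{1+(\text{small})}$ using the product structure of Remark~\ref{rem:conormal-product}. Your extra preliminary step of factoring out $a^\flat=a|_{x'=0}$ is harmless but not needed.

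The one place where your argument is not complete as written is the step you yourself flag as ``the hard part'': the tame bound $\|\tilde a\tilde b\|_{;k}\leq C_k(\|\tilde a\|_{;0}\|\tilde b\|_{;k}+\|\tilde a\|_{;k}\|\tilde b\|_{;0})$ is asserted, with only a one-line indication (paraproduct splitting of the $\mu'$-convolution), and the whole convergence argument rests on it. Your diagnosis that \eqref{eq:tilde-a-tilde-b-kth} alone is insufficient is correct — the higher seminorms of $\rho$ cannot be made small uniformly in $k$ by the frequency splitting — but the paper sidesteps the issue entirely rather than proving a Moser estimate. Its device is the chain rule: for a derivation $V$ from the list generating the symbol topology, $V(f\circ\tilde a)=(f'\circ\tilde a)(V\tilde a)$, and iterating produces finite sums of products of terms $f^{(m)}\circ\tilde a$ and $V^{(j)}\tilde a$. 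Each series $f^{(m)}\circ\tilde a$ has radius of convergence $1$ and so converges in the \emph{zeroth} seminorm once $\|\tilde a\|_{;0}<C^{-1}$, each $V^{(j)}\tilde a$ has finite zeroth seminorm, and the zeroth-order product bound \eqref{eq:tilde-a-tilde-b-0th} then controls every higher seminorm of $f\circ\tilde a$. Thus only $0$th-seminorm smallness is ever needed, and no tame estimate, interpolation, or $k$-dependent choice of the cutoff parameter enters. If you want to keep your route you would need to actually prove the Moser-type bound (plausible via Leibniz plus Landau--Kolmogorov interpolation for these sup-type seminorms, but not off the shelf in this product-symbol setting); the chain-rule argument is the cheaper and cleaner way to close the gap.
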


Note that under the assumptions, $a$ is the inverse Fourier
transform in $\mu'$, the dual variable of $x'$, of a symbol of order
$l-k/2<-k$, so $a$ is actually continuous, and indeed H\"older
$\alpha$ for $0<\alpha<-(l+k/2)$. Thus, the pointwise statement $a\geq
c|\xi|^p$ actually makes sense.

\begin{proof}
Note that the statement is a consequence of the positive ellipticity
of $a$ away from $x'=0$, so we may work in an arbitrarily small
neighborhood of $x'=0$ as is convenient.
Given $\ep>0$, we first decompose $a=a_1+a_2$ with
\begin{equation*}\begin{aligned}
&a_1\in S^p(\RR^n_{\xi};\CI(\RR^n_x))=S^p(\RR^n_x,\RR^n_\xi),\\
&a_2\in
S^{p}(\RR^{n}_{\xi};I^{l'-\frac{n}{4}}(\RR^{n}_{x};N^*\{x'=0\})),
\end{aligned}\end{equation*}
and with $a_1\geq (c/2)|\xi|^p$, for $|\xi|\geq R$, on
$\Gamma\subset\RR^n_{\xi}$, while
$\|a_2\|_{S^{p}(\RR^{n}_{\xi};I^{l'-\frac{n}{4}});0}<\ep$ (here
we use shorthand notation as in the above remark). To do so, we
note that
$$
a=a_0+\cF^{-1}_{\mu'} b,\ b\in
S^{p,l-k/2}(\RR^{n-k}_{x''};\RR^n_\xi;\RR^k_{\mu'}),\ a_0\in
S^p(\RR^n_\xi;\CI(\RR^n_x)).
$$
Now, given $\ep'>0$, the standard approximation argument, using $b_R=b\phi(\mu'/R)$,
where $\phi\equiv 1$ near $0$, has compact support, letting
$R\to\infty$ gives $b'_1\in S^{p,-\infty}$ such that
$\|b-b'_1\|_{S^{p,l'-k/2};0}<\ep'$. Then, as $l'-k/2<-k$, with $b_2=b-b'_1$,
$$
\sup|\langle\xi\rangle^{-p}\cF^{-1}_{\mu'} b_2|\leq
C_0\|b_2\|_{S^{p,l'-k/2};0}<C_0\ep'.
$$
Thus, with
$a_2=\cF^{-1}_{\mu'}b_2$, $a_1=a-a_2=a_0+\cF^{-1}_{\mu'}b'_1$, $a>(c-C_0\ep')|\xi|^p$.
Now let $\ep'=\min(\ep,c/(2C_0)$; then $a_1$ and $a_2$ satisfy all conditions.

We note that as $a_1$ is elliptic on $\Gamma$, with a positive
elliptic lower bound,
\begin{equation*}\begin{aligned}
&\sqrt{a_1}\in
S^{p/2}(\Gamma_{\xi};\CI(\RR^n)),\\
&\tilde a=a_1^{-1}a_2\in S^0
(\Gamma_{\xi};I^{l'-\frac{n}{4}}(\RR^{n}_{x};N^*\{x'=0\})),
\end{aligned}\end{equation*}
and $\tilde a$ vanishes at $x'=0$.
We write
$$
b=\sqrt{a_1}\sqrt{1+(a_1^{-1}a_2)},
$$
and we are reduced to showing that
\begin{equation}\label{eq:square-root-conormal}
\sqrt{1+\tilde a}\in S^0
(\Gamma_{\xi};I^{l'-\frac{n}{4}}(\RR^{n}_{x};N^*\{x'=0\})).
\end{equation}
We
expand $f=\sqrt{1+.}$ in Taylor series, whose radius of convergece
$1$. By Remark~\ref{rem:conormal-product},
$$
\tilde a^N\in S^0
(\Gamma_{\xi};I^{l'-\frac{n}{4}}(\RR^{n}_{x};N^*\{x'=0\})),
$$
with
$$
\|\tilde a^N\|_{S^0(I^{l'-\frac{n}{4}});0}\leq C^{N-1}\|\tilde a\|^N_{S^0(I^{l'-\frac{n}{4}});0},
$$
which follows from \eqref{eq:tilde-a-tilde-b-0th} by induction.
This shows that, provided
$\|\tilde a\|_{S^0(I^{l'-\frac{n}{4}});0}<C^{-1}$ (which holds if
$\ep<C^{-1}$),
the Taylor series
converges in the $0$th $S^0(I^{l'-\frac{n}{4}})$-norm. Then
differentiating the Taylor series with respect to operators giving
rise to the symbol topology, as discussed in
Remark~\ref{rem:conormal-product}, preserves the
$S^0(I^{l'-\frac{n}{4}})$-estimates in view of the chain rule for
derivatives, which gives $(f'\circ \tilde a)(V\tilde a)$,
where $V$ is one of $\pa_{\xi_j}$, $\xi_k\pa_{\xi_j}$,
$\pa_{x_j}$, $x'_k\pa_{x'_j}$, $x'_k$, and the fact that $Va$ satisfies
$S^0(I^{l-\frac{n}{4}})$-estimates as well, plus the fact that $f'$ also has
Taylor series with radius of convergence
$1$. Iterating this argument proves \eqref{eq:square-root-conormal},
and thus the lemma.
\end{proof}

As we already mentioned, a different model for the Lagrangians in $\RR^{2n}$, used by
Greenleaf and Uhlmann \cite{Greenleaf-Uhlmann:Estimates},
is the pair $(N^*\diag,\Lambda_{\tilde\Gamma})$ when 
$\tilde\Gamma$ given by $\xi'=0$, so
$$
\tilde\Lambda
=\Lambda_{\tilde\Gamma}=\{((x,\xi),(y,\eta)):\ \xi'=0=\eta',\
\xi''=-\eta'',\ x''=y''\}.
$$
With this
model, paired Lagrangian distributions in $I^{p,l}(N^*\diag,\tilde\Lambda)$
are given by oscillatory integrals
$$
\int e^{i[(x'-y'-s)\cdot\zeta'+(x''-y'')\cdot\zeta+s\sigma]}a(x,y,s,\zeta,\sigma)\,ds\,d\zeta\,d\sigma,
$$
with $a\in S^{M,M'}(\RR^{2n+k},\RR^n,\RR^k)$, $M=p+k/2$, $M'=l-k/2$ (there is a typo in
\cite{Greenleaf-Uhlmann:Estimates} in their definition of the first
order after (1.31)). Note here the flow-out is the second Lagrangian,
reversed as compared to Proposition~\ref{prop:diag-main-compose},
which is convenient to apply the results of Antoniano and Uhlmann, but
is not convenient in our case.

Since the structure of the projection maps of the left and the right
factors matters for composition purposes (i.e.\ just because all
Lagrangian pairs can be put to a model form via a symplectomorphism on
$\RR^{2n}$, it does not follow that they all have the same composition
properties!), we also need another
special case of the model of \eqref{eq:model-Lag-2}
in $\RR^{2n}=\RR^n_x\times\RR^n_y$, with
$\RR^n=\RR^k_{x'}\times\RR^{n-k}_{x''}$ and with
$(\xi,\eta)$ the dual variables of $(x,y)$, as before. This is
\begin{equation}\begin{aligned}\label{eq:one-sided-mapping}
&\Lambda_1=\{x'=0,\ \xi''=0,\ \eta'=0,\ \eta''=0\}=N^*\{x'=0\},\\
&\Lambda_0=\{x'=0=y',\ x''=y'',\ \xi''=-\eta''\}=N^*\{x'=0=y',\ x''=y''\},
\end{aligned}\end{equation}
this time with codimension $n$ intersection. Note that here
$\Lambda_0$ is the same `flow-out' Lagrangian as in
\eqref{eq:near-diag-model}, but $\Lambda_1$ a Lagrangian of the form
$\Lambda_1^\sharp\times o_{\RR^n}$, with $\Lambda_1^\sharp$ Lagrangian
in $T^*\RR^n\setminus o_{\RR^n}$, which means that if an operator
with Schwartz kernel in $I^p(\Lambda_1)$ is applied to even a
$\CI_c(\RR^n_y)$ function, the result is not $\CI$, merely Lagrangian
on $\Lambda_1^\sharp$. (There is a dual phenomenon if one reverses the
$x$ and the $y$ factors, namely then the operator cannot be applied to
all distributions.)
For this pair, the parameterization, modulo
$I^p(\Lambda_1)$, is
\begin{equation}\label{eq:one-sided-param}
\int e^{i[x'\cdot\xi'-y'\cdot\eta'+(x''-y'')\cdot\eta'']} a(x'',
\xi',\eta',\eta'')\,d\xi'\,d\eta,\ a\in S^{p+\frac{n-k}{2},l-\frac{n}{2}}(\RR^{n-k}_{x''};\RR^k_{\xi'};\RR^n_{\eta}),
\end{equation}
where a conic neighborhood of $\Lambda_0\cap\Lambda_1$ corresponds to
a conic neighborhood of $\eta=0$ in $\RR^k_{\xi'}\times\RR^n_{\eta}$
(so now $\xi'$ is the `large variable' on the parameter space),
and the $x''$ dependence can again be replaced by $y''$ dependence. (To see
this form of parameterization, write $z''=x'$, $z'=(-y',x''-y'')$, $z'''=x''$
then $\Lambda_1=N^*\{z''=0\}$, $\Lambda_0=N^*\{z'=0,\
z''=0\}$. Replacing $x''$ by $y''$ in the definition
of $z'''$ gives the other parameterization.)

We first note the action of pseudodifferential operators applied from
either factor to this pair:

\begin{lemma}\label{lemma:one-sided-psdo-comp}
Let $\Lambda_0,\Lambda_1$ as in \eqref{eq:one-sided-mapping}, with
$x$'s being the left variables. Then for $Q\in\Psi^s(\RR^n)$ (of proper
support), and for $K\in I^{p,l}(\Lambda_0,\Lambda_1)$, $QK\in
I^{p+s,l}(\Lambda_0,\Lambda_1)$ while $KQ\in I^{p,l+s}(\Lambda_0,\Lambda_1)$.
\end{lemma}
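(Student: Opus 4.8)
The plan is to work directly with the oscillatory integral model \eqref{eq:one-sided-param}, which is already adapted to the left/right (operator) structure, rather than reducing to the abstract model pair. First, since $Q$ is microlocal one has $\WF'(QK),\WF'(KQ)\subset\WF'(K)$, so it suffices to treat separately a $K$ microsupported near $\Lambda_0\cap\Lambda_1$ and a $K$ microsupported away from it. Away from the intersection $K$ is microlocally of the form $K_0+K_1$ with $K_0\in I^{p+l}(\Lambda_0)$ and $K_1\in I^p(\Lambda_1)$, and there the claim reduces to the familiar composition of a pseudodifferential operator with a Lagrangian distribution: composing with $Q$ from either side raises the order on $\Lambda_0$ by $s$; composing from the left raises the order on $\Lambda_1$ by $s$ as well, whereas composing from the right leaves the order on $\Lambda_1$ unchanged, because $\Lambda_1=\Lambda_1^\sharp\times o_{\RR^n}$ and $Q$ then acts in the $o_{\RR^n}$ slot only (concretely, writing $K_1=\int e^{ix'\cdot\xi'}\beta(x'',y,\xi')\,d\xi'$ with $\beta$ a symbol in $\xi'$ of order $p+\frac{n-k}{2}$ valued in compactly supported smooth functions of $(x'',y)$, composition with $Q$ in the $y$-slot replaces $\beta(x'',\cdot,\xi')$ by $Q^t$ applied to it, which is again such a symbol of the same order). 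Thus $Q\cdot I^p(\Lambda_1)\subset I^{p+s}(\Lambda_1)\subset I^{p+s,l}(\Lambda_0,\Lambda_1)$ and $I^p(\Lambda_1)\cdot Q\subset I^p(\Lambda_1)\subset I^{p,l+s}(\Lambda_0,\Lambda_1)$; this in particular disposes of the $I^p(\Lambda_1)$ remainder implicit in \eqref{eq:one-sided-param}.

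It then remains to treat $K$ given by \eqref{eq:one-sided-param} with $a\in S^{p+\frac{n-k}{2},\,l-\frac n2}(\RR^{n-k}_{x''};\RR^k_{\xi'};\RR^n_\eta)$ supported in a conic neighborhood of $\{\eta=0\}$, where (as elsewhere in the paper) we may assume compact support in all base variables. Write $Q(x,w)=(2\pi)^{-n}\int e^{i(x-w)\cdot\zeta}q(x,\zeta)\,d\zeta$, $q\in S^s$. For $QK=\int Q(\cdot,w)K(w,\cdot)\,dw$ the $w'$-integral produces $(2\pi)^k\delta(\xi'-\zeta')$, killing the $\zeta'$-integral and forcing $\zeta'=\xi'$, while the $w''$-integral replaces $a$ by its Fourier transform $\hat a_{w''}(\zeta''-\eta'';\xi',\eta',\eta'')$ in the base variable; after the substitution $\zeta''=\eta''+\mu$ the phase recombines into the form of \eqref{eq:one-sided-param} with new amplitude $\tilde a=(2\pi)^{-(n-k)}\int e^{ix''\cdot\mu}q(x,\xi',\eta''+\mu)\,\hat a_{w''}(\mu;\xi',\eta',\eta'')\,d\mu$. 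Since $\hat a_{w''}(\,\cdot\,;\xi',\eta',\eta'')$ decreases rapidly in $\mu$ (compact support of $a$ in $w''$) with values in the product-type symbol class, the $\mu$-integral converges and $\tilde a$ is again a product-type symbol, whose order is controlled by its leading term $q(x,\xi',\eta'')\,a(x'',\xi',\eta',\eta'')$; on the conic support $\langle(\xi',\eta)\rangle\sim\langle\xi'\rangle\sim\langle(\xi',\eta'')\rangle$, so $|\tilde a|\lesssim\langle(\xi',\eta)\rangle^{\,p+s+\frac{n-k}{2}}\langle\eta\rangle^{\,l-\frac n2}$, i.e.\ $\tilde a\in S^{p+s+\frac{n-k}{2},\,l-\frac n2}$ and $QK\in I^{p+s,l}(\Lambda_0,\Lambda_1)$. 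The computation for $KQ=\int K(\cdot,w)Q(w,\cdot)\,dw$ is symmetric: the $w$-integral localizes the covariable $\theta$ of $Q$ near $\eta$, and the new amplitude has leading term $q(y,\eta)\,a(x'',\xi',\eta',\eta'')$ with $|q(y,\eta)|\lesssim\langle\eta\rangle^s$, whence $|\tilde a|\lesssim\langle(\xi',\eta)\rangle^{\,p+\frac{n-k}{2}}\langle\eta\rangle^{\,l+s-\frac n2}$ and $KQ\in I^{p,l+s}(\Lambda_0,\Lambda_1)$. In both cases the dependence of $\tilde a$ on the $x'$- (resp.\ $y'$-) base variable, which is not permitted in \eqref{eq:one-sided-param}, is removed in the standard way: Taylor expansion in that variable about $0$, integration by parts converting each monomial into derivatives in $\xi'$ (resp.\ $\eta'$), and asymptotic summation; the remainders, like the lower-order terms in the Taylor expansion of $q$ about $\eta''$ (resp.\ $\eta$), are of strictly lower order and hence harmless.

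The only point that is not purely formal is the symbol-order bookkeeping in the product-type class. One must verify that the $w$-integration in the composition deposits $Q$'s full symbol at precisely the frequency $(\xi',\eta'')$ when composing from the left and at the frequency $\eta$ when composing from the right, and that the conic localization $\langle\eta\rangle\lesssim\langle\xi'\rangle$ then forces $Q$'s factor $\langle\cdot\rangle^s$ to be absorbed into $\langle\xi'\rangle^s$ — raising only the first order $p$ — in the first case, and to be kept as $\langle\eta\rangle^s$ — raising only the second order $l$ — in the second; this is exactly the asymmetry asserted in the lemma. Checking the higher product-type seminorms of $\tilde a$, and the convergence of the $\mu$-integral against the rapidly decreasing $\hat a_{w''}$, is routine.
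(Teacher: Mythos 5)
Your proof is correct and follows essentially the same route as the paper: reduce to the oscillatory-integral model \eqref{eq:one-sided-param}, dispose of the pure-Lagrangian remainders away from $\Lambda_0\cap\Lambda_1$ by standard results, and observe that on the conic support $\langle\eta\rangle\lesssim\langle\xi'\rangle$ the symbol of $Q$ is deposited at $(\xi',\eta'')$ (hence absorbed into the $\xi'$-order, raising $p$) for left composition, and at $\eta$ (hence into the $\eta$-order, raising $l$) for right composition. The only technical difference is that the paper computes $QK$ using the $y''$-dependent form of the amplitude (and $KQ$ using the $x''$-dependent form), so that $K$'s amplitude is independent of the variables on which $Q$ acts and the composition is exact — the new amplitude is literally the product $q\cdot a$ — which avoids your $\mu$-integral and the asymptotic expansion; both versions are fine.
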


\begin{proof}
As before, it suffies to consider kernels $K$ of the form
\eqref{eq:one-sided-param}, or its $y''$-dependent analogue, for
kernels in $I^p(\Lambda_1)$, as well as those in $I^{p+l}(\Lambda_0)$
with wave front set disjoint from $\Lambda_0\cap\Lambda_1$ (thus away
from covectors with vanishing dual-to-$y$ components), can easily
be treated by standard results.

In order to find $QK$, write $K$ in the form
\eqref{eq:one-sided-param}, but with $x''$ dependence replaced by
$y''$ dependence. Writing $Q$ as left quantization,
$$
Qv(z)=(2\pi)^{-n}\int
e^{i(z'\cdot\zeta'+z''\cdot\zeta'')}q(z',z'',\zeta',\zeta'')(\cF v)(\zeta',\zeta'')\,d\zeta'\,d\zeta'',
$$
and using that \eqref{eq:one-sided-param} with $x''$-dependence
replaced by $y''$-dependence gives, when applied to a $\CI_c$ function
$u$,
$$
\left(\cF_{\xi',\eta''}\left(\int e^{i[-y'\cdot\eta'-y''\cdot\eta'']} a(y'',
\xi',\eta',\eta'')\,u(y',y'')\,d\eta'\,dy'\,dy''\right)\right)(x',x''),
$$
we conclude that
\begin{equation*}\begin{aligned}
QKu(z)=&(2\pi)^{-n}\int
e^{i(z'\cdot\zeta'+z''\cdot\zeta'' -y'\cdot\eta'-y''\cdot\zeta'')}\\
&\qquad q(z',z'',\zeta',\zeta'') a(y'',
\zeta',\eta',\zeta'')\,u(y',y'')\,d\eta'\,dy'\,dy''\,d\zeta'\,d\zeta'',
\end{aligned}\end{equation*}
so the Schwartz kernel of $QK$ is given by the oscillator integral
$$
QK=(2\pi)^{-n}\int
e^{i(z'\cdot\zeta'-y'\cdot\eta'+(z''-y'')\cdot\zeta'')}q(z',z'',\zeta',\zeta'') a(y'',
\zeta',\eta',\zeta'')\,d\eta'\,d\zeta'\,d\zeta'',
$$
which is of the desired form.

Composition from the right can be checked similarly, using
\eqref{eq:one-sided-param} as stated, with $x''$-dependence.
\end{proof}

Most crucially we need mapping properties of these operators on
Sobolev spaces.

\begin{prop}\label{prop:one-sided-bded}
Let $\Lambda_0,\Lambda_1$ as in \eqref{eq:one-sided-mapping}, with
$x$'s being the left variables. Then for $K\in
I^{p,l}(\Lambda_0,\Lambda_1)$
with wave front set disjoint from $o_{\RR^n}\times
T^*\RR^n$,
and for $m,m'\in\RR$,
\begin{equation}\label{eq:one-sided-bded-conditions}
p+l<m+m'-\frac{k}{2}\ \text{and}\ p<m-\frac{n}{2}\Rightarrow K\in\cL(H^{m'},H^{-m}).
\end{equation}
\end{prop}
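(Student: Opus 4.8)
The plan is to use the parameterization \eqref{eq:one-sided-param} and reduce the mapping property to two separate, already-known estimates by decomposing $K$ into a piece that behaves like an element of $I^p(\Lambda_1)$ and a piece that behaves like an element of $I^{p+l}(\Lambda_0)$. Since all statements are local and microlocal, I would first assume, as in the other proofs in this section, that $K$ has wave front set near $\Lambda_0\cap\Lambda_1$ (the complementary region being handled by standard Fourier integral operator estimates for $I^{p+l}(\Lambda_0)$ and $I^p(\Lambda_1)$ separately), and that $K$ is given by \eqref{eq:one-sided-param} with symbol $a\in S^{p+\frac{n-k}{2},l-\frac{n}{2}}(\RR^{n-k}_{x''};\RR^k_{\xi'};\RR^n_\eta)$, supported where $|\eta|\leq\ep|\xi'|$. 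The crucial structural observation is that $\Lambda_1=N^*\{x'=0\}$ has the form $\Lambda_1^\sharp\times o_{\RR^n}$, so that the dual-to-$y$ variable $\eta$ being `small' (near the intersection) is precisely the regime in which $K$ looks like the kernel of an operator landing in a conormal space in $x$; while $|\eta|$ comparable to $|\xi'|$ is the regime governed by $\Lambda_0$.

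Concretely, I would split $a = a\,\chi(\langle\eta\rangle/\langle\xi'\rangle) + a\,(1-\chi(\langle\eta\rangle/\langle\xi'\rangle))$ with $\chi$ supported near $0$. The second term has $(\xi',\eta)$ in a region where $|\eta|$ is bounded below by a multiple of $|\xi'|$, hence by $|(\xi',\eta)|$, so it defines (after undoing the reduction) an element of $I^{p+l}(\Lambda_0)$ with wave front set away from $\Lambda_0\cap\Lambda_1$; the $L^2$-based mapping property of such operators (an honest FIO associated to a canonical graph, or here the flow-out, modulo the intersection) gives $H^{m'}\to H^{-m}$ boundedness precisely under $p+l\leq m+m'-k/2$ — this is the first condition (with strict inequality to spare, as we have it). For the first term, where $|\eta|\lesssim\langle\xi'\rangle$, I would treat $K$ as an operator by composing: writing $K = K\,\langle D_y\rangle^{-m'}\cdot\langle D_y\rangle^{m'}$ and $\langle D_x\rangle^{m}\cdot K$, and invoking Lemma~\ref{lemma:one-sided-psdo-comp} to absorb the pseudodifferential factors, so that it suffices to prove boundedness $L^2\to L^2$ for $K'\in I^{p',l'}(\Lambda_0,\Lambda_1)$ with $p' = p+m<-n/2+m_0$ shifted appropriately; then a Schur-test / Cotlar–Stein almost-orthogonality argument on the kernel \eqref{eq:one-sided-param}, using that the $\xi'$-symbol has order $p+\frac{n-k}{2}$ and one integrates $\eta$ over a ball of radius $\sim\langle\xi'\rangle$ picking up $\langle\xi'\rangle^{n}$, yields exactly the constraint $p<m-n/2$ for $L^2$-boundedness. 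Alternatively, and more in keeping with the paper's style, I would quote Proposition~\ref{prop:diag-main-bded} or Proposition~\ref{prop:Greenleaf-Uhlmann} after conjugating the $\Lambda_1^\sharp\times o$ factor into an honest conormal-bundle-of-diagonal situation by tensoring with a rank-one piece, but the direct kernel estimate seems cleanest.

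The main obstacle, and the reason one cannot simply cite Proposition~\ref{prop:diag-main-bded}, is the asymmetry of the Lagrangian $\Lambda_1=\Lambda_1^\sharp\times o_{\RR^n}$: it is genuinely a `one-sided mapping' object — applied to a $\CI_c$ function it produces something only conormal, not smooth — so the standard pseudodifferential/FIO boundedness framework does not apply to $I^p(\Lambda_1)$ directly, and one must track the $\eta$-behavior (the dual-to-$y$ variable) by hand. Getting the correct power of $\langle\xi'\rangle$ from the $\eta$-integration over $|\eta|\lesssim\langle\xi'\rangle$ is what produces the shift from $m-k/2$ to $m-n/2$ in the second hypothesis, and verifying that this is the sharp threshold (so that the two hypotheses in \eqref{eq:one-sided-bded-conditions} are exactly the $\Lambda_1$- and $\Lambda_0$-thresholds) is the delicate bookkeeping step. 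The wave front hypothesis disjoint from $o_{\RR^n}\times T^*\RR^n$ is what guarantees the operator can be applied to all of $H^{m'}$ in the first place, and I would use it to justify the Schur-test/Cotlar–Stein manipulation (no issues from the `missing' smoothing in the right variable).
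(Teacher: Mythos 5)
Your reduction to $L^2$ via Lemma~\ref{lemma:one-sided-psdo-comp} and your treatment of the piece supported where $|\eta|\gtrsim|\xi'|$ (an honest element of $I^{p+l}(\Lambda_0)$ with wave front set away from both zero sections, bounded under $p+l\leq m+m'-\frac{k}{2}$) are both fine and consistent with what the paper does. The gap is in the near-intersection piece. You assert that after cutting to $\langle\eta\rangle\lesssim\langle\xi'\rangle$ the $L^2$-bound "yields exactly the constraint $p<m-\frac{n}{2}$", i.e.\ that the two hypotheses in \eqref{eq:one-sided-bded-conditions} decouple, one per piece. They do not. On the region $\langle\eta\rangle\leq\ep\langle\xi'\rangle$ the symbol still carries the full growth $\langle\eta\rangle^{l-m'-\frac{n}{2}}$ with $\eta$ ranging up to size $\ep|\xi'|$; when $l-m'\geq\frac{n-k}{2}$ this factor is not bounded and the second condition alone gives nothing. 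Concretely, freezing $\eta''$ with $|\eta''|\sim R$ and estimating the resulting operator on $L^2(\RR^k_{x'})$ in Hilbert--Schmidt norm (which is how one actually quantifies your "Schur/Cotlar--Stein" step), the $\xi'$-integration over $|\xi'|\gtrsim R$ contributes $R^{p-m+\frac{n}{2}}$ and the $\eta'$-integration contributes at least $R^{l-m'-\frac{n}{2}+\frac{k}{2}}$, so the family is uniformly bounded only if $p+l-m-m'+\frac{k}{2}\leq 0$ --- the \emph{first} condition reappears on the near piece. Your bookkeeping ("integrating $\eta$ over a ball of radius $\sim\langle\xi'\rangle$ picks up $\langle\xi'\rangle^n$") implicitly treats the $\eta$-symbol as order zero and therefore misses this.

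The paper's proof does not decompose in $\eta$ at all. Instead it first observes that \eqref{eq:one-sided-bded-conditions} is \emph{equivalent} to the disjunction of \eqref{eq:one-sided-0-stronger} and \eqref{eq:one-sided-1-stronger}, and then treats the whole near-intersection kernel as an operator-valued symbol $A(x'',\eta'')\in S^0(\RR^{n-k};\RR^{n-k};\cL(L^2(\RR^k),L^2(\RR^k)))$, quantized by the operator-valued calculus of \cite[Section~18.1]{Hor}. The uniform Hilbert--Schmidt bound \eqref{eq:weighted-sup-estimate-1} is then closed by a case split on the sign of $l-m'-\frac{n-k}{2}$: if it is nonnegative one trades $\langle\eta\rangle$ for $\langle\xi'\rangle$ using the support condition and invokes the first hypothesis; if it is negative the $\eta$-weight is harmless and the second hypothesis finishes. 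To repair your proposal you would need to replace the claim "$p<m-\frac{n}{2}$ suffices for the near piece" by exactly this two-case analysis (or an equivalent dyadic version of it), at which point you have essentially reconstructed the paper's argument; the near/far splitting buys you nothing. The side remark about reducing to Proposition~\ref{prop:diag-main-bded} by "tensoring with a rank-one piece" also does not work as stated, since $\Lambda_1=\Lambda_1^\sharp\times o_{\RR^n}$ cannot be conjugated to a conormal bundle of a diagonal by operations preserving the left/right projection structure, which is precisely why this separate proposition is needed.
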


\begin{rem}
Note that the first condition of \eqref{eq:one-sided-bded-conditions}
is almost exactly the statement that a distribution in
$I^{p+l}(\Lambda_0)$ with wave front set away from
$\Lambda_0\cap\Lambda_1$ is bounded from $H^{m'}$ to $H^{-m}$, with
`almost' referring to the loss of the normally allowed equality, cf.\
Proposition~\ref{prop:diag-main-bded} and the remarks afterwards. On the
other hand, the second condition in
\eqref{eq:one-sided-bded-conditions} is exactly
the condition that an element of $I^p(\Lambda_1)$ maps distributions
(or even just $\CI$, for that matter)
into $H^{-m}$.
\end{rem}

\begin{proof}
We first remark that \eqref{eq:one-sided-bded-conditions} is
equivalent to the combination of the two conditions: either
\begin{equation}\label{eq:one-sided-0-stronger}
l\geq m'-\frac{k}{2}+\frac{n}{2}\ \text{and}\ p+l<m+m'-\frac{k}{2},
\end{equation}
or
\begin{equation}\label{eq:one-sided-1-stronger}
l<m'-\frac{k}{2}+\frac{n}{2}\ \text{and}\ p<m-\frac{n}{2}.
\end{equation}
Indeed, \eqref{eq:one-sided-bded-conditions} automatically implies
these two, and conversely, if $l\geq m'-\frac{k}{2}+\frac{n}{2}$ then
subtracting the first inequality from the second yields
$p<m-\frac{n}{2}$ while if $l<m'-\frac{k}{2}+\frac{n}{2}$ then adding
the inequalities yields $p+l<m+m'-\frac{k}{2}$.

Now, in view of Lemma~\ref{lemma:one-sided-psdo-comp}, at the cost of
replacing $p$ by $p-m$ and $l$ by $l-m'$, as we now do,
it suffices to consider $L^2$-boundedness. Further, one may assume
that $K$ is of the form \eqref{eq:one-sided-param}, with $a$ supported
in the region $\langle\eta\rangle\leq\langle\xi'\rangle$. We claim that if
we let $A(x'',\eta'')$ be the operator on $\RR^k_{x'}$ given by
$$
(A(x'',\eta'')u)(x')=\int e^{i[x'\cdot\xi'-y'\cdot\eta']} a(x'',
\xi',\eta',\eta'')u(y')\,d\xi'\,d\eta'\,dy',\ u\in\CI_c(\RR^k),
$$
and if either set of conditions \eqref{eq:one-sided-0-stronger}, resp.\ \eqref{eq:one-sided-1-stronger}, is satisfied then
\begin{equation}\label{eq:op-valued-symbol}
A\in S^0(\RR^{n-k}_{x''};\RR^{n-k}_{\eta''};\cL(L^2(\RR^k),L^2(\RR^k))),
\end{equation}
i.e.\ it is an operator-valued symbol of order $0$, which thus by the
operator-valued version of the standard calculus, see
\cite[Section~18.1, Remark~2]{Hor}, gives a bounded
operator
$$
\cL(L^2(\RR^{n-k};L^2(\RR^k));L^2(\RR^{n-k};L^2(\RR^k)))=\cL(L^2(\RR^n),L^2(\RR^n)),
$$
proving the proposition.

But with $\cF'$ denoting the Fourier transform in the primed
variables,
$$
(A(x'',\eta'') u)(x')=(2\pi)^k\left((\cF')^{-1} (\int
a(x'',.,\eta',\eta'')(\cF u)(\eta')\,d\eta')\right)(x'),
$$
i.e.\ $\cF'A(\cF')^{-1}$ has Schwartz kernel $(2\pi)^k
a(x'',\xi',\eta',\eta'')$, with the action in the primed
variables.
First we check that $A(.,.)$ is a uniformly bounded family
of bounded operators (and indeed, a uniformly bounded family
of Hilbert-Schmidt operators). This follows if we show that
$$
a(x'',\xi',\eta',\eta'')\in
L^\infty(\RR^{2(n-k)}_{x'',\eta''};L^2(\RR^{2k}_{\xi',\eta'})),
$$
which in turn follows if for some $\delta>0$,
$$
\langle\xi'\rangle^{\frac{k}{2}+\delta}\langle\eta'\rangle^{\frac{k}{2}+\delta}a\in L^\infty(\RR^{2n}).
$$
But
\begin{equation}\label{eq:weighted-sup-estimate-1}
\langle\xi'\rangle^{\frac{k}{2}+\delta}\langle\eta'\rangle^{\frac{k}{2}+\delta}|a|
\leq C\langle\xi'\rangle^{p-m+\frac{n}{2}+\delta}\langle\eta\rangle^{l-m'-\frac{n-k}{2}+\delta}
\end{equation}
Now, if $l-m'-\frac{n-k}{2}\geq 0$ then
$\langle\eta\rangle^{l-m'-\frac{n-k}{2}+\delta}\leq
\langle\xi'\rangle^{l-m'-\frac{n-k}{2}+\delta}$, and thus
\begin{equation}\label{eq:weighted-sup-estimate-2}
\langle\xi'\rangle^{\frac{k}{2}+\delta}\langle\eta'\rangle^{\frac{k}{2}+\delta}|a|
\leq C\langle\xi'\rangle^{p+l-m-m'+\frac{k}{2}+2\delta},
\end{equation}
and thus is bounded since $p+l-m-m'+\frac{k}{2}<0$ means that one can
take sufficiently small $\delta>0$ to still have
$p+l-m-m'+\frac{k}{2}+2\delta\leq 0$. On the other hand, if
$l-m'-\frac{n-k}{2}< 0$ then $p-m+\frac{n}{2}<0$ as well, so one may
choose $\delta>0$ sufficiently small so that the right hand side of
\eqref{eq:weighted-sup-estimate-1} is bounded.

Since $D_{\eta''_j}$, $\eta''_jD_{\eta''_i}$, $D_{x''_j}$ preserve the
symbolic order of $a$, analogous properties follow when these
differential
operators are applied to $A(.,.)$ iteratively, implying that
\eqref{eq:op-valued-symbol} holds, which in turn completes the proof
of the proposition.
\end{proof}

If the role of the $x$ and $y$ variables is reversed one has
\begin{equation}\begin{aligned}\label{eq:one-sided-mapping-mod}
&\hat\Lambda_1=\{y'=0,\ \eta''=0,\ \xi'=0,\ \xi''=0\}=N^*\{y'=0\},\\
&\Lambda_0=\{x'=0=y',\ x''=y'',\ \xi''=-\eta''\}=N^*\{x'=0=y',\ x''=y''\},
\end{aligned}\end{equation}
as the modified model. Either essentially repeating the arguments
given above,
or noting that if $K\in
I^{p,l}(\Lambda_0,\hat\Lambda_1)$ then its adjoint is in
$I^{p,l}(\Lambda_0,\Lambda_1)$, and thus via dualization one obtains
mapping properties of $K$ from Proposition~\ref{prop:one-sided-bded},
one has

\begin{prop}\label{prop:one-sided-bded-mod}
Let $\Lambda_0,\hat\Lambda_1$ as in \eqref{eq:one-sided-mapping-mod}, with
$x$'s being the left variables. Then for $K\in
I^{p,l}(\Lambda_0,\Lambda_1)$
with wave front set disjoint from $o_{\RR^n}\times
T^*\RR^n$,
and for $m,m'\in\RR$,
\begin{equation}\label{eq:one-sided-bded-conditions-mod}
p+l<m+m'-\frac{k}{2}\ \text{and}\ p<m'-\frac{n}{2}\Rightarrow K\in\cL(H^{m'},H^{-m}).
\end{equation}
\end{prop}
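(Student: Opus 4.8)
The plan is to deduce Proposition~\ref{prop:one-sided-bded-mod} from Proposition~\ref{prop:one-sided-bded} by duality, as the discussion preceding the statement already indicates. The first step is to describe the (formal) adjoint $K^*$ of an element $K\in I^{p,l}(\Lambda_0,\hat\Lambda_1)$: passing to the adjoint interchanges the two base factors of $\RR^{2n}=\RR^n_x\times\RR^n_y$ and composes with the antipodal map on the cotangent fibres. Under this operation $\hat\Lambda_1=N^*\{y'=0\}$ is carried to $\Lambda_1=N^*\{x'=0\}$, while $\Lambda_0=N^*\{x'=0=y',\ x''=y''\}$ is invariant, so $K^*$ is a paired Lagrangian distribution for the pair $(\Lambda_0,\Lambda_1)$ of \eqref{eq:one-sided-mapping}. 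Moreover the bi-order $(p,l)$ is preserved: writing $K$ via the oscillatory integral \eqref{eq:one-sided-param} (with its $y''$-dependent variant where convenient), $K^*$ is obtained by taking complex conjugates and reversing the signs of all phase variables, which maps the product-type symbol class $S^{p+\frac{n-k}{2},l-\frac{n}{2}}$ to itself with no shift of orders; this is the paired-Lagrangian analogue of the fact that the adjoint of a Fourier integral operator of order $p$ is an FIO of order $p$ associated to the transposed relation. Finally, $\WF(K^*)$ is the image of $\WF(K)$ under the interchange-and-antipode map, so the wave front set hypothesis on $K$ is carried to precisely $\WF(K^*)\cap(o_{\RR^n}\times T^*\RR^n)=\emptyset$, the hypothesis needed to apply Proposition~\ref{prop:one-sided-bded} to $K^*$.

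With this in hand, I would apply Proposition~\ref{prop:one-sided-bded} to $K^*$ with the roles of $m$ and $m'$ interchanged. Its hypotheses then read $p+l<m+m'-\frac{k}{2}$ and $p<m'-\frac{n}{2}$, which are exactly the hypotheses \eqref{eq:one-sided-bded-conditions-mod} of the present proposition, and the conclusion is $K^*\in\cL(H^{m},H^{-m'})$. Since the $L^2$ pairing identifies $(H^{m'})^*$ with $H^{-m'}$ and $(H^{-m})^*$ with $H^{m}$, and since proper support removes the global mapping subtleties, the boundedness of $K^*$ from $H^{m}$ to $H^{-m'}$ is equivalent by transposition to the boundedness of $K$ from $H^{m'}$ to $H^{-m}$. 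This gives the proposition.

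The only step requiring genuine verification is the claim in the first paragraph that taking the adjoint preserves $I^{p,l}(\Lambda_0,\hat\Lambda_1)$ together with both orders; this is routine from the parameterization \eqref{eq:one-sided-param}, but it is the one place where a line or two of actual computation is needed, and I expect it to be the main (minor) obstacle. If one prefers to avoid duality entirely, the alternative alluded to in the text --- ``essentially repeating the arguments given above'' --- is to first reduce to $L^2$-boundedness via the analogue of Lemma~\ref{lemma:one-sided-psdo-comp} for the model \eqref{eq:one-sided-mapping-mod}, then realize $K$, written in the form \eqref{eq:one-sided-param} with $y''$-dependence, as an operator-valued symbol of order $0$ in the double-primed variables acting on $L^2$ of the primed variables, the fibrewise Hilbert--Schmidt bounds coming from the same weighted $\sup$-norm estimates \eqref{eq:weighted-sup-estimate-1}--\eqref{eq:weighted-sup-estimate-2} with $m$ and $m'$ swapped, and the case split \eqref{eq:one-sided-0-stronger}--\eqref{eq:one-sided-1-stronger} likewise swapped; the duality route is shorter, and is the one I would write up.
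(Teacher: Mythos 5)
Your proposal is correct and follows the same route the paper itself takes: the sentence immediately preceding the proposition disposes of it by exactly this duality, noting that the adjoint of $K\in I^{p,l}(\Lambda_0,\hat\Lambda_1)$ lies in $I^{p,l}(\Lambda_0,\Lambda_1)$ with the same orders and then invoking Proposition~\ref{prop:one-sided-bded} with $m$ and $m'$ interchanged. One small point: under the interchange-and-antipode map, the condition ensuring that $\WF(K^*)$ avoids $o_{\RR^n}\times T^*\RR^n$ is that $\WF(K)$ avoid $T^*\RR^n\times o_{\RR^n}$ (the hypothesis as printed in the proposition, disjointness from $o_{\RR^n}\times T^*\RR^n$, would force $\WF(K)$ to miss $\hat\Lambda_1$ entirely and is evidently a typo, cf.\ the second case of Corollary~\ref{cor:one-sided-bded}), and your argument implicitly and correctly works with this corrected hypothesis.
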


Even if a distribution is Lagrangian associated to $\Lambda_0$ (i.e.\
has no singularity at $\Lambda_1$), the fact that $\Lambda_0$
intersects $T^*\RR^n\times o_{\RR^n}$ means that the standard results
on mapping properties do not apply. However, one {\em can} regard this
distribution as a paired Lagrangian associated to
$(\Lambda_0,\Lambda_1)$ and apply the previous propositions:

\begin{cor}\label{cor:one-sided-bded}
Let $\Lambda_0$ be as in \eqref{eq:one-sided-mapping}, with
$x$'s being the left variables. Then for $K\in
I^p(\Lambda_0)$ with wave front set disjoint from $o_{\RR^n}\times
T^*\RR^n$
and for $m,m'\in\RR$,
\begin{equation}\label{eq:Lambda_0-one-sided-bded-conditions}
p<m+m'-\frac{k}{2}\ \text{and}\ p<m\Rightarrow K\in\cL(H^{m'},H^{-m}).
\end{equation}
In case $K\in
I^p(\Lambda_0)$ with wave front set disjoint from $T^*\RR^n\times
o_{\RR^n}$, then the conditions become
\begin{equation}\label{eq:Lambda_0-one-sided-bded-conditions-mod}
p<m+m'-\frac{k}{2}\ \text{and}\ p<m'\Rightarrow K\in\cL(H^{m'},H^{-m}).
\end{equation}
\end{cor}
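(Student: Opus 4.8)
The plan is to deduce the corollary from Propositions~\ref{prop:one-sided-bded} and~\ref{prop:one-sided-bded-mod} by using Lemma~\ref{lemma:Lambda_0-Lag} to regard the pure Lagrangian distribution $K\in I^p(\Lambda_0)$ as a paired Lagrangian distribution carrying only a relative singularity on a conormal bundle, and to obtain the second statement from the first by duality.

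For the first statement, I would localize to the model Lagrangians of \eqref{eq:one-sided-mapping}, so that $\Lambda_1=N^*\{x'=0\}\subset T^*\RR^n\times o_{\RR^n}$ and the intersection $\Lambda_0\cap\Lambda_1$ has codimension $n$. Applying Lemma~\ref{lemma:Lambda_0-Lag} with the codimension of the Lagrangian intersection equal to $n$ (not the $k$ of the corollary's models) gives $K\in I^{p-n/2,\,n/2}(\Lambda_0,\Lambda_1)$. Since $\Lambda_1\subset T^*\RR^n\times o_{\RR^n}$ meets $o_{\RR^n}\times T^*\RR^n$ only in the zero section, the hypothesis that $\WF(K)$ is disjoint from $o_{\RR^n}\times T^*\RR^n$ is preserved when $K$ is viewed inside $I^{p-n/2,\,n/2}(\Lambda_0,\Lambda_1)$, so Proposition~\ref{prop:one-sided-bded} applies with its orders taken to be $p-n/2$ and $l=n/2$. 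Its conditions \eqref{eq:one-sided-bded-conditions} then read $(p-n/2)+n/2<m+m'-\frac{k}{2}$, i.e.\ $p<m+m'-\frac{k}{2}$, and $p-n/2<m-n/2$, i.e.\ $p<m$, which is exactly \eqref{eq:Lambda_0-one-sided-bded-conditions}.

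For the second statement, I would use that the submanifold $\{x'=0=y',\ x''=y''\}$, hence $\Lambda_0$, is invariant under interchanging the $x$ and $y$ factors (and under $\xi\mapsto-\xi$, $\eta\mapsto-\eta$), so that if $K\in I^p(\Lambda_0)$ has $\WF(K)$ disjoint from $T^*\RR^n\times o_{\RR^n}$, then its adjoint $K^*$ is again in $I^p(\Lambda_0)$ with $\WF(K^*)$ disjoint from $o_{\RR^n}\times T^*\RR^n$. Applying the first statement to $K^*$ with $m,m'$ interchanged gives $K^*\in\cL(H^{m},H^{-m'})$ whenever $p<m+m'-\frac{k}{2}$ and $p<m'$, and dualizing yields $K\in\cL(H^{m'},H^{-m})$ under the same conditions, which is \eqref{eq:Lambda_0-one-sided-bded-conditions-mod}. (Alternatively, one can run the argument of the first part directly with $\hat\Lambda_1=N^*\{y'=0\}$ from \eqref{eq:one-sided-mapping-mod} and Proposition~\ref{prop:one-sided-bded-mod}.)

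This is essentially bookkeeping; the one point I would check most carefully is that the elliptic FIO reducing the one-sided pairs \eqref{eq:one-sided-mapping}, \eqref{eq:one-sided-mapping-mod} to the standard model $(\tilde\Lambda_0,\tilde\Lambda_1)$ does not move $\WF(K)$ into the forbidden region, so that Lemma~\ref{lemma:Lambda_0-Lag}—proved for the standard model—transfers without change, and that the two uses of the letter $k$ (the small-variable dimension in the corollary versus the codimension of the Lagrangian intersection entering Lemma~\ref{lemma:Lambda_0-Lag}) are kept straight.
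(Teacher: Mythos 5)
Your proposal is correct and follows essentially the same route as the paper: apply Lemma~\ref{lemma:Lambda_0-Lag} with the intersection codimension $n$ to get $I^p(\Lambda_0)\subset I^{p-\frac{n}{2},\frac{n}{2}}(\Lambda_0,\Lambda_1)$, then invoke Proposition~\ref{prop:one-sided-bded} (and its adjoint/dual version, Proposition~\ref{prop:one-sided-bded-mod}, for the second case), with the same order bookkeeping. The arithmetic checks out and the care you flag about the two roles of $k$ is exactly the right point to watch.
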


\begin{proof}
With $\Lambda_1$ as in \eqref{eq:one-sided-mapping},
$\Lambda_0\cap\Lambda_1$ has codimension $n$ in either of these two
Lagrangians, and thus by Lemma~\ref{lemma:Lambda_0-Lag},
$I^p(\Lambda_0)\subset
I^{p-\frac{n}{2},\frac{n}{2}}(\Lambda_0,\Lambda_1)$. Thus by
Proposition~\ref{prop:one-sided-bded}, $K$ is bounded as claimed
provided
$p<m+m'-\frac{k}{2}$ and $p<m$, which completes the proof.
\end{proof}

As an example, with $\codim Y=\codimY$, $\dim X=n$,
consider
$$
f\in I^{[-s_0]}(Y)=I^{-s_0-(\dim X-2\codimY)/4}(N^*Y).
$$
Then the pullback $\pi_L^* f$ of $f$ to $X\times X$, via the left projection to
$X$, is in $I^{[-s_0]}=I^{-s_0-\dim Y/2}(N^*(Y\times X))$. Since for
$A\in\Psi^s(X)$ one has $K_A\in I^s(N^*\diag)$ (with $K_A$ denoting the Schwartz kernel of
$A$), the Schwartz kernel $K_{fA}$ of $fA$ is $(\pi_L^*
f)K_A$, and by \eqref{eq:product-conormal} one has
\begin{equation}\begin{aligned}\label{eq:psdo-with-sing-coeff}
K_{fA}\in &I^{s,-s_0+\codimY/2}(N^*(\diag\cap(Y\times
X)),N^*\diag)\\
&\qquad\qquad+I^{-s_0-\dim Y/2,s+n/2}(N^*(\diag\cap(Y\times
X)),N^*(Y\times X)).
\end{aligned}\end{equation}
Similar results apply to $Af$, with the left and the right factors
interchanged.

In the special case $A=\Id$ we get
\begin{equation}\begin{aligned}\label{eq:mult-with-sing-coeff}
K_{f\,\Id}\in &I^{0,-s_0+\codimY/2}(N^*(\diag\cap(Y\times
X)),N^*\diag)\\
&\qquad\qquad+I^{-s_0-\dim Y/2,n/2}(N^*(\diag\cap(Y\times
X)),N^*(Y\times X)).
\end{aligned}\end{equation}
In this case one could write the multiplication also as a
multiplication from the right factor, and thus deduce that the second
summand can be dropped. (This also follows directly
from \cite{Greenleaf-Uhlmann:Estimates}.) However, this has no impact
on the following consequence:

\begin{prop}\label{prop:sing-mult-bded}
Multiplication by $f\in I^{[-s_0]}(Y)=I^{-s_0-(\dim
  X-2\codimY)/4}(N^*Y)$ is bounded $H^s\to H^s$ provided $s_0>\codim Y$
and $-s_0+\codimY/2<s<s_0-\codimY/2$.
\end{prop}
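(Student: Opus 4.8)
The plan is to read the result off the paired Lagrangian structure of the Schwartz kernel $K=K_{f\,\Id}$ of multiplication by $f$, which is recorded in \eqref{eq:mult-with-sing-coeff}, together with the Sobolev boundedness statements already established: Proposition~\ref{prop:diag-main-bded} for the behavior near the diagonal, and Corollary~\ref{cor:one-sided-bded} (equivalently Proposition~\ref{prop:one-sided-bded}) for the behavior near the two zero sections of $T^*(X\times X)$. Write $k=\codimY=\codim Y$. Since $f$ may equally be regarded as multiplication from the right factor, the second summand in \eqref{eq:mult-with-sing-coeff} can be dropped, leaving
$$K\in I^{0,\,-s_0+k/2}(\Lambda_0,\Lambda_1),\qquad \Lambda_1=N^*\diag,\ \ \Lambda_0=N^*(\diag\cap(Y\times X)),$$
the flow-out, with $\Lambda_0\cap\Lambda_1$ of codimension $k$. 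Here $\Lambda_1$ is disjoint from $T^*X\times o$ and from $o\times T^*X$, while $\Lambda_0$ meets both, in the disjoint subsets where the right, resp.\ left, momentum vanishes.

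Next I would microlocally decompose $K$, using a conic partition of unity in $T^*(X\times X)\setminus o$ applied to the amplitude in its oscillatory integral representation, as $K=K_0+K_1+K_2+K_3$, where $K_0\in\CI(X\times X)$ (a smoothing, hence trivially $H^s\to H^s$ bounded, operator), $\WF(K_1)$ is away from both $T^*X\times o$ and $o\times T^*X$, and $\WF(K_2)$, $\WF(K_3)$ lie in small conic neighborhoods of $\Lambda_0\cap(T^*X\times o)$, resp.\ $\Lambda_0\cap(o\times T^*X)$. Since $\WF(K)\subset\Lambda_0\cup\Lambda_1$ and the two intersections of $\Lambda_0$ with the zero sections are disjoint, these neighborhoods may be taken pairwise disjoint. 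Each $K_j$ remains a paired Lagrangian of the same orders; in particular $K_1\in\tilde I^{0,-s_0+k/2}(\Lambda_0,\Lambda_1)$, while $K_2,K_3$, being microsupported away from $\Lambda_1=N^*\diag$ (hence away from $\Lambda_0\cap\Lambda_1$ as well), are plain Lagrangian distributions on $\Lambda_0$ of the off-diagonal order $p+l=-s_0+k/2$, i.e.\ lie in $I^{-s_0+k/2}(\Lambda_0)$, with wave front set disjoint from $o\times T^*X$, resp.\ from $T^*X\times o$.

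Then I would apply the three boundedness statements with $m'=m=s$. For $K_1$: Proposition~\ref{prop:diag-main-bded} gives $K_1\in\cL(H^s,H^s)$ once $0\le 0$ and $-s_0+k/2<-k/2$, i.e.\ once $s_0>\codim Y$, with no restriction on $s$. For $K_2$: Corollary~\ref{cor:one-sided-bded} in the form with wave front set disjoint from $o_{\RR^n}\times T^*\RR^n$, applied with $p=-s_0+k/2$, $m'=s$, $m=-s$, gives $K_2\in\cL(H^s,H^s)$ once $p<-k/2$ and $p<-s$, i.e.\ once $s_0>\codim Y$ and $s<s_0-\codimY/2$. For $K_3$: Corollary~\ref{cor:one-sided-bded} in the form with wave front set disjoint from $T^*\RR^n\times o_{\RR^n}$ gives $K_3\in\cL(H^s,H^s)$ once $p<-k/2$ and $p<s$, i.e.\ once $s_0>\codim Y$ and $-s_0+\codimY/2<s$. (Alternatively, $K_3$ can be handled by dualizing the argument for $K_2$: the adjoint of multiplication by $f$ is multiplication by $\bar f$, which lies in the same class, and $H^s\to H^s$ boundedness is equivalent to $H^{-s}\to H^{-s}$ boundedness.) Summing, $M_f=\sum_{j=0}^{3}K_j$ is bounded $H^s\to H^s$ exactly when $s_0>\codim Y$ and $-s_0+\codimY/2<s<s_0-\codimY/2$, which are the hypotheses.

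The hard part is purely bookkeeping, not analysis: one must check that the conic decomposition of $K$ stays inside the paired Lagrangian (and conormal) symbol classes — routine at the level of amplitudes — and, more delicately, recognize that for the pieces $K_2,K_3$ near the zero sections the relevant model is the one-sided pair \eqref{eq:one-sided-mapping}/\eqref{eq:one-sided-mapping-mod} (with the flow-out $\Lambda_0$ as in \eqref{eq:near-diag-model}) rather than the near-diagonal pair, so that the support and ellipticity conventions of Corollary~\ref{cor:one-sided-bded} genuinely apply, together with confirming that the order of $K$ on $\Lambda_0$ is exactly $p+l=-s_0+\codimY/2$ with no loss. Beyond this, Proposition~\ref{prop:diag-main-bded} and Corollary~\ref{cor:one-sided-bded} supply all the needed input.
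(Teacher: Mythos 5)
Your argument is correct and is essentially the paper's own proof: the paper likewise reads the result off \eqref{eq:mult-with-sing-coeff} together with Proposition~\ref{prop:diag-main-bded}, Proposition~\ref{prop:one-sided-bded} and Corollary~\ref{cor:one-sided-bded}, arriving at the same three inequalities (the paper keeps the second summand of \eqref{eq:mult-with-sing-coeff} rather than dropping it, but that is immaterial). Your explicit conic decomposition $K=K_0+K_1+K_2+K_3$ just spells out the microlocalization that the paper's one-line citation implicitly requires, since Proposition~\ref{prop:diag-main-bded} applies only to $\tilde I^{*,*}$, i.e.\ away from the zero sections.
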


\begin{proof}
In view of \eqref{eq:mult-with-sing-coeff} and Propositions~\ref{prop:diag-main-bded}
and \ref{prop:one-sided-bded} and
Corollary~\ref{cor:one-sided-bded}, multiplication by $f$ is bounded
$H^s\to H^s$ provided
\begin{equation*}\begin{aligned}
&-s_0+\codimY/2<-\codimY/2,\\
&-s_0-\dim Y/2<s-n/2,\\
&-s_0+\codimY/2<-s,
\end{aligned}\end{equation*}
which gives exactly the constraints in the proposition.
\end{proof}

\section{Elliptic estimates}\label{sec:elliptic}
In this section we discuss microlocal elliptic estimates, which help
take care of the regions of phase space one would like to think of as
`irrelevant' for wave propagation purposes. Here, and in the next
section, we denote the position (base) variable by $x$, the dual
variable by $\xi$, and use local coordinates in which $Y$ is given by $\{x'=0\}$.

So suppose that $g\in
I^{[-s_0]}(Y)$, $\codim Y=\codimY$, with $G$ the dual metric. For simplicity, we reduce the problem from $\Box$ to
$$
P=(\det g)^{1/2}\Box=\sum_{ij} D_i(\det g)^{1/2} G_{ij} D_j.
$$
If $\Box u=f\in H^s$, then by Proposition~\ref{prop:sing-mult-bded}
multiplication by $(\det g)^{1/2}\in
I^{[-s_0]}(Y)$
preserves $H^s$ if
\begin{equation}\begin{aligned}\label{eq:ell-reduction-constraint}
&s_0>\codimY,\\
&-s_0+\codimY/2<s<s_0-\codimY/2.
\end{aligned}\end{equation}
Thus,
$$
Pu=(\det g)^{1/2}f\in H^s;
$$
so under these constraints, we may instead study the equation
$Pu=\tilde f$. We write
\begin{equation}\label{eq:reduced-op}
g_{ij}=(\det g)^{1/2} G_{ij},\ P=\sum_{ij}D_i g_{ij} D_j,
\end{equation}
and note that $P$ is formally self-adjoint with respect to the
Euclidean inner product.

For $A\in\Psi^{2s-2}(X)$; we need to compute the Schwartz
kernel of $ P A$ (or $A P$) as a (sum of) paired Lagrangian distribution(s).
The Schwarz kernel $K_{D_iA}$ of $D_i A$ is $D_{i,L}K_A$
(where the subscript $L$ denotes the derivative acting on the left
factor of $X\times X$), while the
Schwartz kernel $K_{AD_i}$ of $A D_i$ is $-D_{i,R}K_A$, we deduce that
the Schwartz kernel of $ P A$, resp.\ $A P$, is
\begin{equation*}\begin{split}
K_{ P A}=\sum D_{i,L}g_{ij,L}D_{j,L}K_A,\qquad  K_{A P }=\sum D_{j,R} g_{ij,R}D_{i,R}K_A.
\end{split}\end{equation*}
Here $g_{ij,L}$, resp.\ $g_{ij,R}$, is the pullback of $g_{ij}$ from
the left, resp.\ right, factor.
Now, $K_A\in I^{2s-2}(N^*\diag)$, so
$D_{i,L}K_A, D_{i,R}K_A\in I^{2s-1}(N^*\diag)$.
Now as $g_{ij}\in I^{[-s_0]}(Y)$, by \eqref{eq:psdo-with-sing-coeff}
(with the left and right factors interchanged in the first case),
\begin{equation*}\begin{aligned} 
g_{ij,R}D_{i,R}K_A\in  &I^{2s-1,-s_0+\codimY/2}(N^*(\diag\cap(X\times 
Y)),N^*\diag)\\
&+I^{-s_0-\dim Y/2,2s-1+n/2}(N^*(\diag\cap(X\times 
Y)),N^*(X\times Y)),
\end{aligned}\end{equation*} 
and
\begin{equation*}\begin{aligned}
g_{ij,L}D_{i,L}K_A\in  &I^{2s-1,-s_0+\codimY/2}(N^*(\diag\cap(Y\times
X)),N^*\diag)\\
&+I^{-s_0-\dim Y/2,2s-1+n/2}(N^*(\diag\cap(Y\times
X)),N^*(Y\times X)),
\end{aligned}\end{equation*}
so in particular
away from the intersections, these are Lagrangian associated to
the conormal bundles of
$X\times Y$ (or $Y\times X$), $\diag$, as well as their intersection, $(Y\times
Y)\cap\diag$, with orders $I^{[-s_0]}=I^{-s_0-2\dim Y/4}$,
$I^{[2s-1]}=I^{2s-1}$ and $I^{[-s_0+2s-1]}=I^{-s_0+2s-1+\codimY/2}$.
Applying $D_{j,R}$, resp.\ $D_{j,L}$ increases the orders on all
Lagrangians, i.e.\ in terms of paired Lagrangians it increases the
first order (corresponding to the main Lagrangian, i.e.\ the second in
the pair,
dictating the singular behavior) by $1$,
see in particular Lemma~\ref{lemma:one-sided-psdo-comp}. Thus, we
conclude:

\begin{lemma}
For $g\in
I^{[-s_0]}(Y)$, $A\in\Psi^{2s-2}(X)$ with compactly supported Schwartz
kernel,
\begin{equation}\begin{split}\label{eq:right-comp-orders}
K_{A P}\in  
&I^{2s,-s_0+\codimY/2}(N^*(\diag\cap(X\times  
Y)),N^*\diag)\\
&\qquad+I^{-s_0+1-\dim Y/2,2s-1+n/2}(N^*(\diag\cap(X\times  
Y)),N^*(X\times Y)),
\end{split}\end{equation}  
and
\begin{equation}\begin{split}\label{eq:left-comp-orders}
K_{ P A}\in 
&I^{2s,-s_0+\codimY/2}(N^*(\diag\cap(Y\times 
X)),N^*\diag)\\
&\qquad+I^{-s_0+1-\dim Y/2,2s-1+n/2}(N^*(\diag\cap(Y\times 
X)),N^*(Y\times X)).
\end{split}\end{equation} 
\end{lemma}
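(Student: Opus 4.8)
The plan is to assemble the computation carried out in the paragraphs immediately above into the two asserted membership statements, treating $K_{AP}$ in detail since $K_{PA}$ is its mirror image under interchanging the two factors of $X\times X$. Starting from the identity $K_{AP}=\sum_{ij}D_{j,R}g_{ij,R}D_{i,R}K_A$, the first thing I would record is that $K_A\in I^{2s-2}(N^*\diag)$ and that differentiation raises the conormal order by one, so $D_{i,R}K_A\in I^{2s-1}(N^*\diag)$; up to sign this is the Schwartz kernel of $AD_i\in\Psi^{2s-1}(X)$.

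Next I would bring in the coefficient. The function $g_{ij}\in I^{[-s_0]}(Y)$ pulls back under the right projection $X\times X\to X$, which is a submersion, to an element of $I^{[-s_0]}(X\times Y)=I^{-s_0-\dim Y/2}(N^*(X\times Y))$, exactly as was recorded above for the pullback $\pi_L^* f$: pullback under a submersion carries the delta distribution of $Y$ to that of $X\times Y$, and so preserves the bracketed order. Since $\diag$ and $X\times Y$ intersect transversally in $X\times X$ (their codimensions $n$ and $k$ add up to the codimension $n+k$ of $\diag\cap(X\times Y)$), the Greenleaf--Uhlmann product formula \eqref{eq:product-conormal}, applied in $X\times X$ (so with $n/4$ there replaced by $n/2$, the ambient dimension being $2n$), expresses $g_{ij,R}D_{i,R}K_A$ as a sum of two paired Lagrangian distributions. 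Using $-s_0-\dim Y/2+n/2=-s_0+\codimY/2$, and Lemma~\ref{lemma:filter} to amalgamate contributions, this is precisely \eqref{eq:psdo-with-sing-coeff} with $A$ replaced by $D_iA\in\Psi^{2s-1}(X)$.

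The final step is to apply $D_{j,R}$, i.e.\ to compose the resulting kernel from the right with $D_j\in\Psi^1(X)$. The point is that such a composition raises by one the order on the leading (second-listed) Lagrangian of each summand while leaving the relative (second) order fixed; granting this, summing over $i,j$ gives \eqref{eq:right-comp-orders}, and running the same argument with $L$ and $R$ interchanged and $X\times Y$ replaced by $Y\times X$ gives \eqref{eq:left-comp-orders}.

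I expect the point just granted to be the main --- essentially the only --- obstacle: one must verify that composition with a first-order operator acts on $I^{p,l}$ by $p\mapsto p+1$, $l\mapsto l$, uniformly across both summands and near every point of the Lagrangians. For the second summand, whose leading Lagrangian $N^*(X\times Y)$ is of the form $N^*\{y'=0\}$, this is exactly Lemma~\ref{lemma:one-sided-psdo-comp}. For the first summand, whose pair is the model \eqref{eq:near-diag-model} with leading Lagrangian $N^*\diag$, I would use the composition Proposition~\ref{prop:diag-main-compose}, writing the kernel of $D_j$ as a member of $I^1(N^*\diag)\subset I^{1,l}(N^*(\diag\cap(X\times Y)),N^*\diag)$ with $l$ taken negative enough that the hypothesis $l+l'<0$ of that proposition holds, so that the composition lands in $I^{2s,-s_0+\codimY/2}$. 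Near the covectors where these Lagrangians meet the zero sections $T^*X\times o$ and $o\times T^*X$ one instead works in the models \eqref{eq:one-sided-mapping} and \eqref{eq:one-sided-mapping-mod}, where the same $p\mapsto p+1$, $l\mapsto l$ behaviour is furnished again by Lemma~\ref{lemma:one-sided-psdo-comp}. Everything else is order bookkeeping.
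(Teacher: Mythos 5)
Your proposal is correct and follows essentially the same route as the paper: the same factorization $K_{AP}=\sum D_{j,R}g_{ij,R}D_{i,R}K_A$, the same use of the pullback of $g_{ij}$ together with the Greenleaf--Uhlmann product formula \eqref{eq:product-conormal} (i.e.\ \eqref{eq:psdo-with-sing-coeff} applied to $D_iA\in\Psi^{2s-1}$), and the same final application of $D_{j,R}$ raising the order on the leading Lagrangian by one via Lemma~\ref{lemma:one-sided-psdo-comp}. Your extra detour through Proposition~\ref{prop:diag-main-compose} for the $N^*\diag$ summand is harmless but unnecessary, since $D_{j,R}$ is a differential operator and the order bookkeeping is immediate from the oscillatory-integral representations.
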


Now consider $K_{A P}$. Note that microlocally away from the intersection of the two
Lagrangians, microlocally near $N^*(\diag\cap(X\times
Y))$,
$$
I^{2s,-s_0+\codimY/2}(N^*(\diag\cap(X\times
Y)),N^*\diag)
$$
is just
$$
I^{2s-s_0+\codimY/2}(N^*(\diag\cap(X\times
Y))),
$$
and $N^*(\diag\cap(X\times Y))=N^*(\diag\cap (Y\times
Y))$ intersects $T^*\RR^n\times o_{\RR^n}$ at $N^*(Y\times X)$. Thus,
we need to use Corollary~\ref{cor:one-sided-bded} as well when
discussing boundedness between Sobolev spaces.
In view of Propositions~\ref{prop:diag-main-bded}
and \ref{prop:one-sided-bded-mod} and
Corollary~\ref{cor:one-sided-bded}, microlocally away from $N^*\diag$,
$A P$ is bounded from $H^{s-\ep_0}$ to $H^{-s+\ep_0}$ provided
\begin{equation}\begin{aligned}\label{eq:good-interact-Sob-prelim}
&-s_0+2s+\codimY/2< 2s-2\ep_0-\codimY/2,\\
&-s_0+1-\dim 
Y/2<s-\ep_0-\frac{n}{2}\ \text{and}\\
&-s_0+2s+\codimY/2<s-\ep_0,
\end{aligned}\end{equation} 
i.e.
\begin{equation}\begin{aligned}\label{eq:good-interact-Sob}
&\codimY+2\ep_0< s_0,\\
&s>-s_0+\ep_0+1+\codimY/2\ \text{and}\\
&s<s_0-\ep_0-\codimY/2.
\end{aligned}\end{equation}
Notice that these inequalities imply
\eqref{eq:ell-reduction-constraint}.
Note that if the first inequality holds then
$$
-s_0+\ep_0+1+\codimY/2<-\codimY/2-\ep_0+1<1-\codimY/2,
$$
so when $s\geq 1-\codimY/2$, the second inequality in
\eqref{eq:good-interact-Sob} is automatic when the first
holds. Moreover, if the stronger inequality $1+\codimY+2\ep_0< s_0$ is
assumed in place of the first in \eqref{eq:good-interact-Sob} (we need
the stronger inequality below in the hyperbolic setting), then
for $s\geq -\codimY/2$ it assures that the second one holds.
An
analogous (in some sense, dual) computation applies to $ P A$, using
Proposition~\ref{prop:one-sided-bded}  in place of
Proposition~\ref{prop:one-sided-bded-mod}, and
yielding the same constraints, \eqref{eq:good-interact-Sob}. We state
these results as a lemma:

\begin{lemma}\label{lemma:off-diag-elliptic}
For $g\in
I^{[-s_0]}(Y)$, $A\in\Psi^{2s-2}(X)$ with compactly supported Schwartz
kernel, $ P A,A P$ are, microlocally
away from $N^*\diag$, bounded from $H^{s-\ep_0}$ to $H^{-s+\ep_0}$
provided \eqref{eq:good-interact-Sob} is satisfied.
\end{lemma}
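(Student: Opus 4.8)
Here is the plan. The statement follows by inserting the decompositions \eqref{eq:right-comp-orders} and \eqref{eq:left-comp-orders} from the preceding lemma into the Sobolev boundedness results of Section~\ref{sec:paired-Lag} and applying them piece by piece; all of the genuine analytic content sits in those results, so what is left is essentially bookkeeping. I would carry out the argument for $AP$ in detail, the operator $PA$ being the mirror image obtained by interchanging the left and right factors of $X\times X$---this replaces $N^*(X\times Y)$ by $N^*(Y\times X)$, hence Proposition~\ref{prop:one-sided-bded-mod} by Proposition~\ref{prop:one-sided-bded} and the first version of Corollary~\ref{cor:one-sided-bded} by the second---and it produces the very same constraints.

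First I would microlocalize away from $N^*\diag$. Since $K_A\in I^{2s-2}(N^*\diag)$ is singular only on $N^*\diag$, by \eqref{eq:right-comp-orders} the Schwartz kernel of $AP$, cut off away from $N^*\diag$, is a sum of two kinds of contributions: a genuine paired Lagrangian distribution in $I^{-s_0+1-\dim Y/2,\,2s-1+n/2}(N^*(\diag\cap(X\times Y)),N^*(X\times Y))$ microlocalized near the `big' Lagrangian $N^*(X\times Y)$, together with an ordinary conormal distribution in $I^{2s-s_0+\codimY/2}(N^*(\diag\cap(X\times Y)))$ microlocalized away from $N^*(X\times Y)$---both summands in \eqref{eq:right-comp-orders} contribute to the latter, and one checks using $\dim Y=n-\codimY$ that both have order $2s-s_0+\codimY/2$ on the flow-out $N^*(\diag\cap(X\times Y))$. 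I would then bound each kind separately. For the paired piece near $N^*(X\times Y)$ the relevant model is \eqref{eq:one-sided-mapping-mod}: there $N^*(X\times Y)$ lies inside $o_{\RR^n}\times T^*\RR^n$, and a conic neighborhood of it is disjoint from $T^*\RR^n\times o_{\RR^n}$, so Proposition~\ref{prop:one-sided-bded-mod} applies (with parameters $\codim Y$, $\dim X$ and $m=m'=s-\ep_0$ in the notation there) and yields boundedness $H^{s-\ep_0}\to H^{-s+\ep_0}$ under the first two inequalities of \eqref{eq:good-interact-Sob-prelim}. For the conormal piece one observes that $N^*(\diag\cap(X\times Y))$ meets $o_{\RR^n}\times T^*\RR^n$ only along $N^*(X\times Y)$ and meets $T^*\RR^n\times o_{\RR^n}$ only along $N^*(Y\times X)$, so away from $N^*(X\times Y)$ its wave front set avoids $o_{\RR^n}\times T^*\RR^n$; then Corollary~\ref{cor:one-sided-bded} near $N^*(Y\times X)$, and the standard boundedness of conormal distributions (a special case in which equality is allowed) elsewhere, give boundedness under the first and third inequalities of \eqref{eq:good-interact-Sob-prelim}. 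Collecting these, and simplifying, gives exactly \eqref{eq:good-interact-Sob}; Proposition~\ref{prop:diag-main-bded} is the tool that would cover, if one wished, the contribution with wave front set disjoint from both section loci.

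I do not expect a real obstacle: the results of Section~\ref{sec:paired-Lag} are calibrated precisely to this situation. The one place the argument genuinely demands care---and the only place it could go wrong---is the bookkeeping of which of $N^*(X\times Y)$ and $N^*(Y\times X)$ sits in $o_{\RR^n}\times T^*\RR^n$ and which in $T^*\RR^n\times o_{\RR^n}$, since this fixes whether Proposition~\ref{prop:one-sided-bded} or Proposition~\ref{prop:one-sided-bded-mod} (and which version of Corollary~\ref{cor:one-sided-bded}) is the applicable one, together with checking that the wave front set hypotheses of those results really do hold after the cutoff near $N^*\diag$, and that the asymmetric ordering of the two Lagrangians in \eqref{eq:right-comp-orders}--\eqref{eq:left-comp-orders} matches the ordering assumed in each proposition.
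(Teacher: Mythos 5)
Your proposal is correct and follows essentially the same route as the paper: decompose $K_{AP}$ via \eqref{eq:right-comp-orders}, note that away from $N^*\diag$ the first summand is purely conormal to the flow-out $N^*(\diag\cap(X\times Y))$ of order $2s-s_0+\codimY/2$, and then invoke Proposition~\ref{prop:one-sided-bded-mod} near $N^*(X\times Y)$ and Corollary~\ref{cor:one-sided-bded} where the flow-out meets $N^*(Y\times X)$, which yields exactly the three inequalities \eqref{eq:good-interact-Sob-prelim} and hence \eqref{eq:good-interact-Sob}, with the mirror-image bookkeeping for $PA$. Your microlocal case analysis is if anything slightly more explicit than the paper's (and correctly reads the wave-front-set hypothesis of Proposition~\ref{prop:one-sided-bded-mod} as referring to $T^*\RR^n\times o_{\RR^n}$, as the adjoint argument requires).
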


Microlocal elliptic regularity is now a straightforward
consequence. Consider $q_0\notin\Sigma$.
We shall assume that $q_0\notin\WF^{s-1/2}(u)$, thus there
is a conic neighborhood $O$ of $q_0$ on which $u$ is microlocally
in $H^{s-1/2}$; we may take $O$ disjoint from $\Sigma$. 
With $p$ the principal symbol of
$ P$, $p(q_0)\neq 0$, and we may assume that $\sign p$ is constant
on $O$.
We take $A\in\Psi^{2s-2}(X)$ with
principal symbol $a_0^2$ elliptic at $q_0$, supported close to $q_0$, in the region
where $\sign p$ is constant, with
$\WF'(A)\subset O$ and $A=A^*$.
Then the principal symbol of $A P$ on $N^*\diag$ is
$$
a_0^2p\in S^{2s}(\RR^n_\xi;I^{-s_0-n/4+k/2}(\RR^n_x;N^*\{x'=0\}))=S^{2s}(\RR^n_\xi;I^{[-s_0]}(\RR^n_x;N^*\{x'=0\})).
$$
By
assumption,
$$
p\in S^{2}(\RR^n_\xi;I^{-s_0-n/4+k/2}(\RR^n_x;N^*\{x'=0\})).
$$
has a fixed (non-zero) sign, $\sign p(q_0)$, on $\supp a_0$, so by
Lemma~\ref{lemma:sqrt}, for $\ep_1>0$ (which we take as small as convenient),
$$
a_0^2 p=(\sign p(q_0))b^2,\ b=a_0\sqrt{|p|}\in S^{s}(\RR^n_\xi;I^{-s_0-n/4+k/2+\ep_1}(\RR^n_x;N^*\{x'=0\})).
$$
Let
$$
B\in I^{s,-s_0+\codimY/2+\ep_1}(N^*(\diag\cap(X\times  
Y)),N^*\diag)
$$
with principal symbol $b$; then by
Proposition~\ref{prop:diag-main-compose}, taking into account that
$2(-s_0+\codimY/2)<-\codimY-4\ep_0<-1$ so there is a full order gain
in the symbolic calculation,
$$
A P=(\sign p(q_0))B^*B+F
$$
with
\begin{equation}\begin{aligned}\label{eq:elliptic-error-space-one}
F\in  
&I^{2s-1,1-s_0+\codimY/2+\ep_1}(N^*(\diag\cap(X\times 
Y)),N^*\diag)\\
&\qquad+I^{-s_0+1-\dim Y/2,2s-1+n/2}(N^*(\diag\cap(X\times 
Y)),N^*(X\times Y)),
\end{aligned}\end{equation}
so $F$ has order corresponding to $\Psi^{2s-1}(X)$ on the conormal
bundle of the diagonal, and elsewhere it has the same orders as
$A P$ had, apart from the $\ep_1>0$ loss from the symbolic
construction of $B$.
In view of Propositions~\ref{prop:diag-main-bded} and
\ref{prop:one-sided-bded-mod} and Corollary~\ref{cor:one-sided-bded},
for $\ep_0'=\min(1/2,\ep_0)$ and $\ep_1>0$ sufficiently small (since
we have strict inequalities in \eqref{eq:good-interact-Sob}), $F$
is bounded from
$H^{s-\ep'_0}$ to
$H^{-s+\ep'_0}$ (here we possibly reduced $\ep_0$ to $\ep_0'$ in order
to deal with the diagonal
singularity, which we thus far ignored),
if \eqref{eq:good-interact-Sob} holds.
Thus, subject to these
limitations on $\ep_0'$, $s_0$ and $s$,
$\langle Fu,u\rangle$ is bounded by the a priori assumptions.
Since the constraint on $\ep_0'$ is purely due to the diagonal
singularity, it is convenient to write
\begin{equation}\begin{aligned}\label{eq:elliptic-error-space}
&F=F'+F'',\\
&F'\in  
I^{2s-1,1-s_0+\codimY/2+\ep_1}(N^*(\diag\cap(X\times 
Y)),N^*\diag),\\
&F''\in I^{-s_0+1-\dim Y/2,2s-1+n/2+\ep_1}(N^*(\diag\cap(X\times 
Y)),N^*(X\times Y)),
\end{aligned}\end{equation}
with the wave front set of $F'$ in a prescribed arbitrary conic neighborhood
of $N^*\diag$ -- note that away from $N^*\diag$, elements of
$$
I^{2s-1,1-s_0+\codimY/2+\ep_1}(N^*(\diag\cap(X\times 
Y)),N^*\diag)
$$
are in $I^{-s_0+1-\dim Y/2,2s-1+n/2+\ep_1}(N^*(\diag\cap(X\times 
Y)),N^*(X\times Y))$, so can always be regarded as part of $F''$.
Concretely, as $\WF'(A)\subset O$, so the wave front set of
$K_{A P}$ intersects $N^*\diag$ only in $O\times O'$, we demand, as
we may, that
$$
\WF(K_{B}),\ \WF(K_{F'})\subset O\times O',
$$
where the prime on $O$ denotes the usual twisting, i.e.\ the switch of
the sign of the second covector. (Note that this means in particular
that $\WF(K_{F'})$ does not contain covectors in $(T^*X\setminus
o)\times o$ and $o\times (T^*X\setminus
o)$.)
With such a decomposition, for $\ep_1>0$ sufficiently small, $F''$
is bounded from $H^{s-\ep_0}$ to $H^{-s+\ep_0}$ so $\langle
F''u,u\rangle$ is bounded,
while $u$ being in
$H^{s-1/2}$ on $O$,
$\langle F'u,u\rangle$ is bounded by the a priori assumptions.
Further,
with
$Q\in\Psi^{s-2}(X)$ elliptic with positive principal symbol $q$, with
parametrix $G\in\Psi^{2-s}(X)$ with positive principal symbol $g$, such
that $GQ=\Id+R$, $R\in\Psi^{-\infty}(X)$, and for $\delta>0$,
\begin{equation}\begin{aligned}\label{eq:inhomog-elliptic-term}
|\langle Au, P
u\rangle|&\leq |\langle G^*Au,Q P u\rangle|+|\langle Au,R P
u\rangle|\\
&\leq \delta\|G^*Au\|^2+\delta^{-1}\|Q P u\|^2+|\langle Au,R P
u\rangle|,
\end{aligned}\end{equation}
where the last two terms are bounded by the a
priori assumptions. In order to absorb the $G^*A\in\Psi^{s}(X)$
term, to deal with the regularizer, as well as to facilitate the direct translation to a wave front
set statement, it is convenient to replace $B^*B$ by
\begin{equation}\label{eq:elliptic-mod-B}
B_1^*B_1+B_2^* B_2+c^2(G^*A)^*(G^*A)
\end{equation}
where $c>0$ is a small constant,
\begin{equation*}\begin{split}
&B_1\in I^s(N^*\diag),\\
&B_2\in I^{s,-s_0+\codimY/2+\ep_1}(N^*(\diag\cap(Y\times 
Y)),N^*\diag).
\end{split}\end{equation*}
This is achieved as follows. Let $\rho$ be a positive elliptic homogeneous
degree $1$ function on $T^*X\setminus o$. Since $\supp a_0$ is compact, disjoint from
$\Sigma$, $|p|\geq c_0^2\rho^2$ on it for some $c_0>0$. Further, the principal
symbol $\tilde g$ of $G^*$ satisfies $|\tilde g|\leq C'\rho^{2-s}$, and
that of $a_0$ satisfies $|a_0|\leq C''\rho^{s-1}$, so
the principal symbol $a_0^4 g^2$ of $(G^*A)^*(G^*A)$ is then bounded
by $C^2\rho^2a_0^2$. Then let $c=\frac{c_0}{2C}$, so the symbol of
$c^2(G^*A)^*(G^*A)$ is bounded by $\frac{c_0^2}{4} \rho^2 a_0^2$.
Now let
$$
b_1=\frac{c_0}{2}a_0\rho ,\ b_2=\left(|p|\rho^{-2}-\frac{c_0^2}{4}-c^2a_0^4 g^2\rho^{-2}\right)^{1/2}a_0\rho.
$$
Then on $\supp a_0$, the factor inside the parentheses is a
homogeneous degree zero $\CI$
function bounded below by a positive constant, thus the square root is
$\CI$. Taking $B_j$ with principal symbols $b_j$,
\eqref{eq:elliptic-mod-B} has principal symbol $|p|a_0^2$, hence
$$
A P=(\sign p(q_0))\left( B_1^*B_1+B_2^* B_2+c^2(G^*A)^*(G^*A)\right)+F,
$$
with $F$ satisfying \eqref{eq:elliptic-error-space} (but possibly
different from the $F$ given by $B^*B$).
Then
\begin{equation*}\begin{aligned}
&\langle P u,Au\rangle
=\langle A P u,u\rangle=(\sign p(q_0))\Big(\|B_1u\|^2+\|B_2 u\|^2+c\|G^*Au\|^2\Big)+\langle Fu,u\rangle,
\end{aligned}\end{equation*}
which we justify via a standard regularization argument, recalled below,
so using \eqref{eq:inhomog-elliptic-term} to estimate the left hand side
from above, and taking $\delta>0$ sufficiently small,
$\delta\|G^*Au\|^2$
can be absorbed in the right hand side.
This
gives the conclusion that $B_j u\in L^2$ for $j=1,2$,
which allows us to conclude that $\WF^s(u)$ is
disjoint from the elliptic set of $B_1$.

Finally, the regularization argument is to replace $A$ by $A_r=\Lambda_r
A\Lambda_r$, $r\in [0,1]$, where $\Lambda_r\in\Psi^{-1}$ for
$r>0$, $\Lambda_r$ is uniformly bounded in $\Psi^0$, and $\Lambda_r\to
\Id$ in $\Psi^\ep$ for $\ep>0$, and thus strongly in $L^2$; one may
take $\Lambda_r$ formally self-adjoint for convenience. (One can
for instance take $\Lambda_r$ to be a quantization of
$(1+r\rho)^{-1}$; and then replace it by its self-adjoint part which
does not affect the principal symbol or the boundedness and
convergence properties, as in Section~\ref{sec:structure}.)
Then $\Lambda_rA\Lambda_rP$ has the same
principal symbol, uniformly in $\Psi^{2s}$, as
$$
\Lambda_r(\sign
p(q_0))\Big(B_1^*B_1+B_2^*B_2+c(G^*A)^*(G^*A)\Big)\Lambda_r,
$$
and correspondingly
\begin{equation*}\begin{aligned}
&\langle P u,A_ru\rangle
=\langle A_r P u,u\rangle\\
&\qquad=(\sign p(q_0))\Big(\|B_1\Lambda_r
u\|^2+\|B_2 \Lambda_r u\|^2+c\|G^*A\Lambda_r u\|^2\Big)+\langle F_ru,u\rangle,
\end{aligned}\end{equation*}
where $F_r$ is uniformly bounded in $\Psi^{2s-1}$, and is in
$\Psi^{2s-3}$ for $r>0$. Here the calculations such as the first
equality and $\|\Lambda_r B_1 u\|^2=\langle \Lambda_r
B_1^*B_1\Lambda_r u,u\rangle$ follow since for $r>0$ on $O$, which
contains the (conic or essential)
support of $a_0$,
$u$ is in $H^{s-1/2}$ by the a priori assumptions, and the sum of the
diagonal orders
of the operators involved is $\leq 2s-1$. Now letting $r\to 0$ gives
uniform bounds for $\|B_j\Lambda_r u\|_{L^2}$, and thus proves $B_j
u\in L^2$ in view of the weak compactness of the unit ball in $L^2$
and since $B_j\Lambda_r u\to B_j u$ in distributions.
As $q_0\in\Sigma$ was
arbitrary, we conclude that

\begin{lemma}\label{lemma:mic-elliptic}
Suppose that \eqref{eq:good-interact-Sob} holds.
If $u\in H^{s-\ep_0}_{\loc}$, $ P u\in H^{s-2}_{\loc}$,
then $\WF^s(u)\subset\Sigma\cup \WF^{s-1/2}(u)$.
\end{lemma}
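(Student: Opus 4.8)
The plan is to deduce Lemma~\ref{lemma:mic-elliptic} from a single microlocal positive-definiteness estimate, run at each point $q_0\notin\Sigma$ with $q_0\notin\WF^{s-1/2}(u)$, concluding $q_0\notin\WF^s(u)$. First I would fix a conic neighborhood $O$ of $q_0$ disjoint from $\Sigma$ on which $u$ is microlocally in $H^{s-1/2}$ and on which $\sign p$ is constant (possible since $p(q_0)\neq 0$), and choose a microlocalizer $A=A^*\in\Psi^{2s-2}(X)$ with compactly supported Schwartz kernel, $\WF'(A)\subset O$, and principal symbol $a_0^2$ elliptic at $q_0$. Then I would write the Schwartz kernel of $AP$ as a sum of paired Lagrangian distributions via \eqref{eq:right-comp-orders}, noting that its $N^*\diag$-component has principal symbol $a_0^2 p\in S^{2s}(\RR^n_\xi;I^{[-s_0]}(\RR^n_x;N^*\{x'=0\}))$.

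Since $p$ has a fixed nonzero sign on $\supp a_0$, Lemma~\ref{lemma:sqrt} gives, at the cost of an arbitrarily small loss $\ep_1>0$, a conormal square root $b=a_0\sqrt{|p|}$; quantizing it to a $B$ of the appropriate paired-Lagrangian order and applying Proposition~\ref{prop:diag-main-compose} — using that $2(-s_0+\codimY/2)<-1$ so there is a full symbolic order gain in the composition — yields a factorization $AP=(\sign p(q_0))B^*B+F$ with $F$ in the error class \eqref{eq:elliptic-error-space-one}. The key analytic point, and what I expect to be the main obstacle, is that $F$ is \emph{not} a pseudodifferential operator but a sum of paired Lagrangian distributions associated to $N^*\diag$, $N^*(\diag\cap(X\times Y))$ and $N^*(X\times Y)$; its $H^{s-\ep_0}\to H^{-s+\ep_0}$ boundedness is exactly what Propositions~\ref{prop:diag-main-bded}, \ref{prop:one-sided-bded}, \ref{prop:one-sided-bded-mod} and Corollary~\ref{cor:one-sided-bded} provide, and this is precisely where the hypotheses \eqref{eq:good-interact-Sob} (and the implied \eqref{eq:ell-reduction-constraint}) enter. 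Near $N^*\diag$ one must additionally shrink $\ep_0$ to $\ep_0'=\min(1/2,\ep_0)$ to absorb the diagonal singularity, so I would split $F=F'+F''$ as in \eqref{eq:elliptic-error-space}, with $\WF(K_{F'})$ confined to a small conic neighborhood of $N^*\diag$ inside $O\times O'$, so that $\langle F''u,u\rangle$ is controlled by the boundedness above while $\langle F'u,u\rangle$ is controlled by the microlocal $H^{s-1/2}$ hypothesis on $O$.

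To close the pairing, I would replace $B^*B$ by $B_1^*B_1+B_2^*B_2+c^2(G^*A)^*(G^*A)$ with $c>0$ small, $B_1\in I^s(N^*\diag)$, $B_2\in I^{s,-s_0+\codimY/2+\ep_1}(N^*(\diag\cap(Y\times Y)),N^*\diag)$, and $G$ an elliptic parametrix of a suitable $Q\in\Psi^{s-2}(X)$, exactly as in \eqref{eq:elliptic-mod-B}; the factor $|p|\rho^{-2}-\tfrac{c_0^2}{4}-c^2a_0^4g^2\rho^{-2}$ is a positive homogeneous-degree-zero $\CI$ function on $\supp a_0$, so its square root is smooth, and the resulting $F$ still lies in \eqref{eq:elliptic-error-space}. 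The identity $\langle Pu,Au\rangle=\langle APu,u\rangle$ then gives, after using \eqref{eq:inhomog-elliptic-term} to estimate the inhomogeneous term and absorbing $\delta\|G^*Au\|^2$ into the right-hand side for $\delta>0$ small, a bound for $\|B_1u\|^2+\|B_2u\|^2+c\|G^*Au\|^2$ in terms of $|\langle Fu,u\rangle|$ and quantities bounded by the a priori hypotheses $Pu\in H^{s-2}_\loc$, $u\in H^{s-\ep_0}_\loc$, and $u\in H^{s-1/2}$ microlocally on $O$.

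Finally I would justify this pairing by the standard regularization: replace $A$ by $A_r=\Lambda_r A\Lambda_r$ with $\Lambda_r\in\Psi^{-1}$ for $r>0$ uniformly bounded in $\Psi^0$ and $\Lambda_r\to\Id$, which is legitimate since $u$ is in $H^{s-1/2}$ microlocally on $\WF'(A)$ and the total diagonal order is $\leq 2s-1$, obtaining the same identity with an error $F_r$ uniformly bounded in $\Psi^{2s-1}$; letting $r\to 0$ and using weak-$*$ compactness of the unit ball of $L^2$ together with $B_j\Lambda_r u\to B_j u$ in distributions yields $B_1u,B_2u\in L^2$. Since $B_1$ is elliptic at $q_0$, this gives $q_0\notin\WF^s(u)$, and as $q_0$ was arbitrary the lemma follows.
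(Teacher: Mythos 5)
Your proposal is correct and follows essentially the same route as the paper: the same choice of microlocalizer $A$ with $\WF'(A)\subset O$, the conormal square root of $a_0^2p$ via Lemma~\ref{lemma:sqrt}, the composition and factorization $AP=(\sign p(q_0))B^*B+F$ via Proposition~\ref{prop:diag-main-compose}, the splitting $F=F'+F''$ with the respective boundedness statements, the modification to $B_1^*B_1+B_2^*B_2+c^2(G^*A)^*(G^*A)$, and the $\Lambda_r$ regularization. No gaps.
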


Now one can iterate this, gradually increasing $s$ by $\leq 1/2$; here
we also return to $\Box$ instead of $P$:

\begin{prop}\label{prop:mic-elliptic}
Suppose that
$\codimY+1+2\ep_0< s_0$ and $-\codimY/2<s<s_0-\ep_0-\codimY/2$.
If $u\in H^{s-\ep_0}_{\loc}$, $\Box u\in H^{s-2}_{\loc}$,
then $\WF^s(u)\subset\Sigma$.
\end{prop}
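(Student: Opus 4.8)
The plan is to obtain Proposition~\ref{prop:mic-elliptic} from Lemma~\ref{lemma:mic-elliptic} by an induction on the Sobolev order, after first reducing $\Box$ to the formally self-adjoint operator $P=(\det g)^{1/2}\Box$ of \eqref{eq:reduced-op}. For the reduction I would use, as at the start of Section~\ref{sec:elliptic}, that $(\det g)^{\pm1/2}\in I^{[-s_0]}(Y)$, so by Proposition~\ref{prop:sing-mult-bded} multiplication by these factors is bounded on the local Sobolev spaces that occur; since the hypotheses $\codimY+1+2\ep_0<s_0$ and $-\codimY/2<s<s_0-\ep_0-\codimY/2$ imply \eqref{eq:ell-reduction-constraint}, the equation $\Box u\in H^{s-2}_{\loc}$ passes to $Pu\in H^{s-2}_{\loc}$, and $\WF^s(u)\subset\Sigma$ for $P$ is equivalent to the statement for $\Box$. (The only slightly delicate point is the behaviour at $N^*Y$ when $s$ is close to $-\codimY/2$; there one argues microlocally, using that $(\det g)^{\pm1/2}$ is microlocally smooth off $N^*Y$, and that the paired-Lagrangian mapping estimates of Section~\ref{sec:paired-Lag} control the product at $N^*Y$ itself.) Thus it suffices to prove the statement with $P$ in place of $\Box$.

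The second ingredient is the observation already recorded after \eqref{eq:good-interact-Sob} --- and this is exactly where the \emph{strengthened} hypothesis $\codimY+1+2\ep_0<s_0$ (rather than just $\codimY+2\ep_0<s_0$) enters --- namely that under the hypotheses of the proposition, \eqref{eq:good-interact-Sob} holds not only at the ambient order $s$ but at every intermediate order $t$ with $s-\ep_0<t\le s$ (with the same $\ep_0$). Indeed the first inequality of \eqref{eq:good-interact-Sob} is weaker than the hypothesis, the third follows from $t\le s<s_0-\ep_0-\codimY/2$, and the middle one holds since
\begin{equation*}
-s_0+\ep_0+1+\codimY/2<-\ep_0-\codimY/2<s-\ep_0<t,
\end{equation*}
the first inequality using $\codimY+1+2\ep_0<s_0$ and the second using $-\codimY/2<s$. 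It is precisely this that lets the admissible range of $s$ reach down to $-\codimY/2$ rather than being cut off at the apparent threshold $-s_0+\ep_0+1+\codimY/2$ visible in \eqref{eq:good-interact-Sob}.

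Finally I would run the bootstrap. Set $t_0=s-\ep_0$ and $t_{j+1}=\min(s,t_j+1/2)$, so that $t_j$ increases to $s$ in finitely many steps, with $t_j\in(s-\ep_0,s]$ for $j\ge1$ and $t_{j+1}-1/2\le t_j$. By hypothesis $u\in H^{s-\ep_0}_{\loc}$, so $\WF^{t_0}(u)=\emptyset$; assume inductively that $\WF^{t_j}(u)\subset\Sigma$. Then I apply Lemma~\ref{lemma:mic-elliptic} at ambient order $t_{j+1}\le s$: since $t_{j+1}\le s$, the hypotheses $u\in H^{t_{j+1}-\ep_0}_{\loc}$ and $Pu\in H^{t_{j+1}-2}_{\loc}$ follow from $u\in H^{s-\ep_0}_{\loc}$ and $Pu\in H^{s-2}_{\loc}$, and \eqref{eq:good-interact-Sob} at order $t_{j+1}$ holds by the previous paragraph; the lemma then gives
\begin{equation*}
\WF^{t_{j+1}}(u)\subset\Sigma\cup\WF^{t_{j+1}-1/2}(u)\subset\Sigma\cup\WF^{t_j}(u)=\Sigma,
\end{equation*}
where I used $t_{j+1}-1/2\le t_j$ and the inductive hypothesis. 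Once $t_j=s$ this yields $\WF^s(u)\subset\Sigma$, and undoing the reduction to $P$ completes the proof. I expect the main obstacle to be the bookkeeping in this last step rather than any new estimate: one must keep the gained order $t_{j+1}$ at or below both a priori thresholds, $s-\ep_0$ for the background regularity of $u$ (which is what makes the error term $F''$ of \eqref{eq:elliptic-error-space} bounded) and $s-2$ for $Pu$, which is exactly why the order is raised by at most $1/2$ at each step; all of the analytic content --- the decomposition of $PA$ into paired Lagrangian pieces, the square-root construction of Lemma~\ref{lemma:sqrt}, and the Sobolev mapping bounds for the error terms --- is already contained in Lemma~\ref{lemma:mic-elliptic} and the results of Section~\ref{sec:paired-Lag}.
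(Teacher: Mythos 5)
Your proposal is correct and follows essentially the same route as the paper: reduce to the self-adjoint operator $P$ of \eqref{eq:reduced-op} (this reduction is carried out once at the start of Section~\ref{sec:elliptic}, so Lemma~\ref{lemma:mic-elliptic} is already stated for $P$), check that the hypotheses force the second inequality of \eqref{eq:good-interact-Sob} at every intermediate order above $s-\ep_0$, and then bootstrap in steps of at most $1/2$ via $t_{j+1}=\min(s,t_j+1/2)$ exactly as in the paper's iteration $s'=\min(s-\ep_0+1/2,s)$, $s''=\min(s'+1/2,s)$, etc. The verification of the middle inequality of \eqref{eq:good-interact-Sob} via $\codimY+1+2\ep_0<s_0$ and $s>-\codimY/2$ is the same computation the paper performs.
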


\begin{proof}
First apply Lemma~\ref{lemma:mic-elliptic} with
$s'=\min(s-\ep_0+1/2,s)\leq s$ in place of $s$ (and $\ep_0$ unchanged); then
$$
s'\geq s-\ep_0+1/2>-\codimY/2-\ep_0+1/2-(s_0-\codimY-1-2\ep_0)>-s_0+\codimY/2+\ep_0+1,
$$
so the second inequality in \eqref{eq:good-interact-Sob} holds, and all
others hold because $s'\leq s$.
Since $u\in
H^{s-\ep_0}_{\loc}$ implies that $\WF^{s'-1/2}(u)=\emptyset$, the
conclusion of the lemma gives $\WF^{s'}(u)\subset \Sigma$. Now repeat
this argument with $s''=\min(s'+1/2,s)\in[s',s]$, so
\eqref{eq:good-interact-Sob} holds for $s''$ in place of $s$, to conclude
$\WF^{s''}(u)\subset \Sigma$. An inductive argument gives
$\WF^{s}(u)\subset \Sigma$ in a finite number of steps, as desired.
\end{proof}

\section{Propagation estimate}\label{sec:prop-est}

We now return to the positive commutator propagation estimates, but
unlike the smooth coefficients in Section~\ref{sec:structure}, we consider $g\in
I^{[-s_0]}(Y)$, $\codim Y=\codimY$. We again work with the reduced operator
$P=\sum_{ij}D_i g_{ij}D_j$ given by \eqref{eq:reduced-op}, replacing
$\Box u=f\in H^{s-1}$ by $Pu=(\det g)^{1/2}f\in H^{s-1}$ provided
(in view of Proposition~\ref{prop:sing-mult-bded})
\begin{equation}\begin{aligned}\label{eq:hyp-reduction-constraint}
&s_0>\codimY,\\
&-s_0+\codimY/2<s-1<s_0-\codimY/2.
\end{aligned}\end{equation}

So suppose that $A\in\Psi^{2s-1}(X)$; we need to compute the Schwartz
kernel of $[ P,A]$ as a (sum of) paired Lagrangian distribution(s).
By the remarks at the beginning of Section~\ref{sec:elliptic}, and
writing
$$
[ P,A]=\sum [D_i,A] g_{ij}D_j+\sum D_i [g_{ij},A]D_j+\sum D_i g_{ij}[D_j,A],
$$
the Schwartz kernel of $[ P,A]$ is
\begin{equation*}\begin{split}
K_{[ P,A]}=&-\sum D_{j,R} g_{ij,R}(D_{i,L}+D_{i,R})K_A+\sum D_{i,L}
g_{ij,L}(D_{j,L}+D_{j,R})K_A\\
&\qquad-\sum D_{i,L}D_{j,R} (g_{ij,L}-g_{ij,R})K_A.
\end{split}\end{equation*}
As before, $g_{ij,L}$, resp.\ $g_{ij,R}$, is the pullback of $g_{ij}$ from
the left, resp.\ right, factor.
Now, $K_A\in I^{2s-1}(N^*\diag)$, and
$D_{i,L}+D_{i,R}$ is tangent to the diagonal, so
$(D_{i,L}+D_{i,R})K_A\in I^{2s-1}(N^*\diag)$ still.
Now as $g_{ij}\in I^{[-s_0]}(Y)$, by \eqref{eq:psdo-with-sing-coeff}
(with the left and right factors interchanged),
\begin{equation*}\begin{aligned}
&g_{ij,R}(D_{i,L}+D_{i,R})K_A\\
&\in  I^{2s-1,-s_0+\codimY/2}(N^*(\diag\cap(X\times
Y)),N^*\diag)\\
&\qquad+I^{-s_0-\dim Y/2,2s-1+n/2}(N^*(\diag\cap(X\times
Y)),N^*(X\times Y)),
\end{aligned}\end{equation*}
and applying $D_{j,R}$ increases the orders on all
Lagrangians by $1$,
see in particular Lemma~\ref{lemma:one-sided-psdo-comp}. Thus,
\begin{equation}\begin{split}\label{eq:good-comm-orders}
&\sum D_{j,R} g_{ij,R}(D_{i,L}+D_{i,R})K_A\\
&\in
I^{2s,-s_0+\codimY/2}(N^*(\diag\cap(X\times
Y)),N^*\diag)\\
&\qquad+I^{-s_0+1-\dim Y/2,2s-1+n/2}(N^*(\diag\cap(X\times
Y)),N^*(X\times Y)).
\end{split}\end{equation}
The right hand side is exactly the same space as what we obtained in
\eqref{eq:right-comp-orders} and \eqref{eq:left-comp-orders}.
As in Lemma~\ref{lemma:off-diag-elliptic}, we deduce that
microlocally away from $N^*\diag$, \eqref{eq:good-comm-orders}
is bounded from $H^{s-\ep_0}$ to $H^{-s+\ep_0}$ provided
\eqref{eq:good-interact-Sob} holds.

An
analogous computation applies to $\sum D_{i,L}
g_{ij,L}(D_{j,L}+D_{j,R})K_A$, yielding the same constraints, \eqref{eq:good-interact-Sob}.

A similar computation applies to $D_{i,L}D_{j,R}
(g_{ij,L}-g_{ij,R})K_A$, i.e.\ when $g_{ij}$ is commuted through
$A$. However, while the order on $N^*\diag$ is the same as in the
above cases, the order on the {\em other} Lagrangians is just that of 
$D_{i,L}D_{j,R} g_{ij,L} K_A$ and $D_{i,L}D_{j,R} g_{ij,R} K_A$, i.e.\ the commutator does not proved
additional help as compared to the product. This means a loss of $1$
order on $N^*(\diag\cap(Y\times Y))$ as compared to
\eqref{eq:good-comm-orders},
but no extra loss on $N^*(Y\times
X)$ since $D_{i,R}$ is characteristic there. Concretely, as above,
\begin{equation*}\begin{aligned}
&g_{ij,R}K_A\\
&\in  I^{2s-1,-s_0+\codimY/2}(N^*(\diag\cap(X\times
Y)),N^*\diag)\\
&\qquad+I^{-s_0-\dim Y/2,2s-1+n/2}(N^*(\diag\cap(X\times
Y)),N^*(X\times Y)),
\end{aligned}\end{equation*}
and so, using Lemma~\ref{lemma:one-sided-psdo-comp} for the second
summand on the right hand side,
\begin{equation*}\begin{aligned}
&D_{i,L}D_{j,R} g_{ij,R} K_A\\
&\in  I^{2s+1,-s_0+\codimY/2}(N^*(\diag\cap(X\times 
Y)),N^*\diag)\\
&\qquad+I^{-s_0+1-\dim Y/2,2s+n/2}(N^*(\diag\cap(X\times 
Y)),N^*(X\times Y)),
\end{aligned}\end{equation*} 
Similarly,
\begin{equation*}\begin{aligned}
&D_{i,L}D_{j,R} g_{ij,L} K_A\\
&\in  I^{2s+1,-s_0+\codimY/2}(N^*(\diag\cap(Y\times
X)),N^*\diag)\\
&\qquad+I^{-s_0+1-\dim Y/2,2s+n/2}(N^*(\diag\cap(Y\times
X)),N^*(Y\times X)),
\end{aligned}\end{equation*}
and so, in principle,
\begin{equation*}\begin{aligned}
&D_{i,L}D_{j,R} (g_{ij,L}-g_{ij,R}) K_A\\
&\in  I^{2s+1,-s_0+\codimY/2}(N^*(\diag\cap(Y\times 
Y)),N^*\diag)\\
&\qquad+I^{-s_0+1-\dim Y/2,2s+n/2}(N^*(\diag\cap(X\times 
Y)),N^*(X\times Y))\\
&\qquad+I^{-s_0+1-\dim Y/2,2s+n/2}(N^*(\diag\cap(Y\times
X)),N^*(Y\times X)).
\end{aligned}\end{equation*} 
However, by the standard pseudodifferential calculus, the principal
symbol on $N^*\diag$ in
$$
S^{2s+1}(I^{-s_0+\codimY/2-n/4})/S^{2s}(I^{-s_0+\codimY/2+1-n/4}),
$$
where we used short hand notation so that e.g.
$$
S^{2s+1}(I^{-s_0+\codimY/2-n/4}) =S^{2s+1}(\RR^n_\xi;I^{-s_0+\codimY/2-n/4}(\RR^n_x;N^*\{x'=0\})),
$$
vanishes since it is
given by (the equivalence class of) $\xi_i\xi_j g_{ij}(x) a(x,\xi)$ for both $D_{i,L}D_{j,R}
g_{ij,L} K_A$ and $D_{i,L}D_{j,R}
g_{ij,R} K_A$, so by Lemma~\ref{lemma:basic-symbol},
\begin{equation}\begin{aligned}\label{eq:bad-comm-orders}
&D_{i,L}D_{j,R} (g_{ij,L}-g_{ij,R}) K_A\\
&\in  I^{2s,-s_0+1+\codimY/2}(N^*(\diag\cap(Y\times 
Y)),N^*\diag)\\
&\qquad+I^{-s_0+1-\dim Y/2,2s+n/2}(N^*(\diag\cap(X\times 
Y)),N^*(X\times Y))\\
&\qquad+I^{-s_0+1-\dim Y/2,2s+n/2}(N^*(\diag\cap(Y\times
X)),N^*(Y\times X)).
\end{aligned}\end{equation} 
Thus, the only change compared to the previous calculations for
boundedness $H^{s-\ep_0}\to H^{-s+\ep_0}$ away from $N^*\diag$
is that \eqref{eq:good-interact-Sob-prelim} is replaced by
\begin{equation}\begin{aligned}\label{eq:bad-interact-Sob-prelim}
&-s_0+2s+1+\codimY/2< 2s-2\ep_0-\codimY/2\\
&-s_0+1-\dim Y/2<s-\ep_0-\frac{n}{2}\ \text{and}\\
&-s_0+2s+1+\codimY/2<s-\ep_0,
\end{aligned}\end{equation}
thus \eqref{eq:good-interact-Sob}
is replaced by
\begin{equation}\begin{aligned}\label{eq:bad-interact-Sob}
&\codimY+1+2\ep_0< s_0\\
&s>-s_0+\ep_0+1+\codimY/2\ \text{and}\\
&s<s_0-\ep_0-1-\codimY/2.
\end{aligned}\end{equation}
Note that these inequalities imply \eqref{eq:hyp-reduction-constraint}.
The first of these inequalities implies
$$
-s_0+\ep_0+1+\codimY/2<-\codimY/2-\ep_0,
$$
so again,  when $s>-\codimY/2$, the second inequality automatically holds
if the first holds.

Now, the actual argument proceeds as follows. We want to take $A\in\Psi^{2s-1}$ satisfying
\eqref{eq:commutant-symbol}; this requires $\etat$ and $\sigma_j$ to be
fixed; as in Section~\ref{sec:structure} we also use a positive
elliptic order $1$ symbol $\rho$.
As in Section~\ref{sec:structure}, we then actually arrange that $A$ is of the form
$\check A^2$, with $\check A\in\Psi^{s-1/2}$ formally self-adjoint;
and in fact we take
$$
A_r=\Lambda_r A\Lambda_r,\ \check A_r=\check A\Lambda_r,
$$
with $\Lambda_r$ as before with symbol $\phi_r$.
The functions $\etat$ and $\sigma_j$ depend on $p$ only via
\eqref{eq:Ham-vf-flow-relation} and
\eqref{eq:Ham-vf-localizer-relation}, both of which are purely
conditions at $\bar q$. Now, if $g_{ij}$ are conormal to $Y$ and $\bar
q\in T^*_Y X$, then for $s_0>1+\codimY$, $p$ is still $C^1$, and thus
$\sH_p$ is continuous, and is indeed $C^{\alpha_0}$, $\alpha_0<s_0-1-\codimY$.
Correspondingly, $\sH_p(\bar q)$ is
well-defined, and one can use it in the definition of the $\CI$ functions $\etat$ and
$\sigma_j$ on $S^*X$. However,
as $\sH_p\sigma$ is now $C^{\alpha_0}$, instead of
\eqref{eq:H_p-sigma-est}
one has for $\alpha=\min(1,\alpha_0)>0$,
\begin{equation}\label{eq:H_p-sigma-est-mod}
|\sH_p\sigma_j|\leq C_0(\omega^{1/2}+|\etat|)^\alpha,
\end{equation}
so
$|\sH_p\omega|\leq C \omega^{1/2}(\omega^{1/2}+|\etat|)^\alpha$. Using
\eqref{eq:supp-a-est}, we now deduce that $|\sH_p\omega|\leq \frac{c_0}{2}\ep^2\delta$
provided that $\frac{c_0}{2}\ep^2\delta\geq C''(\ep\delta)\delta^\alpha$, i.e.\ that
$\ep\geq C'\delta^{\alpha}$ for some constant $C'$ independent of
$\ep$, $\delta$. Taking $\ep\sim\delta^{\alpha}$,
the size of the parabola at $\etat=-\delta$ is roughly
$\omega^{1/2}\sim\delta^{1+\alpha}$, which still suffices for the
proof of propagation of singularities in view of
$\alpha>0$,
as we have localized along a single
direction, namely the direction of $\sH_p$ at $\bar q$.

We also assume at first (an assumption that will be eliminated by an iterative
procedure) that
\begin{equation}\label{eq:iterative-assump}
\WF'(A)\subset O,\ O=\WF^{s-1/2}(u)^c;
\end{equation}
note that given $O$ the $\delta$-localization of $a$ makes this achievable.

By construction, the
principal symbol of the commutator along the conormal bundle of the
diagonal, which is in
$S^{2s}(\RR^n_\xi;I^{-s_0+\codimY/2+1-n/4}(\RR^n_x;N^*\{x'=0\}))$,
is still of the form \eqref{eq:pos-comm-symbol}, though now
the symbols have a conormal singularity at $Y$ as well.
More precisely, with
$$
e_0=\rho^{2s}e\in S^{2s}(\RR^n_\xi;I^{-s_0+\codimY/2+1-n/4}(\RR^n_x;N^*\{x'=0\})),
$$
as in \eqref{eq:e-form} times the weight $\rho^{2s}$, cf.\ \eqref{eq:commutant-symbol},  and
$$
b_0=\rho^s b\in S^{s}(\RR^n_\xi;I^{-s_0+\codimY/2+1-n/4+\ep_1}(\RR^n_x;N^*\{x'=0\})),
$$
as in
\eqref{eq:b-form}  times the weight $\rho^{s}$ (getting $b_0$ to lie
in the indicated space uses
Lemma~\ref{lemma:sqrt}, applied to $\sH_p\phi$, which is bounded away
from $0$; this gives the loss of $\ep_1>0$ which one can take as small
as convenient, as we did in the elliptic setting),
one takes
$B,E$ paired Lagrangian associated to $N^*\diag$ and
$N^*(\diag\cap(Y\times Y))$ with principal symbols given by $b_0,e_0$
on
$N^*\diag$, more precisely
$$
B\in  I^{s,-s_0+1+\codimY/2+\ep_1}(N^*(\diag\cap(Y\times 
Y)),N^*\diag),
$$
and
$$
E\in I^{2s,-s_0+1+\codimY/2}(N^*(\diag\cap(Y\times 
Y)),N^*\diag)
$$
so they are in $\Psi^s$ and $\Psi^{2s}$ on $N^*\diag\setminus
N^*(\diag\cap(Y\times Y))$, and the orders on
$N^*(\diag\cap(Y\times Y))$ given by
$I^{[-s_0+s+1]}=I^{-s_0+s+1+\codimY/2+\ep_1}$ for $B$ and
$I^{[-s_0+s+1]}=I^{-s_0+2s+1+\codimY/2}$ for $E$; one can also arrange
(by applying a pseudodifferential operator microlocally the identity
near $N^*\diag$ but with wave front set in $O\times O'$)
that the Schwartz kernels of $B,E$ satisfy
$$
\WF'(K_B),\WF'(K_E)\subset O\times O',
$$
where $O'$ is the usual twisted version of $O$ (sign of the covector switched).
Then by
Proposition~\ref{prop:diag-main-compose}, taking into account that
$2(-s_0+1+\codimY/2)<-\codimY-4\ep_0<-1$ so there is a full order gain
in the symbolic calculation (if we take $\ep_1>0$ sufficiently small),
$$
{i}[ P,A]=B^*B+E+F,
$$ 
where away from $N^*(Y\times X)\cup N^*(X\times Y)$, at which $F$ has
the same orders as the commutator, as given in
\eqref{eq:good-comm-orders} and \eqref{eq:bad-comm-orders} (i.e.\ is
dictated by the second of these, as these are greater),
$$
F\in  I^{2s-1,-s_0+2+\codimY/2+\ep_1}(N^*(\diag\cap(Y\times 
Y)),N^*\diag).
$$
As in the elliptic setting, we break up $F$:
\begin{equation}\begin{aligned}\label{eq:hyp-error-space}
&F=F'+F'',\\
&F'\in  
I^{2s-1,-s_0+2+\codimY/2+\ep_1}(N^*(\diag\cap(X\times 
Y)),N^*\diag),\\
&F''\in 
I^{-s_0+1-\dim Y/2,2s+n/2+\ep_1}(N^*(\diag\cap(X\times 
Y)),N^*(X\times Y))\\
&\qquad+I^{-s_0+1-\dim Y/2,2s+n/2+\ep_1}(N^*(\diag\cap(Y\times
X)),N^*(Y\times X)),
\end{aligned}\end{equation}
with the wave front set of the Schwartz kernel of $F'$
in
$$
\WF(K_{F'})\subset O\times O';
$$
note that away from $N^*\diag$, elements of
$$
I^{2s-1,-s_0+2+\codimY/2+\ep_1}(N^*(\diag\cap(X\times 
Y)),N^*\diag)
$$
are in $I^{-s_0+1-\dim Y/2,2s+n/2+\ep_1}(N^*(\diag\cap(X\times 
Y)),N^*(X\times Y))$, so can always be regarded as part of $F''$.
With such a decomposition, for $\ep_1>0$ sufficiently small, in view of
Propositions~\ref{prop:one-sided-bded}
and \ref{prop:one-sided-bded-mod} and
Corollary~\ref{cor:one-sided-bded}, $F''$
is bounded from $H^{s-\ep_0}$ to $H^{-s+\ep_0}$ so $\langle
F''u,u\rangle$ is bounded. On the other hand
$F'$ is bounded $H^{s-1/2}$ to $H^{1/2-s}$ by
Proposition~\ref{prop:diag-main-bded},
and has wave front set in $O\times O'$, so $u$ being in
$H^{s-1/2}$ on $O$,
$\langle F'u,u\rangle$ is bounded by the a priori assumptions as well.
Thus, subject to \eqref{eq:bad-interact-Sob}, $\langle Eu,u\rangle$
and $\langle Fu,u\rangle$ are bounded by the a priori assumptions.

Further, as in Section~\ref{sec:structure}, if $ P u\in H^{s-1}_{\loc}(X)$, then with
$Q\in\Psi^{1/2}(X)$ elliptic with positive principal symbol $\rho^{1/2}$, with
parametrix $G\in\Psi^{-1/2}(X)$, such
that $GQ=\Id+R$, $R\in\Psi^{-\infty}(X)$, we use
\eqref{eq:break-up-inhomog} to control $|\langle A_r u, P
u\rangle|$.
In order to absorb the $Q\check A_r\in\Psi^{s}(X)$
term in \eqref{eq:break-up-inhomog}, and to deal with the regularizer
and the weight as in Section~\ref{sec:structure}, as well as to facilitate the direct translation to a wave front
set statement, we replace $B^*B$ by
$B_{1,r}^*B_{1,r}+B_{2,r}^* B_{2,r}+M^2(Q\check A_r)^*(Q\check A_r)$ where $M>0$ is a large constant,
\begin{equation*}\begin{split}
&B_{1,r}=B_1\Lambda_r,\ B_1\in I^s(N^*\diag)=\Psi^s,\\
&B_{2,r}\in I^{s,-s_0+1+\codimY/2+\ep_1}(N^*(\diag\cap(Y\times 
Y)),N^*\diag),
\end{split}\end{equation*}
$B_{2,r}$ uniformly bounded in $I^{s,-s_0+1+\codimY/2+\ep_1}(N^*(\diag\cap(Y\times 
Y)),N^*\diag)$ and with the Schwartz kernel of $B_{j,r}$ having (uniform) wave front set in
$O\times O'$.
To achieve this, we proceed as in
\eqref{eq:absorb-a-into-b}-\eqref{eq:reg-b-def}, and we recall
that we arranged that $\sH_p\phi\geq c_0/2$, and thus writing
$$
\sH_p\phi=\psi_1+\psi_2,\ \psi_1\equiv c_0/4,\ \psi_2\geq c_0/4,
$$
we let
\begin{equation}\label{eq:b_1-form} 
b_{1,r}=\rho^s\phi_r\digamma^{-1/2}\delta^{-1/2}\sqrt{\psi_1}\sqrt{\chi_0'\left(\digamma^{-1}\Big(2\beta-\frac{\phi}{\delta}\Big)\right)}\sqrt{\chi_1\left(\frac{\etat+\delta}{\ep\delta}+1\right)},
\end{equation} 
and
\begin{equation}\begin{aligned}\label{eq:b_2-form}
b_{2,r}=&\rho^s\phi_r \digamma^{-1/2}\delta^{-1/2}c_{2,r}\sqrt{\chi_0'\left(\digamma^{-1}\Big(2\beta-\frac{\phi}{\delta}\Big)\right)}
\sqrt{\chi_1\left(\frac{\etat+\delta}{\ep\delta}+1\right)},\\
c_{2,r}=&\left(\psi_2-\Big(\big((2s-1)-r\rho\phi_r\big)  (\rho^{-1}\sH_p\rho)+M^2\Big)
 \digamma^{-1}\delta\Big(2\beta-\frac{\phi}{\delta}\Big)^2\right)^{1/2}
\end{aligned}\end{equation}
and let $B_{j,r}$ have principal symbol $b_{j,r}$, noting that $b_{1,r}$ is
$\CI$ (i.e.\ does not have a conormal singularity).
As in Section~\ref{sec:structure}, the expression in the large
parentheses defining $c_{2,r}$ is bounded below by a positive constant
(uniformly in $r$)
for $\digamma>0$ sufficiently large as $|2\beta-\frac{\phi}{\delta}|\leq 4$
on $\supp a$.
Then the analogue of \eqref{eq:basic-pairing-mod} is
\begin{equation*}\begin{aligned}
\|B_{1,r}u\|^2+\|B_{2,r} u\|^2+M^2\|Q\check A_ru\|^2\leq 2|\langle A_ru, P u\rangle|+|\langle
E_ru,u\rangle|+|\langle F_ru,u\rangle|.
\end{aligned}\end{equation*}
Using \eqref{eq:break-up-inhomog} to estimate the first term on the
right hand side
from above, $\|Q\check A_ru\|^2$ can be absorbed in the $M^2\|Q\check A_ru\|^2$ term
on the left hand side (for $M>1$).
This
gives the conclusion that $B_{j,0} u\in L^2$ for $j=1,2$,
which allows us to conclude that $\WF^s(u)$ is
disjoint from the elliptic set of $B_{1,0}$.

What we have proved is the following:

\begin{lemma}\label{lemma:rough-prop}
Suppose that \eqref{eq:bad-interact-Sob} holds.
Let $\alpha=\min(1,\alpha_0)\in (0,1]$, $\alpha_0<s_0-1-k$, and let $U\subset X$ be
coordinate chart (identified with a subset of $\RR^n$).
For any $K\subset\Sigma\cap T^*_UX$
compact there exists $\delta_0>0$ and $C_0>0$
such that the following holds. If $u\in H^{s-\ep_0}_{\loc}$, $ P u\in H^{s-1}_{\loc}$,
$\delta\in(0,\delta_0)$ and
$q_0\in K$ and if the Euclidean metric ball around $q_0+\delta \sH_p(q_0)$ of
radius $C_0\delta^{1+\alpha}$ is disjoint from $\WF^s(u)$, and the Euclidean
metric tube (union of metric balls) around the straight line segment connecting $q_0$ and
$q_0+\delta \sH_p(q_0)$ of radius $C_0\delta^{1+\alpha}$ is disjoint from $\WF^{s-1/2}(u)$ then
$q_0\notin\WF^s(u)$.

The analogous conclusion also holds with $q_0+\delta \sH_p(q_0)$
replaced by $q_0-\delta \sH_p(q_0)$.
\end{lemma}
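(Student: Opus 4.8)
The plan is to assemble the positive commutator construction of this section into the stated geometric form and then to obtain uniformity over $K$ by compactness. Fix $\bar q=q_0\in K$. Since $s_0>\codimY+1$, $p$ is $C^1$ and $\sH_p$ is $C^\alpha$, so $\sH_p(\bar q)\neq0$ is well defined; choose $\etat\in\CI(S^*X)$ with $\etat(\bar q)=0$ and $\sH_p\etat(\bar q)=-1$ (for the first statement; $\sH_p\etat(\bar q)=+1$ for the ``analogous'' one), and $\sigma_j\in\CI(S^*X)$, $j=1,\dots,2n-2$, with $\sigma_j(\bar q)=0$, $\sH_p\sigma_j(\bar q)=0$, $d\etat,d\sigma_j$ spanning $T_{\bar q}S^*X$, and set $\omega=\sum\sigma_j^2$. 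Form the commutant $A_r=\Lambda_r\check A^2\Lambda_r\in\Psi^{2s-1}$ as in \eqref{eq:commutant-symbol}--\eqref{eq:reg-b-def}, with parameters $\delta\in(0,\delta_0)$, $\ep$ comparable to $\delta^\alpha$ (the scaling $\ep\geq C'\delta^\alpha$ forced by \eqref{eq:H_p-sigma-est-mod}), $\beta$ also comparable to $\delta^\alpha$, $\digamma$ large, and $\WF'(A_r)\subset O:=\WF^{s-1/2}(u)^c$; the last is possible for $\delta_0$ small because $\supp a\to\{\bar q\}$ as $\delta\to0$, and this is where the assumption $q_0\notin\WF^{s-1/2}(u)$ (implied by the tube hypothesis) first enters. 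As explained around \eqref{eq:H_p-sigma-est-mod}, $|\sH_p\phi|\geq c_0/2>0$ on $\supp a$ even with $\ep\sim\delta^\alpha$, so the construction of Section~\ref{sec:structure}, adapted exactly as in \eqref{eq:H_p-sigma-est-mod}--\eqref{eq:b_2-form} (with the overall sign of the $B^*B$ and $M^2$ terms dictated by that of $-\sH_p a(\bar q)$, and with Lemma~\ref{lemma:sqrt} used to take square roots of the conormal factors), produces $B_{1,r}$, $B_{2,r}$, $E_r$, $F_r=F_r'+F_r''$ in the paired-Lagrangian classes displayed just before \eqref{eq:hyp-error-space}, together with the energy inequality
$$
\|B_{1,r}u\|^2+\|B_{2,r}u\|^2+M^2\|Q\check A_ru\|^2\leq 2|\langle A_ru,Pu\rangle|+|\langle E_ru,u\rangle|+|\langle F_ru,u\rangle|.
$$

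The next step is the geometric reading of the supports. The map $q\mapsto(\etat(q),\sigma_1(q),\dots,\sigma_{2n-2}(q))$ is a $C^1$ diffeomorphism near $\bar q$, and by \eqref{eq:supp-a-est} one has $\supp e\subset\{-(1+\ep)\delta\leq\etat\leq-\delta,\ \omega^{1/2}\leq2\ep\delta\}$ and $\supp a\subset\{-(1+\ep)\delta\leq\etat\leq2\beta\delta,\ \omega^{1/2}\leq2\ep\delta\}$; with $\ep,\beta\sim\delta^\alpha$ these are, respectively, a ball of radius $\sim\delta^{1+\alpha}$ and a tube of radius $\sim\delta^{1+\alpha}$ about the $\sH_p$-integral curve $\gamma$ issuing from $\bar q$, run over flow time of size $\delta$. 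Because $\sH_p$ is $C^\alpha$ one has $\gamma(t)=\bar q+t\sH_p(\bar q)+O(|t|^{1+\alpha})$, and since $\sH_p\etat=\mp1+O(\delta^\alpha)$ along $\gamma$ the level $\etat=-\delta$ is attained at $\bar q\pm\delta\sH_p(\bar q)+O(\delta^{1+\alpha})$. Hence, choosing $C_0$ large enough that $\supp a$ actually sits inside the $C_0\delta^{1+\alpha}$-tube about the straight segment, $\supp e$ lies in the Euclidean ball of radius $C_0\delta^{1+\alpha}$ about $q_0+\delta\sH_p(q_0)$ (resp.\ $q_0-\delta\sH_p(q_0)$), and $\WF'(A_r)$ --- which contains the twisted wave front sets of $K_{B_{j,r}},K_{E_r},K_{F_r'}$ that we arranged to lie in $O\times O'$ --- lies over the $C_0\delta^{1+\alpha}$-tube about the segment from $q_0$ to $q_0+\delta\sH_p(q_0)$ (resp.\ $q_0-\delta\sH_p(q_0)$).

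Therefore the hypotheses of the lemma deliver exactly the a priori control needed in the energy inequality above: $\langle E_ru,u\rangle$ is bounded since $u\in H^s$ on $\WF'(E_r)\subset\supp e$; $\langle F_r'u,u\rangle$ is bounded since $F_r'$ is bounded $H^{s-1/2}\to H^{1/2-s}$ by Proposition~\ref{prop:diag-main-bded} and $u\in H^{s-1/2}$ on $\WF'(F_r')$; $\langle F_r''u,u\rangle$ is bounded because, for $\ep_1>0$ small, $F_r''$ is bounded $H^{s-\ep_0}\to H^{-s+\ep_0}$ by Propositions~\ref{prop:one-sided-bded}, \ref{prop:one-sided-bded-mod} and Corollary~\ref{cor:one-sided-bded} under \eqref{eq:bad-interact-Sob}, using only $u\in H^{s-\ep_0}_{\loc}$; and $2|\langle A_ru,Pu\rangle|$ is controlled via \eqref{eq:break-up-inhomog} using $Pu\in H^{s-1}_{\loc}$, with $\|Q\check A_ru\|^2$ absorbed into $M^2\|Q\check A_ru\|^2$ for $M>1$. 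All right-hand terms being bounded uniformly in $r\in(0,1]$, $B_{1,r}u$ is bounded in $L^2$, and letting $r\to0$, weak-$*$ compactness of the unit ball of $L^2$ together with $B_{1,r}u\to B_{1,0}u$ in $\mathcal D'$ gives $B_{1,0}u\in L^2$. At $\bar q$ one has $\phi(\bar q)=0$, $\chi_0'(\digamma^{-1}2\beta)>0$, $\chi_1(\ep^{-1}+1)>0$, and the factor $\sqrt{\psi_1}\equiv\sqrt{c_0/4}>0$ in \eqref{eq:b_1-form}, so $b_{1,0}(\bar q)\neq0$, i.e.\ $q_0\in\Ell(B_{1,0})$; since $B_{1,0}\in\Psi^s$ this yields $q_0\notin\WF^s(u)$. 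Finally, all the constants ($\delta_0$, $C_0$, $\digamma$, $\dots$) depend only on the $C^0$-size and $C^\alpha$-seminorm of $\sH_p$ and on the spanning data, all of which are uniformly controlled on a fixed neighborhood of $\bar q$ in $K$ --- one uses a $\bar q$-dependent normalization $\etat_{\bar q},\sigma_{j,\bar q}$ but the same $\delta_0,C_0$ throughout such a neighborhood --- so covering $K$ by finitely many such neighborhoods and taking the smallest $\delta_0$ and largest $C_0$ gives the uniform statement.

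The analytic core --- the paired-Lagrangian composition and mapping properties behind the commutator identity --- is already in hand from Propositions~\ref{prop:diag-main-compose}, \ref{prop:diag-main-bded}, \ref{prop:one-sided-bded}, \ref{prop:one-sided-bded-mod} and Lemma~\ref{lemma:sqrt}. The point requiring genuine care is thus the matching in the second paragraph: showing that the parabola-shaped set $\supp a$, adapted to the merely $C^\alpha$ flow through the smooth functions $\etat,\sigma_j$, sits inside a Euclidean tube of radius of the correct order $\delta^{1+\alpha}$ about the straight segment, with the deviation of $\gamma$ from the straight line and of $\sH_p\etat$ from $\mp1$ along it both genuinely $O(\delta^{1+\alpha})$ and uniform over $K$. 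This is precisely where the gain $\alpha>0$, equivalently $s_0>\codimY+1$, is used.
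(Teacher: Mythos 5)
Your proposal is correct and follows essentially the same route as the paper, which states this lemma as the summary of the entire commutator construction of Section~\ref{sec:prop-est}: you reassemble exactly that construction (the scaling $\ep\sim\delta^{\alpha}$ forced by \eqref{eq:H_p-sigma-est-mod}, the $B_{1,r},B_{2,r},E_r,F_r'+F_r''$ decomposition with the same boundedness inputs and the same regularization limit) and make explicit the geometric identification of $\supp a$ and $\supp e$ with the tube and the ball, together with the compactness argument for uniformity over $K$, both of which the paper leaves implicit. The only minor imprecision is that $\supp a$ is a $\delta^{1+\alpha}$-tube about the smooth curve $\{\omega=0\}=\{\sigma_j=0\ \text{for all}\ j\}$ rather than about the $\sH_p$-integral curve, but that curve is $\CI$ and tangent to $\sH_p(\bar q)$ at $\bar q$, hence within $O(\delta^{2})\le O(\delta^{1+\alpha})$ of the straight segment, so the containment you need still holds.
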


As in the elliptic case, one can eliminate the
background regularity assumption on the metric tube; here one needs to
proceed more directly, shrink the supports of the cutoffs defining $a$
slightly in each step of the iteration, as is standard, see
\cite[Section~24.5]{Hor}, last paragraph of the proof of
Proposition~24.5.1, and the end of Section~\ref{sec:structure}. The key point in starting the iteration is that
with $s'=\min(s-\ep_0+1/2,s)\leq s$, if $\codimY+1+2\ep_0< s_0$ and
$-\codimY/2<s$ then
$$
s'\geq s-\ep_0+1/2>-\codimY/2-\ep_0+1/2-(s_0-\codimY-1-2\ep_0)
>-s_0+\codimY/2+\ep_0+1,
$$
so the second inequality in \eqref{eq:bad-interact-Sob} holds; all others
follow at once from those of $s$ using $s'\leq s$.

\begin{prop}\label{prop:rough-prop}
Suppose that $\codimY+1+2\ep_0< s_0$ and $-\codimY/2<s<s_0-\ep_0-1-\codimY/2$.
Let $\alpha=\min(1,\alpha_0)\in (0,1]$, $\alpha_0<s_0-1-k$, and let $U\subset X$ be
coordinate chart (identified with a subset of $\RR^n$).
For any $K\subset\Sigma\cap T^*_UX$
compact there exists $\delta_0>0$ and $C_0>0$
such that the following holds. If $u\in H^{s-\ep_0}_{\loc}$, $\Box u\in H^{s-1}_{\loc}$,
$\delta\in(0,\delta_0)$ and
$q_0\in K$ and if the metric ball around $q_0+\delta \sH_p(q_0)$ of
radius $C_0\delta^{1+\alpha}$ is disjoint from $\WF^s(u)$ then
$q_0\notin\WF^s(u)$.

The analogous conclusion also holds with $q_0+\delta \sH_p(q_0)$
replaced by $q_0-\delta \sH_p(q_0)$.
\end{prop}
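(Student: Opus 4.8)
The plan is to deduce Proposition~\ref{prop:rough-prop} from Lemma~\ref{lemma:rough-prop} by the standard Melrose--Sj\"ostrand iteration, run simultaneously over the Sobolev scale --- the same passage as from Lemma~\ref{lemma:mic-elliptic} to Proposition~\ref{prop:mic-elliptic} in the elliptic case. First I would reduce from $\Box$ to the formally self-adjoint operator $P=\sum_{ij}D_ig_{ij}D_j$ of \eqref{eq:reduced-op}: since $P=(\det g)^{1/2}\Box$ and multiplication by $(\det g)^{1/2}\in I^{[-s_0]}(Y)$ is bounded on $H^{s-1}$ by Proposition~\ref{prop:sing-mult-bded} --- the constraints \eqref{eq:hyp-reduction-constraint} being implied by the hypotheses of the proposition, as noted after \eqref{eq:bad-interact-Sob} --- the assumption $\Box u\in H^{s-1}_{\loc}$ gives $Pu\in H^{s-1}_{\loc}$ while $\WF^s(u)$ is unchanged. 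Next, by Proposition~\ref{prop:mic-elliptic} (applicable since $\Box u\in H^{s-1}\subset H^{\sigma-2}$ and $-\codimY/2<\sigma<s_0-\ep_0-\codimY/2$ for every $\sigma\le s$ relevant below) we have $\WF^\sigma(u)\subset\Sigma$, so the whole argument may be localized near $\Sigma$; the non-characteristic part of the thin tubes appearing below --- where the straight segment deviates from $\Sigma$ by $O(\delta^2)=O(\delta^{1+\alpha})$ --- is then absorbed by that same microlocal elliptic regularity.

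I would then set $\sigma_j=\min(s-\ep_0+j/2,\,s)$ for $j=0,1,\dots,N$, so that $\sigma_0=s-\ep_0$, $\sigma_N=s$, and $\sigma_{j+1}\le\sigma_j+1/2$, and prove by induction on $j$ the following tube-avoidance form of the claim: for every compact $\tilde K\subset\Sigma\cap T^*_UX$ there are $\delta_0,C_0>0$ such that for $q_0\in\tilde K$ and $\delta\in(0,\delta_0)$, if the ball of radius $C_0\delta^{1+\alpha}$ about $q_0+\delta\sH_p(q_0)$ avoids $\WF^{\sigma_j}(u)$ then the tube of radius $\tfrac12C_0\delta^{1+\alpha}$ about the segment $[q_0,\,q_0+\delta\sH_p(q_0)]$ avoids $\WF^{\sigma_j}(u)$ (and symmetrically with $-\delta$). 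The base case $j=0$ is vacuous, since $u\in H^{s-\ep_0}_{\loc}$ makes $\WF^{\sigma_0}(u)=\emptyset$ locally. The reason the iteration can start is exactly the computation recalled just before the statement of the proposition: $\sigma_1$ satisfies the lower bound in \eqref{eq:bad-interact-Sob}, and since $\sigma_j\le s$ it then satisfies all of \eqref{eq:bad-interact-Sob} for every $j\ge1$; in particular Lemma~\ref{lemma:rough-prop} is legitimately available at each order $\sigma_j$, $j\ge1$.

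For the inductive step $\sigma_j\to\sigma_{j+1}$ one uses $\WF^{\sigma_j}(u)\subset\WF^{\sigma_{j+1}}(u)$: if the ball about $q_0+\delta\sH_p(q_0)$ avoids $\WF^{\sigma_{j+1}}(u)$ it in particular avoids $\WF^{\sigma_j}(u)$, so by the inductive hypothesis the segment tube avoids $\WF^{\sigma_j}(u)$, hence --- because $\sigma_{j+1}-1/2\le\sigma_j$ and so $\WF^{\sigma_{j+1}-1/2}(u)\subset\WF^{\sigma_j}(u)$ --- it also avoids $\WF^{\sigma_{j+1}-1/2}(u)$. Thus both hypotheses of Lemma~\ref{lemma:rough-prop} at order $\sigma_{j+1}$ are verified, giving $q_0\notin\WF^{\sigma_{j+1}}(u)$; a standard open--closed argument along the segment --- sliding the base point from the forward endpoint back toward $q_0$ and applying the $\delta$-localized Lemma~\ref{lemma:rough-prop} at a fixed fraction of the current scale at each stage --- promotes this to tube-avoidance at order $\sigma_{j+1}$, completing the step. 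After $N$ steps, $\sigma_N=s$ is precisely the assertion for $P$, and undoing the reduction gives it for $\Box$.

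I expect the main difficulty to lie not in this bookkeeping of exponents but in making the iteration uniform: one must fix a single slightly enlarged compact $\tilde K$ containing all tubes that occur for $q_0\in K$, $\delta<\delta_0$, and check that each of the finitely many Sobolev steps and each pass of the open--closed argument costs only a bounded factor in the admissible radius, so that one genuine pair $(\delta_0,C_0)$ serves for the given $K$. This rests on the geometry already packaged into Lemma~\ref{lemma:rough-prop} --- the parabolic commutant has transverse width $\omega^{1/2}\sim\delta^{1+\alpha}$, which is $\ll\delta$ exactly because $\alpha>0$, so a single application carries regularity a definite distance $\sim\delta$ along $\sH_p(q_0)$ without leaving the tube --- together with the support-shrinkage device (decreasing $\beta$ so that the support of the commutant at the next Sobolev level lies where either the elliptic set of the previous $B$ or the a priori datum on $\supp E$ supplies the needed regularity), recalled at the end of Section~\ref{sec:structure} and in \cite[Section~24.5]{Hor}. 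Modulo these standard but delicate points, the proof is a repackaging of Lemma~\ref{lemma:rough-prop}.
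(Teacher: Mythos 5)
Your proposal is correct and follows essentially the same route as the paper: iterate Lemma~\ref{lemma:rough-prop} over the Sobolev scale in steps of $1/2$ starting from the a priori $H^{s-\ep_0}$ regularity, verify that the first step $s'=\min(s-\ep_0+1/2,s)$ satisfies \eqref{eq:bad-interact-Sob} (your computation is exactly the paper's), and remove the tube hypothesis by the standard support-shrinking/open--closed argument of H\"ormander's Proposition~24.5.1 and the end of Section~\ref{sec:structure}. The uniformity and radius-bookkeeping issues you flag are precisely what the paper delegates to those references, so nothing essential is missing.
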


\section{Propagation of singularities}\label{sec:prop-sing}

In order to convert Proposition~\ref{prop:rough-prop} into a
propagation of singularities along bicharacteristics statement, we
need a more precise analysis of the bicharacteristics.
One has the following lemma, which
is just a version of the argument of Melrose and Sj\"ostrand \cite{Melrose-Sjostrand:I,
Melrose-Sjostrand:II}, see also \cite[Chapter~XXIV]{Hor} and
\cite{Lebeau:Propagation}.

\begin{lemma}(Version of \cite[Lemma~24.3.15]{Hor}.)
Suppose that $\alpha\in(0,1]$ and $\sH_p$ is in $C^\alpha$.
Suppose that $F$ is a closed subset of $\Sigma$ with the property that
for every $U\subset X$ coordinate chart and for every $K\subset\Sigma\cap T^*_UX$
compact there exists $\delta_0>0$ and $C_0>0$ such that for all $t\in(-\delta_0,\delta_0)\setminus\{0\}$ and
$q_0\in K\cap F$ there exists $q=q(t,q_0)\in F$
in the metric ball $B(q_0+t\sH_p(q_0),C_0 |t|^{1+\alpha})$
around $q_0+t\sH_p(q_0)$ of radius
$C_0|t|^{1+\alpha}$. Then for every $q_0\in F$ there is a
bicharacteristic $\gamma:(t_-,t_+)\to F$ with $\gamma(0)=q_0$ and such
that $\gamma$ leaves every compact subset of $F$ when $t\to t_\pm$.
\end{lemma}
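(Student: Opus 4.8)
The plan is to follow the scheme of \cite[Lemma~24.3.15]{Hor}: first build the bicharacteristic on a short interval by an Euler-type polygonal approximation together with a compactness argument, and then extend it by a maximality argument. \emph{Short-time construction.} Fix $q_0\in F$, choose a coordinate chart $U$ with $q_0\in\Sigma\cap T^*_U X$ and a compact neighborhood $K$ of $q_0$ in $\Sigma\cap T^*_U X$, and let $\delta_0,C_0$ be the constants furnished by the hypothesis for this $K$. Set $M=\sup_K|\sH_p|$ and $c=M+C_0\delta_0^\alpha$, and shrink $T\in(0,\delta_0)$ so that the closed $2cT$-ball about $q_0$ in $\Sigma\cap T^*_U X$ lies in the interior of $K$. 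For $N=2^k$, $k\in\NN$, put $h=\pm T/N$ (the sign fixing the direction of propagation) and define inductively $q^{(N)}_0=q_0$ and $q^{(N)}_{j+1}=q(h,q^{(N)}_j)$; by the hypothesis $q^{(N)}_{j+1}\in F$ lies in $B(q^{(N)}_j+h\sH_p(q^{(N)}_j),C_0|h|^{1+\alpha})$ provided $q^{(N)}_j\in K\cap F$, and since $|q^{(N)}_{j+1}-q^{(N)}_j|\le |h|(M+C_0|h|^\alpha)\le c|h|$, induction on $j\le N$ shows $|q^{(N)}_j-q_0|\le jc|h|\le cT$, hence all the $q^{(N)}_j$ lie in $K\cap F$ and the iteration runs.

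\emph{Passage to the limit.} Let $\gamma_N$ be the piecewise-linear interpolation on the interval from $0$ to $\pm T$ with $\gamma_N(jh)=q^{(N)}_j$. Then $\gamma_N(0)=q_0$, $\gamma_N$ takes values in $K$, $|\dot\gamma_N|\le c$ off the breakpoints, and, writing $\bar\gamma_N$ for the step function equal to $q^{(N)}_j$ on $[jh,(j+1)h)$, one has $\dot\gamma_N(t)=\sH_p(\bar\gamma_N(t))+r_N(t)$ with $\|r_N\|_\infty\le C_0|h|^\alpha$, i.e.\ $\gamma_N(t)=q_0+\int_0^t\bigl(\sH_p(\bar\gamma_N(s))+r_N(s)\bigr)\,ds$. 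The family $\{\gamma_N\}$ is equi-Lipschitz and uniformly bounded, so by Arzel\`a--Ascoli a subsequence converges uniformly to a Lipschitz curve $\gamma$. Since $\|\bar\gamma_N-\gamma_N\|_\infty\le c|h|\to 0$ and $\int|r_N|\le C_0|h|^\alpha\, T=C_0 T^{1+\alpha}N^{-\alpha}\to 0$ (here $\alpha>0$ is used), continuity of $\sH_p$ lets us pass to the limit in the integral identity to get $\gamma(t)=q_0+\int_0^t\sH_p(\gamma(s))\,ds$; thus $\gamma$ is a $C^1$ integral curve of $\sH_p$ with $\gamma(0)=q_0$. For any dyadic $t=(p/2^m)(\pm T)$ and $N=2^k$ with $k\ge m$ along the subsequence, $\gamma_N(t)=q^{(N)}_{p2^{k-m}}\in F$, so $\gamma(t)\in F$ since $F$ is closed; as these $t$ are dense and $F$ is closed, $\gamma$ maps its interval into $F$. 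Running $h>0$ and $h<0$ and gluing at $t=0$ (both one-sided derivatives there equal $\sH_p(q_0)$) produces a $C^1$ bicharacteristic $\gamma\colon[-T,T]\to F$ through $q_0$.

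\emph{Maximal extension.} Order by extension the set of $C^1$ integral curves of $\sH_p$ with values in $F$, defined on intervals containing $0$ and sending $0$ to $q_0$; chains have upper bounds (their unions are again such curves), so Zorn's lemma yields a maximal $\gamma$ on an interval $(t_-,t_+)$, which is open because the short-time construction lets one extend $\gamma$ as a $C^1$ integral curve past either endpoint as soon as the value there lies in $F$ (the one-sided derivatives matching, both equal to $\sH_p$ of that point). It remains to check that $\gamma$ leaves every compact subset of $F$ as $t\uparrow t_+$ (the case $t\downarrow t_-$ is identical). If not, there are a compact $L\subset F$ and $t_n\uparrow t_+$ with $\gamma(t_n)\in L$, and necessarily $t_+<\infty$. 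Pick a compact neighborhood $L'$ of $L$, let $d>0$ be the distance from $L$ to $X\setminus L'$ and $M'=\sup_{L'}|\sH_p|$. Once $t_+-t_n<d/M'$, the curve cannot exit $L'$ on $[t_n,t_+)$, so $|\dot\gamma|\le M'$ there; hence $\gamma$ extends continuously to $t=t_+$, and $\gamma(t_+)\in F$ since $\gamma(t)\in F$ for $t<t_+$ and $F$ is closed. The short-time construction at $\gamma(t_+)$ then extends $\gamma$ beyond $t_+$ within $F$, contradicting maximality. Since $\gamma$ takes values in $F\subset\Sigma$, it is the desired bicharacteristic.

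\emph{Main obstacle.} The delicate part is the limiting step: one must keep the Euler polygons equi-Lipschitz and confined to a single compact subset of $F$ over a time interval \emph{independent of} $N$ — so that Arzel\`a--Ascoli applies and the hypothesis may legitimately be invoked at every vertex — while simultaneously seeing that the total Euler error $\sum_j C_0|h|^{1+\alpha}=C_0 T^{1+\alpha}N^{-\alpha}$ tends to $0$, which is exactly where $\alpha>0$ enters and guarantees that the uniform limit solves the \emph{exact} integral equation and, by closedness of $F$, stays in $F$. The maximal-extension step is routine ODE bookkeeping.
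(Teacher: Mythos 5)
Your proof is correct and follows essentially the same route as the paper: Euler polygons built from the hypothesis with step $T/2^k$, a uniform Lipschitz bound and Arzel\`a--Ascoli, membership of the limit in $F$ via the dyadic vertices and closedness of $F$, and Zorn's lemma for the maximal extension (which the paper merely calls standard). The only noticeable difference is in verifying that the limit is an integral curve: you pass to the limit in the integral identity $\gamma_N(t)=q_0+\int_0^t(\sH_p(\bar\gamma_N)+r_N)$, using just continuity of $\sH_p$ and $\int|r_N|=O(N^{-\alpha})$, whereas the paper proves the pointwise estimate $\gamma(t+\delta)\in B(\gamma(t)+\delta\sH_p(\gamma(t)),\tilde C_0|\delta|^{1+\alpha})$ by a telescoping sum using the H\"older modulus of $\sH_p$ --- both are valid, yours being slightly more elementary and the paper's yielding a quantitative $C^{1,\alpha}$-type bound on $\gamma$.
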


\begin{proof}
One can follow the proof of \cite[Lemma~24.3.15]{Hor} quite closely,
ignoring case (i). Here we present a slightly different version of the
argument, following \cite{Lebeau:Propagation}, see also
\cite[Proof of Theorem~8.1]{Vasy:Propagation-Wave}.

A standard argument based on Zorn's lemma shows that it suffices to
prove the local assertion that for every
$q_0\in F$
there exists a bicharacteristic
$\gamma:[-\ep,\ep]\to\Sigma$, $\ep>0$,
with $\gamma(0)=q_0$ and such that
$\gamma(t)\in F$ for $t\in[-\ep,\ep]$.
Indeed, it suffices to do a one-sided version, i.e.\ that if
$q_0\in F$
then
\begin{equation}\label{eq:prop-104}\begin{split}
&\text{there exists a bicharacteristic}
\ \gamma:[-\ep,0]\to\Sigma,\ \ep>0,\\
&\qquad\qquad \gamma(0)=q_0,
\ \gamma(t)\in F,\ t\in[-\ep,0],
\end{split}\end{equation}
for the existence of a bicharacteristic
on $[0,\ep]$ can be demonstrated similarly
by replacing the forward propagation
estimates by backward ones, and piecing together the two
bicharacteristics $\gamma_\pm$ gives one defined on $[-\ep,\ep]$ since at $0$
they both satisfy $\frac{d}{dt}\gamma_\pm(0)=\sH_p (q_0)$, so the
curve defined on $[-\ep,\ep]$ is $C^1$ with the correct derivative
everywhere.

Let $\cU$
be a neighborhood of $q_0$ with $\overline{\cU}\subset T^*_U X$ so
$\sH_p$ is H\"older-$\alpha$ in $\overline{\cU}$, and is in particular
bounded; $\sup\|\sH_p\|\leq C'$. Let $\cU_0$ be a smaller neighborhood with closure
in $\cU$ and (with $\delta_0$ as in Proposition~\ref{prop:rough-prop})
$\ep\in (0,\delta_0)$ such that for any $q\in\cU_0$, $\|q'-q\|\leq
(C'+C_0\ep^\alpha)\ep$ implies $q'\in\cU$.
Suppose that
$0<\delta<\ep$, $q\in \cU_0$.
For $q\in T^*X$, let
\begin{equation}
D(q,\delta)=B(q-\delta \sH_p(q),C_0\delta^{1+\alpha})\cap F.
\end{equation}
For each integer $N\geq 1$ now we define a sequence of $2^N+1$ points
$q_{j,N}$,
$0\leq j\leq 2^N$ integer,
which will be used to construct points $\gamma(-j 2^{-N}\ep)$
on the desired
bicharacteristic $\gamma:[-\ep,0]\to F$ through $q_0$.
Namely, let $\delta=2^{-N}\ep$, $q_{0,N}=q_0$, and choose $q_{j+1,N}
\in D(q_{j,N},\delta)$; such $q_{j+1,N}$ exists by assumption.
Here one needs to check that $q_{j,N}\in\cU$ inductively for $0\leq j\leq 2^N$, but this
follows as
\begin{equation}\begin{aligned}\label{eq:rough-bich-bd-est}
\|q_{j,N}-q_0\|&\leq \sum_{i=0}^{j-1}\|q_{i+1,N}-q_i\|\\
&\leq j(C'
2^{-N}\ep+C_0(2^{-N}\ep)^{1+\alpha})
\leq C'\ep+C_0 2^{-\alpha N}\ep^{1+\alpha}.
\end{aligned}\end{equation}
Let
$\gamma_N:[-\ep,0]$ be the curve defined by $\gamma_N(t)=q_{j,N}$
for $t=-j 2^{-N}\ep$, with $\gamma$ given by the straight line
between successive dyadic points. Thus, by an estimate similar to
\eqref{eq:rough-bich-bd-est},
$\gamma_N$ is a uniformly Lipschitz family with
$$
\|\gamma_N(t)-\gamma_N(t')\|\leq (C'+C_0\ep^\alpha)|t-t'|,
$$
and thus there is a subsequence $\gamma_{N_k}$
converging uniformly to some $\gamma$; as $F$ is closed, $\gamma$
takes values in $F$. It remains to check the differentiability of
$\gamma$, and that $\frac{d}{dt}\gamma(t)=\sH_p(\gamma(t))$. For this
it suffices to show that there is $\tilde C_0>0$ such that for all
relevant $t$ and $\delta$,
$$
\gamma(t+\delta)\in B(\gamma(t)+\delta\sH_p(\gamma(t)),\tilde C_0|\delta|^{1+\alpha}),
$$
which follows if we show the analogous statement for $\gamma_N$ (with
constant $\tilde C_0$ independent of $N$) when $t$ and $t+\delta$ are
both dyadic points (so $\delta=-k\ep 2^{-N}$ is such as well). This is straightforward to
check from the definition of $\gamma_N$ since, with $C_\alpha$ the
H\"older-$\alpha$ constant of $\sH_p$ on $\overline{\cU}$, so $\|\sH_p(q)-\sH_p(q')\|\leq C_\alpha\|q-q'\|^\alpha$,
\begin{equation*}\begin{aligned}
\|&\gamma_N(t-k\ep 2^{-N})-\gamma_N(t)+k\ep 2^{-N}\sH_p(\gamma_N(t))\|\\
&\leq\sum_{j=0}^{k-1}\|\gamma_N(t-(j+1)\ep 2^{-N})-\gamma_N(t-j\ep 2^{-N})+\ep
2^{-N}\sH_p(\gamma_N(t-j\ep
2^{-N}))\|\\
&\qquad+\sum_{j=0}^{k-1}\ep
2^{-N}\|\sH_p(\gamma_N(t-j\ep 2^{-N}))-\sH_p(\gamma_N(t))\|\\
&\leq \sum_{j=0}^{k-1}C_0(\ep
2^{-N})^{1+\alpha}+\sum_{j=0}^{k-1}C_\alpha \ep 2^{-N} (j\ep
2^{-N})^\alpha\\
&\leq (kC_0+\frac{C_\alpha}{1+\alpha}k^{1+\alpha})(\ep
2^{-N})^{1+\alpha}\leq (C_0+\frac{C_\alpha}{1+\alpha})(k\ep 2^{-N})^{1+\alpha},
\end{aligned}\end{equation*}
which gives the desired estimate with $\tilde C_0=C_0+\frac{C_\alpha}{1+\alpha}$.
\end{proof}

Applying the lemma with $F=\WF^s(u)$,
Proposition~\ref{prop:rough-prop} implies
Theorem~\ref{thm:global-background}, which we restate as a corollary:

\begin{cor}\label{cor:global-background}
Suppose that
$\codimY+1+2\ep_0< s_0$ and $-\codimY/2<s<s_0-\ep_0-1-\codimY/2$. Then
for $u\in H^{s-\ep_0}_{\loc}$, $\Box u\in H^{s-1}_{\loc}$, $\WF^s(u)$ is a union of maximally extended
bicharacteristics in $\Sigma$.
\end{cor}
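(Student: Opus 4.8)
The plan is to deduce Corollary~\ref{cor:global-background} (equivalently Theorem~\ref{thm:global-background}) from Proposition~\ref{prop:rough-prop} together with the lemma above, a version of \cite[Lemma~24.3.15]{Hor}, using the elliptic regularity of Section~\ref{sec:elliptic} for the preliminary reduction to the characteristic set. First I would observe that the standing hypotheses $\codimY+1+2\ep_0<s_0$ and $-\codimY/2<s<s_0-\ep_0-1-\codimY/2$ are strictly stronger than those of Proposition~\ref{prop:mic-elliptic} (whose upper bound is only $s<s_0-\ep_0-\codimY/2$), and that $\Box u\in H^{s-1}_{\loc}\subset H^{s-2}_{\loc}$, so that proposition applies and yields $\WF^s(u)\subset\Sigma$. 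It then remains to show that the closed set $F:=\WF^s(u)\subset\Sigma$ is a union of maximally extended bicharacteristics, and this is precisely the output of the geometric lemma applied with this $F$, once its hypotheses are verified.

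Next I would check those hypotheses. For the regularity condition: since $s_0>\codimY+1+2\ep_0>\codimY+1$ one may pick $\alpha_0<s_0-1-\codimY$ with $\alpha_0>0$, so $\alpha=\min(1,\alpha_0)\in(0,1]$ and $\sH_p\in C^\alpha$ as recalled in Sections~\ref{sec:bichar} and~\ref{sec:prop-est}. For the dynamical condition: fix a coordinate chart $U$ and a compact $K\subset\Sigma\cap T^*_UX$, and let $\delta_0,C_0$ be the constants furnished by Proposition~\ref{prop:rough-prop} for this $K$ (using $u\in H^{s-\ep_0}_{\loc}$, $\Box u\in H^{s-1}_{\loc}$). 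Reading that proposition contrapositively: if $q_0\in\WF^s(u)$ and $0<\delta<\delta_0$, then the metric ball of radius $C_0\delta^{1+\alpha}$ about $q_0+\delta\sH_p(q_0)$ meets $\WF^s(u)$, and likewise the ball about $q_0-\delta\sH_p(q_0)$ meets $\WF^s(u)$. Combining the forward statement (for $t>0$) with the backward one (for $t<0$) gives, for every $t\in(-\delta_0,\delta_0)\setminus\{0\}$ and every $q_0\in K\cap F$, a point $q(t,q_0)\in F$ lying in $B(q_0+t\sH_p(q_0),C_0|t|^{1+\alpha})$, which is exactly the hypothesis of the lemma.

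Finally I would invoke the lemma: for each $q_0\in\WF^s(u)$ it produces a bicharacteristic $\gamma\colon(t_-,t_+)\to\WF^s(u)$ with $\gamma(0)=q_0$ that leaves every compact subset of $\WF^s(u)$ as $t\to t_\pm$. Since $\WF^s(u)$ is closed, such a $\gamma$ cannot be prolonged as a bicharacteristic while staying in $\WF^s(u)$, so it is a maximally extended bicharacteristic in $\Sigma$ contained in $\WF^s(u)$; letting $q_0$ vary exhibits $\WF^s(u)$ as a union of such curves, which is the assertion. The genuinely hard work — the paired‑Lagrangian analysis of Section~\ref{sec:paired-Lag} and the positive commutator estimate behind Proposition~\ref{prop:rough-prop} — is already done, so the only places here that demand attention are the elliptic reduction $\WF^s(u)\subset\Sigma$ and the bookkeeping in the contrapositive, both‑signs application of Proposition~\ref{prop:rough-prop} with constants uniform over compacta (matching the ``$\pm\delta\sH_p$'' statements to the ``$t\in(-\delta_0,\delta_0)$'' form and confirming that the ball radii agree); I expect that matching, rather than any conceptual point, to be the main thing to get right.
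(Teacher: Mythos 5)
Your proposal is correct and coincides with the paper's own (one-line) proof: apply the Melrose--Sj\"ostrand-type lemma with $F=\WF^s(u)$, verifying its hypothesis via the contrapositive of Proposition~\ref{prop:rough-prop} in both time directions. You also make explicit the preliminary reduction $\WF^s(u)\subset\Sigma$ via Proposition~\ref{prop:mic-elliptic}, which the paper leaves implicit; your checks of the order inequalities and of the H\"older exponent $\alpha$ are all accurate.
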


A corollary of Theorem~\ref{thm:global-background} is the following
global regularity result:

\begin{cor}\label{cor:global-reg}
If $s_0>1+\codimY/2$, $-\codimY/2<s'<s<s_0-1-\codimY/2$, $u\in
H^{s'}_{\loc}$, $\Box u\in H^{s-1}_{\loc}$ and for each
$q\in\Sigma$ the bicharacteristic through $q$ has a point $q'$ on it
which is not in $\WF^{s}(u)$, then $u\in H^s_{\loc}$.
\end{cor}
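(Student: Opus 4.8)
The plan is to bootstrap $u$ from $H^{s'}_{\loc}$ up to $H^s_{\loc}$ by an induction in which the a priori regularity is raised by a fixed increment $\ep_0>0$ at each stage, combining microlocal elliptic regularity (Proposition~\ref{prop:mic-elliptic}) off the characteristic set with the propagation theorem (Corollary~\ref{cor:global-background})---i.e.\ the flow-invariance of $\WF^\sigma(u)$---on it. First I would fix $\ep_0>0$ small enough that the inequalities $\codimY+1+2\ep_0<s_0$ and $\sigma<s_0-\ep_0-1-\codimY/2$ hold for every $\sigma$ in the relevant range $(s',s]$; since $s<s_0-1-\codimY/2$ there is room below that threshold, and the hypotheses on $s_0$ permit such a choice (so that also the weaker hypotheses of Proposition~\ref{prop:mic-elliptic} are met throughout).

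The inductive step is as follows. Suppose $u\in H^{\sigma-\ep_0}_{\loc}$ with $-\codimY/2<\sigma\le s$. Then $\Box u\in H^{s-1}_{\loc}\subset H^{\sigma-1}_{\loc}\subset H^{\sigma-2}_{\loc}$, so Proposition~\ref{prop:mic-elliptic} gives $\WF^\sigma(u)\subset\Sigma$, while Corollary~\ref{cor:global-background} gives that $\WF^\sigma(u)$ is a union of maximally extended bicharacteristics in $\Sigma$. Now if $q_0\in\WF^\sigma(u)$ there is a maximally extended bicharacteristic $\gamma$ through $q_0$ lying entirely in $\WF^\sigma(u)$; since $\sigma\le s$ forces $\WF^\sigma(u)\subset\WF^s(u)$, this $\gamma$ lies in $\WF^s(u)$, contradicting the hypothesis that the bicharacteristic through $q_0$ carries a point $q'\notin\WF^s(u)$. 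Hence $\WF^\sigma(u)=\emptyset$, i.e.\ $u\in H^\sigma_{\loc}$.

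It then remains to iterate: set $\sigma_0=s'$, $\sigma_{j+1}=\min(\sigma_j+\ep_0,s)$; since $\sigma_{j+1}-\ep_0\le\sigma_j$, each stage takes $u\in H^{\sigma_j}_{\loc}$ to $u\in H^{\sigma_{j+1}}_{\loc}$, and after finitely many stages $\sigma_j=s$. The only genuinely delicate point I anticipate is the bookkeeping that keeps both Proposition~\ref{prop:mic-elliptic} and Corollary~\ref{cor:global-background} applicable at \emph{every} intermediate level $\sigma\in(s',s]$ for the single fixed $\ep_0$; the logical core---elliptic regularity together with flow-invariance of $\WF^\sigma(u)$ together with the standing bicharacteristic hypothesis, forcing $\WF^\sigma(u)$ empty---is then immediate. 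One should also bear in mind that the conclusion is local (as all Sobolev statements in this paper are) while the bicharacteristic hypothesis is global, and that, as bicharacteristics need not be unique, ``the bicharacteristic through $q$'' in the hypothesis is to be read as ``every maximally extended bicharacteristic through $q$'', which is exactly what is needed to contradict the flow-invariance statement.
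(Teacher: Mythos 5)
Your proposal is correct and follows essentially the same route as the paper: fix a suitable $\ep_0>0$, use microlocal elliptic regularity to confine the wave front set to $\Sigma$, and then iterate Corollary~\ref{cor:global-background} in increments of $\ep_0$ from $s'$ up to $s$, using the bicharacteristic hypothesis (read, as you correctly note, as applying to every maximally extended bicharacteristic) to conclude $\WF^{\tilde s}(u)=\emptyset$ at each stage. The only cosmetic difference is that the paper invokes elliptic regularity once at level $s$ (which suffices since $\WF^{\tilde s}(u)\subset\WF^{s}(u)$) rather than at each intermediate level.
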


\begin{proof}
By microlocal elliptic regularity which is valid with this $s$,
$\WF^s(u)\subset\Sigma$.
Now let $\ep_0=\min((s_0-\codimY-1)/2,s_0-1-\codimY/2-s)/2>0$. Then
for $s'\leq \tilde s\leq s$, the hypotheses of
Corollary~\ref{cor:global-background}, apart from possibly $u\in H^{\tilde
  s-\ep_0}$,
are satisfied with $s$ replaced
by $\tilde s$ and with this $\ep_0$. Thus, taking $\tilde
s=\min(s,s'+\ep_0)$, all hypotheses are satisfied, so as a point on
any bicharacteristic is not in $\WF^s(u)$ and thus not in $\WF^{\tilde
  s}(u)$, one concludes that $\WF^{\tilde s}(u)=\emptyset$, i.e.\
$u\in H^{\tilde s}_{\loc}$. If $\tilde s=s$, we are done, otherwise we
have $u\in H^{s'+\ep_0}_{\loc}$
repeat the argument, with $\tilde s=\min(s,s'+2\ep_0)$; in finite
number of steps we conclude that $u\in H^s_{\loc}$.
\end{proof}

A further consequence is:

\begin{cor}\label{cor:forward-solution-reg}
Suppose $s_0>1+\codimY/2$, $0\leq s<s_0-1-\codimY/2$.
Let $\Box_+^{-1}f\in H_{\bl,\loc}^{1,-\infty}(X)$ denote the
forward solution for $\Box u=f$, i.e.\ for $f\in H_{\bl,\loc}^{-1,-\infty}(X)$ supported in
$t>t_0$, $u=\Box_+^{-1}f$ is supported in $t>t_0$.

If $f\in
H^{s-1}_{\loc}$ is supported in $t>t_0$, then $u=\Box_+^{-1}f\in
H^{s}_{\loc}$.

An analogous result holds with $\Box_+^{-1}$ replaced by the backward
  solution operator $\Box_-^{-1}$ and $t>t_0$ replaced by $t<t_0$.
\end{cor}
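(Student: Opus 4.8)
The plan is to reduce the statement to Corollary~\ref{cor:global-reg}, after first extracting from the b-theory a starting regularity for $u=\Box_+^{-1}f$. Since $s\ge 0$ we have $s-1\ge -1$, so $f\in H^{s-1}_{\loc}(X)=H^{-1,s}_{\bl,\loc}(X)$, and $f$ is supported in $\{t>t_0\}$; the forward solvability statement recalled in the introduction (applied with $r+1=s$) produces a unique $w\in H^{1,s-1}_{\bl,\loc}(X)$ with $\Box_g w=f$ and $\supp w\subset\{t>t_0\}$, and by uniqueness of the forward solution $w=\Box_+^{-1}f=u$. Next I would use the elementary inclusion $H^{1,r}_{\bl,\loc}(X)\subset H^{\min(1,1+r)}_{\loc}(X)$ (for $r\ge 0$ this is $H^{1,r}_{\bl,\loc}\subset H^1_{\loc}$, and for $r<0$ it is immediate from the definition of $H^{1,r}_{\bl}$ as sums of up to $-r$ tangential derivatives of $H^1_{\loc}$ functions, each lowering the Sobolev order by one). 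With $r=s-1$ this gives $u\in H^{\min(1,s)}_{\loc}$; in particular $u\in L^2_{\loc}$, and if $s\le 1$ we are already done since then $u\in H^s_{\loc}$.

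So it remains to treat $s>1$. Here I would apply Corollary~\ref{cor:global-reg} with $s'=1$: the hypotheses $s_0>1+\codimY/2$ and $-\codimY/2<s'<s<s_0-1-\codimY/2$ hold (the first two are assumed, $s'=1<s$ since $s>1$, and $-\codimY/2<1$ since $\codimY\ge 1$), $u\in H^{s'}_{\loc}$ was just shown, and $\Box u=f\in H^{s-1}_{\loc}$. The remaining hypothesis of Corollary~\ref{cor:global-reg} --- that every maximally extended bicharacteristic of $\sH_p$ in $\Sigma$ contains a point off $\WF^s(u)$ --- is where the forward support of $u$ enters: since $u$ is the forward solution and $f$ is supported in $\{t>t_0\}$, $u$ vanishes on $\{t<t_0\}$, so $\WF^s(u)$ is disjoint from $T^*_{\{t<t_0\}}X$; and since $t$ is a global time function, it is strictly monotone along $\sH_p$ on each component of $\Sigma$, so --- under the standing causal hypotheses, the same under which the Cauchy problem and hence $\Box_\pm^{-1}$ are well posed --- every maximally extended bicharacteristic, followed in the direction of decreasing $t$, reaches $\{t<t_0\}$, where $u$ vanishes identically near the corresponding point. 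Corollary~\ref{cor:global-reg} (which internally combines the microlocal elliptic estimate of Section~\ref{sec:elliptic} with the propagation estimate, Corollary~\ref{cor:global-background}) then yields $u\in H^s_{\loc}$.

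The assertion for the backward solution operator $\Box_-^{-1}$ and $\{t<t_0\}$ follows verbatim with $t$ replaced by the global time function $-t$. The step requiring the most care is the last one --- relating the forward support of $u$ to the bicharacteristic hypothesis of Corollary~\ref{cor:global-reg} --- but it is pure causal bookkeeping and carries no analytic content beyond what is already in Corollary~\ref{cor:global-reg}; that corollary, together with the soft reduction to b-spaces in the first paragraph, does all the real work.
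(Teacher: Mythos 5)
Your proof is correct and follows essentially the same route as the paper's: deduce $u=\Box_+^{-1}f\in H^{1,s-1}_{\bl,\loc}(X)$ from the b-theory, note that every bicharacteristic reaches $\{t<t_0\}$ where $u$ vanishes, and apply Corollary~\ref{cor:global-reg}. The only (immaterial) difference is your choice of starting exponent $s'=\min(1,s)$ with a separate disposal of the case $s\leq 1$, where the paper simply takes $s'=0$ using $u\in L^2_{\loc}$.
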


\begin{proof}
First we note $f\in H^{s-1}_{\loc}(X)$ implies $f\in
H_{\bl,\loc}^{-1,s}(X)$, and thus $u=\Box_+^{-1}f\in
H_{\bl,\loc}^{1,s-1}(X)\subset L^2(X)$.
Then
we merely need to observe that every bicharacteristic reaches $t<t_0$,
where $u$ vanishes, thus is in $H^s_{\loc}$, so
Corollary~\ref{cor:global-reg} is applicable with $s'=0$ and yields the conclusion.
\end{proof}

\bibliographystyle{plain}
\bibliography{sm}

\end{document}